\DeclareMathAlphabet{\mathbbold}{U}{bbold}{m}{n}
\renewcommand{\=}{\coloneqq}
\def\th@plain{%
	\thm@notefont{}%
	\itshape %
}
\def\th@definition{%
	\thm@notefont{}%
	\normalfont %
}
\theoremstyle{plain}
\newtheorem{theorem}{Theorem}[section]
\newtheorem*{theorem*}{Theorem}
\newtheorem{thmx}{Theorem}
\newtheorem{thml}{Theorem}
\newtheorem*{conj*}{Conjecture}
\newtheorem{lemma}[theorem]{Lemma}
\newtheorem{prop}[theorem]{Proposition}
\newtheorem{cor}[theorem]{Corollary}
\theoremstyle{definition}
\newtheorem{definition}[theorem]{Definition}
\newtheorem{rem}[theorem]{Remark}
\newtheorem{example}[theorem]{Example}
\newtheorem{notation}[theorem]{Notation}
\theoremstyle{remark}
\numberwithin{equation}{section}
\numberwithin{theorem}{section}
\numberwithin{table}{section}
\numberwithin{figure}{section}
\newcommand{\diam}  {\operatorname{diam}}
\newcommand{\card} {\operatorname{card}}
\newcommand{\supp}{\operatorname{supp}}
\newcommand{\R}{\mathbb{R}}
\newcommand{\Q}{\mathbb{Q}}
\newcommand{\N}{\mathbb{N}}      
\providecommand{\abs}[1]{\lvert#1\rvert}
\providecommand{\Absbig}[1]{\bigl\lvert#1\bigr\rvert}
\providecommand{\norm}[1]{\|#1\|}
\renewcommand{\:}{\colon}
\newcommand{\cA}{\mathcal{A}}
\newcommand{\cB}{\mathcal{B}}
\newcommand{\cC}{\mathcal{C}}
\newcommand{\cH}{\mathcal{H}}
\newcommand{\cK}{\mathcal{K}}
\newcommand{\cL}{\mathcal{L}}
\newcommand{\cM}{\mathcal{M}}
\newcommand{\cO}{\mathcal{O}}
\newcommand{\cP}{\mathcal{P}}
\newcommand{\sP}{\mathscr{P}}
\newcommand{\hb}{\widehat{b}}
\newcommand{\hc}{\widehat{c}}
\newcommand{\ha}{\widehat{a}}
\newcommand{\hA}{\widehat{A}}
\newcommand{\hB}{\widehat{B}}
\newcommand{\hC}{\widehat{C}}
\newcommand{\hbN}{\widehat{\mathbb{N}}}
\newcommand{\hW}{\widehat{W}}
\newcommand{\hfa}{\widehat{\mathbf{a}}}
\newcommand{\fa}{\mathbf{a}}
\newcommand{\fb}{\mathbf{b}}
\newcommand{\fc}{\mathbf{c}}
\newcommand{\hpi}{\widehat{\pi}}
\newcommand{\hrho}{\widehat{\rho}}
\newcommand{\hSigma}{\widehat{\Sigma}}
\newcommand{\tphi}{\widetilde{\phi}}
\newcommand{\tpsi}{\widetilde{\psi}}
\newcommand{\PPP}{\mathcal{P}}
\newcommand{\MMM}{\mathcal{M}}
\newcommand{\Holder}[1] {\CCC^{0,#1}}
\newcommand{\Hseminorm}[2] {\abs{#2}_{#1}}
\newcommand{\Hseminormbig}[2] {\Absbig{#2}_{#1}}
\newcommand{\Hnorm}[2] {\norm{#2}_{#1}}
\renewcommand{\le}{\leqslant}
\renewcommand{\leq}{\leqslant}
\renewcommand{\ge}{\geqslant}
\renewcommand{\geq}{\geqslant}
\newcommand{\vertiii}[1]{{\left\vert\kern-0.25ex\left\vert\kern-0.25ex\left\vert #1 
		\right\vert\kern-0.25ex\right\vert\kern-0.25ex\right\vert}}
\newcommand{\dd}{\mathrm{d}}
\newcommand{\CCC}{C}
\newcommand{\bcf}{E}
\newcommand{\Lock}{\operatorname{Lock}}
\newcommand{\Flock}{\operatorname{Lock_F}}
\newcommand{\tcO}{\widetilde{\mathcal{O}}}
\newcommand{\mea}{Q}
\newcommand{\Fix}{\operatorname{Fix}}
\newcommand{\Mmax}{\mathcal{M}_{\max}}
\newcommand{\RM}{\mathcal{F}}
\newcommand{\Gau}{G}
\newcommand{\bad}{\mathfrak{E}}
\newcommand{\well}{\mathfrak{Z}}
\newcommand{\ral}{\mathfrak{R}}
\newcommand{\irr}{\operatorname{irr}}
\newcommand{\erg}{\operatorname{erg}}
\newcommand{\conv}{\operatorname{conv}}
\begin{document}

\title[Ergodic optimization for Gauss's continued fraction map]
 {Ergodic optimization for Gauss's continued fraction map}

\author{Yinying~Huang \and Oliver~Jenkinson \and Zhiqiang~Li}
\address{Yinying~Huang: School of
Mathematical Sciences, Peking University, Beijing 100871, China.}
\email{miaoyan@stu.pku.edu.cn}
\address{Oliver~Jenkinson, School of Mathematical Sciences, Queen Mary, University of London, Mile End Road, London E1 4NS, United Kingdom}
\email{o.jenkinson@qmul.ac.uk}
\address{Zhiqiang~Li, School of Mathematical Sciences \& Beijing International Center for Mathematical Research, Peking University, Beijing 100871, China}
\email{zli@math.pku.edu.cn}

\subjclass[2020]{Primary: 37A99; Secondary: 37A05, 37B25, 37B65, 37C50, 37E05, 37A44.}

\keywords{Gauss map, continued fractions, maximizing measure, ergodic optimization, typical periodic optimization, typical finite optimization}

\thanks{Y.~H.\ and Z.~L.\ were partially supported by Beijing Natural Science Foundation (JQ25001 and 1214021) and National Natural Science Foundation of China (12471083, 12101017, 12090010, and 12090015).
	}

\begin{abstract}
      The theory of ergodic optimization for distance-expanding maps is extended to Gauss's continued fraction map. Since the set of invariant probability measures is not weak$^*$ closed, we establish a characterisation of the closure of this set, and investigate limit-maximizing measures for H\"older continuous functions.
     Although a Ma\~n\'e cohomology lemma is shown to hold, the typical periodic optimization conjecture is shown to fail, as a consequence of the typical finite optimization property established for a certain class of (rationally maximized) functions. The typical periodic optimization (TPO) property is shown to hold, however, for the class of $\alpha$-H\"older essentially compact functions.
\end{abstract}

\maketitle
\tableofcontents
\section{Introduction}\label{sct_intro}

    The purpose of this work is to study the properties of maximizing measures for Gauss's continued fraction map $\Gau \: [0,1]\to [0,1]$, defined by \begin{equation}\label{gaussdefnintro}
    	\Gau(x)=\begin{cases}
    		\frac{1}{x}-\bigl\lfloor\frac{1}{x}\bigr\rfloor   & \text{if }x\in (0,1]  , \\
    		0 &\text{if }x=0  . 
    	\end{cases}
    \end{equation}
    
	For a general Borel measurable map $T\: X \rightarrow X$ on a metric space $X$, let $\MMM(X,T)$ denote the set of $T$-invariant Borel probability measures on $X$, let $\overline{\MMM(X,T)}$ denote the weak$^*$ closure of $\MMM(X,T)$ in the space $\cP(X)$ of Borel probability measures on $X$, and
define the \emph{ergodic supremum} of a bounded Borel measurable function (potential) $\phi \:X\to\R$ to be 
\begin{equation}\label{e_ergodicmax}
	\mea(T,\phi)\=\sup\limits_{\mu\in\MMM(X,T)}\int\! \phi \, \mathrm{d}\mu.
\end{equation}
Any measure $\mu \in \MMM(X,T)$ that 
attains the supremum in (\ref{e_ergodicmax}) is called a \emph{$(T,\phi)$-maximizing measure},
or simply a \emph{$\phi$-maximizing measure},
and the set of such measures is denoted by
\begin{equation}\label{e_setofmaximizngmeasures}
	\Mmax(T,  \phi) \=\biggl\{ \mu\in\MMM(X,T) : \int \! \phi \,\mathrm{d} \mu = \mea(T,\phi) \biggr\}  . 
\end{equation}
If $\mu$  is a weak$^*$ accumulation point of $\MMM(X,T)$ with $\int \!\phi \,\mathrm{d}\mu=\mea(T,\phi)$,
then $\mu$ is called a \emph{$(T,\phi)$-limit-maximizing measure} and the set of such measures is denoted by
\begin{equation}\label{e_setoflimmaximizngmeasures}
	\Mmax^*(T,  \phi) \=\biggl\{ \mu\in\overline{\MMM(X,T)} : \int \! \phi \,\mathrm{d} \mu = \mea(T,\phi) \biggr\}  . 
\end{equation}

The \emph{ergodic optimization} problem in this article
will be concerned with the study of maximizing,
and limit-maximizing, measures for the Gauss map $\Gau$
and suitable real-valued functions $\phi$.
By contrast with previous investigations of ergodic
optimization for the Gauss map (see e.g.~\cite{Je08,JS10, Pi23}),
or more general countable branch maps
(see e.g.~\cite{BF13,BG10,GG24,GGS25, Io07, JMU06, JMU07}),
here we shall be concerned with those properties of 
maximizing measures that are \emph{typical} in spaces of H\"older functions on $I \= [0,1]$.
Problems of this kind, concerning generic properties, had attracted the interest of Yuan \& Hunt \cite{YH99}, who
conjectured that for an expanding map, or an Axiom A diffeomorphism, there is an
open and dense subset of Lipschitz functions $\phi$
such that the $\phi$-maximizing measure is supported on a single periodic orbit.

The Yuan--Hunt conjecture 
stimulated work by various authors
(see \cite{Bou01, Bou08, CLT01, Mo08, QS12}), and
was eventually 
proved by Contreras \cite{Co16} in the case of 
expanding maps, 
and by Huang, Lian, Ma, Xu \& Zhang \cite{HLMXZ25}
for Axiom A diffeomorphisms; going beyond the setting of uniform hyperbolicity, Li \& Zhang
\cite{LZ25} proved the analogous result for expanding Thurston maps from complex dynamics.
The so-called typical periodic optimization (TPO) conjecture (cf.~\cite{Boc18,Je19}) posits that for a more general
(suitably chaotic) map, and a space of (suitably regular) functions, there is an open dense subset $U$ of the space such that if $\phi\in U$
then the $(T,\phi)$-maximizing measure is unique and supported on a periodic orbit.

While the Gauss map shares certain features of expanding maps, the fact that it has infinitely many branches complicates matters, and the lack of compactness of $\MMM(I,\Gau)$ makes it appropriate to study limit-maximizing measures rather than maximizing measures.
This leads to a new phenomenon: while it can be shown that
typical \emph{periodic} optimization is false,
it seems to be the case that \emph{typical finite optimization} (TFO) holds; in other words, it is conjectured that there is an open dense subset $U$ of the space of Lipschitz functions on $I$ such that if $\phi\in U$
then the $(\Gau,\phi)$-limit-maximizing measure is unique and of finite support.

\subsection{Main results}
Henceforth we write $I\=[0,1]$.
The Gauss map $\Gau:I\to I$  defined by
(\ref{gaussdefnintro})
is such 
that the set $\MMM(I,\Gau)$ of $\Gau$-invariant measures is not weak$^*$ compact. 
 Any
$\Gau$-invariant measure $\mu \in \MMM(I,\Gau)$ satisfying $\mu(I\cap \Q)=0$ will be called \emph{$\Q$-null},  and the set of such measures will be denoted by $\MMM_{\irr}(I,\Gau)$, the notation reflecting the fact that the set of irrational members of $I$ is given full mass by measures in $\MMM_{\irr}(I,\Gau)$. 

The noncompactness of $\MMM(I,\Gau)$ means that
certain continuous functions $\phi$ do not have a maximizing measure; therefore, a satisfactory theory of ergodic optimization for $\Gau$ will require understanding of the closure $\overline{\MMM(I,\Gau)}$.

Each rational number $r_0$ in $(0,1]$ has a finite $\Gau$-orbit $r_0, \, r_1, \, \dots, \, r_n,\, 0$
terminating at the fixed point 0. A related sequence $r_0, \, r_1, \, \dots, \, r_n-1,\, 1,\, 0$ corresponds to the \emph{alternative} form of the (finite) continued fraction representation of $r_0$ (cf.~Section~\ref{sec_Premilinaries}).  
A discrete probability measure concentrating equal mass on each of the
points in a finite sequence of either of the above two forms will be called a \emph{finite-continued-fraction (FCF) measure}.
Although such measures are never $\Gau$-invariant,
they are precisely the ingredient needed to understand the 
closure $\overline{\MMM(I,\Gau)}$, as described by our first main theorem:

\begin{thmx} \label{t_weak_*_closure_M(I,T)}
The closure $\overline{\MMM(I,\Gau)}$
is equal to the convex hull of the union of $\MMM_{\irr}(I,\Gau)$ and the set of finite-continued-fraction measures.
\end{thmx}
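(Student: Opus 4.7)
The plan is to prove the two inclusions separately; the inclusion ``$\supseteq$'' is a straightforward periodic-approximation argument, while the real work lies in ``$\subseteq$'', for which I would lift to a compact symbolic model of $(I,\Gau)$.

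For ``$\supseteq$'', the containments $\MMM_{\irr}(I,\Gau) \subseteq \MMM(I,\Gau) \subseteq \overline{\MMM(I,\Gau)}$ are trivial. For an FCF measure $\mu_{r_0}$ with $r_0 = [a_1,\ldots,a_m]$ in regular continued fraction form, set $p_N \= [\overline{a_1,\ldots,a_m,N}]$: the uniform probability $\mu_N$ on the period-$(m+1)$ orbit of $p_N$ is $\Gau$-invariant, and a direct computation shows each $\Gau^j(p_N)$ converges to $r_j$ as $N \to \infty$ (in particular $\Gau^m(p_N) \to 0$), so $\mu_N \to \mu_{r_0}$ weak$^*$; the analogous argument using $[\overline{a_1,\ldots,a_m-1,1,N}]$ handles alternative-form FCF measures. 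Since $\overline{\MMM(I,\Gau)}$ is convex and weak$^*$-closed, it contains the (closed) convex hull of $\MMM_{\irr}(I,\Gau)$ together with all FCF measures.

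For ``$\subseteq$'', fix $\mu_n \to \mu$ weak$^*$ with $\mu_n \in \MMM(I,\Gau)$, and lift to the compact shift space $\overline{\Sigma} \= (\N \cup \{\infty\})^{\N}$ (product topology), with shift $\overline{\sigma}$ (drop the first coordinate) and the (everywhere continuous) coding $\overline{\pi}\colon \overline{\Sigma} \to I$ sending $(a_i)_{i\geq 1}$ to $[a_1,a_2,\ldots]$, with the continued fraction truncating at the first $\infty$. Each $\mu_n$ (supported on $\{0\} \cup (I \setminus \Q)$) lifts uniquely to a $\overline{\sigma}$-invariant probability $\widehat{\mu}_n$ on $\overline{\Sigma}$ supported on $\N^{\N} \cup \{(\infty,\infty,\ldots)\}$, with $\overline{\pi}_*\widehat{\mu}_n = \mu_n$. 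Compactness yields a subsequence with $\widehat{\mu}_n \to \widehat{\mu}$; continuity of $\overline{\sigma}$ and $\overline{\pi}$ then gives $\widehat{\mu}$ $\overline{\sigma}$-invariant and $\overline{\pi}_*\widehat{\mu} = \mu$. The nested sets $\overline{\sigma}^{-k}(\N^{\N}) = \{\omega : \omega_i \in \N \text{ for all } i > k\}$ each have $\widehat{\mu}$-measure equal to $\widehat{\mu}(\N^{\N})$ by invariance, and their union is the set of sequences with only finitely many $\infty$-coordinates; monotone convergence therefore forces $\widehat{\mu}$ to assign mass zero to sequences with at least one but only finitely many $\infty$-coordinates. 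Hence $\widehat{\mu} = \widehat{\mu}|_{\N^{\N}} + \widehat{\mu}|_{Y}$, where $Y = \{\omega : \omega \text{ has infinitely many } \infty\text{-coordinates}\}$; this vanishing also implies $\widehat{\mu}|_{\N^{\N}}$ is shift-invariant on $\N^{\N}$, so its $\overline{\pi}$-image is a $\Gau$-invariant measure on the irrationals, hence a nonnegative scalar multiple of an element of $\MMM_{\irr}(I,\Gau)$.

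For the restriction to $Y$, each $\omega \in Y$ has a unique excursion decomposition $\omega = B_0 \infty B_1 \infty B_2 \infty \cdots$ with $B_i \in \N^{*} \cup \{\emptyset\}$, and the $\overline{\sigma}$-action is the suspension of the block-shift under the roof $\tau(B) \= |B| + 1$. Taking the ergodic decomposition of $\widehat{\mu}|_{Y}$, each ergodic component is determined (by standard Palm/renewal theory) by an excursion distribution $\nu$ on $\N^{*} \cup \{\emptyset\}$ with finite mean roof $\int \tau \, \dd\nu$; by invariance, each excursion of type $B$ receives equal mass at each of its $|B|+1$ coordinates, and $\overline{\pi}$ maps these coordinates bijectively onto the finite Gauss orbit $[B], \Gau[B], \ldots, 1/a_{|B|}, 0$ of the rational $[B]$. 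A direct calculation yields
\[
\overline{\pi}_*(\text{ergodic component}) = \sum_{B \in \N^{*} \cup \{\emptyset\}} \frac{\nu(B)(|B|+1)}{\int \tau\, \dd\nu}\, \mu_{[B]},
\]
a (possibly countable) convex combination of FCF measures (with the convention $\mu_{[\emptyset]} = \delta_0$), which lies in the closed convex hull of FCF measures. Integrating over the ergodic decomposition and combining with the irrational part exhibits $\mu$ in the (closed) convex hull of $\MMM_{\irr}(I,\Gau) \cup \{\text{FCF measures}\}$, as required.

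The main obstacle is this Palm/renewal analysis of $\widehat{\mu}|_{Y}$: one has to justify the suspension structure carefully (including finiteness of the mean roof for ergodic probabilities on $Y$), verify that both regular and alternative-form blocks appear in the support of $\nu$ so that both families of FCF measures in the statement are genuinely recovered, and interpret the degenerate empty-block case $\nu = \delta_\emptyset$ (which projects to $\delta_0$) by noting that $\delta_0 = \lim_{N\to\infty} \tfrac{1}{2}(\delta_{1/N} + \delta_0)$ already lies in the closed convex hull of FCF measures.
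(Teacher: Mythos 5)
Your ``$\supseteq$'' direction is essentially the paper's Lemma~\ref{l_rational_orbit/measure_close_to_periodic}, but your ``$\subseteq$'' direction takes a genuinely different route. Both proofs lift to the compactified shift $\hSigma=\hbN^{\N}$ and split the lifted measure $\hmu$ into a part on $\Sigma=\N^{\N}$ (giving the $\MMM_{\irr}$-component) and a part giving zero mass to $\Sigma$; your increasing-sequence argument that sequences with at least one but finitely many $\infty$'s carry no $\hmu$-mass is exactly the content of the paper's Lemma~\ref{l_properties_sigma_and_Sigma}~(iii)--(iv). The divergence is in how the ``boundary'' part is handled. The paper proves an explicit \emph{combinatorial characterisation} of the convex hull of FCF measures (Proposition~\ref{l_RM*_equivalent_def} and Corollary~\ref{c_RM_equivalent_def}: a measure concentrated on $I\cap\Q$ belongs to $\RM$ iff it satisfies the submartingale-type atom inequalities $\mu(\{x\})\geq\mu(\Gau^{-1}(x))$ etc.), builds the decomposition of such a measure into $\delta_0$ and FCF measures by an explicit telescoping construction, and then verifies those inequalities directly for $\hpi_*\mu$ by reading off cylinder masses. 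You instead \emph{induce} on the cylinder $\cC(\infty)$, use Kac's formula (your ``Palm/renewal theory'') to write each ergodic component as a suspension over the block shift with roof $\tau(B)=|B|+1$, and observe that the $\hpi$-pushforward depends only on the one-block marginal $\nu$ and equals $\sum_B \tfrac{\nu(B)(|B|+1)}{\int\tau\,\dd\nu}\,\mu_B$. Your computation is correct: the finiteness of the mean roof is automatic from $m(\cC(\infty))>0$ (which follows since the sweep-out of $\cC(\infty)$ covers $Y$), so the step you flag as the ``main obstacle'' is actually standard once this is noted. One small imprecision: an ergodic component is not ``determined by'' $\nu$ as you write — only its $\hpi$-pushforward is — but the conclusion is unaffected. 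Your approach is more ergodic-theoretic and arguably more conceptual (the FCF measures appear as projected excursions rather than as solutions to an inequality system), while the paper's combinatorial route has the virtue of producing an intrinsic, checkable membership criterion (Corollary~\ref{c_RM_equivalent_def}) for $\RM$ that is of independent use. Both approaches share the convex-hull/closed-convex-hull ambiguity (countable combinations appear in each), and both need $\delta_0$: the paper includes $\delta_0$ as a generator of $\RM$ explicitly, while you correctly observe it arises as the $\nu=\delta_\emptyset$ case and also lies in the weak$^*$ closure of FCF measures.
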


A well-known feature of ergodic optimization, 
that is useful both for the identification of specific maximizing measures, and for establishing typical properties of such measures in various function spaces,
is a result known as a \emph{Ma\~n\'e lemma} (see e.g.~\cite{Bou00,Bou01,Bou02,Bou08,Bou11,CLT01,CG95,LT03,Sa99}). 
We establish the following Ma\~n\'e lemma for the Gauss map,
in the context of 
the space $\Holder{\alpha}(I)$
of $\alpha$-H\"older functions, which extends \cite[Lemma~A]{Bou00}:
	\begin{thmx}[Ma\~n\'e Lemma]  \label{t_mane}
		Suppose $\alpha\in (0,1]$ and $\phi\in \Holder{\alpha}(I)$. There exists $u_\phi\in\Holder{\alpha}(I)$ satisfying the functional equation
		\begin{equation}\label{e_mane_equation}
			 u_\phi(x)=\sup_{n\in\N}\Bigl\{\overline{\phi}\Bigl(\frac{1}{n+x}\Bigr)+u_\phi\Bigl(\frac{1}{n+x}\Bigr)\Bigr\}  \quad\text{ for all } x\in I,  
		\end{equation}
        where $\overline{\phi}\=\phi-\mea(\Gau ,\phi)$.
\end{thmx}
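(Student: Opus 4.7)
The plan is to realize $u_\phi$ as a fixed point of the Lax operator $\mathcal{L}\colon C(I) \to C(I)$ defined by
\[
\mathcal{L}u(x) \= \sup_{n \in \N}\bigl\{\overline{\phi}(\psi_n x) + u(\psi_n x)\bigr\},
\]
where $\psi_n(x) \= 1/(n+x)$ parametrizes the inverse branches of $\Gau$. Each $\mathcal{L}u$ is continuous on $I$: since $\psi_n x \to 0$ uniformly as $n \to \infty$, the functions $\overline{\phi}\circ\psi_n + u\circ\psi_n$ converge uniformly to $\overline{\phi}(0) + u(0)$, making $\mathcal{L}u$ a uniform limit of finite suprema of continuous functions. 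Moreover, $\mathcal{L}$ is $1$-Lipschitz in sup-norm and monotone in the pointwise order.

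The technical heart of the proof is a two-step H\"older estimate that compensates for the neutral behavior of $\psi_1$ at $0$. A direct computation gives
\[
\bigl\lvert (\psi_n \circ \psi_m)'(x) \bigr\rvert = \frac{1}{(nm + nx + 1)^2} \le \frac{1}{4} \quad \text{for all } n,m \in \N,\; x \in I,
\]
so every two-fold composition of inverse branches is $\tfrac{1}{4}$-Lipschitz. Consequently, for $u \in \Holder{\alpha}(I)$,
\[
\lvert \mathcal{L}^2 u \rvert_\alpha \le \lvert \overline{\phi}\rvert_\alpha(1 + 4^{-\alpha}) + 4^{-\alpha}\lvert u\rvert_\alpha,
\]
so $\mathcal{L}^2$ preserves the compact convex set $\cK \= \{u \in \Holder{\alpha}(I) : u(0) = 0,\ \lvert u\rvert_\alpha \le M_0\}$ once $M_0 \ge \lvert\overline{\phi}\rvert_\alpha(1 + 4^{-\alpha})/(1 - 4^{-\alpha})$. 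Applying the Schauder--Tychonoff fixed point theorem to the normalization $u \mapsto \mathcal{L}^2 u - \mathcal{L}^2 u(0)$ on $\cK$ produces $u^* \in \cK$ satisfying $\mathcal{L}^2 u^* = u^* + c_2$ for some $c_2 \in \R$.

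To promote $u^*$ into a genuine fixed point of $\mathcal{L}$, I would use the max-plus trick of setting $w^* \= \mathcal{L}u^*$ (so $\mathcal{L}w^* = u^* + c_2$) and $u_\phi \= \max(u^*, w^*)$. The elementary identity $\sup_n \max(f_n, g_n) = \max(\sup_n f_n, \sup_n g_n)$ gives
\[
\mathcal{L}u_\phi(x) = \max\bigl(w^*(x),\, u^*(x) + c_2\bigr),
\]
which coincides with $u_\phi = \max(u^*, w^*)$ \emph{precisely when} $c_2 = 0$; and $u_\phi \in \Holder{\alpha}(I)$ automatically, since $\max$ preserves H\"older regularity.

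The main obstacle is therefore to show $c_2 = 0$. Integrating the 2-step Ma\~n\'e inequality $\overline{\phi}(y) + \overline{\phi}(\Gau y) + u^*(y) - u^*(\Gau^2 y) \le c_2$ against any $\mu \in \MMM(I, \Gau) = \MMM(I, \Gau^2)$ yields $2\mea(\Gau, \overline{\phi}) = 0 \le c_2$. For the reverse inequality, I would construct an approximately optimal backward orbit $(x_k)_{k \ge 0}$ with $\Gau x_{k+1} = x_k$ by successively choosing near-maximizers in the suprema defining $\mathcal{L}^2 u^* = u^* + c_2$, form the empirical measures $\mu_k \= \tfrac{1}{2k}\sum_{i=0}^{2k-1}\delta_{x_{2k-i}}$ (which satisfy $\lVert \mu_k - \Gau_*\mu_k \rVert_{\mathrm{TV}} \le 1/k \to 0$), and extract a weak-$*$ limit $\mu^*$. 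The construction forces $\int \overline{\phi}\,\mathrm{d}\mu^* \ge c_2/2$, while Theorem~\ref{t_weak_*_closure_M(I,T)}, through its explicit description of $\overline{\MMM(I, \Gau)}$ as the convex hull of $\MMM_{\irr}(I, \Gau)$ and the FCF measures, is used to place $\mu^* \in \overline{\MMM(I, \Gau)}$ despite the discontinuity of $\Gau$ at $0$; hence $\int \overline{\phi}\,\mathrm{d}\mu^* \le 0$, giving $c_2 \le 0$. The delicate step is precisely this identification of $\mu^*$ in the weak-$*$ closure, for which Theorem~\ref{t_weak_*_closure_M(I,T)} is essential.
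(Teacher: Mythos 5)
Your proposal is correct and takes a genuinely different route from the paper's. The paper builds $u_\phi$ constructively as $\limsup_{n\to\infty}\cL^n_{\overline\phi}(\mathbbold{0})$ in Proposition~\ref{p_calibrated_sub-action_exists}: it controls all iterates' $\alpha$-H\"older seminorms uniformly via the summable distortion constant $K_\alpha$ (Lemmas~\ref{l_Distortion} and~\ref{l_Bousch_Op_preserve_space}), bounds the iterates from above using periodic orbits and from below using the symbolic maximizing measure, and verifies $\cL_{\overline\phi}u_\phi=u_\phi$ from equicontinuity of the tail suprema together with the commutation properties of Lemma~\ref{l_property_Bousch_operator}. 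You instead apply Schauder to the normalized square of the operator, exploiting your observation that every two-fold inverse branch has derivative of modulus at most $\tfrac14$ (cf.~Lemma~\ref{l_q_n_and_T_a_prime} with $q_2(\fa)\ge 2$), which neatly sidesteps the neutral derivative at $0$ of the first branch and delivers a genuine H\"older-seminorm contraction for $\cL^2$; you then promote the $\cL^2$-fixed point to a fixed point of $\cL$ via the max-plus trick. Your route needs only the two-step derivative bound rather than the series defining $K_\alpha$, but it incurs an extra eigenvalue-normalization step ($c_2=0$) that the paper's construction avoids because its $u_\phi$ has eigenvalue zero by design.

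On that last step, your argument for $c_2\ge 0$ is fine, but the attribution of $c_2\le 0$ to Theorem~\ref{t_weak_*_closure_M(I,T)} is slightly misplaced. The ingredient you actually need is the surjectivity of $\hpi_*\colon\MMM\bigl(\hSigma,\sigma_{\hSigma}\bigr)\to\overline{\MMM(I,\Gau)}$ from Lemma~\ref{l_properties_pi_and_pi_*}~\ref{l_properties_hpi_*}: lift the backward orbit to the digit string $A_k\in\Sigma$ of $x_{2k}$, so that $\mu_k$ is the $\hpi_*$-pushforward of the empirical measure of $A_k$ along $\sigma$; since $\bigl(\hSigma,\sigma_{\hSigma}\bigr)$ is a continuous map on a compact space, the lifted empirical measures admit a $\sigma_{\hSigma}$-invariant weak$^*$ limit $\nu^*$, and $\mu^*=\hpi_*(\nu^*)\in\overline{\MMM(I,\Gau)}$ yields $\int\!\overline\phi\,\dd\mu^*\le 0$ as you need. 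So the ``delicate step'' you flag is genuinely closable, but through the symbolic compactification rather than via Theorem~\ref{t_weak_*_closure_M(I,T)} directly.
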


Let $T\: X \rightarrow X$ be a Borel measurable map on a metric space $X$, and $\alpha\in(0,1]$.
We say that a function 
$\phi \in \Holder{\alpha}(X)$
has the
 \emph{periodic optimization property} if there exists some $T$-periodic orbit $\cO$ such that the probability measure $\mu_\cO$ uniformly distributed on $\cO$ is the unique $(T,\phi)$-maximizing measure. Similarly, we say that a function 
$\phi \in \Holder{\alpha}(X)$
has the
 \emph{finite optimization property} if there exists some 
 measure $\mu\in\overline{\MMM(X,T)}$ supported on finitely many points
 such that $\mu$ is the unique $(T,\phi)$-limit-maximizing measure. 

\begin{thmx}[Failure of TPO for the Gauss map] \label{t:FailTPO}
Let $\Gau:I\to I$ be the Gauss map.
There is a nonempty open subset of $\Holder{\alpha}(I)$ consisting of functions which do not have the periodic optimization property.
\end{thmx}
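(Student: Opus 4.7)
The plan is to construct an explicit $\phi_0 \in \Holder{\alpha}(I)$ whose ergodic supremum $Q(\Gau,\phi_0)$ is attained only by a non-invariant element of $\overline{\MMM(I,\Gau)}$, and then show that this property persists in an $\Holder{\alpha}$-neighbourhood of $\phi_0$. Since every periodic orbit measure is $\Gau$-invariant, no such measure can be $\phi$-maximizing for $\phi$ in the neighbourhood, so the periodic optimization property fails there. The essential structural input is that $\Gau^{-1}(\{1\}) = \emptyset$, whence every $\mu \in \MMM(I,\Gau)$ satisfies $\mu(\{1\}) = 0$; consequently the FCF measure $\mu^* \= \tfrac{1}{2}(\delta_0 + \delta_1)$, which lies in $\overline{\MMM(I,\Gau)}$ by Theorem~A, is not itself in $\MMM(I,\Gau)$.

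First I would take $\phi_0(x) \= 1 - (2 - 2x)^\alpha$ for $x \in [1/2, 1]$ and $\phi_0 \equiv 0$ on $[0, 1/2]$; a direct check shows $\phi_0 \in \Holder{\alpha}(I)$ with $\phi_0(1) = 1$, $\phi_0(0) = 0$, and $\phi_0 < 1$ on $[0, 1)$, so $\int \phi_0 \, \dd \mu^* = \tfrac{1}{2}$. The core analytic task is to prove $\int \phi_0 \, \dd \mu < \tfrac{1}{2}$ for every $\mu \in \overline{\MMM(I, \Gau)} \setminus \{\mu^*\}$; by Theorem~A and convexity it suffices to check this on $\MMM_{\irr}(I, \Gau)$ and on the family of FCF measures separately. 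For invariant measures, the Ma\~n\'e sub-action $u_{\phi_0}$ from Theorem~B gives $\bar\phi_0 + u_{\phi_0} - u_{\phi_0} \circ \Gau \le 0$ with strict inequality at $y = 0$ (where the left side equals $\bar\phi_0(0) = -\tfrac{1}{2}$); since any invariant measure supported on a subset of the Aubry set carries a forward-invariant set, and the Aubry set of $\phi_0$ contains no forward-invariant subset supporting an invariant measure other than $\delta_0$ (for which $\int \phi_0 \, \dd \delta_0 = 0$), no invariant measure achieves $\tfrac{1}{2}$. For FCF measures $\tfrac{1}{N} \sum_i \delta_{r_i}$ supported on finite orbits (standard or alternative form), the fact that consecutive orbit points alternate between the two halves of $I$ (except through the irrational fixed point $\sigma$, which cannot occur on an FCF orbit) forces at most half of the $r_i$ to lie in $(1/2, 1)$, and $\phi_0 < 1$ there combined with $\phi_0 = 0$ on $[0, 1/2]$ then yields $\int \phi_0 \, \dd \mu < \tfrac{1}{2}$, with equality only for the orbit $\{1, 0\}$, i.e., $\mu = \mu^*$.

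For openness (Step~2), I would invoke weak$^*$-compactness of $\overline{\MMM(I, \Gau)} \subset \cP(I)$ together with weak$^*$-continuity of $\mu \mapsto \int \phi_0 \, \dd \mu$ and the uniqueness established above to produce a weak$^*$-neighbourhood $V$ of $\mu^*$ and $\eta > 0$ with $\int \phi_0 \, \dd \mu \le \tfrac{1}{2} - \eta$ on $\overline{\MMM(I, \Gau)} \setminus V$. For any $\phi = \phi_0 + \psi$ with $\|\psi\|_\infty < \eta / 3$ this confines every $\phi$-maximizing candidate to $V$. Inside $V$ the worst competitors are the period-two invariant measures $\mu_k = \tfrac{1}{2}(\delta_{x_1^{(k)}} + \delta_{x_2^{(k)}})$ from the purely periodic CF $[\overline{1, k}]$, with $x_1^{(k)} = 1 - 1/k + O(k^{-2})$ and $x_2^{(k)} = 1/k + O(k^{-2})$; direct estimates give $\int \phi_0 \, \dd \mu^* - \int \phi_0 \, \dd \mu_k = 2^{\alpha - 1} k^{-\alpha} + O(k^{-\alpha - 1})$ while $\bigl|\int \psi \, \dd \mu^* - \int \psi \, \dd \mu_k\bigr| \le \|\psi\|_{\Holder{\alpha}} k^{-\alpha}$, so for $\|\psi\|_{\Holder{\alpha}} < 2^{\alpha - 1}$ the strict inequality $\int \phi \, \dd \mu_k < \int \phi \, \dd \mu^*$ persists uniformly in $k$. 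A parallel H\"older--Wasserstein estimate, leveraging the structural fact that any invariant $\mu \in V$ balances its mass near $1$ against preimage mass near $0$, disposes of the remaining invariant measures in $V$.

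The principal obstacle is the uniform quantitative matching of scales in Step~2: the gap $\tfrac{1}{2} - \int \phi_0 \, \dd \mu$ for invariant measures approximating $\mu^*$ must dominate the H\"older-controlled perturbation error. The explicit exponent $\alpha$ in the definition $\phi_0(x) = 1 - (2 - 2x)^\alpha$ is precisely what calibrates the two rates to the common order $k^{-\alpha}$: the $\alpha$-H\"older regularity of $\phi_0$ at $1$ produces a gap of order $(1-x_1)^\alpha$, which is exactly the order at which a generic $\alpha$-H\"older perturbation can shift the integral against a measure whose support has moved by $1 - x_1$, so that a sufficiently small $\|\psi\|_{\Holder{\alpha}}$ preserves the strict inequality with a positive margin.
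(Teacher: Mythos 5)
The overall architecture of your proposal — exhibit an explicit $\phi_0\in\ral^\alpha(\Gau)$ for which $\mu^*=\tfrac12(\delta_0+\delta_1)$ is the unique limit-maximizing measure, then argue this persists in a $\Holder{\alpha}$-neighbourhood, and conclude via $\Gau^{-1}(1)=\emptyset$ that nearby potentials have no invariant maximizer at all — is exactly the paper's strategy. The paper implements it with a different explicit potential (the piecewise-affine Example~\ref{ex_rational_maximized}) and then invokes Theorem~\ref{t_rationl_locking_property} to handle the openness; you try to short-circuit the latter by working with $\phi_0$ directly. The difficulties in your version are both in the verification of uniqueness for $\phi_0$ and, more seriously, in the openness step.

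On uniqueness: the ``alternation'' claim driving your FCF estimate is false. Orbit points under $\Gau$ do not alternate between $[0,\tfrac12]$ and $(\tfrac12,1]$; e.g.\ $\Gau(3/5)=2/3$ with both $3/5,2/3>\tfrac12$, and neither equals the golden fixed point. Likewise the appeal to the ``Aubry set'' of $\phi_0$ is undeveloped — the paper never defines that object, and the claim that it carries no forward-invariant set beyond $\{0\}$ requires an argument. Both gaps are fixable by a cleaner observation that your sketch does not use: one can check directly that $\phi_0(x)+\phi_0(\Gau(x))\leq 1$ for all $x\in I$, with equality iff $x=1$. (For $x\leq\tfrac12$ this is $\phi_0(\Gau(x))\leq 1$; for $x\geq 2/3$ it is $\phi_0(x)\leq 1$; and on $(\tfrac12,2/3)$ the inequality $(2-2x)^\alpha+(4-2/x)^\alpha\geq 1$ follows from concavity of $t\mapsto t^\alpha$.) For invariant $\mu$ one then has $2\int\phi_0\,\dd\mu=\int(\phi_0+\phi_0\circ\Gau)\,\dd\mu\leq 1$, with equality forcing $\mu=\delta_1\notin\MMM(I,\Gau)$, so $\int\phi_0\,\dd\mu<\tfrac12$ strictly; for an FCF measure $\mu_\fa$ a telescoping over the orbit gives $2(n+1)\int\phi_0\,\dd\mu_\fa=\phi_0(p_0)+\phi_0(p_n)+\sum_{k=0}^{n-1}(\phi_0(p_k)+\phi_0(p_{k+1}))\leq n+1$, with equality iff $p_n=1$, i.e.\ $\fa=(1)$. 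So the claimed uniqueness is true, but not for the reason you give.

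The critical gap is the openness argument. Restricting attention to the period-two family $\mu_k=\tfrac12(\delta_{x_1^{(k)}}+\delta_{x_2^{(k)}})$ does not exhaust the invariant (or limit) measures that can accumulate on $\mu^*$ in the weak$^*$ topology: invariant measures in your neighbourhood $V$ can have long periods or be non-atomic, with mass distributed near $\{0,1\}$ in ways that your $k^{-\alpha}$ calibration does not control. The ``parallel H\"older--Wasserstein estimate'' you gesture at is precisely the content of the paper's transport-sequence bound (Lemma~\ref{l_rational_locking}), and it is the whole substance of the proof of Theorem~\ref{t_rationl_locking_property_locking}; invoking it informally does not close the argument. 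There is also a subtler obstruction: your claim amounts to asserting $\phi_0\in\Flock^\alpha(\Gau)$, but the paper's Theorem~\ref{t_rationl_locking_property} only gives density of $\Flock^\alpha(\Gau)$ inside $\ral^\alpha(\Gau)$, not that every rationally maximized potential has the locking property. Indeed, because $\phi_0\equiv 0$ on $[0,\tfrac12]$ is flat rather than cusped near $0$, $\phi_0$ does not have the form $\text{const}-t\,d(\cdot,\{0,1\})^\alpha$ needed for the margin in the locking argument, and one would expect to first perturb $\phi_0$ downward by $-t\,d(\cdot,\tcO(1))^\alpha$ — exactly what the paper does in Case~B of Theorem~\ref{t_rationl_locking_property_locking} — before the openness statement becomes robust. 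So, as written, the openness step is not a proof, and it cannot be completed without reproducing the transport-sequence machinery you were trying to avoid.
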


Let $\ral^\alpha(\Gau)$ denote the set of those $\alpha$-H\"older
functions such that either the Dirac measure $\delta_0$ at $0$, or some FCF measure, is limit-maximizing; members of $\ral^\alpha(\Gau)$ will be called \emph{rationally maximized}, in view of their close connection to rational orbits (cf.~Definition~\ref{d_rational_orbit_and_rational_measure}).

We prove the following typical finite optimization result:
\begin{thmx}[TFO for rationally maximized potentials] \label{t_rationl_locking_property}
	For $\alpha\in(0,1]$, there is an open subset $U$ of 
    $\Holder{\alpha}(I)$ consisting of functions
    with the finite optimization property, such that $U$ is a dense subset of $\ral^\alpha(\Gau)$.
\end{thmx}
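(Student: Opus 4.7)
The plan is to define $U$ as the subset of $\Holder{\alpha}(I)$ of potentials with a unique $(\Gau,\phi)$-limit-maximizing measure of finite support, and to prove that $U$ is open in $\Holder{\alpha}(I)$ and dense in $\ral^\alpha(\Gau)$. The driving tool is the Ma\~n\'e lemma (Theorem~B), deployed as in standard typical-optimization arguments but adapted to the noncompactness of $\MMM(I,\Gau)$ described by Theorem~A.

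For openness, fix $\phi\in U$ with unique limit-maximizing $\mu$ supported on a finite set $F$, and let $u_\phi$ be the Ma\~n\'e potential from Theorem~B. Define
$$
V_\phi(y) \= \overline{\phi}(y) + u_\phi(y) - u_\phi(\Gau(y)),
$$
which is $\leq 0$ on $I$ by (\ref{e_mane_equation}), together with $V_\phi(0)=\overline{\phi}(0)\leq 0$ since $\delta_0\in \overline{\MMM(I,\Gau)}$. For any invariant measure $\nu$, $\int V_\phi\,\mathrm{d}\nu = \int\overline{\phi}\,\mathrm{d}\nu$, and a telescoping computation along a finite continued-fraction orbit gives a parallel identity for FCF measures (with a boundary correction of the form $u_\phi(0)-u_\phi(r_0)$). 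Combined with Theorem~A and the ergodic decomposition, this identifies the $(\Gau,\phi)$-limit-maximizing measures as precisely those $\nu\in\overline{\MMM(I,\Gau)}$ satisfying $\int V_\phi\,\mathrm{d}\nu = 0$. Uniqueness of $\mu$ then forces $V_\phi$ to be strictly bounded away from $0$ outside any prescribed open neighbourhood $N$ of $F$. By arranging continuous dependence of $u_\psi$ on $\psi$ (via a suitable quantitative version of Theorem~B), $V_\psi$ stays close to $V_\phi$ in the sup norm for $\psi$ near $\phi$, so that $\{V_\psi=0\}$ remains in $N$; a final extremal-point analysis in $\overline{\MMM(I,\Gau)}$ rules out spurious mixtures with $\delta_0$ and gives $\psi \in U$.

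For density, fix $\phi \in \ral^\alpha(\Gau)$ with finite-support limit-maximizing measure $\mu$ and $\supp(\mu) = F$. Construct two nonnegative $\alpha$-H\"older bumps: $\rho_1$, supported in a small neighbourhood of $F\setminus\{0\}$, positive on $F \setminus \{0\}$ and vanishing at $0$; and $\rho_2$, supported in a small neighbourhood of $0$, peaked at $0$. For small $\varepsilon>0$, the perturbation $\phi_\varepsilon \= \phi + \varepsilon(\rho_1 - \rho_2)$ should satisfy $\int\phi_\varepsilon\,\mathrm{d}\mu > \int\phi_\varepsilon\,\mathrm{d}\nu$ for every $\nu \in \overline{\MMM(I,\Gau)}$ distinct from $\mu$: the $\rho_1$-contribution rules out irrational invariant measures and FCF measures whose orbits stay away from $F$, while the $\rho_2$-contribution separates $\mu$ from $\delta_0$ and from the convex combinations $t\mu+(1-t)\delta_0$ with $t<1$. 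Hence $\phi_\varepsilon \in U$, and $\|\phi_\varepsilon-\phi\|_{\Holder{\alpha}} \to 0$ as $\varepsilon \to 0^+$, giving density.

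The main obstacle is the noncompactness of $\MMM(I,\Gau)$: by Theorem~A the closure $\overline{\MMM(I,\Gau)}$ contains mixtures $t\mu+(1-t)\delta_0$ whose supports share the point $0$ with $\mu$, so isolating $\mu$ from such mixtures---both in the contact-set analysis for openness and in the bump design for density---is the main technical hurdle. A secondary issue is the boundary correction $u_\phi(0)-u_\phi(r_0)$ in the telescoping identity for $V_\phi$ along a finite continued-fraction orbit, which complicates the passage from the vanishing of $\int V_\phi\,\mathrm{d}\mu$ to pointwise information on $\supp(\mu)$ compared with the classical expanding-map setting.
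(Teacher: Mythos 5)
The approach is not the paper's, and both the openness and density steps contain genuine gaps. The paper's proof of Theorem~D (via the stronger Theorem~D') does not invoke the Ma\~n\'e lemma at all. Its engine is a transport-sequence inequality (Lemma~\ref{l_rational_locking}): for all $\nu\in\overline{\MMM(I,\Gau)}$ and $\phi\in\Holder{\alpha}(I)$ one has
$\langle\nu,\phi\rangle\le\max_{\mu\in\cM_x}\langle\mu,\phi\rangle+C_x\Hseminorm{\alpha}{\phi}\bigl\langle\nu,d(\cdot,\tcO(x))^\alpha\bigr\rangle$,
together with the strict positivity $\bigl\langle\nu,d(\cdot,\tcO(x))^\alpha\bigr\rangle>0$ for $\nu\in\overline{\MMM(I,\Gau)}\smallsetminus\conv(\cM_x)$ (Lemma~\ref{l_support_limiting_measure}). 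Density is obtained by subtracting the penalties $s\,d(\cdot,\cO(x))^\alpha$ and $t\,d(\cdot,\tcO(x))^\alpha$, and the displayed inequality gives exactly the uniform gap that forces the FCF measure $\mu_\fa$ to remain the unique limit-maximizer under any $\psi$ with $\Hnorm{\alpha}{\psi}<\min\{t/C_x,\delta_s/4\}$. Openness is then immediate, since the (finite) locking property is by definition stable under small H\"older perturbations.

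Your openness argument hinges on sup-norm continuity of $\psi\mapsto u_\psi$ (via ``a suitable quantitative version of Theorem~B''), which is neither proved nor available: $u_\psi$ is a limsup of Bousch iterates, there is no uniqueness, and continuous dependence of a fixed point of a nonlinear, non-contractive operator on its data is a substantial claim requiring a separate argument. The set you define as $U$ (potentials with a unique finite-support limit-maximizing measure) is also not obviously open as stated, and is not even a subset of $\ral^\alpha(\Gau)$ --- potentials whose unique finite-support maximizer is a periodic (hence irrational) measure lie in your $U$ but not in $\ral^\alpha(\Gau)$. Your density argument has the right philosophy and you are alert to the $\delta_0$-mixture issue, but the qualitative bump $\varepsilon\rho_1$ does not yield a uniform strict gap: periodic orbits shadowing $\tcO(x)$ make $\int\rho_1\,\dd\nu$ arbitrarily close to $\int\rho_1\,\dd\mu$, and you have no mechanism, comparable to Lemma~\ref{l_rational_locking}, that bounds the advantage of \emph{every} competitor by a multiple of $\bigl\langle\nu,d(\cdot,\tcO(x))^\alpha\bigr\rangle$ which the distance penalty can absorb. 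Establishing that lemma --- via a transport sequence along the rational orbit that must simultaneously handle both finite continued-fraction expansions $[a_1,\dots,a_n]$ and $[a_1,\dots,a_n-1,1]$ --- is the core technical work, and it is entirely absent from the proposal.
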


As we shall see (cf.~Example~\ref{ex_rational_maximized}),
there exists a function $\phi \in \ral^\alpha(\Gau)$ with a limit-maximizing measure that is not the Dirac measure $\delta_0$. This, combined with Theorem~\ref{t_rationl_locking_property}, immediately implies Theorem~\ref{t:FailTPO} (cf.~Subsection~\ref{subsct:FailTPO}).

Let $\bad^\alpha(\Gau)$ denote the set of $\alpha$-H\"older
 functions with a maximizing measure whose support does not contain the point $0$ (see Lemma~\ref{l_equivalent_bad_alpha} for equivalent characterisations of $\bad^\alpha(\Gau)$).
 Even though the typical periodic optimization conjecture fails for the Gauss map and the space of $\alpha$-H\"older functions, we are nevertheless able to show that potentials in an open dense subset of $\bad^\alpha(\Gau)$ have the periodic optimization property:

\begin{thmx}[TPO for essentially compact potentials] \label{t_main_theorem_ess_compact}
For $\alpha\in(0,1]$, there is an open subset $U$ of 
$\Holder{\alpha}(I)$ consisting of functions with the periodic optimization property, such that $U$ is a dense subset of $\bad^\alpha(\Gau)$.
\end{thmx}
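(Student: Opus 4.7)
The plan is to reduce to Contreras' typical periodic optimization theorem \cite{Co16} by exploiting the essential compactness of potentials in $\bad^\alpha(\Gau)$. For $\phi \in \bad^\alpha(\Gau)$, every $(\Gau,\phi)$-maximizing measure is supported in some $[\ve,1]$ with $\ve>0$. Using the classification of $\overline{\MMM(I,\Gau)}$ in Theorem~\ref{t_weak_*_closure_M(I,T)} together with upper semi-continuity of the set-valued map $\psi \mapsto \Mmax^*(\Gau,\psi)$, I would first show that one can find $\ve' \in (0,\ve)$ and a $\Holder{\alpha}$-neighborhood $V$ of $\phi$ such that for every $\psi \in V$ every $(\Gau,\psi)$-limit-maximizing measure is supported in the compact $\Gau$-invariant set $K \= \bigcap_{n \ge 0}\Gau^{-n}([\ve',1])$. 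Only the finitely many inverse branches $x \mapsto 1/(n+x)$ with $n \le 1/\ve'$ meet $K$, so $(K,\Gau|_K)$ is a distance-expanding Markov system of finite type, and Contreras' theorem (in its form for such systems, which underlies the results of \cite{HLMXZ25,LZ25}) yields an open dense subset $W \subset \Holder{\alpha}(K)$ each element of which has the periodic optimization property for $\Gau|_K$.

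Next I would define $U \= \{\psi \in V : \psi|_K \in W\}$. Since the restriction map $r\:\Holder{\alpha}(I) \to \Holder{\alpha}(K)$, $r(\psi)\=\psi|_K$, is continuous and $W$ is open, $U$ is open in $\Holder{\alpha}(I)$. For $\psi \in U$, every $(\Gau,\psi)$-maximizing measure is supported in $K$ by the choice of $V$, and so coincides with a $(\Gau|_K,\psi|_K)$-maximizing measure; since $\psi|_K \in W$, such a measure is unique and supported on a single periodic orbit. Hence each $\psi \in U$ has the periodic optimization property.

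To prove density of $U$ in $\bad^\alpha(\Gau)$, I would fix $\phi \in \bad^\alpha(\Gau)$ and $\delta > 0$, and choose $V$ and $K$ as above. Using density of $W$ in $\Holder{\alpha}(K)$, pick $g \in W$ with $\norm{g - \phi|_K}_{\Holder{\alpha}(K)}$ small. A McShane-type formula
\[
h(x) \= \inf_{y \in K}\bigl\{ (g - \phi|_K)(y) + L\, \abs{x-y}^\alpha \bigr\}
\]
(with $L$ the $\alpha$-H\"older seminorm of $g - \phi|_K$) produces $h \in \Holder{\alpha}(I)$ with $h|_K = g - \phi|_K$ and $\norm{h}_{\Holder{\alpha}(I)} \le C\norm{g - \phi|_K}_{\Holder{\alpha}(K)}$. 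Setting $\psi \= \phi + h$ gives $\psi|_K = g \in W$, and for fine enough approximation $\psi \in V$ with $\norm{\psi - \phi}_{\Holder{\alpha}(I)} < \delta$, so $\psi \in U$.

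The main obstacle is the first step: showing that ``every limit-maximizing measure is supported in a common compact subset of $(0,1]$'' is an open condition in $\Holder{\alpha}(I)$. This exploits Theorem~\ref{t_weak_*_closure_M(I,T)}, which forces any $\mu \in \overline{\MMM(I,\Gau)}$ to be a convex combination of members of $\MMM_{\irr}(I,\Gau)$ and FCF measures, the latter charging $0$. For $\phi \in \bad^\alpha(\Gau)$ no FCF component can appear; for $\psi$ close to $\phi$, a positive FCF contribution would force $\int \psi\,\mathrm{d}\mu$ to drop by a quantifiable amount below $\mea(\Gau,\psi)$, contradicting the limit-maximizing hypothesis. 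Turning this into uniform control on the support, via Theorem~\ref{t_mane} and the calibrated potential $\overline{\phi} + u_\phi - u_\phi\circ\Gau$, yields the neighborhood $V$. Once this semi-continuity is in place, the Contreras machinery on $(K,\Gau|_K)$ transfers cleanly back to $\Holder{\alpha}(I)$ via $r$ and the extension $h$.
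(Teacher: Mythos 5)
Your approach attempts to reduce to a compact invariant set $K$ and then invoke Contreras' theorem, but there is a real gap at the very first step, and this gap propagates through the whole argument.

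You claim that for $\phi \in \bad^\alpha(\Gau)$ ``every $(\Gau,\phi)$-maximizing measure is supported in some $[\ve,1]$.'' This is false. Essential compactness (Definition~\ref{d_classification_of_potential}~(i), equivalently Lemma~\ref{l_equivalent_bad_alpha}~(iii)) says only that \emph{some} maximizing measure avoids $0$; it places no constraint on the other maximizing measures. A concrete counterexample is $\phi \equiv 0$: it is essentially compact (since $\mea(\Gau,\mathbbold{0}) = 0 = \mea_m(\Gau,\mathbbold{0})$ for every $m$), yet $\delta_0$ is a $\phi$-maximizing measure. Consequently the follow-up claim — that one can find $\ve' \in (0,\ve)$ and a $\Holder{\alpha}$-neighbourhood $V$ of $\phi$ such that every limit-maximizing measure of every $\psi \in V$ is supported in $K = \bigcap_{n} \Gau^{-n}([\ve',1])$ — also fails: for $\phi = \mathbbold{0}$ no such $V$ can exist, since $\mathbbold{0} \in V$ itself has $\delta_0$ as a limit-maximizing measure. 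Upper semi-continuity of $\psi \mapsto \Mmax^*(\Gau,\psi)$ in the weak$^*$ topology does not help here; weak$^*$ proximity to a measure supported in $\bcf_m$ does not force the support to stay inside any compact subset of $(0,1]$ (a sequence of the form $(1-1/n)\mu + (1/n)\delta_0$ illustrates the issue). There is also a secondary difficulty: $K$ as you have defined it is not obviously a subshift of finite type — the condition $\Gau^n(x) \ge \ve'$ for all $n$ is not a Markov condition — whereas $\bcf_m$ is, and the latter is what the paper actually works with.

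The paper's proof (Theorem~\ref{t_main_theorem_ess_compact_locking} together with Proposition~\ref{p_periodic_locking}) sidesteps this entirely: it never attempts to control \emph{all} maximizing measures. It fixes a single $\mu \in \Mmax(\Gau,\phi)$ with $\supp\mu \subseteq \bcf_m$ (guaranteed by essential compactness), applies the Mañé lemma to get the revealed potential $\tphi \le 0$ vanishing on $\supp\mu$, uses the Bressaud--Quas/HLMXZ closing lemma (Lemma~\ref{l_sum_bound_by_gap}) on $\bcf_m$ to locate a periodic orbit $\cO \subseteq \bcf_m$ with action small relative to its gap, and then shows directly — via a segment-by-segment estimate of Birkhoff averages along arbitrary irrational orbits of the full map $\Gau$ on $I$, not on a restricted compact set — that $\phi' = \phi - \mea(\Gau,\phi) - \ve\, d(\cdot,\cO)^\alpha$ has $\mu_\cO$ as a maximizing measure. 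Density of $\sP^\alpha(\Gau)$ in $\bad^\alpha(\Gau)$ follows by letting $\ve \to 0$, and the locking argument (Appendix~\ref{sct_locking_property}) upgrades this to an open dense set. If you want to salvage a ``restrict and transfer'' strategy, you would need to both restrict to $\bcf_m$ (which is SFT) and carry out a global estimate showing that no orbit escaping $\bcf_m$ can beat the restricted ergodic supremum after perturbation — which is precisely the content of the paper's perturbative claim.
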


Inspired by the structure of $\overline{\MMM(I,\Gau)}$
as revealed by Theorem~\ref{t_weak_*_closure_M(I,T)},
and the guiding philosophy that maximizing measures are
generically unique, and 
should be of low complexity, we articulate the following \emph{typical finite optimization (TFO) conjecture}  for the Gauss map $\Gau$: for 
$\alpha\in(0,1]$, and
an open dense subset of $\alpha$-H\"older functions $\phi$, there is a unique $(\Gau,\phi)$-limit-maximizing measure, and it is supported on a finite set.
Note that by Theorem~\ref{t_weak_*_closure_M(I,T)},
and standard convexity arguments,
the finitely supported measures of this conjecture
are either FCF measures, or supported on a single periodic orbit.

As mentioned above, the typical periodic optimization conjecture of Yuan \& Hunt served as a central open problem for a number of years.
Its resolution (in the case of open expanding maps) by Contreras \cite{Co16} relied not only on an original perturbation argument
(cf.~\cite[Proposition~2.6]{Co16}, which was itself inspired by Quas \& Siefkin \cite{QS12}),
but also on work by Bressaud \& Quas \cite{BQ07}, and by Morris \cite{Mo08},
and a fundamental structural result which we shall refer to as the Ma\~n\'e 
lemma\footnote{The terminology ``lemme de Ma\~n\'e'' was used by Bousch in \cite{Bou00},
noting that Ma\~n\'e \cite{Ma96} had proved
an analogous result in the context of Lagrangian flows.}
(it has also been dubbed the Ma\~n\'e--Conze--Guivarc'h lemma \cite{Bou11,Mo09}, in view of \cite{CG95}, or simply a normal form theorem \cite{Je01}),
which had been developed in various settings, notably by
Bousch \cite{Bou00,Bou01,Bou02,Bou08,Bou11}, Contreras, Lopes \& Thieullen \cite{CLT01}, Conze \& Guivarc'h \cite{CG95}, Lopes \& Thieullen \cite{LT03}, and Savchenko \cite{Sa99}.

The  
Ma\~n\'e 
lemma
asserts that, for suitable uniformly hyperbolic dynamical systems $T\:X\to X$ (the early works \cite{Bou00, CLT01} took $T$
to be an expanding map of the circle $X$), for a H\"older function $\phi\:X\to\R$ there exists a H\"older function $u$
(termed a \emph{sub-action} in \cite{CLT01})
such that
$\phi+u-u\circ T\le Q(T,\phi)$ on $X$. 
The freedom to modify\footnote{Note that this modification does not change the ergodic optimization problem: clearly
$\phi+u-u\circ T$ has the same ergodic supremum as $\phi$.}
 $\phi$ by the coboundary $u-u\circ T$ is a crucial step in facilitating subsequent perturbation arguments
in the proof of typical periodic optimization theorems.

Bressaud \& Quas \cite{BQ07} established a closing lemma, guaranteeing that for uniformly hyperbolic maps, every
closed invariant set is approximable (in a certain precise sense) by periodic orbits of period up to $n$, with an accuracy that
is super-polynomial in $n$.
Morris \cite{Mo08} proved that, generically, the (unique) maximizing measure has zero metric entropy: exploiting the existence 
of periodic orbits with small `action' and small period, guaranteed by the Bressaud--Quas 
Closing Lemma, he showed that potential functions can be perturbed so that the new maximizing measures are forced to concentrate mass
near this periodic orbit, and therefore have small entropy. 
Huang, Lian, Ma, Xu \& Zhang \cite{HLMXZ25} developed a proof of typical periodic optimization,
not only for open expanding maps but also for a general class of uniformly hyperbolic maps: their method
combined a novel closing lemma, building on the one of Bressaud \& Quas and the classical one of Anosov, and a corresponding perturbation argument that circumvented 
the intermediate generic zero entropy step.
In the time between the appearance of the initial preprint version of \cite{HLMXZ25} and its eventual publication, an influential expository account by
Bochi \cite{Boc19} gave a simplified presentation of the proof, valid for open expanding maps.
More recently, Li \& Zhang \cite{LZ25} established an analogous typical periodic optimization result for a class of postcritically-finite maps (namely, expanding Thurston maps) arising in complex dynamics, where local closing lemmas are developed and a local perturbation argument devised, in order to avoid finitely many ``bad points'', thereby handling the nonuniform expansion.

In the present article, Theorem~\ref{t_weak_*_closure_M(I,T)} gives a structural description of the closure $\overline{\MMM(I,\Gau)}$ of invariant measures for the Gauss map. In particular, we show that the extreme points of this closure consist precisely of the ergodic $\Gau$-invariant measures together with the finite-continued-fraction measures. To our knowledge, this is the first structural result of this kind for a nonuniformly expanding map with countably many inverse branches. This description provides a necessary framework for understanding the accumulation behavior of maximizing measures, and in particular for the study of limit-maximizing measures, and consequently maximizing measures, for the Gauss map.

Our Theorem \ref{t_mane} represents an analogue of a stronger form of the Ma\~n\'e lemma,
established by Bousch \cite{Bou00, Bou01}, who 
gave conditions for the existence (and uniqueness up to an additive constant) of
a function\footnote{A function $u$
satisfying 
$u(x)=\max_{y\in T^{-1}(x)} (\overline{\phi}+u)(y)$ was
later dubbed a \emph{calibrated} sub-action in \cite{GLT09}.}
$u$ 
satisfying the functional equation 
\begin{equation*}
    u(x)=\max_{y\in T^{-1}(x)} (\overline{\phi}+u)(y),
\end{equation*}
where $\overline{\phi}:=\phi-Q(T,\phi)$.
Such a function can also be viewed as the fixed point of the corresponding Bousch operator (cf.~Definition~\ref{d_Def_BouschOp}).\footnote{The Bousch operator is also known as the Bousch--Lax operator or the Lax operator in the literature as an analogous construction gives the \emph{Lax--Oleinik semi-groups} in the context of Hamiltonian systems.} From a tropical analysis perspective, the Bousch operator and its fixed point $u$ can been seen as tropical analogues of the Ruelle operator and its eigenfunction from the theory of thermodynamic formalism (cf.~\cite{BLL13, LS24}).
Inspired by this analogy and the perspective gained on thermodynamic formalism with countable Markov partitions from the third-named author's on-going work on effective prime orbit theorems for topological Collet--Eckmann maps with Rivera-Letelier, in our setting, we define the Bousch operator using the inverse branches of the Gauss map, directly on $I$ instead of the symbolic space, allowing us to adapt the construction of the fixed point of the Bousch operator to such a discontinuous, non-uniformly expanding dynamical system with a countable Markov partition. To our knowledge, this provides the first formulation and proof of a Ma\~n\'e lemma in such a countable setting.

Our approach to proving Theorem~\ref{t_main_theorem_ess_compact} will be to apply the closing lemma from the uniformly expanding scenario in a neighbourhood of the support of a certain 
maximizing measure, then perform a local analysis inspired by ideas as delineated in \cite{Boc19}, while coping with
difficulties stemming from the fact that $\Gau$ has 
countably many inverse branches, and is discontinuous (see Section \ref{sec_TFO} for further discussion of both the strategy,
and technical details, of this approach).

By contrast the proof of Theorem~\ref{t_rationl_locking_property} is inspired by the proof of a certain periodic locking property (cf.~\cite{BZ15} and \cite[Remark~4.5]{YH99}). 
The technical part in the proof of a key lemma for Theorem~\ref{t_rationl_locking_property} is the construction of a \emph{transport sequence} (following the terminology of \cite{BZ15}) in a given rational orbit. Since there exists more than one FCF measure in a given rational orbit, a more elaborate perturbation argument is needed in the proof of Theorem~\ref{t_rationl_locking_property}.

\subsection{Organisation of the article}
In Section~\ref{sct_Notaion}, some frequently used
notation and assumptions
are fixed.
Section~\ref{sec_Premilinaries} consists of a summary of continued fractions (in particular bounded continued fractions, which are used in the proof of Theorem~\ref{t_main_theorem_ess_compact}), as well as
the Gauss map, its invariant measures, its inverse branches, and its symbolic coding. 
In Section~\ref{sec_maximizing_measures}, we introduce the notion of limit-maximizing measures, and establish some basic properties of maximizing measures.
In Section~\ref{sct_closure_invariant_measures} we introduce finite-continued-fraction measures, and prove a structural theorem for the closure of the set of invariant measures of the Gauss map (Theorem~\ref{t_weak_*_closure_M(I,T)}).
In Section~\ref{Sec_mane_lemma} we discuss the Bousch operator for the Gauss map and prove
the existence of a fixed point of the normalised Bousch operator; we are then
able to establish the Ma\~n\'e Lemma (Theorem~\ref{t_mane}). 
In Section~\ref{sec_TFO} we prove Theorems~\ref{t:FailTPO}, \ref{t_rationl_locking_property}, and~\ref{t_main_theorem_ess_compact}. In fact, slightly stronger versions of Theorems~\ref{t_main_theorem_ess_compact} and~\ref{t_rationl_locking_property}, in terms of \emph{locking}, are formulated and established in Section~\ref{sec_TFO}.
In an appendix we establish the periodic locking property for the Gauss map, which is used in the proof of Theorem~\ref{t_main_theorem_ess_compact}.

\section{Notation} \label{sct_Notaion}

Let $I$ denote the interval $[0,1]$, 
equipped with the Euclidean metric.
Throughout this article, the golden ratio is denoted by $\theta\=\frac{\sqrt{5}+1}{2}$,
we set
$c_0\=\frac{2\sqrt{5}}{5}$, and for each $\alpha\in (0,1]$ we define 
   \begin{equation}\label{e_def_K_alpha}
		K_\alpha\=\frac{ c_0^{-2\alpha}}{1-\theta^{-2\alpha}}=c_0^{-2\alpha}\sum\limits_{n=0}^{+\infty}\theta^{-2\alpha n} .
	\end{equation}
Let $\mathbbold{0} \: I \rightarrow \R$ be the function that sends every $x\in I$ to $0$.

We follow the convention that $\mathbb{N} \coloneqq \{1, \, 2, \, 3, \, \dots\}$, $\N_0 \coloneqq \{0\} \cup \mathbb{N}$, and
$\hbN \= \N\cup \{\infty\}$.
The sets $\N$, $\N_0$, and $\hbN$
will all be equipped with the order relations $<$, $\leq$, $>$, $\geq$, defined in the obvious way.
We endow $\N$ with the discrete topology and $\N^n$, $\N^\N$ with the product topology for each $n\in \N$.
Let $\rho$ be the metric on $\N$ defined by
\begin{equation*}
	\rho(a,b)\=\begin{cases}
		\Absbig{\frac{1}{a}-\frac{1}{b}}  & \text{if } a,b \in \N\text{ and }a\neq b   , \\
		0 &\text{if } a=b   . 
	\end{cases}
\end{equation*}
	Observe that $\rho$ is totally bounded, and the completion of $\N$ with respect to $\rho$ is a compact metric space that can be identified with $\hbN=\N \cup \{\infty\}$. The metric $\rho $ extends to $\hbN$, and we denote the extension by $\hrho$. Note that $\hrho (n,\infty)=\frac{1}{n}$ for every $n\in \N$, and $\hrho(\infty,\infty)=0$.  We also endow $\hbN^n$ with the product topology for each $n\in \N$.
	
For $x\in\R$, let $\lfloor x\rfloor$ denote the largest integer $\leq x$. The cardinality of a set $A$ is denoted by $\card A$. For sets $A$, $B$, denote $A\triangle B=(A\smallsetminus B)\cup (B\smallsetminus A)$.

The collection of all maps from a set $X$ to a set $Y$ is denoted by $Y^X$. We denote the restriction of a map $g \: X \rightarrow Y$ to a subset $Z$ of $X$ by $g|_Z$. 
Given a map $f\: X\rightarrow X$ and a real-valued function $\psi\: X\rightarrow \R$, we write
$
S_n \psi (x)   \coloneqq \sum_{j=0}^{n-1} \psi \bigl( f^j(x) \bigr) 
$
for $x\in X$ and $n\in\N_0$. Note that by definition we always have $S_0 \psi = 0$. We denote the set of bounded functions from $X$ to $\R$ by $B(X)$.

Let $(X,d)$ be a metric space. For subsets $A,\,B\subseteq X$, we set $d(A,B) \coloneqq \inf \{d(x,y):x\in A,\,y\in B\}$, and $d(A,x)=d(x,A) \coloneqq d(A,\{x\})$ for $x\in X$. For each subset $Y\subseteq X$, we denote the diameter of $Y$ by $\diam(Y) \= \sup\{d(x,y): x, \, y\in Y\}$.
For each $r>0$, we define $B^r_d(A)$ to be the open $r$-neighbourhood $\{y\in X : d(y,A)<r\}$ of $A$, and $\overline{B}^r_d(A)$ the closed $r$-neighbourhood $\{y\in X : d(y,A)\leq r\}$ of $A$. 

We set $C(X)$ (resp.\ $B(X)$) to be the space of continuous (resp.\ bounded) functions from $X$ to $\R$, $\mathcal{M}(X)$ the set of finite signed Borel measures, and $\mathcal{P}(X)$ the set of Borel probability measures on $X$. If we do not specify otherwise, we equip $C(X)$ with the uniform norm $\norm{\cdot}_{\infty}$. For a Borel measurable map $g \: X \rightarrow X$, $\mathcal{M}(X,g)$ is the set of $g$-invariant Borel probability measures on $X$ and $\MMM_{\erg}(X,g)$ is the set of ergodic measures in $\mathcal{M}(X,g)$. For each $x\in X$, we denote by $\delta_x$ the Dirac delta measure on $x$ given by $\delta_x(A) = 1$ if $x\in A$ and $0$ otherwise for all Borel measurable set $A \subseteq X$. For $x\in X$, we denote the open (resp.\ closed) ball of radius $r$ centered at $x$ by $B_d^r(x)$ (resp.\ $\overline{B}_d^t(x)$). For $\mu \in \MMM(X)$ and a $\mu$-integrable function $\phi \: X\rightarrow\R$, we write
	\begin{equation*}
		\langle\mu,\phi\rangle\= \int \! \phi \, \dd \mu   . 
	\end{equation*}

The space of real-valued H\"{o}lder continuous functions with an exponent $\alpha\in (0,1]$ is denoted by $\Holder{\alpha}(X,d)$. For each $\psi\in\Holder{\alpha}(X,d)$, we denote
\begin{equation*}   \label{e_Def|.|alpha}
	\Hseminorm{\alpha,\,X}{\psi} \coloneqq \sup \{ \abs{\psi(x)- \psi(y)} / d(x,y)^\alpha  : x, \, y\in X, \,x\neq y \},
\end{equation*}
and the H\"{o}lder norm is given by $\Hnorm{\alpha,\,X}{\psi} \=  \Hseminorm{\alpha,\,X}{\psi}  + \norm{\psi}_{\infty}$. We omit the subscript $X$ when it does not cause confusion.

For a nonempty set $E$ and $n\in\N$, let us denote
$E^\N\=\{ (a_k)_{k\in\N}: a_k\in E\text{ for all }k\in\N\}$, $E^n\=\{ (a_k)_{k=1}^n : a_k\in \cA\text{ for all }1\leq k\in n\}$, and $E^*\=\bigcup_{k\in\N}E^k$.

Define the (left) shift map 
\begin{equation*}
	\sigma = \sigma_{E^\N} \: E^\N \to E^\N
\end{equation*}
by $\sigma  (A )\=  (a_{n+1})_{n\in \N} $
for all
$A = ( a_n)_{n\in \N} \in E^\N$. We often omit the subscript $E^\N$ in $\sigma_{E^\N}$ when it is clear from the context. Sometimes we also consider this shift as defined on words of finite length. Given $A\in E^*$, by $\abs{A}$ we denote the length of the word $A$, i.e.,
the unique $n$ such that $A\in E^n$. If $A\in E^\N$ and $n\in\N$, then denote
$A|_n = a_1 a_2\dots a_n$. 

We denote $\hSigma\=\hbN^{\N}$ and $\Sigma_m\=\{1,\,2,\,\dots,\,m\}^{\N}$.

For each $(a_i)_{i\in\N}\in\N^\N$ and each $n\in\N$ we denote by $[a_1,\dots, a_n]$ the rational number 
\begin{equation*}
	[a_1, \dots, a_n]\=\frac{1}{a_1+\frac{1}{a_2+\frac{1}{\cdots+\frac{1}{a_n}}}},
\end{equation*}
and denote $[a_1,a_2,\dots]\=\lim_{n\to+\infty} [a_1, \dots, a_n]$, the real number in $I$ with continued fraction $(a_n)_{n\in\N}$. We denote by $[\overline{a_{1}, \dots, a_n}]$ the number infinite periodic continued fraction expansion.

\section{The Gauss map and continued fractions}\label{sec_Premilinaries}
\subsection{Continued fractions}
\begin{definition}[Continued fraction transformation]\label{D_Gauss map}
The \emph{continued fraction transformation} (or  \emph{Gauss\footnote{The common convention of referring to $\Gau$ as the Gauss map reflects the foundational work of Gauss \cite{Ga12} (cf.~e.g.~\cite{Br91}) on the statistical properties of continued fractions (having worked on probabilistic aspects of continued fractions in 1799--1800 (cf.~\cite{Ga12}), his understanding of the so-called Gauss measure was mentioned in an 1812 letter to Laplace (see \cite[pp.~147--148]{Br91}).   
    As a self-map of $I=[0,1]$, the value $G(0)$ is habitually defined to be $0$ (see e.g.~\cite{FSU14, Ma87,PW99}).} map}), 
    is the map $\Gau \: I\rightarrow I$ defined by
\begin{equation*}
	\Gau(x)=\begin{cases}
		\frac{1}{x}-\lfloor\frac{1}{x}\rfloor  & \text{if }x\in (0,1]  , \\
		0 &\text{if }x=0 .
	\end{cases}
\end{equation*}
\end{definition}
Note that $\Gau^{-1}(1)=\emptyset$, $\Gau^{-1}(0)=\{0\}\cup\{1/n\}_{n\in \N}$, and
\begin{equation}\label{explicitpreimageset}
    \Gau^{-1}(x)= \{ 1/(a+x): a\in\N \} \quad \text{ for }x\in(0,1) .
\end{equation}
The connection between $\Gau$ and continued fractions is that
	for each irrational $x\in I$, there are unique natural numbers $a_i(x)\=\bigl\lfloor 1\big/\Gau^{i-1}(x)\bigr\rfloor$ such that
\begin{equation}\label{ctd_frac_expansion}
	x=[a_1(x),a_2(x),\dots]\=\lim\limits_{n\to +\infty}[a_1(x),a_2(x),\dots,a_n(x)],
\end{equation}
and (\ref{ctd_frac_expansion}) is the \emph{continued fraction expansion} of $x$ (cf.~\cite[Lemma~4D]{Sc80}).

\begin{lemma}\label{l_continued_fraction_of_rational_number}
	Every $x\in (0,1)\cap \Q$ has precisely two finite continued fraction expansions, one of the form $[a_1,a_2,\dots,a_n]$ with $a_n\geq 2$, and the other of the form $[a_1,a_2,\dots,a_n-1,1]$.
\end{lemma}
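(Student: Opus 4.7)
The plan is to first establish existence of a ``canonical'' expansion by iterating the Gauss map $\Gau$, then produce the second form from a trivial algebraic identity, and finally prove uniqueness by a case split on whether the last partial quotient equals $1$.

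For existence, write $x = p/q$ in lowest terms with $0<p<q$. The Euclidean algorithm applied to $(p,q)$ parallels the $\Gau$-iteration: each step $\Gau(r/s) = (s \bmod r)/r$ strictly decreases the numerator (viewed in lowest terms), so there is a least $n\in\N$ with $\Gau^n(x)=0$. Setting $a_i\=\lfloor 1/\Gau^{i-1}(x)\rfloor$ for $1\le i\le n$ then yields $x=[a_1,\dots,a_n]$. I would observe that $\Gau$ sends $(0,1]$ into $[0,1)$, so $\Gau^{n-1}(x)\in(0,1)$ and $\Gau^n(x)=0$ force $\Gau^{n-1}(x)=1/a_n$ with $a_n\ge 2$ whenever $n\ge 2$ (if $a_n=1$ then $\Gau^{n-1}(x)=1$, contradicting $\Gau^{n-1}(x)<1$); the $n=1$ case $x=1/a_1$ with $x<1$ also forces $a_1\ge 2$. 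For the alternative form, the identity $\tfrac{1}{k} = \tfrac{1}{(k-1)+1/1}$ valid for every integer $k\ge 2$ gives $[a_1,\dots,a_n]=[a_1,\dots,a_n-1,1]$ immediately.

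For uniqueness, suppose $x=[b_1,\dots,b_m]$ with $b_i\in\N$, and consider the tails $y_i\=[b_{i+1},\dots,b_m]$ for $0\le i\le m-1$, with the convention $y_m\=0$. These satisfy the recursion $y_i=1/(b_{i+1}+y_{i+1})$. If $b_m\ge 2$, I would verify inductively that $y_i\in(0,1)$ for every $i$: for $i=m-1$ this uses $y_{m-1}=1/b_m\le 1/2$, and for $i<m-1$ it uses $b_{i+2}\ge 1$ together with $y_{i+1}<1$. Consequently $\lfloor 1/y_i\rfloor=b_{i+1}$ and $\Gau(y_i)=y_{i+1}$, so starting from $y_0=x$ one obtains $b_{i+1}=\lfloor 1/\Gau^i(x)\rfloor$ and $m$ is the least integer with $\Gau^m(x)=0$. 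This matches the canonical expansion exactly, giving $m=n$ and $b_i=a_i$. If instead $b_m=1$, then $m\ge 2$ (since $x<1$ rules out $m=1$), and the same identity $[b_{m-1},1]=1/(b_{m-1}+1)$ yields $[b_1,\dots,b_{m-1},1]=[b_1,\dots,b_{m-1}+1]$, reducing to an expansion whose last entry $b_{m-1}+1\ge 2$; by the case just handled this expansion is the canonical one, forcing $m-1=n$, $b_i=a_i$ for $i<n$, and $b_n=a_n-1$, which is exactly the second form.

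The only delicate point is the bound $y_i<1$ in the uniqueness argument, since it is precisely what allows the floor function to recover $b_{i+1}$ from $y_i$; this is where the hypothesis $b_m\ge 2$ is essential, and it is also the reason why the tail $\dots,1$ is the unique source of a second representation.
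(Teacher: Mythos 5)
The paper does not actually prove this lemma; it simply cites \cite[Lemma~4C]{Sc80}, so there is no internal proof to compare against. Your self-contained argument is correct, and it fits naturally with the paper's $\Gau$-orbit point of view (every rational in $(0,1)$ has a finite $\Gau$-orbit terminating at $0$, and the digits $a_i=\lfloor 1/\Gau^{i-1}(x)\rfloor$ reproduce the expansion). The existence step via Euclidean descent of numerators, the passage to the alternative form via $\tfrac{1}{k}=\tfrac{1}{(k-1)+\tfrac{1}{1}}$ for $k\ge 2$, and the uniqueness argument split on $b_m\ge 2$ versus $b_m=1$ are all sound.

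One small wording glitch worth flagging: in the downward induction establishing $y_i\in(0,1)$ for $i<m-1$, the inequality $y_i=1/(b_{i+1}+y_{i+1})<1$ follows from $b_{i+1}\ge 1$ together with $y_{i+1}>0$, rather than from ``$b_{i+2}\ge 1$ together with $y_{i+1}<1$'' as you wrote. Since all $b_j\ge 1$ and every tail $y_j$ with $j<m$ is a finite continued fraction with positive entries (hence positive), both needed facts hold automatically, so the slip does not affect the validity of the argument; it is only a misattribution of which hypotheses drive the step. The genuinely delicate point is, as you correctly emphasise, the base case $y_{m-1}=1/b_m<1$, which is where $b_m\ge 2$ is used and which is exactly what lets $\lfloor 1/y_i\rfloor$ recover $b_{i+1}$.
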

\begin{proof}
	See \cite[Lemma~4C]{Sc80}.
\end{proof}
To accommodate the nonuniqueness of continued fraction expansions of rational numbers, it will be convenient to distinguish between those expansions which do, or do not, contain the digit 1, in the following way:

\begin{notation}\label{n_Rn_g_f}
	We let 
	\begin{equation*}
		R_1\=\{1/a : a\in\N,\, a \ge 2\}
		=\{ [a]: a\in\N, \, a \ge 2\}
	\end{equation*} 
    denote the set of unit fractions with the value 1 excluded, or in other words
    \begin{equation*}
        R_1=\Gau^{-1}(0)\smallsetminus\{0,\,1\}.
    \end{equation*}
    For $n\ge 2$, set
	\begin{equation}\label{e_def_sets_R_m}
		R_n\=\Gau^{-(n-1)}(R_1)
        =\{[a_1,a_2,\dots,a_n]:(a_1,\dots,a_n)\in \N^n,\, a_n\geq 2\}.
	\end{equation}
    Note that
    \begin{equation*}
        R_n= \Gau^{-n}(0)\smallsetminus \Gau^{-(n-1)}(0),
    \end{equation*}
    in other words $R_n$ is the set of those points in $I$ whose $\Gau$-orbit lands at 0 for the first time after precisely $n$ iterates.\footnote{Note that the point 1 is not in the image of $\Gau$, so the only eventually fixed orbit containing 1 is the two-element orbit $\{0,\,1\}$.}
    
	For each $n\in \N$, define $\cA_n\subseteq \N^n$ by 
	\begin{equation}\label{e_def_cA_n}
		\cA_n\=\{ (a_1,\dots,a_n)\in \N^n : a_n\geq 2\}  . 
	\end{equation}
	Denote $\cA \=\bigcup_{n=1}^{+\infty}\cA_n \subseteq \N^*$.
	For each $n\in \N$, define $\cB_n\subseteq \N^n$ by 
	\begin{equation*}
		\cB_n\=\{ (a_1,\dots,a_{n})\in \N^{n} : a_n=1\}  . 
	\end{equation*}
	Denote $\cB\=\bigcup_{n=1}^{+\infty}\cB_n \subseteq \N^*$. Clearly, for each $n\in \N$, 
    the sets  $\cA_n$ and $\cB_n$ together form a partition of
    $\N^n$.
	
	For each $n\in \N$, define the bijection $f_n \: \cA_n\to \cB_{n+1}$ by 
	\begin{equation*}
		(a_1,a_2,\dots,a_{n})\mapsto (a_1,a_2,\dots, a_{n}-1,1)  . 
	\end{equation*}
	 Let $f\: \cA \to \cB$ 
    be the function defined by $f|_{\cA_n}\=f_n$
    for all $n\in\N$,
   Define $g_n \: \cA_n \to R_n$ by 
	\begin{equation}\label{e_def_g_n}
		g_n: (a_1,a_2,\dots,a_n)\mapsto [a_1,a_2,\dots,a_n]  . 
	\end{equation}
	It is easy to see that $g_n$ maps $\cA_n$ bijectively to $R_n$ (see Lemma~\ref{l_basic_properties_rational_measures}~(iv)). Let $g\: \cA \to (0,1)$ 
    be the function defined by $g|_{\cA_n}\=g_n$
    for all $n\in\N$.
\end{notation}
Let $a_1,\,a_2,\,a_3,\,\dots$ be variables. We define polynomials $p_0,\,q_0,\,p_1,\,q_1,\,p_2,\,q_2,\,\dots$, with $p_n$ and $q_n$ being polynomials in $a_1,\,\dots,\,a_n$, as follows:

\begin{equation*}
	\begin{aligned}
		p_0&=0,\quad p_{-1}=1,\quad p_n=a_np_{n-1}+p_{n-2},\quad n=1,\,2,\,\dots\\
		q_0&=1,\quad q_{-1}=0,\quad q_n=a_nq_{n-1}+q_{n-2},\quad n=1,\,2,\,\dots
	\end{aligned}
\end{equation*}

\begin{definition}[{\bf Continuants}]\label{d_continuants_and_convergents}
  Fix $n\in \N$. For any finite word $\fa=(a_1,\dots,a_n)\in \N^n$, the integers $p_n(\fa)$ and $q_n(\fa)$ satisfy
  \begin{equation*}
  	[a_1,a_2,\dots,a_n]=\frac{p_n(\fa)}{q_n(\fa)}  . 
  \end{equation*}
The denominator $q_n(\fa)$ is called a \emph{continuant} of the continued fraction $[a_1,\dots,a_n]$. We also define the $q_k(\fa)$ for $k<n$, even if the word $\mathbf{a}$ has length larger than $k$; in this case $q_k(\fa)$ is just the continuant $q_k(\mathbf{a}|_k)$, where $\mathbf{a}|_k$ is the restriction $(a_1,\dots,a_k)$.
\end{definition}

The following lower bound on the growth of continuants is expressed in terms of the constants $c_0$ and $\theta$
(defined in Section~\ref{sct_Notaion}): 
\begin{lemma}\label{l_lower_bound_q_n}
	If $n\in\N$ and $\mathbf{a}\in \N^n$, then $q_n(\mathbf{a})\geq c_0\theta^n$. 
\end{lemma}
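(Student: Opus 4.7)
The plan is induction on $n$, combining the three-term recurrence $q_n = a_n q_{n-1} + q_{n-2}$ with the defining identity $\theta^2 = \theta + 1$ of the golden ratio. First, since each entry $a_i$ lies in $\N$ and hence satisfies $a_i\geq 1$, the recurrence immediately yields the Fibonacci-type domination
\begin{equation*}
q_n(\mathbf{a}) \geq q_{n-1}(\mathbf{a}) + q_{n-2}(\mathbf{a}),
\end{equation*}
valid for all $n\geq 2$ and all $\mathbf{a}\in\N^n$. In particular, over $\mathbf{a}\in\N^n$, the minimum of $q_n(\mathbf{a})$ is attained at the all-ones word, where the sequence reduces to (shifted) Fibonacci numbers.

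The next step is to observe that the target sequence $n\mapsto c_0\theta^n$ satisfies exactly this Fibonacci recursion with equality: from $\theta^2=\theta+1$ we get $\theta^n = \theta^{n-1}+\theta^{n-2}$. Thus, assuming inductively that the claimed bound holds at the two indices $n-1$ and $n-2$, the induction step is automatic:
\begin{equation*}
q_n(\mathbf{a}) \geq q_{n-1}(\mathbf{a}) + q_{n-2}(\mathbf{a}) \geq c_0\theta^{n-1} + c_0\theta^{n-2} = c_0\theta^n.
\end{equation*}
This reduces everything to anchoring the induction at two consecutive initial indices.

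For the anchor I would check the base estimates directly from the recurrence: $q_0=1$ gives $q_0 \geq c_0$ since $c_0 = 2/\sqrt{5} < 1$, and for the next index one compares $q_1(\mathbf{a}) = a_1 \geq 1$ against $c_0\theta = (1+\sqrt{5})/\sqrt{5}$. The value $c_0 = 2\sqrt{5}/5$ is calibrated by Binet's formula — which for the extremal all-ones word gives $q_n = F_{n+1} \sim \theta^{n+1}/\sqrt{5}$ — so that the claimed exponential bound is compatible with the Fibonacci asymptotics.

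I expect the main subtlety to lie in the base-case bookkeeping and in identifying the correct pair of initial indices at which to anchor the induction (and possibly in a short separate argument for small $n$, where the asymptotic approximation is coarsest); the inductive machinery itself is essentially forced by $a_n\geq 1$ and the golden-ratio identity $\theta^2=\theta+1$, and requires no further structural input.
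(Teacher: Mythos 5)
The paper offers no proof of its own for this lemma: it simply cites Jordan \& Sahlsten \cite[Lemma~1]{JoSa16}. Your strategy --- a two-step induction driven by $q_n \geq q_{n-1} + q_{n-2}$ (valid since $a_n \geq 1$) together with the golden-ratio identity $\theta^n = \theta^{n-1} + \theta^{n-2}$ --- is exactly the standard, and correct, way to prove a lower bound of this type. The inductive step is indeed automatic once two consecutive base cases are verified, and you correctly identify the base-case arithmetic as the only place where real work remains.

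The gap is that you did not actually carry out that arithmetic, and doing so reveals that the base case fails for the paper's stated constant. With the recurrence $q_{-1}=0$, $q_0=1$, one has $q_1(\mathbf{a}) = a_1$, whose minimum over $\mathbf{a}\in\N^1$ is $1$. But $c_0\theta = \frac{2\sqrt{5}}{5}\cdot\frac{1+\sqrt{5}}{2} = \frac{1+\sqrt{5}}{\sqrt{5}} = 1 + \tfrac{1}{\sqrt{5}} > 1$, so the required anchor $q_1 \geq c_0\theta$ does not hold when $a_1 = 1$. More generally, for the all-ones word $q_n = F_{n+1}$ and the ratio $F_{n+1}/\theta^n$ attains its minimum $\theta^{-1} = \tfrac{\sqrt{5}-1}{2} \approx 0.618$ at $n=1$, so the two-step induction closes precisely when $c_0 \leq \theta^{-1}$; the paper's value $c_0 = \tfrac{2\sqrt{5}}{5} \approx 0.894$ exceeds this threshold, and the inequality $q_n \geq c_0\theta^n$ as literally written is therefore false (for every $n$ with $\mathbf{a}$ the all-ones word, in fact, since $F_{n+1}/\theta^n \to \theta/\sqrt{5} \approx 0.724 < 2/\sqrt{5}$). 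Your outline is sound and would prove the lemma for any $c_0 \leq \theta^{-1}$, but as a review of the lemma it should have caught that the constant appearing in the paper cannot be correct; it appears to be a mistranscription of the constant in \cite[Lemma~1]{JoSa16}.
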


\begin{proof}
	See \cite[Lemma~1]{JoSa16}.
\end{proof}

\subsection{Invariant measures}

\begin{notation}
	Fix $\mu\in \MMM(I)$. Define $P_1(\mu),\,P_2(\mu)\in \MMM(I)$ by 
	\begin{equation*}
    P_1(\mu)(A)\= \mu(A\cap\Q)\quad\text{ and } \quad
   P_2(\mu)(A)\=\mu(A\smallsetminus \Q)
    \end{equation*}
	for all Borel sets $A\subseteq I$. 
\end{notation}

\begin{lemma}\label{l_Q_Q^c_invatiant}
	We have $\Gau(I\cap \Q)=[0,1)\cap\Q$, $\Gau^{-1}(I\cap \Q)=I\cap \Q$, and $\Gau(I\smallsetminus \Q)=\Gau^{-1}(I\smallsetminus \Q)=I\smallsetminus \Q$. The set $\MMM(I,\Gau)$ is the convex hull of $\MMM_{\irr}(I,\Gau)\cup\{\delta_0\}$ and $\overline{\MMM(I,G)}=\overline{\MMM_{\irr}(I,G)}$. 
\end{lemma}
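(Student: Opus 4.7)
\medskip

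My plan is to handle the three set-theoretic equalities first, then use them to decompose an arbitrary invariant measure, and finally show $\delta_0$ lies in $\overline{\MMM_{\irr}(I,\Gau)}$ to close the loop.

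For the set-theoretic part, I would argue directly from the formulas $\Gau(x)=\frac{1}{x}-\bigl\lfloor\frac{1}{x}\bigr\rfloor$ on $(0,1]$ and the explicit preimage set $\Gau^{-1}(x)=\{1/(a+x):a\in\N\}$ for $x\in(0,1)$ (equation \eqref{explicitpreimageset}), plus $\Gau^{-1}(0)=\{0\}\cup\{1/n:n\in\N\}$. Rationality is preserved in both directions under these algebraic operations, and irrationality is preserved as well: if $\Gau(x)$ were rational and $x\ne 0$ then $1/x = \lfloor 1/x\rfloor + \Gau(x)$ would be rational, forcing $x$ rational. Surjectivity onto $[0,1)\cap\Q$ (respectively, $I\smallsetminus\Q$) is obtained by exhibiting a preimage of the form $1/(1+y)$ for each $y\in [0,1)$.

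Next, fix $\mu\in\MMM(I,\Gau)$ and set $c\=\mu(\{0\})$. Using $\Gau^{-1}(I\cap\Q)=I\cap\Q$ and $\Gau^{-1}(I\smallsetminus\Q)=I\smallsetminus\Q$, the measures $P_1(\mu)$ and $P_2(\mu)$ are each $\Gau$-invariant finite Borel measures. Every rational $r\in I$ has a finite $\Gau$-orbit terminating at $0$ (by Lemma~\ref{l_continued_fraction_of_rational_number}), so with $A_n\=\Gau^{-n}(\{0\})$ one has $A_0\subseteq A_1\subseteq\cdots$ and $I\cap\Q=\bigcup_{n\ge 0}A_n$. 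Invariance gives $\mu(A_n)=\mu(A_0)=c$ for every $n$, whence $\mu(A_n\smallsetminus A_0)=0$ and therefore $P_1(\mu)=c\,\delta_0$. Consequently $P_2(\mu)$ has total mass $1-c$; if $c<1$, then $\nu\=(1-c)^{-1}P_2(\mu)\in\MMM_{\irr}(I,\Gau)$, and $\mu=c\,\delta_0+(1-c)\,\nu$ exhibits $\mu$ as a convex combination of $\delta_0$ and an element of $\MMM_{\irr}(I,\Gau)$.

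For the closure statement, the inclusion $\overline{\MMM_{\irr}(I,\Gau)}\subseteq\overline{\MMM(I,\Gau)}$ is trivial. For the reverse, by the preceding decomposition and the convexity of $\MMM_{\irr}(I,\Gau)$, it suffices to show that $\delta_0\in\overline{\MMM_{\irr}(I,\Gau)}$. The natural witnesses are the measures $\delta_{x_n}$, where $x_n\=[\overline{n}]$ is the fixed point of $\Gau$ satisfying $x_n=1/(n+x_n)$. Solving the quadratic shows $x_n=\tfrac{-n+\sqrt{n^2+4}}{2}$ is irrational for every $n\in\N$, so each $\delta_{x_n}\in\MMM_{\irr}(I,\Gau)$; since $x_n\to 0$ as $n\to+\infty$, we obtain $\delta_{x_n}\to\delta_0$ weak$^*$, which completes the proof. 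The main technical point is the application of invariance along the filtration $\{A_n\}$ to force all rational mass onto the fixed point $0$; everything else is either algebraic manipulation or the explicit approximation by irrational fixed points.
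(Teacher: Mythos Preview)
Your proof is correct and follows essentially the same approach as the paper: both establish the set-theoretic identities directly, use them to show $P_1(\mu)$ and $P_2(\mu)$ are invariant, force all rational mass onto $\{0\}$ via invariance (you use the filtration $A_n=\Gau^{-n}(\{0\})$ with $\mu(A_n)=\mu(A_0)$, while the paper iterates the identity $\mu(\{0\})=\mu(\Gau^{-1}(0))$ pointwise --- these are the same idea), and finish by approximating $\delta_0$ with $\delta_{[\overline{n}]}$. One cosmetic slip: in the closure step you invoke ``the convexity of $\MMM_{\irr}(I,\Gau)$'', but what you actually need (and clearly intend) is the convexity of its weak$^*$ closure.
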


\begin{proof}
	The first part follows immediately from Definition~\ref{D_Gauss map}. Clearly, $\delta_0\in \MMM(I,\Gau)$ and $\MMM_{\irr}(I,\Gau)\subseteq \MMM(I,\Gau)$.
	
	Fix $\mu \in \MMM(I,G)$. For each Borel set $A\subseteq I$, since $\Gau^{-1}(I\cap \Q)=I\cap \Q$ and $\Gau^{-1}(I\smallsetminus \Q)=I\smallsetminus \Q$, we have
	\begin{align*}
		\Gau^{-1}(A\cap \Q)&=\Gau^{-1}(A)\cap \Gau^{-1}(I\cap\Q)=\Gau^{-1}(A)\cap \Q \quad\text{ and }\\
		\Gau^{-1}(A\smallsetminus \Q)&=\Gau^{-1}(A)\cap \Gau^{-1}(I\smallsetminus\Q)=\Gau^{-1}(A)\smallsetminus \Q  . 
	\end{align*}
	Thus, for each Borel set $A\subseteq I$, we have
	\begin{equation*}
		\begin{aligned}
			P_1(\mu)\bigl(\Gau^{-1}(A)\bigr)&=\mu\bigl(\Gau^{-1}(A)\cap \Q\bigr)=\mu\bigl(\Gau^{-1}(A\cap \Q)\bigr)=\mu(A\cap \Q)=P_1(\mu)(A),\\
			P_2(\mu)\bigl(\Gau^{-1}(A)\bigr)&=\mu\bigl(\Gau^{-1}(A)\smallsetminus \Q\bigr)=\mu\bigl(\Gau^{-1}(A\smallsetminus \Q)\bigr)=\mu(A\smallsetminus \Q)=P_2(\mu)(A).
		\end{aligned}	
	\end{equation*}
	Hence $P_1(\mu)$ and $P_2(\mu)$ are $\Gau$-invariant. Note that
	\begin{equation*}
		\mu(\{0\})=\mu\bigl(\Gau^{-1}(0)\bigr)=\mu(\{0\})+\sum\limits_{n=1}^{+\infty}\mu(\{1/n\}).
	\end{equation*}
	Since $\mu$ is $\Gau$-invariant, we have $\mu(\{1/n\})=0$ for all $n\in\N$ and  $\mu\bigl(\Gau^{-k}(1/n)\bigr)=0$, for all $n,\,k\in\N$. Consequently, we obtain
	$\mu((I\cap\Q)\smallsetminus\{0\})=0$ and $P_1(\mu)=\mu(\{0\})\delta_0$. When $\mu(\{0\})=1$, we have $\mu=P_1(\mu)=\delta_0$. When $\mu(\{0\})<1$, we have $\frac{1}{\mu(I\smallsetminus\Q)}P_2(\mu)\in \MMM_{\irr}(I,\Gau)$. Therefore, $\mu$ is contained in the convex hull of $\MMM_{\irr}(I,\Gau)\cup\{\delta_0\}$. But $\mu$ was an arbitrary member of 
    $ \MMM(I,G)$, so we see that $\MMM(I,\Gau)$ is the convex hull of $\MMM_{\irr}(I,\Gau)\cup\{\delta_0\}$. 
        
    Now $\MMM_{\irr}(I,\Gau)\subseteq \MMM(I,\Gau)$, so
    $\overline{\MMM_{\irr}(I,\Gau)}\subseteq \overline{\MMM(I,\Gau)}$,
    and the reverse inclusion $\overline{\MMM(I,\Gau)}\subseteq \overline{\MMM_{\irr}(I,\Gau)}$
    follows from $\delta_0\in \overline{\MMM_{\irr}(I,\Gau)}$, since $\lim\limits_{n\to +\infty}[\overline{n}]=0$ and $\delta_{[\overline{n}]}\in \MMM_{\irr}(I,G)$, so the second part now follows. 
\end{proof}

\begin{rem}
	The set $\MMM(I,  \Gau)$ is not closed with respect to the weak$^*$ topology. Write $x_n\=[2, n,\overline{2, n}]$ for $n\in \N$. Then $\{x_n\}_{n\in\N}$ are the periodic points of $\Gau$ satisfying $\Gau^2(x_n)=x_n$, $\lim\limits_{n\to+\infty}x_n=\frac{1}{2}$, and $\lim\limits_{n\to+\infty}\Gau(x_n)=0$. Define
	$\mu_n\=\frac{1}{2}(\delta_{x_n}+\delta_{\Gau(x_n)})$.
	Then the weak$^*$ limit of the sequence $\{\mu_n\}_{n\in\N}$ is $\mu=\frac{1}{2}(\delta_0+\delta_{1/2})$, which is not $\Gau$-invariant (see Lemma~\ref{l_Q_Q^c_invatiant}).
\end{rem}

The following lemma allows us to abuse notation by identifying
$\MMM(I\smallsetminus \Q,\Gau|_{I\smallsetminus\Q})$
with
$\MMM_{\irr}(I,\Gau)\subseteq \MMM(I,\Gau)$. Define $j \: I\smallsetminus\Q \to I$ to be the inclusion map and $j_*\: \MMM(I\smallsetminus\Q,\Gau|_{I\smallsetminus\Q})\to \MMM(I,\Gau)$ to be the pushforward of $j$, i.e., for each Borel subset $A\subseteq I$ and $\mu\in \MMM(I\smallsetminus\Q,\Gau|_{I\smallsetminus\Q})$, we define
\begin{equation}\label{e_def_j_*}
	j_*(\mu)(A)\=\mu\bigl(j^{-1}(A)\bigr)=\mu (A\smallsetminus\Q)  . 
\end{equation} 

\begin{lemma}\label{l_identification_1}
      The map $j_*$ is a continuous bijection from $\MMM(I\smallsetminus\Q,\Gau|_{I\smallsetminus\Q})$ to $\MMM_{\irr}(I,\Gau)$.
\end{lemma}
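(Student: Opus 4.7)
The plan is to verify, in turn, four claims: (a) $j_*(\mu)$ is a genuine element of $\MMM_{\irr}(I,\Gau)$ for each $\mu \in \MMM(I\smallsetminus\Q, \Gau|_{I\smallsetminus\Q})$; (b) $j_*$ is injective; (c) $j_*$ is surjective onto $\MMM_{\irr}(I,\Gau)$; (d) $j_*$ is continuous with respect to the weak$^*$ topologies. Each step should follow from unpacking the definition $j_*(\mu)(A)=\mu(A\smallsetminus\Q)$ together with the invariance identities $\Gau^{-1}(I\smallsetminus\Q)=I\smallsetminus\Q$ and $\Gau^{-1}(I\cap\Q)=I\cap\Q$ supplied by Lemma~\ref{l_Q_Q^c_invatiant}.

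For (a), taking $A=I$ gives $j_*(\mu)(I)=\mu(I\smallsetminus\Q)=1$, while $j_*(\mu)(I\cap\Q)=\mu(\emptyset)=0$ shows $\Q$-nullness. For $\Gau$-invariance, for a Borel $A\subseteq I$ note that
\begin{equation*}
\Gau^{-1}(A)\smallsetminus\Q=\Gau^{-1}(A)\cap(I\smallsetminus\Q)=\bigl(\Gau|_{I\smallsetminus\Q}\bigr)^{-1}(A\smallsetminus\Q),
\end{equation*}
so $\Gau$-invariance of $j_*(\mu)$ follows from $(\Gau|_{I\smallsetminus\Q})$-invariance of $\mu$. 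For (b), if $j_*(\mu_1)=j_*(\mu_2)$ then for any Borel $B\subseteq I\smallsetminus\Q$ we have $\mu_i(B)=j_*(\mu_i)(B)$, so $\mu_1=\mu_2$.

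For (c), given $\nu\in\MMM_{\irr}(I,\Gau)$, define $\mu$ on Borel subsets of $I\smallsetminus\Q$ by $\mu(B)\=\nu(B)$; this is a probability measure because $\nu(I\cap\Q)=0$, and it is $\Gau|_{I\smallsetminus\Q}$-invariant since $(\Gau|_{I\smallsetminus\Q})^{-1}(B)=\Gau^{-1}(B)\cap(I\smallsetminus\Q)$ and $\Gau^{-1}(B)\subseteq I\smallsetminus\Q$ when $B\subseteq I\smallsetminus\Q$. By construction $j_*(\mu)=\nu$.

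For (d), recalling that the weak$^*$ topology on measures over the non-compact Polish space $I\smallsetminus\Q$ is defined via bounded continuous test functions, suppose $\mu_n\to\mu$ in $\MMM(I\smallsetminus\Q,\Gau|_{I\smallsetminus\Q})$. For any $f\in C(I)$, the restriction $f|_{I\smallsetminus\Q}$ lies in $C_b(I\smallsetminus\Q)$, and by the change-of-variables formula for pushforwards
\begin{equation*}
\int_I \! f \, \dd j_*(\mu_n) = \int_{I\smallsetminus\Q} \! f|_{I\smallsetminus\Q} \, \dd\mu_n \longrightarrow \int_{I\smallsetminus\Q} \! f|_{I\smallsetminus\Q} \, \dd\mu = \int_I \! f \, \dd j_*(\mu),
\end{equation*}
yielding weak$^*$ convergence of $j_*(\mu_n)$ to $j_*(\mu)$ in $\MMM(I)$, hence in $\MMM_{\irr}(I,\Gau)$. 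No step looks substantive; the only mild subtlety is being explicit about the convention of weak$^*$ convergence on the non-compact space $I\smallsetminus\Q$ so that restrictions of $f\in C(I)$ may legitimately serve as test functions in step (d).
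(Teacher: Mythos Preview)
Your proof is correct and follows essentially the same approach as the paper's: injectivity and surjectivity are established by the same direct arguments, and your continuity argument simply unpacks the paper's one-line remark that continuity of $j$ implies continuity of $j_*$. Your treatment is somewhat more explicit (in particular, you verify the well-definedness step~(a) that the paper leaves implicit in its definition of $j_*$), but there is no substantive difference in method.
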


\begin{proof}
	From the fact that $j$ is continuous, $j_*$ is well-defined and continuous.
    Assume that $\mu_1,\,\mu_2\in \MMM(I\smallsetminus \Q, \Gau|_{I\smallsetminus \Q})$ with $j_*(\mu_1)=j_*(\mu_2)$.
    Then by (\ref{e_def_j_*}), $\mu_1 (A\smallsetminus\Q)=\mu_2 (A\smallsetminus\Q)$ for every Borel measurable subset $A\subseteq I$. So $\mu_1=\mu_2$, and consequently $j_*$ is injective.
	
   Fix $\mu \in \MMM_{\irr}(I,\Gau)$. Define $\nu \in \MMM(I\smallsetminus\Q)$ by $\nu(A)\=\mu(A)$ for each Borel subset $A\subseteq I\smallsetminus\Q$.
   By the assumption that $\mu \in \MMM_{\irr}(I,\Gau)$ and the definition of $\MMM_{\irr}(I,\Gau)$, we get $\nu(I\smallsetminus\Q)=1$. 
   For each Borel subset $A\subseteq I\smallsetminus\Q$, from the fact that $\Gau^{-1}(I\smallsetminus \Q)=I\smallsetminus \Q$, the definition of $\nu$, and the fact that $\mu \in \MMM_{\irr}(I,\Gau)$, 
   \begin{equation*}
   	\nu\bigl((\Gau|_{I\smallsetminus\Q})^{-1}(A)\bigr)=\nu \bigl(\Gau^{-1}(A)\bigr)=\mu\bigl(\Gau^{-1}(A)\bigr)=\mu(A)=\nu(A)  . 
   \end{equation*}
Hence $\nu \in \MMM(I\smallsetminus\Q,\Gau|_{I\smallsetminus\Q})$. For each Borel subset $B\subseteq I$, by (\ref{e_def_j_*}), the definition of $\nu$, and the fact that $\mu \in\MMM_{\irr}(I,\Gau)$, we have $j_*(\nu)(B)=\nu (B\smallsetminus\Q)=\mu (B\smallsetminus\Q)=\mu (B)$. So $j_*(\nu)=\mu$ and consequently $j_*$ is a surjective map from $\MMM(I\smallsetminus\Q,T|_{I\smallsetminus\Q})$ to $\MMM_{\irr}(I,\Gau)$. This lemma now follows. 
\end{proof}

\subsection{Inverse branches}

The following notational conventions for
inverse branches of the Gauss map
follow Jordan \& Sahlsten \cite{JoSa16}.

\begin{definition}[Inverse branches]
	For each $a\in \N$, 
    define
    $\Gau_a \: I\to I$ by
	\begin{equation}\label{e_def_T_a}
		\Gau_a(x)\=\frac{1}{a+x}\quad\text{ for all }x\in I   , 
	\end{equation}
    so that (\ref{explicitpreimageset}) becomes
    \begin{equation*}\label{explicitpreimageset2}
        \Gau^{-1}(x)=\{ \Gau_a(x): a\in\N\}
        \quad \text{ for all }x\in(0,1)  . 
    \end{equation*}
    Let us denote $I_a\=\Gau_a(I)$. For each $n\in \N$ and $\fa = (a_1,a_2,\dots,a_n)\in \N^n$, define $\Gau_\mathbf{a}\:I\to I$ to be 
    \begin{equation}\label{e_def_T_fa}
	\Gau_\fa\=\Gau_{a_1}\circ \Gau_{a_2}\circ \dots \circ \Gau_{a_n}  . 
    \end{equation}
In other words, $\Gau_\mathbf{a}(x)=[a_1,a_2,\dots,a_n+x]$ for each $\mathbf{a}\in\N^n$ and each $x\in I$. Denote $I_{\fa}\=\Gau_\fa(I)$. 
\end{definition}

\begin{notation}\label{r_T_infty_and_T_hfa}
	It will be convenient to define $\Gau_\infty \: I\to I$ by setting $\Gau_\infty (x)=0$ for each $x\in I$.
    
    Recalling that $\hbN\=\N\cup\{\infty\}$,
    for $n\in \N$ and $\hfa \in \hbN^n$ we define $\Gau_{\hfa}=( \ha_1,\ha_2,\dots,\ha_n )\:I\to I$ by 
	\begin{equation}\label{e_def_T_hfa}
		\Gau_{\hfa}\=\Gau_{\widehat{a}_1}\circ \Gau_{\widehat{a}_2}\circ \dots \circ \Gau_{\widehat{a}_n}  , 
	\end{equation}
	and set 
    \begin{equation*}
        I_{\hfa}\=\Gau_{\hfa}(I).
    \end{equation*}
For  $\widehat{{\bf a}}=a_1a_2\dots \in \hbN^{\N} \cup \bigcup_{n=1}^{+\infty} \hbN^n$,
we define its \emph{$\infty$-index} $\iota(\hfa)$ to be the smallest $k$ such that $\widehat{a}_k=\infty$, so that $\iota(\widehat{{\bf a}})=+\infty$ precisely when $\hA\in \N^{\N} \cup \bigcup_{n=1}^{+\infty} \N^n$.     
\end{notation}

\begin{lemma}\label{l_q_n_and_T_a_prime}
	If $n\in \N$ and $\fa\in \N^n$, then 
	\begin{equation*}\label{e_est_devative_T_a}
		 q_n(\mathbf{a})^{-2} \big/ 4
        \leq \abs{\Gau'_\mathbf{a}}
        \leq q_n(\mathbf{a})^{-2}  , 
	\end{equation*}
and in particular, the length $\diam I_\mathbf{a}$
satisfies
$\frac{1}{4}q_n(\mathbf{a})^{-2}\leq \diam I_\mathbf{a} \leq q_n(\mathbf{a})^{-2}$.
\end{lemma}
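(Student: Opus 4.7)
The plan is to express $\Gau_\mathbf{a}$ explicitly as a Möbius transformation in terms of the continuants, differentiate, and then estimate the denominator.

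First I would express $\Gau_\mathbf{a}(x)$ in closed form. Using the recursion defining $p_k, q_k$ from Definition~\ref{d_continuants_and_convergents} together with the observation that $[a_1,\ldots,a_n+x]$ is obtained from $[a_1,\ldots,a_n]$ by replacing $a_n$ with $a_n+x$, one gets
\begin{equation*}
\Gau_\mathbf{a}(x) \;=\; \frac{p_n(\mathbf{a}) + x\,p_{n-1}(\mathbf{a})}{q_n(\mathbf{a}) + x\,q_{n-1}(\mathbf{a})}
\qquad \text{for } x\in I,
\end{equation*}
where $q_{n-1}(\mathbf{a})$ is understood as $q_{n-1}(\mathbf{a}|_{n-1})$ and similarly for $p_{n-1}(\mathbf{a})$. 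A short induction on $n$ (or the standard matrix product $\prod_k \begin{pmatrix} 0 & 1 \\ 1 & a_k \end{pmatrix}$) verifies this along with the determinant identity $p_n q_{n-1} - p_{n-1} q_n = (-1)^n$.

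Next I would differentiate, using the determinant identity, to obtain
\begin{equation*}
\Absbig{\Gau_\mathbf{a}'(x)} \;=\; \frac{1}{(q_n(\mathbf{a}) + x\,q_{n-1}(\mathbf{a}))^{2}}.
\end{equation*}
For $x\in[0,1]$ the denominator lies between $q_n(\mathbf{a})^2$ and $(q_n(\mathbf{a})+q_{n-1}(\mathbf{a}))^2$. The upper bound $|\Gau_\mathbf{a}'(x)|\le q_n(\mathbf{a})^{-2}$ is immediate. For the lower bound I need $q_n(\mathbf{a})+q_{n-1}(\mathbf{a})\le 2q_n(\mathbf{a})$, i.e.\ $q_{n-1}(\mathbf{a})\le q_n(\mathbf{a})$; this follows at once from the recursion $q_k=a_kq_{k-1}+q_{k-2}$ with $a_k\ge1$ and $q_k\ge 0$, starting from $q_0=1$, $q_{-1}=0$. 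Then $(q_n+q_{n-1})^2\le 4q_n^2$ gives $|\Gau_\mathbf{a}'(x)|\ge q_n(\mathbf{a})^{-2}/4$, as required.

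Finally, to bound $\diam I_\mathbf{a}$, I would note that $\Gau_\mathbf{a}$ is monotone on $I$ (as a composition of the monotone maps $\Gau_{a_k}$), so
\begin{equation*}
\diam I_\mathbf{a} \;=\; \bigl|\Gau_\mathbf{a}(1)-\Gau_\mathbf{a}(0)\bigr| \;=\; \int_0^1 \Absbig{\Gau_\mathbf{a}'(x)} \,\dd x,
\end{equation*}
and the two-sided bound on $|\Gau_\mathbf{a}'|$ integrates to the asserted bound $\tfrac{1}{4}q_n(\mathbf{a})^{-2}\le \diam I_\mathbf{a}\le q_n(\mathbf{a})^{-2}$. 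There is no real obstacle; the only point requiring a modest check is the closed-form expression for $\Gau_\mathbf{a}$ together with the sign identity for the continuants, after which everything reduces to elementary estimates.
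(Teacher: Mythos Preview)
Your proof is correct and follows the standard route via the M\"obius representation of $\Gau_{\mathbf{a}}$ and the continuant determinant identity; the paper itself simply cites \cite[Lemma~2]{JoSa16} rather than giving a proof, and what you have written is precisely the argument that reference would supply. (One harmless slip: with the conventions $p_{-1}=1$, $q_{-1}=0$, $p_0=0$, $q_0=1$ the sign is $p_n q_{n-1}-p_{n-1}q_n=(-1)^{n+1}$, but only the absolute value is used.)
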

\begin{proof}
	See \cite[Lemma~2]{JoSa16}.
\end{proof}

\begin{prop}\label{p_inverse_branches}
	If $n\in \N$, then
	\begin{enumerate}[label=\rm{(\roman*)}]
		\smallskip
		\item\label{p_inverse_branches_est_derivative} $\abs{\Gau_\mathbf{a}^\prime(x)}\leq c_0^{-2} \theta^{-2n}$ for all $x\in I$ and ${\bf a}\in \N^n$.
		\smallskip
		
		\item\label{p_inverse_branches_(ii)} For  $\widehat{{\bf a}}=a_1a_2\dots a_n\in \hbN^n\smallsetminus \N^n$, 
        \begin{equation*}
        I_{\widehat{{\bf a}}}=
        \begin{cases}
            \{0\} &\text{if }\iota(\widehat{{\bf a}})=1,\\
            \{[a_1,a_2,\dots,a_{\iota(\widehat{{\bf a}})-1}]\} & \text{if }\iota(\widehat{{\bf a}}) \ge 2.
        \end{cases}
        \end{equation*}

		\smallskip
		
		\item\label{p_inverse_branches_monotonicity}For  $\fa\in \N^n$, the map $\Gau_{\fa}$ is strictly increasing for $n$ even, and strictly decreasing for $n$ odd.
		\smallskip 
		
		\item\label{p_inverse_branches_continuous} For each $x\in I$, the map
        $ \hbN^n\to I$, $\hfa\mapsto \Gau_{\hfa}(x)$,
        is continuous.
				
		\smallskip
		\item\label{p_inverse_branches_closure_of_preimage} For each $x\in I$, the closure of $\{\Gau_{\mathbf{a}}(x): \mathbf{a}\in \N^n\}$ is 
        the set $\bigl\{\Gau_{\widehat{\mathbf{a}}}(x) : \widehat{\mathbf{a}}\in \hbN^n\bigr\}$.		

		\smallskip
		
		\item\label{p_inverse_branches_image_and_preimage} If $x\in I\smallsetminus \Q$ and $\mathbf{a}\in\N^n$, then $\Gau_{\sigma^i(\mathbf{a})}(x)=\Gau^{i}(\Gau_\mathbf{a}(x))$ for all integer $1\le i\le n-1$.
		
	\end{enumerate}
\end{prop}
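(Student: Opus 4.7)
The proof of this proposition is essentially a packaging of elementary facts about compositions of Möbius inverse branches; I would treat the six parts in order, leveraging the earlier lemmas.

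For (i), I would simply chain Lemma~\ref{l_q_n_and_T_a_prime} with Lemma~\ref{l_lower_bound_q_n}: $|G_{\mathbf{a}}'(x)|\le q_n(\mathbf{a})^{-2}\le (c_0\theta^n)^{-2}=c_0^{-2}\theta^{-2n}$. For (iii), I would note that each single branch $G_a(x)=1/(a+x)$ has negative derivative on $I$, hence is strictly decreasing; the sign of $G_{\mathbf{a}}'$ is then $(-1)^n$, proving the claim. For (vi), the key observation is that if $y\in (0,1)\smallsetminus\Q$ and $a\in\N$, then $G\circ G_a(y)=y$: indeed $1/(a+y)\in (1/(a+1),1/a)$, so $\lfloor a+y\rfloor=a$ and $G(1/(a+y))=(a+y)-a=y$. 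Since each $G_{a_j}$ preserves irrationality, a straightforward induction on $i$ yields $G^i(G_{\mathbf{a}}(x))=G_{\sigma^i(\mathbf{a})}(x)$ for $x\in I\smallsetminus\Q$ and $1\le i\le n-1$.

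For (ii), suppose $\hat{\mathbf{a}}\in\hbN^n\smallsetminus\N^n$ with $\infty$-index $k=\iota(\hat{\mathbf{a}})$. Since $G_\infty\equiv 0$, the tail $G_{\hat a_k}\circ\cdots\circ G_{\hat a_n}$ maps everything to $0$; if $k=1$ the whole composition is then constantly $0$, giving $I_{\hat{\mathbf{a}}}=\{0\}$. Otherwise the remaining entries $\hat a_1,\dots,\hat a_{k-1}$ lie in $\N$, and unwinding the definition of continued fraction gives $G_{\hat a_1}\circ\cdots\circ G_{\hat a_{k-1}}(0)=[a_1,\dots,a_{k-1}]$, which equals $I_{\hat{\mathbf{a}}}$ since the composition is constant.

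The only nontrivial parts are (iv) and (v), and the main obstacle is handling the boundary behaviour at $\infty$. For (iv) I would first verify joint continuity of the single-branch map $\hbN\times I\to I$, $(\hat a,y)\mapsto G_{\hat a}(y)$, defined by $G_{\hat a}(y)=1/(\hat a+y)$ for $\hat a\in\N$ and $G_\infty(y)\=0$: continuity at $(\hat a_0,y_0)$ with $\hat a_0\in\N$ is obvious, and at $(\infty,y_0)$ it follows from $\hrho(\hat a,\infty)=1/\hat a\to 0$ forcing $1/(\hat a+y)\to 0$ uniformly in $y\in I$. Continuity of $\hat{\mathbf{a}}\mapsto G_{\hat{\mathbf{a}}}(x)$ then follows by induction on $n$ via the evaluation $\hat{\mathbf{a}}\mapsto G_{\hat a_1}\bigl(G_{(\hat a_2,\dots,\hat a_n)}(x)\bigr)$. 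For (v), one inclusion is immediate from (iv): given $\hat{\mathbf{a}}\in\hbN^n$, approximate each $\infty$ coordinate by a sequence of large integers to produce $\mathbf{a}^{(k)}\in\N^n$ with $\mathbf{a}^{(k)}\to\hat{\mathbf{a}}$ in $\hbN^n$, so $G_{\mathbf{a}^{(k)}}(x)\to G_{\hat{\mathbf{a}}}(x)$. For the reverse, any limit point of $\{G_{\mathbf{a}}(x):\mathbf{a}\in\N^n\}$ is obtained along some sequence $\mathbf{a}^{(k)}\in\N^n$; by compactness of $\hbN^n$ pass to a subsequence with $\mathbf{a}^{(k_j)}\to\hat{\mathbf{a}}\in\hbN^n$, and apply (iv) to identify the limit as $G_{\hat{\mathbf{a}}}(x)$. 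This shows the closure equals $\{G_{\hat{\mathbf{a}}}(x):\hat{\mathbf{a}}\in\hbN^n\}$, completing the proof.
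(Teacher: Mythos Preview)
Your proof is correct and follows essentially the same approach as the paper. The only minor stylistic differences are that for (iv) you package the argument via joint continuity of $(\hat a,y)\mapsto G_{\hat a}(y)$ plus induction on $n$, whereas the paper argues directly by case analysis on the $\infty$-index of the limit point; and for (vi) you use the identity $G\circ G_a=\id$ on irrationals inductively, whereas the paper writes out the continued fraction expansion explicitly --- but these are reformulations of the same ideas.
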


\begin{proof}
\ref{p_inverse_branches_est_derivative} Lemmas~\ref{l_lower_bound_q_n} and \ref{l_q_n_and_T_a_prime} immediately yield
$\abs{\Gau_\mathbf{a}^\prime(x)}\leq q_n(\mathbf{a})^{-2}\leq  c_0^{-2} \theta^{-2n}$.

\smallskip

\ref{p_inverse_branches_(ii)} follows from the definition of $\Gau_{\widehat{{\bf a}}}$ (see (\ref{e_def_T_hfa})) and the fact that $\Gau_\infty \equiv 0$ on I.

\smallskip

\ref{p_inverse_branches_monotonicity} is immediate from 
the fact that $\Gau_a$ is strictly decreasing for each $a\in \N$ (cf.~(\ref{e_def_T_a})),
and $\Gau_{\fa}$
is the composition of $n$ such maps
(cf.~(\ref{e_def_T_fa})). 

\smallskip

\ref{p_inverse_branches_continuous} Suppose 
as $m$ tends to infinity, $\hfa_m=a_{m,1}a_{m,2}\dots a_{m,n}$ converges
to $\hfa=a_1a_2\dots a_n$ in $\hbN^n$. 

If $\hfa \in \N^n$, then $a_{m,k}=a_k$ for all $k\in\{1,\,2,\,\dots,\,n\}$ when $m$ is large enough. So, $\Gau_{\hfa_m}(x)=T_{\hfa}(x)$ when $m$ is large enough.

If $\hfa \in \hbN^n\smallsetminus \N^n$, let $l\=\iota(\hfa)$ be the $\infty$-index of $\hfa$. When $l=1$, we get $a_1=\infty$ and $\lim\limits_{m\to +\infty} a_{m,1}=+\infty$. So $\Gau_{\hfa}(x)=0=\lim\limits_{m\to +\infty} \Gau_{\hfa_m}(x)$. When $l\geq 2$, we obtain $\Gau_{\hfa}(x)=[a_1,a_2,\dots, a_{l-1}]$,  $a_{m,k}=a_k$ for all $k\in\{1,\, 2,\, \dots,\, l-1\}$ when $m$ is large enough, and $\lim\limits_{m\to +\infty}a_{m,l}=+\infty$. Hence, $\lim\limits_{m\to +\infty}\Gau_{\hfa_m}(x)=\Gau_{\hfa}(x)$, and \ref{p_inverse_branches_continuous} follows.

\smallskip

\ref{p_inverse_branches_closure_of_preimage} Denote $W\=\{\Gau_{\fa}(x):\fa\in \N^n\}$ and $\hW\=\bigl\{\Gau_{\hfa}(x):\hfa\in \hbN^n\bigr\}$.
Fix $y\in \hW\smallsetminus W$.
Then $y=\Gau_{\widehat{{\bf b} }}(x)$ for some $\widehat{{\bf b} }=b_1b_2\dots b_n\in \hbN^n\smallsetminus \N^n$.
Let $k\=\iota(\widehat{\fb})$ be the $\infty$-index of $\widehat{\fb}$.
By (ii), we get $y=\Gau_{\widehat{\fb}}(x)=0$ when $k=1$ and $y=\Gau_{\widehat{\fb}}(x)=[b_1,\dots,b_{k-1}]$ when $k\geq 2$.
For each $m$, define $\fb_m\=(m,\dots,m)\in \N^n$ when $k=1$ and define $\fb_m\=( b_1,\dots, b_{k-1},m,\dots, m)\in \N^n$ when $k\geq 2$. It is easy to check $\lim\limits_{m\to +\infty}\Gau_{{\bf b}_m}(x)=y$. Hence, $\hW \subseteq \overline{W}$.

Assume that $z\in \overline{W}$. Then there exists a sequence $\{\fa_m\}_{m\in\N} $ in $ \N^n$ with $
\lim\limits_{m\to +\infty}{\Gau_{\fa_m}(x)}=z$.
Since $\hbN^n$ is compact, there exists $\fc \in \hbN^n$ and a subsequence $\{\fa_{m_k}\}_{k\in \N}$ of $\{\fa_m\}_{m\in\N}$ such that $\{\fa_{m_k}\}_{k\in \N}$ converges to $\fc$ as $k$ tends to infinity. 
 By \ref{p_inverse_branches_continuous}, we obtain that 
 \begin{equation*}
 	\Gau_{\fc}(x)=\lim\limits_{k\to +\infty}{\Gau_{\fa_{m_k}}(x)}=\lim\limits_{m\to +\infty}{\Gau_{\fa_m}(x)}=z  , 
 \end{equation*}
 so $z\in \hW$. 
 But $z$ was an arbitrary member of $\overline{W}$, so we have shown that $\overline{W}\subseteq \hW$, and  \ref{p_inverse_branches_closure_of_preimage} follows.
 
 \smallskip
 
 \ref{p_inverse_branches_image_and_preimage} Assume that $x=[b_1,b_2,\dots,b_n,\dots]\in I\smallsetminus\Q$ and $\fa=[a_1,\dots,a_n]$. Then, by the definition of $\Gau_{\fa}$ and $\Gau$, we see that $\Gau_{\sigma^i(\fa)}(x)=[a_{i+1},\dots,a_n,b_1,b_2,\dots,b_n,\dots]=\Gau^i(\Gau_{\fa}(x))$, as required.
\end{proof}

\begin{notation}
	Given $\psi\in \R^I$, $n\in\N$, and $\hfa\in \hbN^n$, define the function $S_{n,\hfa}\psi\: I\to \R$ by 
	\begin{equation}\label{e_S_n_bf_a}
		S_{n,\hfa}\psi\=\sum\limits_{i=0}^{n-1}\psi\circ \Gau_{\sigma^i(\hfa)}  . 
	\end{equation}
\end{notation}

We need the following standard lemma (cf.~\cite[pp.~144--147]{Wa78}, and see also \cite[Lemma~3.1.2]{MU03} 
in the context of conformal graph directed Markov systems). 

\begin{lemma}\label{l_Distortion}
     Suppose $\alpha\in(0,1]$ and $\phi\in \Holder{\alpha}(I)$. For all $n\in\N$, $\fa\in \N^n$, and $x,\,y\in I$,  
	\begin{equation*}
		\abs{S_{n,\fa}\phi(x)-S_{n,\fa}\phi(y)}
		\leq   K_\alpha \Hseminorm{\alpha}{\phi}\abs{x-y}^{\alpha}  . 
	\end{equation*}
\end{lemma}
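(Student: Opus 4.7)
\medskip

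The plan is a direct estimate: bound the difference $\abs{S_{n,\fa}\phi(x)-S_{n,\fa}\phi(y)}$ term by term, use the H\"older regularity of $\phi$ to transfer each term to a bound involving $\abs{\Gau_{\sigma^i(\fa)}(x)-\Gau_{\sigma^i(\fa)}(y)}^\alpha$, control each such difference using the uniform contraction estimate for inverse branches in Proposition~\ref{p_inverse_branches}\ref{p_inverse_branches_est_derivative}, and finally sum a geometric series whose total is at most $K_\alpha$.

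First, by the triangle inequality applied to the definition~(\ref{e_S_n_bf_a}) of $S_{n,\fa}\phi$, I would write
\begin{equation*}
    \abs{S_{n,\fa}\phi(x)-S_{n,\fa}\phi(y)}
    \le \sum_{i=0}^{n-1}\Absbig{\phi(\Gau_{\sigma^i(\fa)}(x))-\phi(\Gau_{\sigma^i(\fa)}(y))}.
\end{equation*}
Since $\fa\in\N^n$, each $\sigma^i(\fa)\in\N^{n-i}$, and $\Gau_{\sigma^i(\fa)}$ is a composition of $n-i$ inverse branches of $\Gau$. Applying the definition of the H\"older seminorm to each summand, and then the mean value theorem combined with Proposition~\ref{p_inverse_branches}\ref{p_inverse_branches_est_derivative}, which gives $\abs{\Gau_{\sigma^i(\fa)}'(t)}\le c_0^{-2}\theta^{-2(n-i)}$ pointwise on $I$, I obtain
\begin{equation*}
    \Absbig{\phi(\Gau_{\sigma^i(\fa)}(x))-\phi(\Gau_{\sigma^i(\fa)}(y))}
    \le \Hseminorm{\alpha}{\phi}\cdot\bigl(c_0^{-2}\theta^{-2(n-i)}\bigr)^\alpha \abs{x-y}^\alpha,
\end{equation*}
where I used that $t\mapsto t^\alpha$ is nondecreasing on $[0,+\infty)$ since $\alpha\in(0,1]$.

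Summing these estimates over $i=0,1,\ldots,n-1$ and reindexing with $j\=n-i$, the factor depending on $\fa$ collapses into
\begin{equation*}
    c_0^{-2\alpha}\sum_{j=1}^{n}\theta^{-2\alpha j}
    \le c_0^{-2\alpha}\sum_{j=0}^{+\infty}\theta^{-2\alpha j}
    = K_\alpha,
\end{equation*}
by the definition~(\ref{e_def_K_alpha}) of $K_\alpha$, yielding the claimed inequality. There is no real obstacle here: the only subtlety worth watching is that the length of the word $\sigma^i(\fa)$ is $n-i$ (not $n$), so that the contraction factor applied at the $i$-th term is $\theta^{-2\alpha(n-i)}$; this is what makes the partial sum a geometric series that fits under $K_\alpha$ independently of $n$ and $\fa$.
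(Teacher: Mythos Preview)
Your proof is correct and essentially identical to the paper's: both apply the triangle inequality to the sum, use the H\"older condition on $\phi$, invoke the mean value theorem together with Proposition~\ref{p_inverse_branches}\ref{p_inverse_branches_est_derivative} to bound $\abs{\Gau_{\sigma^i(\fa)}(x)-\Gau_{\sigma^i(\fa)}(y)}$ by $c_0^{-2}\theta^{-2(n-i)}\abs{x-y}$, and then sum the resulting geometric series against the definition~(\ref{e_def_K_alpha}) of $K_\alpha$. Your explicit reindexing $j=n-i$ and remark about the length of $\sigma^i(\fa)$ make the final step slightly more transparent than the paper's version, but the arguments are the same.
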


\begin{proof}
The inequality clearly holds if $x=y$, while if
$x\neq y$
     then the
     $\alpha$-H\"older assumption, 
     together with the
     intermediate value theorem, implies that there exists some $\xi_i$ in between $x$
     and $y$ for each $0\le i\le n-1$ such that
     \begin{equation*}
        \abs{S_{n,\mathbf{a}}\phi(x)-S_{n,\mathbf{a}}\phi(y)}
		\leq\Hseminorm{\alpha}{\phi}\sum\limits_{i=0}^{n-1}\abs{\Gau_{\sigma^i(\mathbf{a})}(x)-\Gau_{\sigma^i(\mathbf{a})}(y)}^\alpha
		=\Hseminorm{\alpha}{\phi}\sum\limits_{i=0}^{n-1}\Absbig{\Gau'_{\sigma^i(\mathbf{a})}(\xi_i)}^\alpha\abs{x-y}^\alpha ,
    \end{equation*}
     and the result follows readily from Proposition~\ref{p_inverse_branches}~(i) and the definition of $K_\alpha$ (see (\ref{e_def_K_alpha}). 
\end{proof}

\subsection{Symbolic dynamics}

\begin{notation}
Define the \emph{$\N$-valued full shift} $\Sigma$ by
\begin{equation*}
    \Sigma\=\N^\N,
\end{equation*}
and as usual define 
the shift map 
$\sigma \: \Sigma \to \Sigma $ by $\sigma((a_n)_{n\in \N})\=(a_{n+1})_{n\in \N}$. 
\end{notation}

Recall that $\rho(a,b)\=\Absbig{\frac{1}{a}-\frac{1}{b}}$ for $a, \, b\in \N$, and $\hrho$ denotes the extension of $\rho$ to $\hbN$ given by defining $\hrho(a,\infty)\=\frac{1}{a}
$ for $a\in \N$,
and $ \hrho (\infty,\infty) \=0$ (cf.~Section~\ref{sct_Notaion}). 

Recalling that $\theta\=\frac{\sqrt{5}+1}{2}$, 
define the metric $d_\rho$ on $\Sigma$  by
\begin{equation*}\label{e_metric_d_rho}
	d_\rho(A,B)\=\sum\limits_{n\in \N}\theta^{-2n}\rho (a_n,b_n)   , 
\end{equation*}
where $A=(a_n)_{n\in \N}$, $B=(b_n)_{n\in \N}\in \Sigma$.

\begin{definition}[Compactification of $(\Sigma, d_\rho)$]
	Since the metric $d_{\rho}$ is totally bounded, its metric completion, denoted by $\hSigma$, is a compact metric space. In particular, $\hSigma$ is a compactification of $\Sigma$. The compactification $\hSigma$ can be described more explicitly. More precisely, $\hSigma$ can be identified with $\hbN^\N$ equipped with the extended metric $d_{\hrho}$, where
	\begin{equation}\label{e_def_hd_hrho}
		d_{\hrho}\bigl(\hA,\hB\bigr)\=\sum\limits_{n\in \N}\theta^{-2n}\hrho \bigl(\ha_n,\hb_n\bigr)   , 
	\end{equation}
	for $\hA=(\ha_n)_{n\in \N}$, $\hB=\bigl(\hb_n\bigr)_{n\in \N}\in \widehat{\N}^\N$.

    The shift map $\sigma$ extends to a continuous self-map $\sigma:\hSigma\to\hSigma$ given by $\sigma((\ha_n)_{n\in \N})\=(\ha_{n+1})_{n\in \N}$. 
	
	For each $n\in \N$, define the \emph{cylinder set} $\cC(\ha_1,\ha_2,\dots,\ha_n)$ to consist of 
    those sequences $\bigl(\hb_i\bigr)_{i\in \N}\in \hSigma$ such that $\hb_1\hb_2\dots\hb_n=\ha_1\ha_2\dots\ha_n$. 
\end{definition}

\begin{notation}
Define the
homeomorphism  (cf.~\cite[Theorem~1.1]{Mi17})
$\pi\: \Sigma \to I\smallsetminus\Q$ by
\begin{equation*}\label{e_def_pi}
	\pi((a_i)_{i\in \N})\=[a_1,a_2,\dots]  , 
\end{equation*} 
and note that
\begin{equation}\label{conjugacyequation}
\pi\circ \sigma= \Gau\circ \pi.    
\end{equation}
Let \begin{equation*}
    \hpi\: \hSigma\to I
\end{equation*} denote the continuous extension of $\pi$ to $\hSigma$.
\end{notation}

\begin{rem}\label{r_hpi_equi_def}
It is readily checked that $\hpi$ satisfies
\begin{equation}\label{e_def_hpi}
	\hpi(\hA)=\begin{cases}
		[\ha_1,\ha_2,\dots,\ha_n,\dots]  & \text{if } \hA\in \Sigma,\,\text{i.e.,~}\iota(\hA)=+\infty,\\
		0  &  \text{if }\hA\in \hSigma\smallsetminus\Sigma\text{ with }
        \iota(\hA)=1,\\
		\bigl[\ha_1,\ha_2,\dots,\ha_{\iota(\hA)-1}\bigr] &  \text{if } \hA\in \hSigma\smallsetminus\Sigma\text{ with }
      2 \leq \iota(\hA)< +\infty   , 
	\end{cases}
\end{equation}
where $\hA=(\ha_i)_{i\in \N}\in \hSigma$.
\end{rem}

We will need the following lemma:
\begin{lemma}\label{l_est_rho_(a,b)}
	If $a,\, b\in \hbN$ with $a\neq b$,
    then $\abs{x-y}< 2\rho(a,b)$ for all $x\in I_a$, $y\in I_b$.
\end{lemma}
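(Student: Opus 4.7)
The plan is a short case analysis based on whether each of $a, b$ is in $\N$ or equal to $\infty$, since both sides of the desired inequality admit fully explicit formulas. Recall from (\ref{e_def_T_a}) that $I_c = [1/(c+1), 1/c]$ for $c \in \N$, from Notation~\ref{r_T_infty_and_T_hfa} (or Proposition~\ref{p_inverse_branches}~\ref{p_inverse_branches_(ii)}) that $I_\infty = \{0\}$, and that $\rho(a,b) = \abs{1/a - 1/b}$ while $\rho(a, \infty) = 1/a$ for $a \in \N$.

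First I would handle the main case $a, b \in \N$, where by symmetry one may assume $a < b$. Since $a < b$, the interval $I_b = [1/(b+1), 1/b]$ lies weakly to the left of $I_a = [1/(a+1), 1/a]$ inside the segment $[1/(b+1), 1/a]$ (they share only the endpoint $1/b$ in the boundary case $b = a+1$). Consequently, for any $x \in I_a$ and $y \in I_b$,
\begin{equation*}
\abs{x - y} \;\leq\; \frac{1}{a} - \frac{1}{b+1},
\end{equation*}
while $2\rho(a,b) = 2\bigl(\frac{1}{a} - \frac{1}{b}\bigr)$. A direct rearrangement shows the desired inequality $\frac{1}{a} - \frac{1}{b+1} < 2\bigl(\frac{1}{a} - \frac{1}{b}\bigr)$ is equivalent to the strict inequality $a(b+2) < b(b+1)$, which follows from $a \leq b-1$ since $(b-1)(b+2) = b(b+1) - 2 < b(b+1)$.

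In the remaining case, one of $a, b$ equals $\infty$; by symmetry I take $b = \infty$ and $a \in \N$. Then $I_b = \{0\}$ forces $y = 0$, so $\abs{x - y} = x \in I_a \subseteq (0, 1/a]$, giving $\abs{x - y} \leq 1/a < 2/a = 2\rho(a, \infty)$. The case $a = b = \infty$ is excluded by the hypothesis $a \neq b$, and the lemma follows.

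I do not anticipate any serious obstacle: the only point requiring care is the strictness of the inequality when adjacent intervals $I_a$ and $I_{a+1}$ share the endpoint $1/(a+1)$, but this is preserved by the $-2$ gap appearing in $(b-1)(b+2) = b(b+1) - 2$.
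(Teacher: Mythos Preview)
Your proof is correct and follows essentially the same approach as the paper: a case split on whether one of the indices is $\infty$, followed in the finite case by the bound $\abs{x-y}\le 1/a-1/(b+1)$ and the same algebraic inequality (the paper phrases it as $1/a\ge 1/(b-1)>(b+2)/(b(b+1))$, which is your $a(b+2)<b(b+1)$ after clearing denominators). Your handling of the $\infty$ case is in fact cleaner than the paper's, which contains a small typo.
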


\begin{proof}
	Without loss of generality, assume that $a<b$. If $b=\infty$, then $\abs{x-y} =\frac{1}{x}\leq \frac{1}{a}<2\rho(a,b)$.
	
	If $b\neq \infty$, then $a+1\leq b$. Since $x\in \bigl[\frac{1}{a+1},\frac{1}{a}\bigr]$ and $y\in \bigl[\frac{1}{b+1},\frac{1}{b}\bigr]$, we get
	$\abs{x-y} = x-y \leq \frac{1}{a}-\frac{1}{b+1}<\frac{2}{a}-\frac{2}{b}=2\rho(a,b)$,
	where the second inequality, which is equivalent to $\frac{1}{a}>\frac{2}{b}-\frac{1}{b+1}=\frac{b+2}{b(b+1)}$, follows from $\frac{1}{a}\geq \frac{1}{b-1}$ and $\frac{1}{b-1}>\frac{b+2}{b(b+1)}$.
\end{proof}

The following Lemma~\ref{l_identification_2} allows us to abuse notation by identifying the two sets $\MMM(\Sigma,\sigma_{\Sigma})$ and $\bigl\{\mu\in\MMM\bigl(\hSigma,\sigma_{\hSigma}\bigr):\mu(\Sigma)=1 \bigr\}$.
Let $k \: \Sigma \to \hSigma$ be the inclusion map and $k_*\: \MMM(\Sigma,\sigma_{\Sigma})\to \MMM\bigl(\hSigma,\sigma_{\hSigma}\bigr)$ the pushforward of $k$, i.e., for each Borel subset $W\subseteq \hSigma$ and $\mu\in \MMM(\Sigma,\sigma_{\Sigma})$, define
\begin{equation}\label{e_def_k_*}
	k_*(\mu)(W)\=\mu\bigl(k^{-1}(W)\bigr)=\mu (A\cap\Sigma)  . 
\end{equation} 

\begin{lemma}\label{l_identification_2}
	The map $k_*$ is a continuous bijection from $\MMM(\Sigma,\sigma_{\Sigma})$ to $\bigl\{\mu\in\MMM\bigl(\hSigma,\sigma_{\hSigma}\bigr):\mu(\Sigma)=1 \bigr\}$.
\end{lemma}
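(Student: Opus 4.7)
The plan is to follow closely the template of the analogous Lemma~\ref{l_identification_1}, establishing in sequence that $k_*$ is well-defined and continuous, injective, and surjective onto $\bigl\{\mu\in\MMM\bigl(\hSigma,\sigma_{\hSigma}\bigr):\mu(\Sigma)=1\bigr\}$.

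For well-definedness and continuity, given $\mu \in \MMM(\Sigma,\sigma_\Sigma)$, clearly $k_*(\mu)(\Sigma) = \mu(\Sigma) = 1$, and the key observation is that $\sigma_{\hSigma}$ restricted to $\Sigma$ agrees with $\sigma_\Sigma$, so that $\sigma_{\hSigma}^{-1}(W) \cap \Sigma = \sigma_\Sigma^{-1}(W \cap \Sigma)$ for every Borel $W \subseteq \hSigma$; $\sigma_{\hSigma}$-invariance of $k_*(\mu)$ then follows directly from the $\sigma_\Sigma$-invariance of $\mu$. Continuity with respect to the weak$^*$ topology is immediate: each $g\in C\bigl(\hSigma\bigr)$ restricts to a bounded continuous $g|_\Sigma$ on $\Sigma$, and by the change-of-variables formula $\int g \,\dd(k_*\mu) = \int g|_\Sigma \,\dd\mu$.

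For injectivity I would first note that $\Sigma = \hSigma \smallsetminus \bigcup_{n\in\N}\{\hA \in \hSigma : \ha_n = \infty\}$ is Borel (in fact $G_\delta$) in $\hSigma$, since $\{\infty\}$ is closed in $\hbN$. Thus every Borel subset of $\Sigma$ is also Borel in $\hSigma$, and for any such $A$ one has $k_*(\mu_i)(A) = \mu_i(A \cap \Sigma) = \mu_i(A)$; hence $k_*(\mu_1) = k_*(\mu_2)$ forces $\mu_1 = \mu_2$.

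For surjectivity, given $\nu$ in the target set, define $\mu(A) \= \nu(A)$ for every Borel $A \subseteq \Sigma$. Then $\mu$ is a Borel probability measure on $\Sigma$ with $\mu(\Sigma) = 1$; moreover, since $\nu\bigl(\hSigma \smallsetminus \Sigma\bigr) = 0$, for Borel $A \subseteq \Sigma$ we obtain
\begin{equation*}
\mu\bigl(\sigma_\Sigma^{-1}(A)\bigr) = \nu\bigl(\sigma_\Sigma^{-1}(A)\bigr) = \nu\bigl(\sigma_{\hSigma}^{-1}(A) \cap \Sigma\bigr) = \nu\bigl(\sigma_{\hSigma}^{-1}(A)\bigr) = \nu(A) = \mu(A),
\end{equation*}
so that $\mu \in \MMM(\Sigma,\sigma_\Sigma)$; and finally $k_*(\mu)(W) = \mu(W\cap\Sigma) = \nu(W\cap\Sigma) = \nu(W)$ for every Borel $W \subseteq \hSigma$, confirming $k_*(\mu) = \nu$. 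No step of this argument presents a genuine obstacle; the essential technical points are the Borel measurability of $\Sigma$ in $\hSigma$ and the compatibility of the two shifts on the overlap, both of which are immediate from the definitions.
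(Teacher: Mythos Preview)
Your proof is correct and follows essentially the same approach as the paper's: both establish continuity via the continuity of the inclusion $k$, injectivity by testing against Borel subsets of $\Sigma$, and surjectivity by restricting a target measure $\nu$ to $\Sigma$ and using $\nu(\hSigma\smallsetminus\Sigma)=0$ together with the compatibility $\sigma_{\hSigma}^{-1}(W)\cap\Sigma=\sigma_\Sigma^{-1}(W\cap\Sigma)$. You supply a little extra detail (the Borel measurability of $\Sigma$ in $\hSigma$, and an explicit check that $k_*(\mu)$ is $\sigma_{\hSigma}$-invariant), but the argument is otherwise the same as the paper's.
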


\begin{proof}
	Since $k$ is continuous, $k_*$ is well-defined and continuous.
	If $\mu_1,\,\mu_2\in \MMM(\Sigma,\sigma_{\Sigma})$ 
    are such that $k_*(\mu_1)=k_*(\mu_2)$, 
	then by (\ref{e_def_k_*}), $\mu_1 (W\cap\Sigma)=\mu_2 (W\cap \Sigma)$ for every Borel subset $W\subseteq \hSigma$, 
	so $\mu_1=\mu_2$, therefore $k_*$ is injective.
	
	Fix $\mu\in\MMM\bigl(\hSigma,\sigma_{\hSigma}\bigr)$ such that $\mu(\Sigma)=1$.
	So $\mu (\cC(\infty))=0$.
	Define $\nu \in \cP(\Sigma)$ by $\nu(W)\=\mu(W)$ for each Borel subset $W\subseteq \Sigma$. By definition of $\nu$, since $\mu (\cC(\infty))=0$ and $\mu \in \MMM\bigl(\hSigma,\sigma_{\hSigma}\bigr)$, for each Borel subset $W\subseteq \Sigma$ we see that
	\begin{equation*}
		\nu\bigl( \sigma_{\Sigma}^{-1}(W)\bigr)
        =\nu \bigl( \sigma_{\hSigma}^{-1}(W)\smallsetminus\cC(\infty)\bigr)
        =\mu \bigl(\sigma_{\hSigma}^{-1}(W)\smallsetminus\cC(\infty)\bigr)
       =\mu \bigl(\sigma_{\hSigma}^{-1}(W)\bigr)
        =\mu(W)
        =\nu(W)  , 
	\end{equation*}
	hence $\nu\in\MMM(\Sigma,\sigma_{\Sigma})$. For each Borel subset $V\subseteq \hSigma$, by (\ref{e_def_k_*}), we have $k_*(\nu)(V)=\nu (V\cap \Sigma)=\mu (V\cap \Sigma)=\mu (V)$. So $k_*(\nu)=\mu$. Therefore, $k_*$ is a bijection from $\MMM(\Sigma,\sigma_{\Sigma})$ to $\bigl\{\mu\in\MMM \bigl( \hSigma,\sigma_{\hSigma} \bigr) : \mu(\Sigma)=1 \bigl\}$, as required. 
\end{proof}

The following lemma collects some basic properties of $\bigl(\hSigma,d_{\hrho}\bigr)$.

\begin{lemma}\label{l_properties_sigma_and_Sigma}
    The following statements are true:
	\begin{enumerate}[label=\rm{(\roman*)}]
		\smallskip
		\item\label{l_properties_sigma_Lipschitz} The map $\sigma \: \bigl(\hSigma,d_{\hrho}\bigr)\to \bigl(\hSigma,d_{\hrho}\bigr)$ is Lipschitz.
	    \smallskip
	    
	    \item\label{l_properties_sigma_density_invariant_measures} $\MMM(\Sigma,\sigma_{\Sigma})$ is a dense subset of $\MMM\bigl(\hSigma,\sigma_{\hSigma}\bigr)$, and $\MMM_{\operatorname{erg}}\bigl(\hSigma,\sigma_{\hSigma}\bigl)\smallsetminus\MMM(\Sigma,\sigma_{\Sigma}) $ is a dense subset of $\MMM\bigl(\hSigma,\sigma_{\hSigma}\bigr)$.
	    \smallskip
	    
	    \item\label{l_properties_sigma_decomposition_measure_1} If $\mu\in \MMM_{\erg} \bigl(\hSigma,\sigma_{\hSigma}\bigr)$, then either $\mu(\Sigma)=1$ or $\mu(\Sigma)=0$.
	    \smallskip
	    
	    \item\label{l_properties_sigma_decomposition_measure_2} $ \MMM\bigl(\hSigma,\sigma_{\hSigma}\bigr)$ is equal to the convex hull of 
        $\MMM(\Sigma,\sigma_{\Sigma}) \cup \bigl\{\mu\in\MMM\bigl(\hSigma,\sigma_{\hSigma}\bigr):\mu(\Sigma)=0 \bigr\}$.
	\end{enumerate}
\end{lemma}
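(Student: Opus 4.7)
Part (i) is an immediate computation: for $\hA = (\ha_n)_{n\in\N}$ and $\hB = (\hb_n)_{n\in\N}$ in $\hSigma$, by (\ref{e_def_hd_hrho})
\[
d_{\hrho}(\sigma \hA, \sigma \hB) = \sum_{n=1}^{\infty} \theta^{-2n}\hrho(\ha_{n+1}, \hb_{n+1}) = \theta^{2}\sum_{n=2}^{\infty} \theta^{-2n}\hrho(\ha_{n}, \hb_{n}) \le \theta^{2}\, d_{\hrho}(\hA, \hB),
\]
so $\sigma$ is Lipschitz with constant at most $\theta^{2}$. For part (iii), observe that $\sigma^{-1}(\Sigma) = \bigl\{\hA \in \hSigma : \ha_n \in \N \text{ for all } n\ge 2\bigr\} \supseteq \Sigma$, so by $\sigma$-invariance $\mu\bigl(\sigma^{-1}(\Sigma)\smallsetminus \Sigma\bigr) = \mu\bigl(\sigma^{-1}(\Sigma)\bigr) - \mu(\Sigma) = 0$. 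Hence $\Sigma$ and $\sigma^{-1}(\Sigma)$ agree modulo $\mu$, so $\Sigma$ is $\mu$-essentially $\sigma$-invariant, and by ergodicity $\mu(\Sigma)\in\{0,1\}$.

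\textbf{Part (iv).} Given $\mu \in \MMM(\hSigma, \sigma_{\hSigma})$, set $t \= \mu(\Sigma)$. If $t \in (0,1)$, define
\[
\mu_1(A) \= t^{-1}\mu(A\cap \Sigma), \qquad \mu_2(A) \= (1-t)^{-1}\mu(A \smallsetminus \Sigma)
\]
for Borel $A\subseteq \hSigma$; these are probability measures with $\mu_1(\Sigma)=1$ and $\mu_2(\Sigma)=0$, and $\mu = t\mu_1 + (1-t)\mu_2$. To verify $\sigma$-invariance of $\mu_1$, note that $\sigma^{-1}(A\cap\Sigma) = \sigma^{-1}(A)\cap \sigma^{-1}(\Sigma)$ differs from $\sigma^{-1}(A)\cap \Sigma$ by a $\mu$-null set (as shown in part (iii)), so
\[
\mu_1\bigl(\sigma^{-1}(A)\bigr) = t^{-1}\mu\bigl(\sigma^{-1}(A)\cap \Sigma\bigr) = t^{-1}\mu\bigl(\sigma^{-1}(A\cap \Sigma)\bigr) = t^{-1}\mu(A\cap \Sigma) = \mu_1(A),
\]
and similarly for $\mu_2$. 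The boundary cases $t=0$, $t=1$ are trivial. Via the identification of Lemma~\ref{l_identification_2}, $\mu_1 \in \MMM(\Sigma, \sigma_\Sigma)$, proving that $\MMM\bigl(\hSigma,\sigma_{\hSigma}\bigr)$ is the convex hull of $\MMM(\Sigma, \sigma_\Sigma)\cup \bigl\{\mu\in \MMM(\hSigma,\sigma_{\hSigma}) : \mu(\Sigma)=0\bigr\}$.

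\textbf{Part (ii), first density.} By part (iv), it suffices to approximate any $\mu \in \MMM(\hSigma, \sigma_{\hSigma})$ with $\mu(\Sigma)=0$ by elements of $\MMM(\Sigma, \sigma_\Sigma)$; by ergodic decomposition, assume $\mu$ is ergodic. Choose a Birkhoff-generic point $\hA \in \hSigma$ for $\mu$, and for each $n$ let $\hA^{[n]}\in\hSigma$ be the $n$-periodic point obtained by periodizing $\hA|_n$, with orbit measure $\lambda_n$. A standard argument, reducing to cylinder functions (which depend only on finitely many coordinates, so that $f(\sigma^i\hA^{[n]})=f(\sigma^i\hA)$ for all but $O(1)$ indices $i\in\{0,\dots,n-1\}$), yields $\lambda_n\to \mu$. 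Now for each $n$, replace each entry $\infty$ in $\hA|_n$ by a large integer $N$, obtaining an $n$-periodic $\hA^{[n,N]}\in\Sigma$ with orbit measure $\lambda_n^{(N)}\in\MMM(\Sigma,\sigma_\Sigma)$; for fixed $n$, Proposition~\ref{p_inverse_branches}(iv)-type continuity of the coordinate maps gives $\lambda_n^{(N)}\to \lambda_n$ as $N\to\infty$, so a diagonal choice $N=N(n)\to\infty$ produces $\lambda_n^{(N(n))}\to \mu$.

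\textbf{Part (ii), second density.} By the first density, it suffices to approximate each $\nu \in \MMM(\Sigma, \sigma_\Sigma)$ by ergodic measures supported on $\hSigma \smallsetminus \Sigma$. Further reduce (via the first density argument applied inside $\Sigma$) to the case where $\nu$ is the orbit measure of a $p$-periodic point $\hA \in \Sigma$. For each $M\in\N$, form the $(pM+1)$-periodic point
\[
\hB_M \= \bigl(\underbrace{\ha_1,\dots,\ha_p,\ha_1,\dots,\ha_p,\dots,\ha_1,\dots,\ha_p}_{M \text{ repetitions}}, \infty, \underbrace{\ha_1,\dots,\ha_p,\dots}_{M \text{ repetitions}}, \infty, \dots\bigr);
\]
its orbit measure $\eta_M$ is ergodic, contains $\infty$ in every orbit point (so $\eta_M(\Sigma)=0$, whence $\eta_M \in \MMM_{\erg}(\hSigma,\sigma_{\hSigma})\smallsetminus \MMM(\Sigma,\sigma_\Sigma)$), and converges to $\nu$ as $M\to\infty$ since only $O(1/M)$ of the shifts $\sigma^i\hB_M$ sit within distance $< p\theta^{-2k}/(1-\theta^{-2})$ of $\cC(\infty)$.

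\textbf{Main obstacle.} The only step requiring care is the diagonal approximation in part (ii), since $\sigma$ is only Lipschitz (not distance-decreasing), so we must ensure that the $N\to\infty$ truncation converges \emph{simultaneously} for all the $n$ shifts $\sigma^i\hA^{[n,N]}$, $0\le i\le n-1$; but this is automatic for fixed $n$ by finiteness, and the diagonal choice then delivers weak$^*$ convergence.
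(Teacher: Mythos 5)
Your parts (i) and (iii) reproduce the paper's argument essentially verbatim. The interesting divergences are in (ii) and (iv).

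For part (iv), the paper invokes the ergodic decomposition theorem: it splits $\alpha_\mu$ over the ergodic measures into the pieces $\widehat{\cM}_1 = \MMM_{\erg}\bigl(\hSigma,\sigma_{\hSigma}\bigr)\cap\MMM(\Sigma,\sigma_\Sigma)$ and its complement, and then normalizes. You instead define $\mu_1(A)=t^{-1}\mu(A\cap\Sigma)$, $\mu_2(A)=(1-t)^{-1}\mu(A\smallsetminus\Sigma)$ directly and verify $\sigma$-invariance by hand via the $\mu$-null set $\sigma^{-1}(\Sigma)\smallsetminus\Sigma$. This is correct, and in fact more elementary: it avoids the choice-of-disintegration machinery entirely, produces the decomposition in closed form, and makes the invariance verification transparent. (It also implicitly fills a small gap the paper glosses over, namely that the paper's $\mu_1/p$ has total mass one and gives full mass to $\Sigma$.)

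For part (ii), the paper simply cites \cite[Theorem~1.1]{IV25}, whereas you give a fully self-contained argument. Your construction is sound: reduce to ergodic $\mu$ with $\mu(\Sigma)=0$ (valid since finite convex combinations of ergodic measures are dense by Krein--Milman, and $\MMM(\Sigma,\sigma_\Sigma)$ is convex), periodize a generic point to get $\lambda_n\to\mu$, then truncate the $\infty$-symbols to land in $\Sigma$ and diagonalize. For the second density, inserting a single $\infty$ after $M$ repetitions of a periodic word produces ergodic periodic measures $\eta_M$ with $\eta_M(\Sigma)=0$ converging to the original periodic measure, which is again correct since only $O(1/M)$ of the shifts have $\infty$ within the first $k$ coordinates. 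The one step you should state more carefully is the intermediate reduction "to the case where $\nu$ is the orbit measure of a $p$-periodic point $\hA\in\Sigma$": this is the density of periodic measures in $\MMM(\Sigma,\sigma_\Sigma)$ (the paper cites \cite[Theorem~3.8]{IV21} for this elsewhere), which your periodization argument does indeed reprove, but the phrase "the first density argument applied inside $\Sigma$" undersells what is being claimed. Overall your proof trades the external reference for a constructive argument and is a net gain in self-containedness.
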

\begin{proof}
	Suppose $\hA=(\ha_n)_{n\in \N}\in \hSigma$ and $\hB=\bigl(\hb_n\bigr)_{n\in \N}\in \hSigma$. 
    
	\smallskip
	
	\ref{l_properties_sigma_Lipschitz} It is immediate from (\ref{e_def_hd_hrho}) that
	\begin{equation*}\label{e_Lip_constant_sigma}
		d_{\hrho} \bigl(\hA,\hB\bigr)=\frac{\hrho(\ha_1,\hb_1)}{\theta^2}+\sum\limits_{n=2}^{+\infty}\frac{\hrho(\ha_n,\hb_n)}{\theta^{2n}}=\frac{\hrho(\ha_1,\hb_1)}{\theta^2}+\frac{d_{\hrho} (\sigma(\hA),\sigma (\hB))}{\theta^{2}} \geq \frac{ d_{\hrho} (\sigma(\hA),\sigma (\hB))}{\theta^{2}}  , 
	\end{equation*}
	so $d_{\hrho} \bigl(\sigma\bigl(\hA\bigr),\sigma \bigl(\hB\bigr)\bigr)\leq \theta^2 d_{\hrho} \bigl(\hA,\hB\bigr)$, and statement~\ref{l_properties_sigma_Lipschitz} follows.
    
	\smallskip
		
	\ref{l_properties_sigma_density_invariant_measures}	This follows from \cite[Theorem~1.1]{IV25}.
    
	\smallskip
    
	\ref{l_properties_sigma_decomposition_measure_1} Clearly, $\Sigma\triangle \sigma^{-1}(\Sigma)=\sigma^{-1}(\Sigma)\cap \cC(\infty)=\sigma^{-1}(\Sigma)\smallsetminus \Sigma$. 
	So $\mu\bigl(\Sigma\triangle \sigma^{-1}(\Sigma)\bigr)=\mu\bigl(\sigma^{-1}(\Sigma)\bigr)-\mu(\Sigma)=0$. But $\mu$ is ergodic, so 
    statement~\ref{l_properties_sigma_density_invariant_measures} follows (cf.~\cite[Theorem~1.5~(ii)]{Wa82}).
    
	\smallskip
    
	\ref{l_properties_sigma_decomposition_measure_2} Assume that $\mu \in \MMM\bigl(\hSigma,\sigma_{\hSigma}\bigr)$. By the ergodic decomposition theorem (see e.g.~\cite[Theorem~4.8]{EW11}), writing $\widehat{\cM}_1\=\MMM_{\erg}\bigl(\hSigma,\sigma_{\hSigma}\bigr)\cap \MMM(\Sigma,\sigma_{\Sigma})$ and $\widehat{\cM}_2\=\MMM_{\erg}\bigl(\hSigma,\sigma_{\hSigma}\bigr)\smallsetminus \MMM(\Sigma,\sigma_{\Sigma})$,
	\begin{equation*}\label{e_ergodic_decompostion_hSigma}
		\mu =\int_{\MMM_{\erg}(\hSigma,\sigma)}\! m \,\dd \alpha_\mu (m)=\int_{\widehat{\cM}_1}\! m \,\dd \alpha_\mu (m)+\int_{\widehat{\cM}_2}\! m \,\dd \alpha_\mu (m)  ,  
	\end{equation*}
	for some probability measure $\alpha_\mu$ on $\MMM_{\erg}\bigl(\hSigma,\sigma\bigr)$.
	Define
	$p\=\mu(\Sigma)$, $\mu_1\= \int_{\widehat{\cM}_1}\! m \,\dd \alpha_\mu (m)$, and $\mu_2\=\int_{\widehat{\cM}_2}\! m \,\dd \alpha_\mu (m)$.  
	Then by statement~\ref{l_properties_sigma_density_invariant_measures}, we conclude that
    \begin{enumerate}
    	\smallskip
        \item[(a)] when $p=0$, $\mu=\mu_2\in \bigl\{\mu\in\MMM\bigl(\hSigma,\sigma_{\hSigma}\bigr):\mu(\Sigma)=0 \bigr\}$,
	    \smallskip    
    	\item[(b)] when $p=1$, $\mu=\mu_1 \in \MMM(\Sigma,\sigma_{\Sigma})$,
	    \smallskip    
    	\item[(c)] when $p\in (0,1)$, $\mu = \mu_1+\mu_2$, with $\frac{\mu_1}{p}\in \MMM(\Sigma,\sigma_{\Sigma})$ and $\frac{\mu_2}{1-p}\in \mu\in \bigl\{\mu\in\MMM\bigl(\hSigma,\sigma_{\hSigma}\bigr):\mu(\Sigma)=0 \bigr\}$.
    \end{enumerate}
	 Statement~\ref{l_properties_sigma_decomposition_measure_2} now follows.
	\end{proof}

\begin{rem}
	The map $\sigma\: \bigl(\hSigma,d_{\hrho}\bigr)\to \bigl(\hSigma,d_{\hrho}\bigr)$ is not expansive: to see this note, for example, that if $A_n\=\overline{n}$ and $B\=\overline{\infty} $
    for $n\in \N$, then $d_{\hrho}\bigl(\sigma^k(A_n), \sigma^k(B)\bigr)=\sum_{i=1}^{+\infty}\frac{1}{n\theta^{2i}}=\frac{1}{(\theta^2-1)n}$ for each $k\in \N$.
\end{rem}

\subsection{Bounded continued fractions}\label{subsec_bounded_continued_fractions}

Recall the following notion from Diophantine approximation (see e.g.~\cite[Chapter~1]{Sc80}):
\begin{definition}[Badly approximable number]
	An irrational number $x$ is \emph{badly approximable} if there
	is a constant $c=c(x) >0$ such that
	$\Absbig{x-\frac{p}{q}} >\frac{c}{q^2}$ 
	for every rational number $\frac{p}{q}$.
\end{definition}

An irrational number $x\in I$ is badly approximable if and only if the partial
quotients in its continued fraction expansion are bounded
(see e.g.~\cite[Theorem~5F]{Sc80}).

Here we recall some properties for the set of bounded continued fractions.
For a nonempty subset $A\subseteq \N$,  let $\bcf_A$ denote the set of all irrational $x\in(0, 1)$ such that the digits $a_1(x)$, $a_2(x)$, $\ldots$ in the continued fraction expansion $x=[a_1(x),a_2(x),a_3(x),\dots]$ all belong to $A$.  

If $\card A<+\infty$, sets of the form $\bcf_A$ are said to be of bounded type and in particular they are Cantor sets. Of particular interest have been the sets $\bcf_n \= \bcf_{\{1,\,\dots,\,n\}}$.
The restriction $\Gau|_{\bcf_n}:\bcf_n \to \bcf_n$ is conjugate to the one-sided full shift on a finite alphabet $\{1,\,2,\,\dots,\,n\}$.

We recall the following notion (see e.g.~\cite[Chapter~4]{PU10}):
\begin{definition}[Distance-expanding map]
	For $ (X,d )$ a compact metric space, $T\:X\to X$ is called a \emph{distance-expanding map} if there exist constants $\lambda>1$ and $\eta>0$ such that for all $x, \, y \in X$ with $d (x,y )<2\eta$, 
	\begin{equation*}
		d (T (x ),T (y ) )\ge \lambda d (x,y )  . 
	\end{equation*}
\end{definition}

\begin{definition}\label{def_restricted_maximum_ergodic_average}
	Suppose $n\in \N$. For $\psi\in C(I)$, define the corresponding \emph{restricted ergodic supremum}
    $\mea_{n}(\Gau,\psi)$ by
	\begin{equation*}\label{e_def_restricted_mea}
		\mea_{n}(\Gau,\psi)\=\mea(\Gau|_{E_n},\psi|_{E_n})=\sup \{ \langle \mu,  \psi \rangle :\mu \in \MMM(I,\Gau),\,\supp \mu \subseteq E_n \}.
	\end{equation*}
\end{definition}
The following lemma collects some basic properties of the sets $\bcf_m$.
\begin{lemma}\label{l_basic_property_bounded_continued_fractions}
	Suppose $m\in \N$. If we write 
	\begin{align*}
		P& \=\bigl\{\Gau_{\ha}(0): \ha\in \hbN \bigr\}=\{0,\,1,\,1/2,\,\dots,\,1/n,\,\dots\}   ,   \\
		\eta_m&\=(m+2)^{-3}\big/2\in (0,1),\quad\lambda_m\=(1-\eta_m)^{-2}>1  ,   
	\end{align*}
	and denote the closed $\eta_m$-neighbourhood of $E_m$ by
	\begin{equation}\label{e_def_set_F_m}
		F_m\=\overline{B}^{\eta_m}_d(\bcf_m)
        =\{x\in I:d(x,\bcf_m)\leq \eta_m\}      , 
	\end{equation}
then the following hold:
	\begin{enumerate}[label=\rm{(\roman*)}]
		\smallskip
		\item $d(\bcf_m,P)>2\eta_m$.
		\smallskip
		\item $\Gau|_{F_m}$ is Lipschitz, and in particular $\Gau|_{\bcf_m}$ is Lipschitz.
		\smallskip
		
		\item If $x,\, y\in F_m$ with $\abs{x-y}<\eta_m$, then $\abs{\Gau(x)-\Gau(y)}\geq \lambda_m\abs{x-y}$, so $\Gau|_{\bcf_m}$ is distance-expanding.
		
		\item $\Gau|_{\bcf_m}$ is an open map.
	\end{enumerate}
\end{lemma}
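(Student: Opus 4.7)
The plan is to treat part (i) as the main substantive step, deduce (ii) and (iii) from it by elementary analysis of $\Gau$ on intervals disjoint from $P$, and handle (iv) separately via the symbolic conjugacy already mentioned in Subsection~\ref{subsec_bounded_continued_fractions}.

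For (i), I will exploit the $\Gau$-invariance of $\bcf_m$ together with the identity $x = 1/(a + \Gau(x))$ valid for $x \in I_a \cap \bcf_m$. Setting $\alpha_m \coloneqq [\overline{m}]$ and $\beta_m \coloneqq [\overline{1,m}]$ (the extreme points of $\overline{\bcf_m}$, coinciding when $m=1$), the quadratic equations they satisfy give $\alpha_m > 1/(m+1)$ and $1 - \beta_m = \beta_m^2/m$, so $\bcf_m \subseteq [\alpha_m, \beta_m] \subseteq \bigcup_{a=1}^m I_a^\circ$. Given $1/k \in P$, I will case-split on the position of $k$ relative to $a = a_1(x)$: if $k \notin \{a, a+1\}$, then $1/k$ lies outside $I_a$, and the distance is at least the gap $1/((a+1)(a+2))$ or $1/(a(a-1))$, comfortably exceeding $2\eta_m = 1/(m+2)^3$; if $k = a$, then $1/a - x = \Gau(x)/(a(a+\Gau(x))) \geq \alpha_m/(m(m+1))$; if $k = a+1$, then $x - 1/(a+1) = (1-\Gau(x))/((a+\Gau(x))(a+1)) \geq \beta_m^2/(m(m+1)^2)$. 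The two endpoint cases $p = 0$ and $p = 1$ follow directly from $x \in [\alpha_m, \beta_m]$. Each lower bound will then be checked to exceed $1/(m+2)^3$.

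Part (ii) will follow from $d(F_m, P) \geq \eta_m$, which is a triangle-inequality consequence of (i): $F_m$ decomposes as a finite disjoint union of compact subsets, each lying in some $I_a^\circ$ with $a \leq m$ and staying at distance $\geq \eta_m$ from the endpoints of $I_a$; on each component $\Gau$ is smooth with $|\Gau'(x)| = 1/x^2 \leq 1/(\alpha_m - \eta_m)^2$, while distinct components are separated by at least $2\eta_m$, yielding a global Lipschitz constant. For (iii), the $2\eta_m$-separation forces any $x, y \in F_m$ with $|x-y| < \eta_m$ into a single component $F_m \cap I_a$, so $|\Gau(x) - \Gau(y)| = |x-y|/(xy)$; and since $1 \in P$ and $d(F_m, P) \geq \eta_m$ imply $x, y \leq 1 - \eta_m$, one obtains $xy \leq (1-\eta_m)^2 = 1/\lambda_m$, giving the required expansion. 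For (iv), $\pi$ restricts to a homeomorphism from $\Sigma_m$ onto $\bcf_m$ conjugating the full shift $\sigma|_{\Sigma_m}$ to $\Gau|_{\bcf_m}$ by (\ref{conjugacyequation}), and the full shift on a finite alphabet is a standard example of an open map (it sends cylinders to cylinders).

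The main technical obstacle will be the tightness of the estimate in (i) in the extreme case $a = m$, $k = m+1$: there the lower bound $\beta_m^2/(m(m+1)^2)$ is asymptotically of order $1/m^3$, the same order as $1/(m+2)^3$. Verifying $(m+2)^3 \beta_m^2 > m(m+1)^2$ uniformly in $m$ combines the algebraic identity $(m+2)^3 - m(m+1)^2 = 4m^2 + 11m + 8$ with a uniform lower bound on $\beta_m$ (for example $\beta_m \geq \beta_2 = \sqrt{3} - 1$ for $m \geq 2$, with $m = 1$ handled separately since $\bcf_1$ is a single point). Every other subcase of (i), and every step of (ii)--(iv), carries considerable slack.
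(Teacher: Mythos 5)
Your overall plan matches the paper's: part (i) is the substantive estimate of the distance from $\bcf_m$ to the branch endpoints in $P$, parts (ii) and (iii) follow from (i) by bounding $\Gau'$ on $F_m$ (the paper uses the mean value theorem, you use the exact identity $|1/x-1/y|=|x-y|/(xy)$; this is cosmetic), and part (iv) is the same conjugacy argument. The difference is in how the endpoint estimates for (i) are set up: the paper replaces $\min\bcf_m$ and $\max\bcf_m$ at the outset by the simple rational bounds $1/(m+1)$ and $(m+1)/(m+2)$ and then works entirely with those, whereas you try to carry the exact quadratic surds through the computation. That choice creates two real problems.

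First, $[\overline{m}]$ is \emph{not} the minimum of $\bcf_m$: the minimum is $[\overline{m,1}]$, which is strictly smaller for $m\geq 2$ (for $m=2$ one has $[\overline{2,1}]=(\sqrt 3-1)/2\approx 0.366$ versus $[\overline 2]=\sqrt 2-1\approx 0.414$). Consequently the inclusion $\bcf_m\subseteq[\alpha_m,\beta_m]$ is false, the bound $\Gau(x)\geq\alpha_m$ in your $k=a$ subcase is false, and so is the Lipschitz bound $|\Gau'(x)|\leq(\alpha_m-\eta_m)^{-2}$ in (ii). (The intermediate claim $[\alpha_m,\beta_m]\subseteq\bigcup I_a^\circ$ is also false, since $1/m,\dots,1/2\in[\alpha_m,\beta_m]$.) These are all repairable by substituting $\min\bcf_m=[\overline{m,1}]>1/(m+1)$ throughout, but they are errors as written. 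Second, and more seriously, the claimed route through the hardest case cannot close: a uniform constant lower bound $\beta_m\geq c<1$ can never give $(m+2)^3c^2>m(m+1)^2$, because $m(m+1)^2/(m+2)^3\to 1$, so the inequality fails for all large $m$. You need an $m$-dependent estimate (e.g. $\beta_m>m/(m+1)$, which follows from $\beta_m(m+\beta_m)=m$), or else to pass to the cruder inequality $1-\beta_m>1/(m+2)$ as the paper does, which makes the arithmetic trivial: one then compares $1/((m+1)^2(m+2))$ with $1/(m+2)^3$ and is done. So the paper's choice to discard the surds early is not laziness; it quietly avoids both of the traps you fell into.
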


\begin{proof} 
	(i) By Proposition~\ref{p_inverse_branches}~(iii),
	\begin{equation}\label{e_bound_of_bcf_m}
		\min \bcf_m= [\overline{m,1}]>[m,1]= 1/(m+1)\text{ and }
        \max \bcf_m= [\overline{1,m}]<[1,m+1]=(m+1)/(m+2)  . 
	\end{equation}
    But for each $1\le k\le m$, the map $\Gau_k$ is strictly decreasing  (see Proposition~\ref{p_inverse_branches}~(iii)),
   so (\ref{e_bound_of_bcf_m}) yields
	\begin{equation*}
		\Gau_k(E_m)\subseteq 
        (\Gau_k((m+1)/(m+2)),\Gau_k(1/(m+1)))
        \subseteq (1/(k+1),1/k)  , 
	\end{equation*}
and hence 
\begin{align*}
	d(P,\Gau_k(E_m))
    &>\min \{\Gau_k((m+1)/(m+2))-\Gau_k(1),\, \Gau_k(0)-\Gau_k(1/(m+1))\}\\
	&=\min \{\abs{\Gau_{k}'(z_1)} /(m+2),\,\abs{\Gau_{k}'(z_2)} /(m+1)\}
\end{align*}
    for some $z_1\in((m+1)/(m+2),1)$ and $z_2\in (0,1/(m+1))$ by the intermediate value theorem. 
   Since $\abs{\Gau_k'(x)} = (k+x)^{-2}\geq (m+1)^{-2}$ for all $x\in I$, we deduce that
   \begin{equation*}
	d(P,\Gau_k(E_m)) >\min \bigl\{ (m+1)^{-2}(m+2)^{-1},\,(m+1)^{-3}\bigr\} >2\eta_m   . 
\end{equation*}

Now $\bcf_m=\bigcup_{k=1}^m \Gau_k(E_m)$
(an immediate consequence of the definition of $\bcf_m$), 
so (i) follows. 

\smallskip
	
(ii) Assume that $x, \, y\in F_m$.
 When $\abs{x-y}<\eta_m$, by (i), we get $x,\,y\in (1/(k+1),1/k)$ for some $1\le k\le m$. Then 
 \begin{equation*}
 	\abs{\Gau(x)-\Gau(y)}=\abs{k+1/x-k-1/y}= z^{-2}\abs{x-y}\leq (m+1)^2\abs{x-y}  ,  
 \end{equation*}
	for some $z$ between $x$ and $y$ by the intermediate value theorem, where the last inequality follows from $z\geq 1/(m+1)$.	
	When $\abs{x-y}\geq \eta_m$, we have $\abs{\Gau(x)-\Gau(y)}<1<\frac{1}{\eta_m}\abs{x-y}$, so (ii) follows.
    
	\smallskip
	
	(iii) Assume that $x,\, y\in F_m$ with $\abs{x-y}<\eta_m$. By (i),  
    we get $x,\, y\in (1/(k+1),1/k)$ for some $1\le k\le m$, and then
	\begin{equation*}
		\abs{\Gau(x)-\Gau(y)}
        =\abs{k+1/x-k-1/y}
        = w^{-2}\abs{x-y}
        \geq \lambda_m \abs{x-y} 
	\end{equation*}
for some $w$ in between $x$ and $y$ by the intermediate value theorem, where the inequality follows from the fact that $w<1-\eta_m$.

    \smallskip
    
	(iv) Evidently $\Gau|_{\bcf_m}=(\pi|_{\Sigma_m})^{-1}\circ \sigma_{\Sigma_m} \circ\pi|_{\Sigma_m}$, so the result follows from the fact that $\pi|_{\Sigma_m}$ is a homeomorphism and  $\sigma_{\Sigma_m}$ is open.
\end{proof}

\section{Maximizing and limit-maximizing measures}\label{sec_maximizing_measures}

Here we introduce the notion of \emph{limit-maximizing measure}, which will be useful for a dynamical system, such as $\Gau$, whose set of invariant measures is not weak$^*$ compact.

\begin{notation}\label{pushforwards}
The pushforward 
$\pi_*\:\MMM(\Sigma,\sigma) \to \MMM(I\smallsetminus\Q,\Gau|_{I\smallsetminus\Q})$
of $\pi$ is defined by 
\begin{equation*}
	\pi_*(\mu)(V)\= \mu \bigl(\pi^{-1}(V)\bigr),
\end{equation*}
for all Borel subsets $V\subseteq I\smallsetminus\Q$.
Similarly,
the pushforward 
$\hpi_*\:\MMM\bigl(\hSigma,\sigma\bigr) \to \cP(I)$
is defined by 
\begin{equation*}
	\hpi_*(\mu)(W)\= \mu \bigl(\hpi^{-1}(W)\bigr) 
\end{equation*}
for all Borel subsets $W\subseteq I$.
\end{notation}

\begin{rem}
Since $\pi=\hpi\circ k$, Lemma~\ref{l_identification_2} allows us to abuse notation by writing 
\begin{equation*}
    \hpi_*|_{\MMM(\Sigma,\sigma_{\Sigma})}=\pi_*.
\end{equation*}
\end{rem}
 
 \begin{definition}\label{d_limit_max_measure}
 	Let $T \: X\to X$ be a Borel measurable map on a compact metric space $X$. For a Borel measurable function $\psi \: X\to \R$, a probability measure $\mu$ is called a \emph{$(T,\psi)$-limit-maximizing measure}, or simply a \emph{$\psi$-limit-maximizing measure}, if it is a weak$^*$ accumulation point of $\MMM(X,T)$ and $\int \!\psi \,\mathrm{d}\mu=\mea(T,\psi)$. We denote the set of $(T,\psi)$-limit-maximizing measures by $\Mmax^*(T,\psi)$. 
 \end{definition}
 
 Clearly, $\MMM_{\max}(T,\psi)\subseteq \MMM_{\max}^*(T,\psi)$. The following lemma collects some basic properties of $\pi$ and $\hpi$.
\begin{lemma}\label{l_properties_pi_and_pi_*}
	If $n\in \N$, then the following hold:
		\begin{enumerate}[label=\rm{(\roman*)}]		
			\smallskip
			\item \label{l_propertiess_pi_and_pi_*_preimage} $\hpi^{-1}(0)=\cC(\infty)$, $\hpi^{-1}(1)=\cC(1,\infty)$, and for each $x=[a_1,\dots,a_n]\in R_n$,
            \begin{equation*}
                \hpi^{-1}(x)=\cC(a_1,\dots,a_n,\infty)\cup\cC(a_1,\dots,a_{n-1},a_n-1,1,\infty).
            \end{equation*}

			\smallskip
			
			\item\label{l_properties_sigma_and_pi_commutes} If $\hfa=(\ha_1,\ha_2,\dots ,\ha_n)\in \hbN^n$, then the equality $\Gau_{\hfa}\circ \hpi \circ \sigma^n=\hpi$ holds on
            $\cC(\ha_1,\dots,
			\ha_n)$.
			\smallskip
			
		    \item\label{l_properties_pi_Lipschitz} 	The map $\hpi\:  \bigl(\hSigma,d_{\hrho}\bigr)\to (I,d) $ is Lipschitz. 
		    \smallskip
		
		    \item\label{l_properties_pi_*} $\pi_*\: \MMM(\Sigma,\sigma)\to \MMM_{\irr}(I,\Gau)$ is a homeomorphism.
		    \smallskip
		
		    \item\label{l_properties_hpi_*} $\hpi_*\: \MMM\bigl(\hSigma,\sigma\bigr)\to \overline{\MMM(I,\Gau)}$ is a continuous surjection.
		    \smallskip
		
		    \item\label{l_properties_hpi_*_Birkhoff_sum} If $\hfa=(\ha_1,\dots,\ha_n)\in \hbN^n$, $\phi\in C(I)$, $\hA=(\ha_i)_{i\in \N}\in \cC(\ha_1,\dots,\ha_n)$, and $y_n\=\hpi\bigr(\sigma^n\bigl(\hA\bigr)\bigr)$, then $S_{n,\hfa}\phi(y_n)=S_n^{\sigma}(\phi\circ \hpi)\bigl(\hA\bigr)$.
		\end{enumerate}
\end{lemma}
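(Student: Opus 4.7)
\textbf{Proof plan for Lemma~\ref{l_properties_pi_and_pi_*}.}
The strategy is to treat each of the six statements in the order listed, since the later ones build on the earlier ones, and to reduce everything to the explicit formula~(\ref{e_def_hpi}) for $\hpi$ together with Lemma~\ref{l_continued_fraction_of_rational_number} and the inverse-branch estimates from Proposition~\ref{p_inverse_branches} and Lemma~\ref{l_est_rho_(a,b)}.

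For~\ref{l_propertiess_pi_and_pi_*_preimage} I would simply read off the preimages from~(\ref{e_def_hpi}): $\hpi(\hA)=0$ forces $\iota(\hA)=1$, giving $\cC(\infty)$; $\hpi(\hA)=1$ forces $\iota(\hA)=2$ with $\ha_1=1$; and the two expansions of a rational $x\in R_n$ identified in Lemma~\ref{l_continued_fraction_of_rational_number}, namely $[a_1,\dots,a_n]$ (valid since $a_n\ge 2$) and $[a_1,\dots,a_{n-1},a_n-1,1]$, produce exactly the two stated cylinders. For~\ref{l_properties_sigma_and_pi_commutes} I would split into the cases $\hfa\in\N^n$ and $\hfa\in\hbN^n\smallsetminus\N^n$: in the first case both sides become the continued fraction with prefix $\ha_1,\dots,\ha_n$ followed by the tail (noting separately the subcase $\iota(\sigma^n(\hA))=1$ where $\Gau_{\hfa}(0)=[\ha_1,\dots,\ha_n]$), and in the second case both sides reduce via~(\ref{e_def_T_hfa}) and~(\ref{e_def_hpi}) to $[\ha_1,\dots,\ha_{\iota(\hfa)-1}]$ (or $0$). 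Then~\ref{l_properties_hpi_*_Birkhoff_sum} is an immediate iteration: applying~\ref{l_properties_sigma_and_pi_commutes} to $\sigma^i(\hA)\in\cC(\ha_{i+1},\dots,\ha_n)$ with the block $(\ha_{i+1},\dots,\ha_n)$ gives $\Gau_{\sigma^i(\hfa)}(y_n)=\hpi(\sigma^i(\hA))$ for each $0\le i\le n-1$, which when summed yields the claimed identity.

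The main technical step is~\ref{l_properties_pi_Lipschitz}. Given distinct $\hA,\hB\in\hSigma$, let $k$ be the smallest index with $\ha_k\neq\hb_k$. If some $\ha_j=\infty$ for $j<k$ then $\hb_j=\infty$ also, and~(\ref{e_def_hpi}) forces $\hpi(\hA)=\hpi(\hB)$, so the bound is trivial. Otherwise $(\ha_1,\dots,\ha_{k-1})\in\N^{k-1}$, and using~\ref{l_properties_sigma_and_pi_commutes} twice I can write
\begin{equation*}
\hpi(\hA)-\hpi(\hB)=\Gau_{(\ha_1,\dots,\ha_{k-1})}\bigl(\hpi(\sigma^{k-1}(\hA))\bigr)-\Gau_{(\ha_1,\dots,\ha_{k-1})}\bigl(\hpi(\sigma^{k-1}(\hB))\bigr).
\end{equation*}
Proposition~\ref{p_inverse_branches}~\ref{p_inverse_branches_est_derivative} bounds the corresponding derivative by $c_0^{-2}\theta^{-2(k-1)}$, while $\hpi(\sigma^{k-1}(\hA))\in I_{\ha_k}$ and $\hpi(\sigma^{k-1}(\hB))\in I_{\hb_k}$, so Lemma~\ref{l_est_rho_(a,b)} gives $\abs{\hpi(\sigma^{k-1}(\hA))-\hpi(\sigma^{k-1}(\hB))}\le 2\rho(\ha_k,\hb_k)$. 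Combining, $\abs{\hpi(\hA)-\hpi(\hB)}\le 2c_0^{-2}\theta^2\,\theta^{-2k}\rho(\ha_k,\hb_k)\le 2c_0^{-2}\theta^2\,d_{\hrho}(\hA,\hB)$, proving the Lipschitz bound. The delicate point is justifying the reduction to $(\ha_1,\dots,\ha_{k-1})\in\N^{k-1}$ and handling tails containing $\infty$, which is why I isolate the trivial $\infty$-prefix subcase at the start.

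For~\ref{l_properties_pi_*}, since $\pi\:\Sigma\to I\smallsetminus\Q$ is a homeomorphism conjugating $\sigma$ with $\Gau|_{I\smallsetminus\Q}$ via~(\ref{conjugacyequation}), the pushforward $\pi_*$ is a homeomorphism $\cP(\Sigma)\to\cP(I\smallsetminus\Q)$ sending $\MMM(\Sigma,\sigma)$ bijectively onto $\MMM(I\smallsetminus\Q,\Gau|_{I\smallsetminus\Q})$; under the identification supplied by Lemma~\ref{l_identification_1}, the target becomes $\MMM_{\irr}(I,\Gau)$. Finally, for~\ref{l_properties_hpi_*} continuity follows from~\ref{l_properties_pi_Lipschitz}. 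Since $\MMM(\Sigma,\sigma)$ is dense in $\MMM(\hSigma,\sigma)$ by Lemma~\ref{l_properties_sigma_and_Sigma}~\ref{l_properties_sigma_density_invariant_measures}, and $\hpi_*$ coincides with $\pi_*$ there and hence lands in $\MMM_{\irr}(I,\Gau)\subseteq\MMM(I,\Gau)$, continuity forces $\hpi_*\bigl(\MMM(\hSigma,\sigma)\bigr)\subseteq\overline{\MMM(I,\Gau)}$. For surjectivity, given $\mu\in\overline{\MMM(I,\Gau)}=\overline{\MMM_{\irr}(I,\Gau)}$ (Lemma~\ref{l_Q_Q^c_invatiant}), write $\mu=\lim \pi_*\mu_n$ with $\mu_n\in\MMM(\Sigma,\sigma)$; compactness of $\MMM(\hSigma,\sigma)$ (inherited from compactness of $\hSigma$) yields a subsequential weak$^*$ limit $\nu\in\MMM(\hSigma,\sigma)$, and continuity of $\hpi_*$ gives $\hpi_*\nu=\mu$.
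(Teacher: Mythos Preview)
Your proposal is correct and follows essentially the same route as the paper's proof: the same case analysis via~(\ref{e_def_hpi}) and Lemma~\ref{l_continued_fraction_of_rational_number} for~(i), the same reduction to~(ii) for~(vi), the same conjugacy argument for~(iv), and the same compactness-plus-continuity argument for~(v). Your treatment of~(iii) is in fact slightly cleaner than the paper's, since you explicitly dispose of the case where the common prefix already contains an~$\infty$ (the paper applies Proposition~\ref{p_inverse_branches}~\ref{p_inverse_branches_est_derivative} with $\hfa\in\hbN^{k-1}$ without isolating this degenerate subcase), and you also supply the easy inclusion $\hpi_*\bigl(\MMM(\hSigma,\sigma)\bigr)\subseteq\overline{\MMM(I,\Gau)}$ in~(v), which the paper leaves implicit.
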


\begin{proof}
	\ref{l_propertiess_pi_and_pi_*_preimage}
	 $\hpi^{-1}(0)=\cC(\infty)$ and $\hpi^{-1}(1)=\cC(1,\infty)$ follow from (\ref{e_def_hpi}),
     while the equality
	  \begin{equation*}
        \hpi^{-1}(x)=\cC(a_1,\dots,a_n,\infty)\cup\cC(a_1,\dots,a_{n-1},a_n-1,1,\infty)
    \end{equation*}
    follows from (\ref{e_def_hpi}) and Lemma~\ref{l_continued_fraction_of_rational_number}.
     
	\smallskip
	
	\ref{l_properties_sigma_and_pi_commutes} Fix $\hfa=(\ha_1,\ha_2,\dots ,\ha_n)\in \hbN^n$. Consider an arbitrary $\hA=\ha_1\ha_2\ldots \ha_n\ha_{n+1}\ldots \in \cC[\ha_1,\ha_2,\dots,\ha_n]$. Write $k\= \iota\bigl(\hA\bigr)$. If $\hA$ belongs to $\Sigma$, it follows immediately from the definitions of $\pi$, $\Gau_{\hfa}$, and $\sigma$ that 
		\begin{equation*}
			\Gau_{\hfa}\bigl(\pi\bigl(\sigma^n\bigl(\hA\bigr)\bigr)\bigr)=\Gau_{\widehat{\bf{a}}}([\ha_{n+1},\ha_{n+2},\dots])=[\ha_1,\ha_2,\dots] =\hpi \bigl(\hA\bigr)  . 
		\end{equation*}
		If on the other hand $\hA\in \hSigma\smallsetminus\Sigma$, 
        in the case that $k=1$, then by Proposition~\ref{p_inverse_branches}~(ii),
		\begin{equation*}
			\Gau_{\hfa}\bigl(\hpi\bigl(\sigma^n\bigl(\hA\bigr)\bigr)\bigr)=0 =\hpi \bigl(\hA\bigr)  , 
		\end{equation*}
	while in the case that $2\leq k\leq n$, then by Proposition~\ref{p_inverse_branches}~(ii) and (\ref{e_def_hpi}), 
	\begin{equation*}
		\Gau_{\hfa}\bigl(\hpi\bigl(\sigma^n\bigl(\hA\bigr)\bigr)\bigr)=[\ha_1,\ha_2,\dots ,\ha_{k-1}] =\hpi \bigl(\hA\bigr)  . 
	\end{equation*}
	In the case that $k\geq n+1$, we obtain $\hpi\bigl(\sigma^n\bigl(\hA\bigr)\bigr)=[\ha_n,\dots,\ha_{k-1}]$ and 
	\begin{equation*}
	     \Gau_{\hfa}\bigl(\hpi\bigl(\sigma^n\bigl(\hA\bigr)\bigr)\bigr)=[\ha_1,\ha_2,\dots,\ha_n,\dots ,\ha_{k-1}] =\hpi \bigl(\hA\bigr)  . 
	\end{equation*}

	\smallskip
	
	\ref{l_properties_pi_Lipschitz} Assume that $\hB=\bigl(\hb_i\bigr)_{i\in \N}$, $\hC=(\hc_i)_{i\in \N}\in \hSigma$. 
	Let $k$ be the smallest integer such that $\hb_k \neq \hc_k$. If $k=1$, since $\hpi\bigl(\hB\bigr)\in I_{\hb_1}$, $\hpi\bigl(\hC\bigr)\in I_{\hc_1}$,  Lemma~\ref{l_est_rho_(a,b)}, together with the definition of $d_{\hrho}$ 
    (cf.~(\ref{e_def_hd_hrho})), implies that
	\begin{equation*}
		\Absbig {\hpi\bigl(\hB\bigr)-\hpi\bigl(\hC\bigr)}
        \leq 2\rho\bigl(\hb_1, \hc_1\bigr)\leq 2\theta^2 d_{\hrho} \bigl(\hB,\hC\bigr)  . 
	\end{equation*} 
	If $k\geq 2$, the definition of $\hpi$ guarantees that $\hpi \bigl(\hB\bigr),\, \hpi \bigl(\hC\bigr)\in I_{\hfa}$ for some $\hfa\=\ha_1\ha_2\dots\ha_{k-1}\in \hbN^{k-1}$. 
	Combining statement~\ref{l_properties_sigma_and_pi_commutes}, Proposition~\ref{p_inverse_branches}~(i), Lemma~\ref{l_est_rho_(a,b)}, (\ref{e_def_hd_hrho}), and the fact that $\hpi\bigl(\sigma^{k-1}\bigl(\hB\bigr)\bigr)\in I_{\hb_k}$ and $\hpi\bigl(\sigma^{k-1}\bigl(\hC\bigr)\bigr)\in I_{\hc_k}$, 
    we have
	\begin{align*}
		\Absbig{\hpi\bigl(\hB\bigr)-\hpi\bigl(\hC\bigr)}
        &=\Absbig{\Gau_{\hfa}\bigl(\hpi\bigl(\sigma^{k-1}\bigl(\hB\bigr)\bigr)\bigr) - \Gau_{\hfa}\bigl(\hpi\bigl(\sigma^{k-1}\bigl(\hC\bigr)\bigr)\bigr)} \\
		&\leq \frac{\abs{ \hpi (\sigma^{k-1}(\hB\bigr))-\hpi (\sigma^{k-1}(\hC))}}{c_0^{2}\theta^{2(k-1)}} 
		\leq \frac{2\rho(\hb_k,\hc_k)}{c_0^{2}\theta^{2(k-1)}} 
		  \leq \frac{2\theta^2d_{\rho}(A,B)}{c_0^{2}}  , 
	\end{align*} 
so statement~\ref{l_properties_pi_Lipschitz} follows.

\smallskip

\ref{l_properties_pi_*} Since $\pi \:\Sigma  \to I\smallsetminus\Q$ is a homeomorphism (cf.~\cite[Theorem~1.1]{Mi17}), the relation
(\ref{conjugacyequation}) implies that $\sigma\:\Sigma\to\Sigma$ and $\Gau \: I\smallsetminus\Q\to I\smallsetminus\Q$ are topologically conjugate, so their spaces of invariant probability measures are homeomorphic under $\pi_*$.

\smallskip

\ref{l_properties_hpi_*} Fix $\mu \in \overline{\MMM(I,\Gau)}$. By Lemma~\ref{l_Q_Q^c_invatiant}, there exists $\{\mu_n\}_{n\in \N}\subseteq \MMM_{\irr}(I,\Gau)$ such that $\mu_n$ converges to $\mu$ in the weak$^*$ topology as $n\to +\infty$. Let us denote $\nu_n\=\pi_*^{-1}(\mu_n)$ for each $n\in \N$. By the weak$^*$ compactness of $\MMM\bigl(\hSigma,\sigma_{\hSigma}\bigr)$, there exists an accumulation point $\nu$ of $\{\nu_n\}_{n\in \N}$. Since $\hpi_*$ is continuous (see statement~\ref{l_properties_pi_Lipschitz}), $\hpi_*(\nu)=\mu$. Consequently $\hpi_*$ is surjective.

\smallskip

\ref{l_properties_hpi_*_Birkhoff_sum} From the definition of $S_{n,\hfa}$ (see (\ref{e_S_n_bf_a})), it suffices to note that $\Gau_{\sigma^i(\hfa)}(y_n)=\hpi \bigl(\sigma^i(A)\bigr)$ for each $0\le i\le n-1$, by statement~\ref{l_properties_sigma_and_pi_commutes}. \end{proof}

\begin{rem}
   \begin{enumerate}[label=\rm{(\roman*)}]	
	\item The continuous map 
    $\pi^{-1} \: I\smallsetminus\Q\to\Sigma$ is not Lipschitz: for example if $x_n\=[3,\overline{n}]$ and $y_n \=[2,1,\overline{n}]$, then $\lim\limits_{n\to +\infty}\abs{x_n- y_n}=0$ but $d_{\hrho}\bigl(\pi^{-1}(x_n),\pi{}^{-1}(y_n)\bigr)\geq \theta^{-2}\hrho(3,2)=\frac{1}{6\theta^2}$.
	
    \smallskip
	
	\item The pushforward $\hpi_*$ of the extension $\hpi$ is not injective: for example let $\mu_1$ be the periodic measure supported on the periodic orbit of $\overline{1\infty2\infty}$, let $\mu_2$ be the periodic measure supported on the periodic orbit of $\overline{1\infty}$, and let $\mu_3$ be the periodic measure supported on the periodic orbit of $\overline{2\infty}$. Then by the definition of $\hpi_*$, we get $\hpi_*(\mu_1)=\frac{1}{4}\bigl(2\delta_0+\delta_{1}+\delta_{1/2}\bigr)$, $\hpi_*(\mu_2)=\frac{1}{2}(\delta_0+\delta_{1})$, and $\hpi_*(\mu_3)=\frac{1}{2}\bigl(\delta_0+\delta_{1/2} \bigr)$. Clearly, $\hpi_*(\mu_1)=\hpi_*\bigl(\frac{1}{2}(\mu_2+\mu_3)\bigr)$ but $\mu_1\neq \frac{1}{2}(\mu_2+\mu_3)$.
    
	\smallskip
	
	\item The dynamics of $\Gau$ and $\sigma$ are not intertwined by $\hpi$, in other words, $\hpi\circ \sigma \neq \Gau\circ \hpi$: to see this note, for example, that if $A=\infty \overline{1}$ then $\hpi (\sigma(A))=[\overline{1}]$, but $\Gau(\hpi(A))=0$. 
\end{enumerate}
\end{rem}

\begin{prop}\label{p_ergodic_op_relations}
	If $\phi \in C(I)$, then 	
    \begin{enumerate}[label=\rm{(\roman*)}]	
	\smallskip
	\item\label{p_ergodic_op_relation_mea_equal} $\mea\bigl(\sigma_{\hSigma}, \phi\circ \hpi\bigr)=\mea (\Gau,\phi)$ and
	\smallskip
	
	\item\label{p_ergodic_op_relation_maximizing_measures} $\hpi_*\bigl(\MMM_{\max}\bigl(\sigma_{\hSigma}, \phi\circ \hpi\bigr)\bigr)
    =\MMM^*_{\max} (\Gau,\phi)\neq \emptyset$.
	\end{enumerate}	
\end{prop}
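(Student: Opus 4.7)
The plan is to deduce both parts from Lemma~\ref{l_properties_pi_and_pi_*}~\ref{l_properties_hpi_*}, which provides the continuous surjection $\hpi_*\:\MMM\bigl(\hSigma,\sigma\bigr)\to\overline{\MMM(I,\Gau)}$, together with the standard change-of-variables identity
\begin{equation*}
\int \! \phi\circ \hpi \,\dd\mu = \int \! \phi \,\dd\hpi_*(\mu) \qquad\text{for all } \mu\in \MMM\bigl(\hSigma,\sigma\bigr).
\end{equation*}

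For part~\ref{p_ergodic_op_relation_mea_equal}, I would combine the displayed identity with the surjectivity of $\hpi_*$ to get
\begin{equation*}
\mea\bigl(\sigma_{\hSigma},\phi\circ\hpi\bigr) = \sup_{\mu\in\MMM(\hSigma,\sigma)} \int\!\phi\,\dd\hpi_*(\mu) = \sup_{\nu\in\overline{\MMM(I,\Gau)}} \int\!\phi\,\dd\nu,
\end{equation*}
and then observe that since $\phi$ is continuous on the compact set $I$, the map $\nu\mapsto\int\!\phi\,\dd\nu$ is weak$^*$-continuous on $\cP(I)$, so the supremum over $\overline{\MMM(I,\Gau)}$ equals the supremum over $\MMM(I,\Gau)$, which is $\mea(\Gau,\phi)$ by definition.

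For part~\ref{p_ergodic_op_relation_maximizing_measures}, nonemptiness follows because $\hSigma$ is compact (hence $\MMM\bigl(\hSigma,\sigma_{\hSigma}\bigr)$ is weak$^*$ compact) and $\phi\circ\hpi$ is continuous by Lemma~\ref{l_properties_pi_and_pi_*}~\ref{l_properties_pi_Lipschitz}, so the continuous functional $\mu\mapsto\int\!\phi\circ\hpi\,\dd\mu$ attains its supremum on $\MMM\bigl(\hSigma,\sigma_{\hSigma}\bigr)$. For the set equality, the forward inclusion is immediate: any $\mu\in\MMM_{\max}\bigl(\sigma_{\hSigma},\phi\circ\hpi\bigr)$ pushes forward to a measure $\hpi_*(\mu)\in\overline{\MMM(I,\Gau)}$ satisfying $\int\!\phi\,\dd\hpi_*(\mu) = \int\!\phi\circ\hpi\,\dd\mu = \mea\bigl(\sigma_{\hSigma},\phi\circ\hpi\bigr) = \mea(\Gau,\phi)$ by~\ref{p_ergodic_op_relation_mea_equal}, and so $\hpi_*(\mu)\in\MMM^*_{\max}(\Gau,\phi)$.

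For the reverse inclusion, given $\nu\in \MMM^*_{\max}(\Gau,\phi)\subseteq \overline{\MMM(I,\Gau)}$, surjectivity of $\hpi_*$ supplies some $\mu\in \MMM\bigl(\hSigma,\sigma_{\hSigma}\bigr)$ with $\hpi_*(\mu)=\nu$, and the change-of-variables identity together with~\ref{p_ergodic_op_relation_mea_equal} then gives $\int\!\phi\circ\hpi\,\dd\mu = \mea(\Gau,\phi)=\mea\bigl(\sigma_{\hSigma},\phi\circ\hpi\bigr)$, so $\mu\in \MMM_{\max}\bigl(\sigma_{\hSigma},\phi\circ\hpi\bigr)$ and $\nu=\hpi_*(\mu)$ lies in the image. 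There is no real obstacle here; the entire proposition is a formal consequence of the surjectivity established in Lemma~\ref{l_properties_pi_and_pi_*}~\ref{l_properties_hpi_*} combined with weak$^*$-continuity. The only subtlety worth double-checking is the precise meaning of ``limit-maximizing'' as in Definition~\ref{d_limit_max_measure} versus the closure formulation~(\ref{e_setoflimmaximizngmeasures}); since $\overline{\MMM(I,\Gau)}$ has no isolated points arising from $\MMM(I,\Gau)$ (the periodic examples constructed earlier show accumulation even at rational points), both conventions coincide.
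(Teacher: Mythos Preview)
Your proof is correct. For part~\ref{p_ergodic_op_relation_maximizing_measures} your argument coincides with the paper's. For part~\ref{p_ergodic_op_relation_mea_equal} you take a slightly different route: you use the surjectivity of $\hpi_*$ onto $\overline{\MMM(I,\Gau)}$ (Lemma~\ref{l_properties_pi_and_pi_*}~\ref{l_properties_hpi_*}) together with weak$^*$-continuity of $\nu\mapsto\int\!\phi\,\dd\nu$ to identify the two suprema, whereas the paper instead restricts to the dense subsets $\MMM(\Sigma,\sigma_\Sigma)\subseteq\MMM\bigl(\hSigma,\sigma_{\hSigma}\bigr)$ and $\MMM_{\irr}(I,\Gau)\subseteq\MMM(I,\Gau)$ and matches them via the homeomorphism $\pi_*$ of Lemma~\ref{l_properties_pi_and_pi_*}~\ref{l_properties_pi_*}. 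Your route is marginally cleaner, though since the proof of~\ref{l_properties_hpi_*} already invokes the density of $\MMM_{\irr}(I,\Gau)$, the underlying content is the same. Your closing remark about the two formulations of ``limit-maximizing'' is a valid observation but can be dispatched more simply: $\MMM(I,\Gau)$ is convex with more than one point, hence has no isolated points, so its closure coincides with its set of accumulation points.
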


\begin{proof}
	\ref{p_ergodic_op_relation_mea_equal} From the definition of $\mea\bigl(\sigma_{\hSigma}, \phi\circ \hpi \bigr)$, and the fact that $\MMM(\Sigma,\sigma_{\Sigma})$ is dense in $\MMM\bigl(\hSigma,\sigma_{\hSigma} \bigr)$, we see that
	\begin{equation*}
	     \mea\bigl(\sigma_{\hSigma}, \phi\circ \hpi\bigr)
         =\sup\{ \langle  \mu, \phi\circ \hpi \rangle: \mu\in \MMM(\Sigma,\sigma_{\Sigma})\}
         =\sup\{ \langle  \mu, \phi\circ \pi  \rangle: \mu\in \MMM(\Sigma,\sigma_{\Sigma})\}. 
	\end{equation*}
Since $\MMM_{\irr}(I,\Gau)$ is dense in $\MMM(I,\Gau)$ (see Lemma~\ref{l_Q_Q^c_invatiant}), we have
	\begin{equation*}
	\mea(\Gau, \phi)=\sup\{\langle\nu , \phi\ \rangle: \nu\in \MMM_{\irr}(I,\Gau)\}  . 
\end{equation*}
Combining the above two identities with Lemma~\ref{l_properties_pi_and_pi_*}~\ref{l_properties_pi_*}, 
\ref{p_ergodic_op_relation_mea_equal} follows.

\smallskip

\ref{p_ergodic_op_relation_maximizing_measures} The first identity comes from the fact that $\hpi_*$ is a surjection from $\MMM\bigl(\hSigma,\sigma_{\hSigma}\bigr)$ to $\overline{\MMM(I,\Gau)}$ (cf.~Lemma~\ref{l_properties_pi_and_pi_*}~\ref{l_properties_hpi_*}),
and $\MMM^*_{\max} (\Gau,\phi)\neq \emptyset$ follows from the fact that $\overline{\MMM(I,\Gau)}$ is compact with respect to the weak$^*$ topology, and the assumption that $\phi\in C(I)$.
\end{proof}

\begin{prop}\label{p_mea_equal_sup_time_average}
	If $\phi\in C(I)$, then 
		\begin{enumerate}[label=\rm{(\roman*)}]	
        \smallskip
		\item $\lim\limits_{m\to +\infty}\mea_m(\Gau,\phi)=\mea(\Gau,\phi)$ and
		\smallskip		
		\item $\mea(\Gau,\phi)=\sup \bigl\{ \liminf\limits_{n\to +\infty}\frac{S_n\phi(x)}{n} : x\in I\smallsetminus\Q\bigr\}$.
	\end{enumerate} 
\end{prop}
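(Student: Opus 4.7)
My plan is to lift both statements to the symbolic space $\Sigma$ (and its compactification $\hSigma$), using Proposition~\ref{p_ergodic_op_relations}, Lemma~\ref{l_properties_pi_and_pi_*}, and the compactness of $\hSigma$ to drive all approximation arguments.

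\emph{Part (i).} Monotonicity of $\{\mea_m(\Gau,\phi)\}_{m\in\N}$, together with $\mea_m(\Gau,\phi)\le \mea(\Gau,\phi)$, immediately yields $\lim_{m\to\infty}\mea_m(\Gau,\phi)\le \mea(\Gau,\phi)$. For the reverse inequality, fix $\varepsilon>0$. By Lemma~\ref{l_Q_Q^c_invatiant}, weak$^*$ continuity of $\mu\mapsto\langle\mu,\phi\rangle$, and the ergodic decomposition (whose a.e.~component of a measure in $\MMM_{\irr}(I,\Gau)$ is again $\Q$-null, by Fubini applied to $\mathbf{1}_{I\cap\Q}$), there exists $\nu\in\MMM_{\irr}(I,\Gau)\cap\MMM_{\erg}(I,\Gau)$ with $\langle\nu,\phi\rangle\ge \mea(\Gau,\phi)-\varepsilon$. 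Via Lemma~\ref{l_properties_pi_and_pi_*}\ref{l_properties_pi_*}, pick a $\pi_*^{-1}\nu$-generic point $A=(a_i)_{i\in\N}\in\Sigma$ for the Birkhoff theorem applied to $\phi\circ\pi$. For each $n\in\N$, set $M_n\=\max\{a_i:1\le i\le n\}$, let $B_n\in\Sigma_{M_n}$ be the periodic sequence obtained by repeating the block $a_1a_2\dots a_n$, and let $\mu_n$ be the periodic measure on the $\sigma$-orbit of $B_n$. Then $\pi_*\mu_n$ is supported on $\bcf_{M_n}$, hence $\langle\pi_*\mu_n,\phi\rangle\le \mea_{M_n}(\Gau,\phi)\le \lim_{m\to\infty}\mea_m(\Gau,\phi)$. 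Since $\sigma^k(A)$ and $\sigma^k(B_n)$ agree in their first $n-k$ coordinates, (\ref{e_def_hd_hrho}) and $\hrho\le 1$ give $d_{\hrho}(\sigma^k(A),\sigma^k(B_n))\le \theta^{-2(n-k)}/(1-\theta^{-2})$. Because $\phi\circ\hpi$ is uniformly continuous on the compact space $\hSigma$ (Lemma~\ref{l_properties_pi_and_pi_*}\ref{l_properties_pi_Lipschitz}), splitting the sum at $k=n-J$ yields
\begin{equation*}
\Absbig{\tfrac{1}{n}S_n^{\sigma}(\phi\circ\pi)(A)-\langle\pi_*\mu_n,\phi\rangle}\le \omega\bigl(\tfrac{\theta^{-2J}}{1-\theta^{-2}}\bigr)+\tfrac{2J\norm{\phi}_\infty}{n},
\end{equation*}
where $\omega$ is a modulus of uniform continuity for $\phi\circ\hpi$. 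Letting $n\to\infty$ (so that the left Birkhoff average converges to $\langle\nu,\phi\rangle$) and then $J\to\infty$ gives $\lim_{m\to\infty}\mea_m(\Gau,\phi)\ge \langle\nu,\phi\rangle\ge \mea(\Gau,\phi)-\varepsilon$, and $\varepsilon>0$ being arbitrary proves (i).

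\emph{Part (ii).} Denote the right-hand supremum by $S$. For $S\le \mea(\Gau,\phi)$, fix $x\in I\smallsetminus\Q$, set $A\=\pi^{-1}(x)\in\Sigma$, and consider the empirical measures $\hmu_n\=\frac{1}{n}\sum_{k=0}^{n-1}\delta_{\sigma^k(A)}$, viewed inside the weak$^*$ compact space $\mathcal{P}\bigl(\hSigma\bigr)$. Since $\sigma$ is continuous on $\hSigma$, every weak$^*$ cluster point $\hmu$ of $\{\hmu_n\}$ lies in $\MMM\bigl(\hSigma,\sigma_{\hSigma}\bigr)$, so $\hpi_*\hmu\in\overline{\MMM(I,\Gau)}$. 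Using $\phi\circ\hpi(\sigma^k(A))=\phi(\Gau^k(x))$ (by (\ref{conjugacyequation}) on $\Sigma$), the corresponding subsequence of $\frac{1}{n}S_n\phi(x)$ converges to $\langle\hpi_*\hmu,\phi\rangle\le \mea(\Gau,\phi)$, where the bound uses weak$^*$ continuity of $\mu\mapsto\langle\mu,\phi\rangle$ and the density of $\MMM(I,\Gau)$ in $\overline{\MMM(I,\Gau)}$. Specialising to the subsequence realising the $\liminf$ gives $\liminf_n\frac{1}{n}S_n\phi(x)\le \mea(\Gau,\phi)$, and taking sup over $x$ yields $S\le \mea(\Gau,\phi)$. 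For $S\ge \mea(\Gau,\phi)$, the same density and ergodic-decomposition argument used in Part (i) shows $\mea(\Gau,\phi)=\sup\{\langle\nu,\phi\rangle:\nu\in\MMM_{\irr}(I,\Gau)\cap\MMM_{\erg}(I,\Gau)\}$; for each such $\nu$, Birkhoff's theorem furnishes a $\nu$-generic $x\in\supp\nu\subseteq I\smallsetminus\Q$ with $\lim_n\frac{1}{n}S_n\phi(x)=\langle\nu,\phi\rangle$, whence $S\ge\langle\nu,\phi\rangle$; taking sup over $\nu$ concludes.

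\emph{Main obstacle.} The delicate step is the periodic approximation in Part (i): it converts a generic orbit that may visit arbitrarily large digits into a periodic orbit of bounded digits with nearly the same Birkhoff average. This step crucially relies on uniform continuity of $\phi\circ\hpi$ on the compact $\hSigma$---a property that \emph{fails} for $\phi\circ\pi$ on the non-compact $\Sigma$---together with the geometric decay built into the definition (\ref{e_def_hd_hrho}) of $d_{\hrho}$. Everything else is a packaging exercise combining the ergodic decomposition, Birkhoff's theorem, and the symbolic conjugacy.
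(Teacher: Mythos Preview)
Your proof is correct, and both parts take a more self-contained route than the paper's.

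For Part~(i), the paper simply invokes the density of periodic measures in $\MMM(\Sigma,\sigma)$ (citing \cite[Theorem~3.8]{IV21}) and pushes forward via $\pi_*$: given $\varepsilon>0$, there is a periodic measure $\mu\in\MMM_{\irr}(I,\Gau)$ with $\langle\mu,\phi\rangle\ge\mea(\Gau,\phi)-\varepsilon$, and periodicity forces $\supp\mu\subseteq\bcf_m$ for some $m$. You instead reprove this density in the specific instance you need, by taking a Birkhoff-generic point for an ergodic $\Q$-null measure and periodising its first $n$ digits, with the error controlled via uniform continuity of $\phi\circ\hpi$ on the compact space $\hSigma$. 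Your argument is longer but avoids the external citation; the paper's is a one-liner given the reference.

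For Part~(ii), the paper applies \cite[Proposition~2.2]{Je19} twice---once on $\hSigma$ (using Proposition~\ref{p_ergodic_op_relations}\ref{p_ergodic_op_relation_mea_equal}) to obtain $\mea(\Gau,\phi)\ge S$, and once on each $\bcf_m$ combined with Part~(i) to obtain the reverse inequality. You essentially unpack that cited proposition: your empirical-measures-in-$\hSigma$ argument for $S\le\mea(\Gau,\phi)$ and your Birkhoff-generic-point argument for $S\ge\mea(\Gau,\phi)$ are exactly the ingredients underlying \cite[Proposition~2.2]{Je19}. A minor structural difference is that your proof of~(ii) is independent of~(i), whereas the paper's proof of~(ii) relies on~(i).
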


\begin{proof}
	(i) The set of periodic measures is known to be dense in $\MMM(\Sigma,\sigma)$ (see \cite[Theorem~3.8]{IV21}), and evidently $\pi^*$ gives a one-to-one correspondence between the set of periodic measures in $\MMM(\Sigma,\sigma)$ and the set of periodic measures in $\MMM_{\irr}(I,\Gau)$. 
	Combining this with Lemma~\ref{l_properties_pi_and_pi_*}~\ref{l_properties_pi_*}, it follows that the set of periodic measures is dense in $\MMM_{\irr}(I,\Gau)$,
    so for any $\epsilon>0$ there is a periodic measure $\mu\in \MMM_{\irr}(I,\Gau)$ with $\int\!\phi \, \mathrm{d}\mu\geq \mea (\Gau,\phi)-\epsilon$. Since $\mu$ is periodic, there exists $m\in \N$ such that $\mu \in \MMM(\bcf_m,\Gau|_{\bcf_m})$,
    and clearly $\mea_m(\Gau,\phi)\geq \int\!\phi \, \mathrm{d}\mu\geq \mea (\Gau,\phi)-\epsilon$. But $\epsilon>0$ was arbitrary, and $\{\mea_m(\Gau,\phi)\}_{m\in \N}$ is nondecreasing and bounded above by $\mea(\Gau,\phi)$, so (i) follows.
    
	\smallskip
	
	(ii) Now $\mea\bigl(\sigma_{\hSigma}, \phi\circ \hpi \bigr)=\mea (\Gau,\phi)$ by Proposition~\ref{p_ergodic_op_relations}~(i), the space $\bigl(\hSigma, d_{\hrho} \bigr)$ is compact, $\sigma:\hSigma\to\hSigma$ is continuous, and $\pi \:\Sigma  \to I\smallsetminus\Q$ is a homeomorphism (cf.~\cite[Theorem~1.1]{Mi17}), so \cite[Proposition~2.2]{Je19} gives
	\begin{align*}
		\mea(\Gau,\phi)
        &=\mea\bigl(\sigma_{\hSigma}, \phi\circ \hpi\bigr)
        = \sup \bigl\{ \liminf_{n\to +\infty}  n^{-1} S^{\sigma}_n(\phi\circ \hpi) (A)  : A\in\hSigma \bigr\} \\
		&\geq \sup \bigl\{ \liminf_{n\to +\infty} n^{-1} S^{\sigma}_n(\phi\circ \pi) (A) : A\in\Sigma \bigr\}
		= \sup \bigl\{  \liminf_{n\to +\infty} n^{-1} S_n\phi(x) : x\in I\smallsetminus\Q \bigr\} .  
	\end{align*}
For each $m\in\N$, the restriction $\Gau|_{\bcf_m}$ is a continuous map on the compact metric space $\bcf_m$,
so (i) and \cite[Proposition~2.2]{Je19} together give 	
\begin{equation*}
	\mea(\Gau,\phi)\leq \sup\limits_{ x\in \bigcup_{m\in \N}\bcf_m} \liminf_{n\to +\infty}\frac{S_n\phi(x)}{n}\leq \sup\limits_{ x\in I\smallsetminus\Q} \liminf_{n\to +\infty}\frac{S_n\phi(x)}{n}  , 
\end{equation*}
and (ii) follows.
\end{proof}

\section{\texorpdfstring{Structure of the closure of $\MMM(I,\Gau)$}{Structure of the closure of M(I,G)}}\label{sct_closure_invariant_measures}

\begin{definition}[Finite-continued-fraction measures and rational orbits]\label{d_rational_orbit_and_rational_measure}

	Fix $n\in\N$ and $\fa=(a_1,\dots,a_n)\in \N^n$. Let us denote 
    \begin{equation}  \label{e_def_p_k}
        \begin{aligned}
        p_0&=p_0(\fa)\=0 \quad\text{ and} \\ 
        p_k&=p_k(\fa)\=[a_{n-k+1},\dots,a_n] \quad\text{ for each } 1\le k\le n.
        \end{aligned} 
    \end{equation}
    We define the corresponding \emph{rational orbit} of length 
    \begin{equation}\label{la_defn}
    l_{\fa}\=n+1
    \end{equation}
    to be
	\begin{equation}\label{e_def_rational_orbit}
		\cO_{\bf a}\=
        \{p_0(\fa),\,p_1(\fa),\,p_2(\fa),\,\dots,\,p_n(\fa)\}
       = \{0,\,p_1,\,p_2,\,\dots,\,p_n\}  . 
	\end{equation}
    Note that each rational orbit $\cO_{\bf a}$
    contains only rational numbers.

	The corresponding \emph{finite-continued-fraction measure} (abbreviated as \emph{FCF measure}) $\mu_{\bf a}$ (of \emph{length} $ l_{\fa}$) is defined by
	\begin{equation}\label{e_def_rational_measure}
		\mu_{\bf a}\=
        \frac{1}{l_{\fa}}\bigl(\delta_{p_0(\fa)}+\delta_{p_1(\fa)}+\cdots+\delta_{p_n(\fa)}\bigr)
        = \frac{1}{n+1}\bigl(\delta_0+\delta_{p_1}+\cdots+\delta_{p_n}\bigr)  . 
	\end{equation}
    Note that $\mu_{\bf a}$ is a probability measure, but is never $\Gau$-invariant.

	Define $\RM$ to be the convex hull of $\{\delta_0\} \cup \{\mu_{\bf{a}}: n\in\N, \,\fa\in \N^n\}$.
    Define
	\begin{equation}\label{e_RM_*}
		\RM_{[0,1)} \= \{\mu \in \RM: \mu (\{1\})=0\}\subseteq \RM   . 
	\end{equation}
\end{definition}

\begin{rem}\label{r_rational_orbit}
    Fix $n\in\N$ and $\fa\in \N^n$. When $\fa\in \cA^n$, the rational orbit $\cO_{\fa}=\cO(p_n(\fa))$ is a $\Gau$-orbit that is eventually fixed, in the sense that $\Gau(p_0(\fa))=p_0(\fa)$. 
    
    When $\fa\in \cB^n$, the rational orbit $\cO_{\fa}=\cO(p_n(\fa))\cup\{1\}$ is not a $\Gau$-orbit, but 
    is an orbit under the map that is equal to $1$ on $R_1$, and equal to $\Gau$ elsewhere.
\end{rem}

\begin{lemma}\label{l_rational_orbit/measure_close_to_periodic}
	Every FCF measure is the limit of a sequence of periodic measures. Moreover,
    \begin{equation}\label{rational_contained_closure}
    \RM\subseteq \overline{\MMM(I,\Gau)} =\overline{\MMM_{\irr}(I,\Gau)}  . 
    \end{equation}
\end{lemma}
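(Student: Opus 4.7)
The second equality in the displayed line (\ref{rational_contained_closure}) is already established in Lemma~\ref{l_Q_Q^c_invatiant}, so the work lies in the first assertion and in the inclusion $\RM\subseteq\overline{\MMM(I,\Gau)}$. I will prove the first assertion by constructing an explicit sequence of periodic orbits approaching a given rational orbit, then deduce the inclusion by a convexity argument.

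Fix $n\in\N$ and $\fa=(a_1,\dots,a_n)\in\N^n$, and set $p_k=p_k(\fa)$ as in (\ref{e_def_p_k}). For each integer $m\geq 2$, consider the point
\begin{equation*}
    x_0^{(m)}\=[\overline{a_1,a_2,\dots,a_n,m}]\in I\smallsetminus \Q,
\end{equation*}
which is a quadratic irrational and a $\Gau$-periodic point of period dividing $n+1$. Its orbit consists of the $n+1$ distinct points
\begin{equation*}
    x_j^{(m)}=\Gau^{j}\bigl(x_0^{(m)}\bigr)=[\overline{a_{j+1},\dots,a_n,m,a_1,\dots,a_j}],\quad 0\leq j\leq n,
\end{equation*}
where the indices in the overline are read cyclically; in particular, $x_n^{(m)}=[\overline{m,a_1,\dots,a_n}]$. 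Because each $x_j^{(m)}$ is irrational, the associated periodic measure
\begin{equation*}
    \nu_m\=\frac{1}{n+1}\sum_{j=0}^{n}\delta_{x_j^{(m)}}
\end{equation*}
lies in $\MMM_{\irr}(I,\Gau)$.

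The key step is the limit identification. Letting $m\to+\infty$, one has $1/m\to 0$, so $x_n^{(m)}\to 0=p_0$. For each $0\leq j\leq n-1$, Proposition~\ref{p_inverse_branches}~\ref{p_inverse_branches_continuous} (continuity of $\Gau_{\hfa}(x)$ in $\hfa\in\hbN^{n-j}$), applied to the word $(a_{j+1},\dots,a_n,m)$ with $m\mapsto\infty$ in the last entry, gives
\begin{equation*}
    x_j^{(m)}=\Gau_{(a_{j+1},\dots,a_n,m)}\bigl(x_0^{(m)}\bigr)\xrightarrow[m\to+\infty]{} \Gau_{(a_{j+1},\dots,a_n,\infty)}(0)=[a_{j+1},\dots,a_n]=p_{n-j}.
\end{equation*}
Hence $\nu_m$ converges weak$^*$ to $\frac{1}{n+1}\sum_{j=0}^{n}\delta_{p_{n-j}}=\mu_{\fa}$, which proves the first assertion.

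For the inclusion $\RM\subseteq\overline{\MMM(I,\Gau)}$, note that $\delta_0\in\MMM(I,\Gau)$ since $0$ is a fixed point of $\Gau$, and the sequence $\nu_m\in\MMM_{\irr}(I,\Gau)$ constructed above shows $\mu_{\fa}\in\overline{\MMM(I,\Gau)}$ for every $\fa\in\N^*$. Since $\MMM(I,\Gau)$ is convex, its weak$^*$ closure $\overline{\MMM(I,\Gau)}$ is also convex. As $\RM$ is by definition the convex hull of $\{\delta_0\}\cup\{\mu_{\fa}:\fa\in\N^*\}$, the inclusion follows, and combined with Lemma~\ref{l_Q_Q^c_invatiant} we obtain (\ref{rational_contained_closure}). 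The main (mild) obstacle is verifying the pointwise convergence $x_j^{(m)}\to p_{n-j}$; this is where the continuity statement in Proposition~\ref{p_inverse_branches}~\ref{p_inverse_branches_continuous} does the work, since the periodic tail $\overline{m,a_1,\dots,a_j}$ converges in $\hbN^{\N}$ to a sequence starting with $\infty$, and $\Gau_{\infty}\equiv 0$ collapses the subsequent entries.
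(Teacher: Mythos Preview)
Your proof is correct and uses exactly the same construction as the paper: approximate $\mu_{\fa}$ by the periodic measures supported on the orbits of $[\overline{a_1,\dots,a_n,m}]$, then invoke convexity of $\overline{\MMM(I,\Gau)}$ and Lemma~\ref{l_Q_Q^c_invatiant}. One minor remark: Proposition~\ref{p_inverse_branches}~\ref{p_inverse_branches_continuous} is stated for \emph{fixed} $x$, whereas you apply it with $x=x_0^{(m)}$ varying; it is cleaner to write $x_j^{(m)}=\Gau_{(a_{j+1},\dots,a_n)}\bigl(x_n^{(m)}\bigr)$ with the fixed word $(a_{j+1},\dots,a_n)$ and use continuity of this single map together with $0\le x_n^{(m)}\le 1/m\to 0$.
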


\begin{proof}
	Fix $n\in \N$ and $\fa=(a_1,\dots,a_n)\in \N^n$.
    For each $m\in \N$, define 
	\begin{equation}
		r_m\=[\overline{a_1,\dots,a_n,m}]\in \Fix\bigl(\Gau^{n+1}\bigr)  . 
	\end{equation}
	Evidently $\lim\limits_{m\to +\infty}\Gau^{n-k}(r_m)=p_k$ for each $0\le k\le n$, hence the sequence $\{\mu_{\cO(r_m)}\}_{m\in\N}$ of periodic measures converges to the FCF measure $\mu_{\fa}$, as required.

	The sequence $\{\mu_{\cO(r_m)}\}_{m\in\N}$ is contained in $\MMM(I,\Gau)$, so $\mu_\mathbf{a}\in \overline{\MMM(I,\Gau)}$. But $\overline{\MMM(I,\Gau)}$ is convex, and $\fa\in\N^n$ was arbitrary, so
    $\RM\subseteq \overline{\MMM(I,\Gau)}$. By Lemma~\ref{l_Q_Q^c_invatiant}, we get $\overline{\MMM(I,\Gau)} =\overline{\MMM_{\irr}(I,\Gau)}$. Therefore, we obtain (\ref{rational_contained_closure}).
\end{proof}

The sets $R_n$, defined in Notation~\ref{n_Rn_g_f}, have the following simple properties:

\begin{lemma}\label{l_Rn_disjoint_union}
    $\bigcup_{n\in \N}R_n=(0,1)\cap \Q$, and if $m\neq n$ then $R_n\cap R_m=\emptyset$.
\end{lemma}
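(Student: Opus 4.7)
The plan is to reduce both assertions to the existence-and-uniqueness statement for finite continued fraction expansions already recorded in Lemma~\ref{l_continued_fraction_of_rational_number}.

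First I would verify the inclusion $\bigcup_{n\in\N}R_n\subseteq (0,1)\cap\Q$ directly from the definitions. For $n=1$, $R_1=\{1/a:a\in\N,\,a\ge 2\}\subseteq (0,1/2]\cap\Q$. For $n\ge 2$, the second equality in (\ref{e_def_sets_R_m}) displays each element of $R_n$ as a finite continued fraction $[a_1,\dots,a_n]$ with $a_i\in\N$; such a number is rational, and elementary bounds (every $[a_1,\dots,a_n]$ with $a_1\ge 1$ lies in $(0,1)$) give $R_n\subseteq (0,1)\cap\Q$.

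Next I would establish the reverse inclusion $(0,1)\cap\Q\subseteq\bigcup_{n\in\N}R_n$. Given $x\in(0,1)\cap\Q$, Lemma~\ref{l_continued_fraction_of_rational_number} supplies an integer $n\in\N$ and a tuple $(a_1,\dots,a_n)\in\cA_n$ (that is, with $a_n\ge 2$) such that $x=[a_1,\dots,a_n]$. If $n\ge 2$ the right-hand expression in (\ref{e_def_sets_R_m}) places $x$ in $R_n$; and if $n=1$, then $x=1/a_1$ with $a_1\ge 2$, so $x\in R_1$ by definition. This finishes the first assertion.

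For the disjointness assertion, I would argue by contradiction. Suppose $x\in R_m\cap R_n$ for some $m,n\in\N$. By the description of $R_m$ and $R_n$ given in (\ref{e_def_sets_R_m}) (and the definition of $R_1$), there exist tuples $(a_1,\dots,a_m)\in\cA_m$ and $(b_1,\dots,b_n)\in\cA_n$ with
\begin{equation*}
x=[a_1,\dots,a_m]=[b_1,\dots,b_n],
\end{equation*}
both expansions having last digit at least $2$. But Lemma~\ref{l_continued_fraction_of_rational_number} asserts that any rational in $(0,1)$ has exactly one such expansion, so the two tuples must coincide; in particular $m=n$, contradicting $m\neq n$.

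There is no real obstacle here: the entire argument is a bookkeeping exercise built on top of Lemma~\ref{l_continued_fraction_of_rational_number}, whose uniqueness clause encodes precisely the information needed for both the surjectivity onto $(0,1)\cap\Q$ and the pairwise disjointness. (As a sanity check, one can alternatively read off disjointness from the dynamical reformulation $R_n=\Gau^{-n}(0)\smallsetminus\Gau^{-(n-1)}(0)$ for $n\ge 2$, which says that $n$ is the first $\Gau$-hitting time of $0$ and hence uniquely determined by $x$; but the continued-fraction route is shorter and uses the already quoted lemma.)
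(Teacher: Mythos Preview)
Your proposal is correct and follows essentially the same approach as the paper's proof, which is a two-line appeal to the definition (\ref{e_def_sets_R_m}) for the forward inclusion and to Lemma~\ref{l_continued_fraction_of_rational_number} for both the reverse inclusion and the disjointness. Your version simply spells out the details that the paper leaves implicit.
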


\begin{proof}
    By (\ref{e_def_sets_R_m}), $\bigcup_{n\in \N}R_n\subseteq (0,1)\cap\Q$. By Lemma~\ref{l_continued_fraction_of_rational_number},  $(0,1)\cap\Q\subseteq \bigcup_{n\in \N}R_n$ and $R_n\cap R_m = \emptyset$ when $n\neq m$.
\end{proof}

The following lemma collects some basic properties of measures in $\RM$.

\begin{lemma}\label{l_basic_properties_rational_measures}
	Suppose $\mu\in \RM$, $n\in\N$, $\fa=(a_1,\dots,a_n)\in \cA_n$, and $\fb=(b_1,\dots,b_n)\in \cB_n$.
	\begin{enumerate}[label=\rm{(\roman*)}]
		\smallskip
		
		\item Then $\mu(\{1\})\leq \frac{1}{2}$.
		\smallskip
		
		\item If $\mu \in \RM_{[0,1)}$, then $\mu(R_n)\leq\frac{1}{n+1}$.
		\smallskip
		
		\item $(l_{\fa}+1)\mu_{f(\fa)}=l_{\fa}\mu_{\fa}+\delta_1$.
		\smallskip
		
		\item The map $g_n \: \cA_n \to R_n$ is bijective.
		\smallskip	
		
		\item For every $y\in I$, we have $\mu_{\fa}(\{y\})= 1 / l_{\fa}$ if $y\in \cO(g_n(\fa))$, and $\mu_{\fa}(\{y\})= 0$ otherwise.
        \smallskip
        
        \item $\mu_\fa(\{1\})=0$ and $\mu_\fa(\{0\})= \mu_\fa (R_1)$.
        \smallskip

        \item For all $x\in (0,1)\cap \Q$, $\mu_\fa (\{x\})\geq \mu_\fa \bigl(\Gau^{-1}(x) \bigr)$.
        \smallskip

        \item $\mu_\fb(\{1\})=1/l_{\fb}$ and $\mu_{\fb}(\{0\})\geq \mu_{\fb}(R_1)$.
        \smallskip

        \item For all $x\in (0,1)\cap \Q$, $\mu_\fb (\{x\})\geq \mu_\fb \bigl(\Gau^{-1}(x) \bigr)$.
	\end{enumerate}
\end{lemma}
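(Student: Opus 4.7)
The plan is to prove the structural identities for single FCF measures ((iii)--(ix)) first, then deduce the convexity claims ((i)--(ii)). Since $\mu_\fa$ is the uniform probability on the finite set $\cO(g_n(\fa))$ of size $l_\fa$, every atomic computation reduces to counting support points with a specified property, and essentially all the combinatorics is governed by the two identities $\Gau([a_{n-k+1},\dots,a_n])=[a_{n-k+2},\dots,a_n]$ (with $\Gau([a_n])=0$, since $a_n\geq 2$ forces $1/a_n\in(0,1/2]$) and $[c_1,\dots,c_{m-1},1]=[c_1,\dots,c_{m-1}+1]$ (from Lemma~\ref{l_continued_fraction_of_rational_number}), combined with the pairwise disjointness of the $R_k$ from Lemma~\ref{l_Rn_disjoint_union}.

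I would first prove (iv) and (v) together. Surjectivity of $g_n$ is immediate from (\ref{e_def_sets_R_m}), and injectivity is the uniqueness clause in Lemma~\ref{l_continued_fraction_of_rational_number}. Iterating the first identity above gives $\Gau(p_k(\fa))=p_{k-1}(\fa)$ for $1\leq k\leq n$, so $\cO(g_n(\fa))=\{p_0,\dots,p_n\}$; distinctness follows because $p_0=0$ while $p_k\in R_k$ for $k\geq 1$ (since $\fa\in\cA_n$ ensures each suffix $[a_{n-k+1},\dots,a_n]$ has last digit $\geq 2$), and the $R_k$ are pairwise disjoint. Part (iii) is then a direct computation using the second CF identity: for $\fa\in\cA_n$, $f(\fa)=(a_1,\dots,a_{n-1},a_n-1,1)$ yields $p_1(f(\fa))=1$ and $p_k(f(\fa))=p_{k-1}(\fa)$ for $k\geq 2$, and summing Diracs gives the asserted relation.

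For (vi), (v) places each $p_k(\fa)$ in $R_k$, which avoids $1$, so $\mu_\fa(\{1\})=0$; among the $p_k$'s only $p_1=1/a_n$ lies in $R_1$, yielding $\mu_\fa(R_1)=1/l_\fa=\mu_\fa(\{0\})$. For (viii), writing $\fb=f(\fa')$ for some $\fa'\in\cA_{n-1}$ and applying the second CF identity, $p_1(\fb)=1$ and $p_k(\fb)\in R_{k-1}$ for $k\geq 2$; so $\mu_\fb(\{1\})=1/l_\fb$, and only $p_2(\fb)$ (if $n\geq 2$) can lie in $R_1$, giving $\mu_\fb(R_1)\leq 1/l_\fb=\mu_\fb(\{0\})$. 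For (vii) and (ix), the orbit relation $\Gau(p_k)=p_{k-1}$ (augmented with $\Gau(p_1)=0$ in the $\cB$ case since $p_1=1$) shows that for $x\in(0,1)\cap\Q$, the intersection $\Gau^{-1}(x)\cap\supp\mu$ is either empty or equals $\{p_k\}$ for a single $k\geq 2$ with $p_{k-1}=x$; in the second case $x=p_{k-1}$ lies in the support with matching mass $1/l$, so the required inequality reduces to $0\leq 0$ or $1/l\leq 1/l$.

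Finally, (i) and (ii) are convexity arguments. By (vi), (viii), and the above, $\mu_\fc(\{1\})\leq 1/2$ for every $\fc\in\N^*$ (with equality only for $\fc=(1)\in\cB_1$), and $\delta_0(\{1\})=0$; convex combinations inherit the bound, yielding (i). For (ii), the hypothesis $\mu(\{1\})=0$ forces every $\fc\in\cB$ appearing in the convex combination to carry zero weight (since $\mu_\fc(\{1\})>0$ there), so only $\delta_0$ and $\mu_\fa$ with $\fa\in\cA_k$ contribute; for these, (v) shows $\mu_\fa(R_n)=1/(k+1)$ if $k\geq n$ and $0$ otherwise, in either case $\leq 1/(n+1)$, so convexity gives $\mu(R_n)\leq 1/(n+1)$. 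The main technical care is in tracking which CF tail regime ($\cA$ or $\cB$) applies and invoking the $[\dots,c-1,1]=[\dots,c]$ identity at the right moment to pass between them; the remaining arguments are finite bookkeeping.
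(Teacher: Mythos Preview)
Your proof is correct and follows the same approach as the paper: direct orbit-counting for individual FCF measures combined with the continued-fraction identity $[\dots,c-1,1]=[\dots,c]$, then convexity for (i)--(ii); your justification of (ii) in fact makes explicit a step (that $\mu(\{1\})=0$ forces all $\cB$-weights in any convex representation to vanish) which the paper leaves implicit. One harmless slip worth correcting: for $\fb\in\cB_n$ with $n\geq 2$ the relation $\Gau(p_k(\fb))=p_{k-1}(\fb)$ fails at $k=2$, since $p_2(\fb)=[b_{n-1},1]=1/(b_{n-1}+1)$ gives $\Gau(p_2)=0\neq 1=p_1$; this does not affect your argument for (ix), because the restriction $x\in(0,1)$ already excludes $x=p_1=1$, so the set $\{p_k:k\geq 2,\,p_{k-1}=x\}$ you describe coincides with the correct set $\{p_k:k\geq 3,\,p_{k-1}=x\}$.
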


\begin{proof}
	(i) follows immediately from the fact that $\mu_{\fb} (\{1\})\leq \frac{1}{2}$ for all $\fb\in \N^*$ (see (\ref{e_def_rational_measure})), the fact that $\delta_0(\{1\})=0$, and the definition of $\RM$ (cf.~Definition~\ref{d_rational_orbit_and_rational_measure}).
    
	\smallskip
	
    (ii) This follows from the fact that $\mu_\fb (\{R_n\})\leq \frac{1}{n+1}$ for each $ \fb\in \cA$ (see (\ref{e_def_rational_measure}) and (\ref{e_def_sets_R_m})), the fact that $\delta_0(\{R_n\})=0$, and the definition of $\RM_{[0,1)}$ (see (\ref{e_RM_*})).
    
	\smallskip
	
	(iii) Since $[a_{n-k+1},\dots,a_n]=[a_{n-k+1},\dots,a_n-1,1]$ for each $1\le k\le n$, using (\ref{e_def_rational_measure}) and (\ref{e_def_p_k}) we get
	$(l_{\fa}+1)\mu_{f(\fa)}
        =\delta_0+\delta_1+\sum_{k=1}^n\delta_{p_k}
        =l_{\fa}\mu_{\fa}+\delta_1$.

	\smallskip
	
	(iv) By the definition of $\cA_n$ (see (\ref{e_def_cA_n})) and the definition of $g_n$ (see (\ref{e_def_g_n})), we have $\Gau^{n-1}(g_n(\fa))=\frac{1}{a_n}\in R_1$. By Lemma~\ref{l_continued_fraction_of_rational_number}, $g_n$ is injective. By the definition of $R_n$ (see (\ref{e_def_sets_R_m})), $g_n$ maps $\cA_n$ surjectively to $R_n$.
    
	\smallskip
	
	(v) For $\fa =(a_1,a_2,\dots,a_n)\in \cA_n$,  denote $x\= [a_1,a_2,\dots,a_n]=g_n(\fa)$. The definition of the rational orbit $\cO_{\fa}$ 
    (cf.~(\ref{e_def_rational_orbit})) gives that $\cO_{\fa}=\cO(x)$, and then (v) follows from the definition of $\mu_{\fa}$ 
    (cf.~(\ref{e_def_rational_measure})).
    
    \smallskip

    (vi) We have $\mu_{\fa}(\{1\})=0$ since the support of $\mu_{\fa}$ is $\cO_\fa$ contained in $[0,1)\cap\Q$ (cf.~(\ref{e_def_rational_orbit})). 
    The point $0$ is an atom of $\mu_{\fa}$, with $\mu_{\fa}(\{0\})=1 / l_{\fa}$, and precisely one element of $R_1$, namely the point $1/a_n$, is an atom of $\mu_{\fa}$, also with weight
    $\mu_{\fa}(\{1/a_n\})=1 / l_{\fa}$, so in particular $\mu_{\fa}(\{R_1\})=1 / l_{\fa} = \mu_{\fa}(\{0\})$.

    \smallskip

    (vii) Note that the support of $\mu_{\fa}$ is an eventually fixed $\Gau$-orbit, so if $x$ is not an atom of $\mu_{\fa}$ then nor is any element of $\Gau^{-1}(x)$, so $\mu(\{x\})=0=\mu\bigl(\Gau^{-1}(x)\bigr)$. If $x$ is an atom of $\mu_{\fa}$ then $x=p_k(\fa)$ for some $0\le k\le n$: if $k=n$ then $\Gau^{-1}(x)$ does not contain any atoms of $\mu_{\fa}$, so $\mu_\fa (\{x\}) = 1 / l_{\fa}> 0= \mu_\fa\bigl(\Gau^{-1}(x)\bigr)$,
    while if $k<n$ then $\Gau^{-1}(x)$ contains precisely one atom of $\mu_{\fa}$, namely $p_{k+1}(\fa)$, so
    $\mu_\fa (\{x\}) = 1 / l_{\fa}= \mu_\fa\bigl(\Gau^{-1}(x)\bigr)$, therefore in both cases we see that (vii) holds.
    \smallskip

    (viii) We have $\mu_{\fb}(\{1\})=1/l_\fb$ since the support of $\mu_{\fb}$ is $\cO_\fb$ containing $1$ (cf.~(\ref{e_def_rational_orbit})). When $n=1$, $\mu_{\fb}=1/2(\delta_0+\delta_1)$ and $\mu_\fb(\{0\})\geq 0=\mu_\fb(R_1)$. When $n\geq 2$, note that $\cO_{\fb}=\cO_{f^{-1}(\fb)}\cup\{1\}$ and $f^{-1}(\fb)\in \cA^{n-1}$, so by (iii) and (vi) we get $\mu_\fb(\{0\})\geq\mu_\fb(R_1)$.
    \smallskip

    (ix) Note that the support of $\mu_{\fb}$ is the union of an eventually fixed $\Gau$-orbit and $1$ (see Remark~\ref{r_rational_orbit}), so by the fact that $\Gau^{-1}(1)=\emptyset$, if $x$ is not an atom of $\mu_{\fb}$ then nor is any element of $\Gau^{-1}(x)$, so $\mu_{\fb}(\{x\})=0=\mu_{\fb}\bigl(\Gau^{-1}(x)\bigr)$. If $x\in (0,1)$ is an atom of $\mu_{\fb}$, then $x=p_k(\fb)$ for some $2\le k\le n$: if $k=n$ then $\Gau^{-1}(x)$ does not contain any atoms of $\mu_{\fb}$, so $\mu_\fb (\{x\}) = 1 / l_{\fb}> 0= \mu_\fa\bigl(\Gau^{-1}(x)\bigr)$,
    while if $k<n$ then $\Gau^{-1}(x)$ contains precisely one atom of $\mu_{\fb}$, namely $p_{k+1}(\fb)$, so
    $\mu_\fb (\{x\}) = 1 / l_{\fb}= \mu_\fb\bigl(\Gau^{-1}(x)\bigr)$, therefore in both cases we see that (ix) holds.
\end{proof}

\begin{lemma}\label{l_relation_RM_and_RM_*}
    If $\nu \in \RM_{[0,1)}$ and $r\in [0,1]$ satisfy $(1-r)\nu(\{0\})\geq r$, then $(1-r)\nu +r\delta_1\in \RM$.
\end{lemma}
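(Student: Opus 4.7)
\smallskip

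\textbf{Proof plan.} The idea is to decompose $\nu$ explicitly and then absorb the extra $r\delta_1$ mass by means of two elementary moves.

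First I would observe that, since $\nu\in\RM_{[0,1)}$, the measure $\nu$ admits a representation as a finite convex combination
\begin{equation*}
    \nu=c_0\delta_0+\sum_{i=1}^{m}c_i\mu_{\fa_i},
\end{equation*}
where $c_0,c_i\ge 0$, $c_0+\sum c_i=1$, and each $\fa_i\in\cA$ lies in $\cA$ rather than $\cB$: indeed, $\mu_{\fb}(\{1\})=1/l_{\fb}>0$ for every $\fb\in\cB$ by Lemma~\ref{l_basic_properties_rational_measures}(viii), while $\nu(\{1\})=0$, so any $\mu_{\fb}$ with $\fb\in\cB$ must appear with coefficient $0$. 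Combining this with Lemma~\ref{l_basic_properties_rational_measures}(vi) gives $\nu(\{0\})=c_0+S$, where $S\=\sum_{i=1}^{m}c_i/l_{\fa_i}$.

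The two moves are the following. The identity $\mu_{(1)}=\tfrac{1}{2}(\delta_0+\delta_1)$ lets us rewrite $\beta\delta_0+\beta\delta_1=2\beta\mu_{(1)}$ for any $\beta\ge 0$. Moreover, Lemma~\ref{l_basic_properties_rational_measures}(iii) rearranges to
\begin{equation*}
    u_i\mu_{\fa_i}+\frac{u_i}{l_{\fa_i}}\delta_1=\frac{u_i(l_{\fa_i}+1)}{l_{\fa_i}}\mu_{f(\fa_i)}
\end{equation*}
for any $u_i\ge 0$, which converts $\mu_{\fa_i}$--mass together with a proportional amount of $\delta_1$--mass into $\mu_{f(\fa_i)}$--mass. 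I now choose the parameters: set $\beta\=\max\{0,\,r-(1-r)S\}$ and then select $u_i\in[0,(1-r)c_i]$ with $\sum_i u_i/l_{\fa_i}=r-\beta$ (take, e.g., $u_i\=(1-r)c_i(r-\beta)/((1-r)S)$ when $S>0$, and $u_i\=0$ when $S=0$).

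The hypothesis enters exactly at the verification that $\beta\le (1-r)c_0$: when $\beta>0$ this amounts to $r\le (1-r)(c_0+S)=(1-r)\nu(\{0\})$, which is the assumption; the bound $r-\beta\le (1-r)S$ also follows from the same inequality and the definition of $\beta$. Granted these, one computes
\begin{align*}
    (1-r)\nu+r\delta_1&=\bigl[(1-r)c_0-\beta\bigr]\delta_0+\sum_{i=1}^{m}\bigl[(1-r)c_i-u_i\bigr]\mu_{\fa_i}\\
    &\qquad +\bigl[\beta\delta_0+\beta\delta_1\bigr]+\sum_{i=1}^{m}\Bigl[u_i\mu_{\fa_i}+\frac{u_i}{l_{\fa_i}}\delta_1\Bigr]\\
    &=\bigl[(1-r)c_0-\beta\bigr]\delta_0+\sum_{i=1}^{m}\bigl[(1-r)c_i-u_i\bigr]\mu_{\fa_i}+2\beta\mu_{(1)}+\sum_{i=1}^{m}\frac{u_i(l_{\fa_i}+1)}{l_{\fa_i}}\mu_{f(\fa_i)},
\end{align*}
with all coefficients nonnegative and summing to $(1-r)+r=1$, exhibiting $(1-r)\nu+r\delta_1$ as a convex combination of $\delta_0$ and FCF measures, hence in $\RM$. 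The main (and only) obstacle is realizing that $\beta$ has to be chosen to be precisely the $\delta_1$--mass that cannot be absorbed by the second move; once that reduction is made, the hypothesis $(1-r)\nu(\{0\})\ge r$ is exactly what guarantees the feasibility of $\beta$.
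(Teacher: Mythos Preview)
Your proof is correct and follows essentially the same approach as the paper: both decompose $\nu$ over $\delta_0$ and $\{\mu_{\fa}:\fa\in\cA\}$, then absorb the $r\delta_1$ mass via the two moves $\delta_0+\delta_1=2\mu_{(1)}$ and Lemma~\ref{l_basic_properties_rational_measures}(iii). The only difference is cosmetic: the paper uses a single uniform scaling parameter $\lambda\=r/((1-r)\nu(\{0\}))$, converting the same fraction $\lambda$ of every term simultaneously, whereas you split the allocation between the parameters $\beta$ and $u_i$; both parametrizations yield the same convex combination structure.
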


\begin{proof}
    Since $\nu\in\RM_{[0,1)}$, 
    we can write
    \begin{equation}\label{e_decomposition_nu}
		\nu=r_0\delta_0+\sum\limits_{ \fa \in \cA}r(\fa)\mu_{\fa},
	\end{equation}
    where $r_0\ge0$, and $r(\fa)\ge 0$ for each $\fa\in \cA$, and 
	\begin{equation}
		r_0+\sum\limits_{ \fa \in \cA}r(\fa)=1.\label{e_decomposition_nu_constants}
\end{equation}
        
Combining (\ref{e_decomposition_nu}),
(\ref{la_defn}), and (\ref{e_def_rational_measure}), we see that
	\begin{equation*}
		\nu(\{0\})=
        r_0 + \sum\limits_{ \fa \in \cA} r(\fa) \mu_{\fa}(\{0\})=
        r_0+\sum\limits_{ \fa \in \cA}\frac{r(\fa)}{l_{\fa}},
	\end{equation*}
and combining this with the assumption that $ (1-r)\nu(\{0\})\geq r$
gives
\begin{equation}\label{e_mu0_equality}
r_0+\sum\limits_{ \fa \in \cA}\frac{r(\fa)}{l_{\fa}}
\ge \frac{r}{1-r}.
\end{equation}
    
	Let us denote 
	$\lambda\= \frac{r}{(1-r)\nu (\{0\})}\in[0,1]$.
    Then by (\ref{e_decomposition_nu}), (\ref{e_mu0_equality}), and  (\ref{e_decomposition_nu_constants}),
	\begin{equation*}\label{e_proof_decomposition1}
		\begin{aligned}
			\lambda(1-r)\nu+r\delta_1
			&=\frac{r}{\nu(\{0\})}\Bigl(r_0\delta_0+\sum\limits_{ \fa \in \cA}r(\fa)\mu_{\fa}\Bigr)+
			\frac{r}{\nu(\{0\})}\Bigl(r_0+\sum\limits_{ \fa \in \cA}\frac{r(\fa)}{l_{\fa}}\Bigr)\delta_1\\
			&=\frac{r}{\nu(\{0\})}\Bigl(r_0(\delta_0+\delta_1)+\sum\limits_{ \fa \in \cA}\frac{r(\fa)}{l_\fa}(l_\fa\mu_{\fa}+\delta_1)\Bigr)  . 
		\end{aligned}	
	\end{equation*}
	This, together with $l_\fa\mu_{\fa}+\delta_1 = (l_\fa+1)\mu_{f(\fa)}$ (see Lemma~\ref{l_basic_properties_rational_measures}~(iii)), gives us 
	\begin{align*}
		(1-r)\nu+r\delta_1&=(1-\lambda)(1-r)\nu+\lambda(1-r)\nu+r\delta_1\\
		&=(1-\lambda)(1-r)\nu+\frac{r}{\nu(\{0\})}\Bigl(r_0(\delta_0+\delta_1)+\sum\limits_{ \fa \in \cA}\frac{r(\fa)}{l_{\fa}}(l_\fa+1)\mu_{f(\fa)}\Bigr)  .
	\end{align*}
	Then since $\RM$ is convex, and each of the measures $\nu$, $\frac{\delta_0+\delta_1}{2}$, and $\mu_{f(\fa)}$
    belongs to $\RM$, 
    we conclude that $(1-r)\nu+r\delta_1\in \RM$, as required.
\end{proof}

\begin{prop}\label{l_RM*_equivalent_def}
	Suppose $\mu \in \cP(I)$ with $\mu (I\cap\Q)=1$. Then $\mu\in \RM_{[0,1)}$ if and only if $\mu$ satisfies the following conditions:
	\begin{enumerate}[label=\rm{(\roman*)}]
		\smallskip
		\item $\mu(\{1\})=0$ and $\mu(\{0\})\geq \mu (R_1)$.
		\smallskip
		
		\item $\mu (\{x\})\geq \mu\bigl(\Gau^{-1}(x)\bigr)$ for all $x\in (0,1)\cap \Q$. 
	\end{enumerate}
\end{prop}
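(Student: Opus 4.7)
The plan is to prove each implication separately: the ``only if'' direction by invoking Lemma~\ref{l_basic_properties_rational_measures} together with linearity, and the ``if'' direction by constructing explicit coefficients for a convex decomposition of $\mu$.

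For ``only if'', suppose $\mu\in\RM_{[0,1)}$, so that $\mu=r_0\delta_0+\sum_{\fc\in\N^*}r(\fc)\mu_\fc$ for some non-negative $r_0$ and $r(\fc)$ summing to $1$. Since $\delta_0(\{1\})=0$ and $\mu(\{1\})=0$, while $\mu_\fb(\{1\})=1/l_\fb>0$ for every $\fb\in\cB$ by Lemma~\ref{l_basic_properties_rational_measures}(viii), one deduces $r(\fb)=0$ for all $\fb\in\cB$. Conditions (i) and (ii) now follow by linearity, from the corresponding (trivial) properties of $\delta_0$ and the properties of $\mu_\fa$ for $\fa\in\cA$ recorded in parts (vi) and (vii) of Lemma~\ref{l_basic_properties_rational_measures}.

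For ``if'', given $\mu$ satisfying (i) and (ii), use the bijection $g_n\colon\cA_n\to R_n$ from Lemma~\ref{l_basic_properties_rational_measures}(iv) to define candidate coefficients
\begin{equation*}
r_0\=\mu(\{0\})-\mu(R_1)\geq 0 \quad\text{and}\quad r(\fa)\=l_\fa\bigl(\mu(\{g_n(\fa)\})-\mu\bigl(\Gau^{-1}(g_n(\fa))\bigr)\bigr)\geq 0 \quad\text{for }\fa\in\cA_n,
\end{equation*}
where non-negativity comes from (i) and (ii) respectively. The claim is that $\mu=r_0\delta_0+\sum_{\fa\in\cA}r(\fa)\mu_\fa$, which requires verifying both that the coefficients sum to $1$ and that atom masses match. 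The verification hinges on the identity $\Gau^{-1}(R_n)=R_{n+1}$ (a direct consequence of the definitions of $R_n$ and $\Gau$), which yields $\sum_{x\in R_n}(\mu(\{x\})-\mu(\Gau^{-1}(x)))=\mu(R_n)-\mu(R_{n+1})$ and hence $\sum_{\fa\in\cA}r(\fa)=\sum_{n\ge 1}(n+1)(\mu(R_n)-\mu(R_{n+1}))$. Since (ii) implies $\mu(R_n)\geq\mu(R_{n+1})$ and $\sum_n\mu(R_n)\le 1$, the standard fact that a non-increasing summable sequence $(a_n)$ satisfies $na_n\to 0$ ensures the series converges and telescopes to $\mu((0,1)\cap\Q)+\mu(R_1)=1-\mu(\{0\})+\mu(R_1)$, giving $r_0+\sum_{\fa\in\cA}r(\fa)=1$. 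For the atom at $0$, the identity $\mu_\fa(\{0\})=1/l_\fa$ reduces the computation to the same telescoping, yielding total mass $r_0+\mu(R_1)=\mu(\{0\})$. For $x\in R_n$, Lemma~\ref{l_continued_fraction_of_rational_number}, together with the constraint $\fa\in\cA$ (last digit $\geq 2$), identifies the set of $\fb\in\cA$ with $x\in\supp\mu_\fb$ as precisely the concatenations $(b_1,\dots,b_k,g_n^{-1}(x))$ for $k\ge 0$ and $(b_1,\dots,b_k)\in\N^k$ arbitrary; these biject with $\bigcup_{k\geq 0}\Gau^{-k}(x)$, and the total contribution at $x$ telescopes via $\sum_{k\ge 0}(\mu(\Gau^{-k}(x))-\mu(\Gau^{-(k+1)}(x)))=\mu(\{x\})$, using $\mu(\Gau^{-K}(x))\leq\mu(R_{n+K})\to 0$.

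The principal technical obstacle is this combinatorial enumeration: the restriction $\fb\in\cA$, combined with the uniqueness portion of Lemma~\ref{l_continued_fraction_of_rational_number}, is precisely what excludes the alternative continued fraction representation ending in $1$, leaving the clean ``free prefix + fixed suffix'' parametrisation on which both telescoping arguments depend. Once this bookkeeping is in place, summability and atom-matching become two instances of the same collapse $\sum_{k}(\mu(\Gau^{-k}(\cdot))-\mu(\Gau^{-(k+1)}(\cdot)))$.
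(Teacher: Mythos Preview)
Your proof is correct and follows essentially the same approach as the paper. Both directions match: the ``only if'' by reducing to the generators $\delta_0$ and $\mu_\fa$ for $\fa\in\cA$ via Lemma~\ref{l_basic_properties_rational_measures}~(vi),~(vii), and the ``if'' by defining the same coefficients (the paper writes $\phi_\mu(x)=(m+1)(\mu(\{x\})-\mu(\Gau^{-1}(x)))$ for $x\in R_m$, which is your $r(g_m^{-1}(x))$) and verifying atom-by-atom equality via the same telescoping sums; your explicit check that the coefficients sum to $1$ is an extra step the paper omits (it follows once $\mu=\nu$ is established), and your ``free prefix plus fixed suffix'' bookkeeping is exactly the paper's grouping by $\nu_m$.
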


\begin{proof}
	First we assume that $\mu\in \RM_{[0,1)}$, and will show that $\mu$ satisfies conditions~(i) and~(ii). Note that these properties are closed under convex combination.
    From the definition of $\RM_{[0,1)}$ (see (\ref{e_RM_*})), and the fact that $\delta_0$ satisfies conditions~(i) and~(ii),
    to prove that $\mu$ satisfies conditions~(i) and~(ii)
    it suffices to note that $\mu_{\fa}$ satisfies conditions~(i) and~(ii) for all $\fa\in \cA$, by~(vi) and~(vii) of
	Lemma~\ref{l_basic_properties_rational_measures}.

	Now we assume that $\mu$ satisfies conditions~(i) and~(ii), and will show that this implies that $\mu\in\RM_{[0,1)}$. 
	Define a function $\phi_\mu \: [0,1)\cap \Q\to [0,1]$ by 
	\begin{align}
		\phi_{\mu}(0)&\= \mu(\{0\})-\mu (R_1)  , \label{e_phi_mu_0}\\
		\phi_\mu (x)&\=(m+1)\bigl(\mu(\{x\})-\mu\bigl(\Gau^{-1}(x)\bigr)\bigr)\quad \text{ for } x\in R_m,\, m\in \N \label{e_phi_mu}  , 
	\end{align}
    noting that conditions~(i) and~(ii) ensure that $\phi_\mu$ is everywhere nonnegative.
	Define measures
	\begin{equation} \label{e_proof_nu_n}\\
		\nu_0\=\phi_\mu(0)\delta_0  , \qquad
		\nu_n  \=\sum\limits_{x\in R_n}\phi_{\mu}(x)\mu_{g^{-1}(x)}  , \qquad 
        \nu \=\sum\limits_{n=0}^{+\infty}\nu_n   .   
	\end{equation}

       We note that by the constructions in (\ref{e_proof_nu_n}), $\nu$ is a sum of positive measures. 
       Thus, $\nu$ is a nonnegative combination of the base elements $\delta_0$ and $\bigcup_{n\in\N}\{\mu_{g^{-1}(x)} : x\in R_n\}$ of $\RM_{[0,1)}$, since $g^{-1}(x) \in \cA$ (as $g_n\: \cA_n \to R_n$ is a bijection (cf.~Lemma~\ref{l_basic_properties_rational_measures}~(iv)), $x \in R_n$ guarantees $g^{-1}(x) \in \cA_n \subseteq \cA$, and thus $\mu_{g^{-1}(x)} \in \RM_{[0,1)}$). 
    
	We claim that $\mu =\nu$, so in particular
     $\nu$ is a probability measure. From the above it will therefore follow that $\nu \in \RM_{[0,1)}$, by the convexity of $\RM_{[0,1)}$, and hence the required result that $\mu\in\RM_{[0,1)}$.
    
    To verify that $\mu =\nu$ it suffices to show that
	\begin{equation*}
		\mu(\{y\})=\nu(\{y\})\quad\text{for all } y \in I\cap\Q   . 
	\end{equation*}
	For all $x\in (0,1)\cap \Q$, from the fact that $g^{-1}(x)\in \cA$, the definition of $\cA$ (see (\ref{e_def_cA_n})), and the definition of FCF measures (see (\ref{e_def_rational_measure})), we obtain that $\mu_{g^{-1}(x)}(\{1\})=0$, and hence that   
	\begin{equation}\label{e_proof_nu_equal_mu_1}
		\nu(\{1\})=0=\mu(\{1\})  . 
	\end{equation}
	Now suppose $y\in (0,1)\cap \Q$; by Lemma~\ref{l_Rn_disjoint_union}, there exists $n\in \N$ such that $y\in R_n$. By Lemma~\ref{l_basic_properties_rational_measures}~(v), for each $m\in \N$ and $x\in R_m$, we have
	\begin{equation*}
		(m+1)\mu_{g^{-1}(x)}(\{y\})=\begin{cases}
			1 &\text{if }y\in \cO(x)  , \\
			0 & \text{otherwise}  . 
		\end{cases}
	\end{equation*}
	Combining this with (\ref{e_phi_mu}) gives
	\begin{equation}\label{e_proof_phi_mu_xy}
		\phi_{\mu}(x)\mu_{g^{-1}(x)}(\{y\})=\begin{cases}
			\mu(\{x\})-\mu\bigl(\Gau^{-1}(x)\bigr) & \text{if } y\in \cO(x)  , \\
			0 &\text{otherwise}  . 
		\end{cases}
	\end{equation}	
	Fix $m\in \N$. When $m<n$, we have $y\notin \cO(x)$ for all $x\in R_m$, and combining this with (\ref{e_proof_nu_n}) and (\ref{e_proof_phi_mu_xy}) gives
	\begin{equation}\label{e_nu_m<n_y}
		\nu_m(\{y\})=\sum\limits_{x\in R_m}\phi_{\mu}(x)\mu_{g^{-1}(x)}(y)=0  . 
	\end{equation}
	When $m\geq n$, for each $x\in R_m$, we have $y\in \cO(x)$ if and only if $\Gau^{m-n}(x)=y$, and combining this with (\ref{e_proof_nu_n}) and (\ref{e_proof_phi_mu_xy}) gives
	\begin{equation}\label{e_nu_m>=n_y}
		\begin{aligned}
			\nu_m(\{y\})=\sum\limits_{x\in R_m}\phi_{\mu}(x)\mu_{g^{-1}(x)}(y)
			&=\sum\limits_{x\in\Gau^{-(m-n)}(y)}\mu(\{x\})-\mu\bigl(\Gau^{-1}(x)\bigr) \\
			&=\mu\bigl(\Gau^{-(m-n)}(y)\bigr)-\mu\bigl(\Gau^{-(m-n+1)}(y)\bigr)  . 
		\end{aligned}
	\end{equation}
	By (\ref{e_proof_nu_n}), (\ref{e_nu_m<n_y}), and (\ref{e_nu_m>=n_y}),
	\begin{equation*} 
		\begin{aligned}
			\nu(\{y\})
            =\sum\limits_{m=n}^{+\infty}\nu_{m}(\{y\})
			&=\sum\limits_{m=n}^{+\infty} \bigl( \mu\bigl(\Gau^{-(m-n)}(y)\bigr)-\mu\bigl(\Gau^{-(m-n+1)}(y)\bigr) \bigr)\\
			&=\sum\limits_{j=0}^{+\infty}\bigl(\mu \bigl(\Gau^{-j}(y)\bigr)-\mu \bigl(\Gau^{-(j+1)}(y)\bigr)\bigr)  . 
		\end{aligned}
	\end{equation*}
    By condition~(ii), $\mu \in \PPP(I)$, and the fact that $G^{-j}(y) \cap G^{-k}(y) = \emptyset$ if $0\leq j < k$, the series on the right-hand side of the above has nonnegative entries and is convergent. Thus by telescoping, we get
	\begin{equation}\label{e_nu_=_mu_x}
		 	\nu(\{y\})=\mu(\{y\})  . 
	\end{equation}    
	Combining (\ref{e_proof_nu_n}) and (\ref{e_proof_phi_mu_xy}), for each $m\in \N$, we get
	\begin{equation*}\label{e_nu_m_0}
			\nu_m(\{0\})=\sum\limits_{x\in R_m}\phi_{\mu}(x)\mu_{g^{-1}(x)}(\{0\})
			=\sum\limits_{x\in R_m}\mu(\{x\})-\mu\bigl(\Gau^{-1}(x)\bigr) 
			=\mu(R_m)-\mu(R_{m+1})  . 
	\end{equation*}
	Combining this with (\ref{e_proof_nu_n}), (\ref{e_phi_mu_0}), and a similar argument as above on the convergence of the series gives
	\begin{equation}\label{e_nu_0}
		\nu(\{0\})
        =\sum\limits_{n=0}^{+\infty}\nu_n(\{0\})
        =\mu(\{0\})-\mu(R_1)+\sum\limits_{n=1}^{+\infty}(\mu(R_n)-\mu(R_{n+1}))
        =\mu(\{0\})  . 
	\end{equation}
	Then by (\ref{e_proof_nu_equal_mu_1}), (\ref{e_nu_=_mu_x}), and (\ref{e_nu_0}), we conclude that $\mu=\nu \in \RM_{[0,1)}$, as required.
\end{proof}

Combining Lemma~\ref{l_relation_RM_and_RM_*} and Proposition~\ref{l_RM*_equivalent_def} together gives the following corollary, which is important in the proof of Theorem~\ref{t_weak_*_closure_M(I,T)}.

\begin{cor}\label{c_RM_equivalent_def}
	Suppose $\mu \in \cP(I)$ with $\mu (I\cap\Q)=1$. Then $\mu\in \RM$ if and only if 
    $\mu$ satisfies the following conditions:
	\begin{enumerate}[label=\rm{(\roman*)}]
		\smallskip
		\item $\mu(\{0\})\geq \mu(\{1\})$ and $\mu(\{0\})\geq \mu (R_1)$,
		\smallskip
		
		\item $\mu (\{x\})\geq \mu\bigl(\Gau^{-1}(x)\bigr)$ for all $x\in (0,1)\cap \Q$.
	\end{enumerate}
\end{cor}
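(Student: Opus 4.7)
My plan is to derive Corollary~\ref{c_RM_equivalent_def} as a short bridge between Proposition~\ref{l_RM*_equivalent_def} and Lemma~\ref{l_relation_RM_and_RM_*}, by splitting the possible atom at $1$ from the rest of $\mu$. The forward and reverse directions are handled separately.

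For the forward direction (\emph{$\mu\in\RM$ implies~(i) and~(ii)}), I would first note that both conditions are preserved under convex combinations, since each inequality involves quantities that are linear in $\mu$. It therefore suffices to verify (i) and (ii) for each generator of $\RM$. The measure $\delta_0$ trivially satisfies them, using that $\Gau^{-1}(x)\subseteq(0,1)$ for $x\in(0,1)$, so $\delta_0\bigl(\Gau^{-1}(x)\bigr)=0$. For $\mu_{\fa}$ with $\fa\in\cA$, conditions~(i) and~(ii) follow from Lemma~\ref{l_basic_properties_rational_measures}~(vi) and~(vii); for $\mu_{\fa}$ with $\fa\in\cB$, they follow from Lemma~\ref{l_basic_properties_rational_measures}~(viii) and~(ix), noting that $\mu_{\fb}(\{0\})=1/l_{\fb}=\mu_{\fb}(\{1\})$ in the $\cB$-case.

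For the reverse direction, suppose $\mu\in\cP(I)$ with $\mu(I\cap\Q)=1$ satisfies~(i) and~(ii). Set $r\=\mu(\{1\})$. Condition~(i) forces $r<1$, since $r=1$ would entail $\mu(\{0\})\geq 1$ together with $\mu(\{1\})=1$, contradicting $\mu\in\cP(I)$. Define
\begin{equation*}
\nu\=\frac{1}{1-r}(\mu-r\delta_1)\in\cP(I),
\end{equation*}
which is a probability measure with $\nu(\{1\})=0$ and $\nu(I\cap\Q)=1$. The key point, which is also the main bookkeeping step, is that subtracting $r\delta_1$ does not disturb any of the sets appearing in Proposition~\ref{l_RM*_equivalent_def}: one has $1\notin R_1$ by definition, and $1\notin\Gau^{-1}(x)$ for every $x\in(0,1)\cap\Q$ since $\Gau^{-1}(x)=\{1/(a+x):a\in\N\}\subseteq(0,1)$. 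Consequently $\nu(R_1)=\mu(R_1)/(1-r)$, and $\nu\bigl(\Gau^{-1}(x)\bigr)=\mu\bigl(\Gau^{-1}(x)\bigr)/(1-r)$ for all such $x$, and $\nu(\{x\})=\mu(\{x\})/(1-r)$. The inequalities required by Proposition~\ref{l_RM*_equivalent_def} for $\nu$ then follow by scaling from conditions~(i) and~(ii) for $\mu$, giving $\nu\in\RM_{[0,1)}$.

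To finish, I would apply Lemma~\ref{l_relation_RM_and_RM_*} to $\nu$ and $r$. Its hypothesis $(1-r)\nu(\{0\})\geq r$ reads $\mu(\{0\})\geq\mu(\{1\})$, which is exactly the first half of condition~(i). The lemma then yields
\begin{equation*}
\mu=(1-r)\nu+r\delta_1\in\RM,
\end{equation*}
completing the reverse direction. The entire argument is structural, and I expect no genuine obstacle beyond ensuring that the atom at $1$ is correctly isolated and that $\Gau^{-1}$ and $R_1$ avoid the point $1$; this is what makes the decomposition $\mu=(1-r)\nu+r\delta_1$ compatible with both Proposition~\ref{l_RM*_equivalent_def} and Lemma~\ref{l_relation_RM_and_RM_*}.
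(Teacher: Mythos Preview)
Your proposal is correct and follows essentially the same approach as the paper's proof: both the forward direction (via Lemma~\ref{l_basic_properties_rational_measures}~(vi)--(ix) on the generators) and the reverse direction (splitting off $r\delta_1$, applying Proposition~\ref{l_RM*_equivalent_def} to $\nu$, then invoking Lemma~\ref{l_relation_RM_and_RM_*}) match the paper's argument. The only cosmetic difference is that the paper notes $r\leq 1/2$ rather than just $r<1$, but your weaker bound is all that is needed.
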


\begin{proof}
	First assume that $\mu\in \RM$.  
    Using the fact that conditions~(i) and~(ii) are closed under convex combination, 
    the definition of $\RM$ (see Definition~\ref{d_rational_orbit_and_rational_measure}), and the fact that $\delta_0$ 
    satisfies conditions~(i) and~(ii), 
    it suffices to show that $\mu_{\fa}$ 
    satisfies conditions~(i) and~(ii) 
    for all $\fa\in \N^*$. 
    When $\fa\in \cA$, it follows from Lemma~\ref{l_basic_properties_rational_measures}~(vi) and (vii) that $\mu_{\fa}$ satisfies conditions~(i) and~(ii).
   When $\fa \in \cB$, it follows from Lemma~\ref{l_basic_properties_rational_measures}~(viii) and (ix) that $\mu_{\fa}$ satisfies conditions~(i) and~(ii).
	\smallskip
	
	To prove the converse, let us assume that $\mu$ satisfies conditions~(i) and~(ii). 
    Denoting $r\= \mu(\{1\}) \in [0,1/2]$ by condition~(i), define
	\begin{equation}\label{e_proof_c_rational_measures_nu}
		\nu\=  (\mu-r\delta_1) / (1-r)  , 
	\end{equation}
	and note that 
    (\ref{e_proof_c_rational_measures_nu}) gives
    $\nu(\{1\})=0\le \nu(\{0\})$ and $\nu(\{0\})=\mu(\{0\})/(1-r)\geq \mu(\{R_1\})/(1-r)=\nu(R_1)$, which is condition~(i) of Proposition~\ref{l_RM*_equivalent_def}, and 
    condition~(ii) gives, for all $x\in (0,1)\cap \Q$,  
    \begin{equation*}
        \nu(\{x\})
        = \mu (\{x\}) /(1-r)
        \geq  \mu\bigl(\Gau^{-1}(x) \bigr) \big/ (1-r)
        =\nu\bigl(\Gau^{-1}(x)\bigr), 
    \end{equation*}
    which is condition (ii) of Proposition~\ref{l_RM*_equivalent_def}.
    From Proposition~\ref{l_RM*_equivalent_def}
    it follows 
    that $\nu\in \RM_{[0,1)}$. 
    
    Now (\ref{e_proof_c_rational_measures_nu}) can be written as
		$(1-r)\nu = \mu -r\delta_1$,
	so by condition~(i),
	\begin{equation}\label{(vii)_can_be_used}
		(1-r)\nu(\{0\})=\mu(\{0\})\geq \mu(\{1\})=r   . 
	\end{equation}
	But (\ref{(vii)_can_be_used}) means, by Lemma~\ref{l_relation_RM_and_RM_*}, that $\mu =(1-r)\nu +r\delta_1\in \RM$, as required.
\end{proof}

Finally, we are able to prove our first main theorem.

\begin{proof}[\bf Proof of Theorem~\ref{t_weak_*_closure_M(I,T)}]
By Lemma~\ref{l_properties_sigma_and_Sigma}~(iv),
$ \MMM\bigl(\hSigma,\sigma\bigr)$ is the convex hull of 
        \begin{equation*}
            \MMM(\Sigma,\sigma) \cup \bigl\{\nu\in\MMM\bigl(\hSigma,\sigma_{\hSigma}\bigr):
        \nu(\Sigma)=0 \bigr\}.
        \end{equation*}
        But $\pi_*\: \MMM(\Sigma,\sigma)\to \MMM_{\irr}(I,\Gau)$ is a homeomorphism,
 according to Lemma~\ref{l_properties_pi_and_pi_*}~\ref{l_properties_pi_*},
and the pushforward 
$\hpi_*\:\MMM\bigl(\hSigma,\sigma\bigr) \to \cP(I)$
(cf.~Notation~\ref{pushforwards})
is affine,
so by Lemma~\ref{l_properties_pi_and_pi_*}~, it suffices to show that $\hpi_*(\mu)\in \RM$ for all $\mu \in \bigl\{\nu\in \MMM \bigl(\hSigma,\sigma_{\hSigma} \bigr):\nu(\Sigma)=0 \bigr\}$. 
	
	Fix $\mu \in \bigl\{\nu\in \MMM \bigl(\hSigma,\sigma_{\hSigma} \bigr):\nu(\Sigma)=0 \bigr\}$. We want to apply Corollary~\ref{c_RM_equivalent_def} to $\hpi_*(\mu)$.
	
	Fix $x= [a_1,a_2,\dots,a_n]$ with $a_1,\,\dots,\,a_{n}\in \N$, $a_n\geq 2$. By Lemma~\ref{l_properties_pi_and_pi_*}~\ref{l_propertiess_pi_and_pi_*_preimage}, we obtain
	\begin{align*}
		\hpi^{-1}(0)&=\cC(\infty)  , \qquad
		\hpi^{-1}(1)=\cC(1,\infty)  , \\
		\hpi^{-1}(x)&=\cC(a_1,\dots,a_n,\infty)\cup \cC(a_1,\dots,a_n-1,1,\infty)  . 
	\end{align*}
	Thus, 
	\begin{align*}
		\hpi_*(\mu)(\{0\})&=\mu\bigl(\hpi^{-1}(0)\bigr)=\mu(\cC(\infty))  , \qquad
		\hpi_*(\mu)(\{1\}) =\mu\bigl(\hpi^{-1}(1)\bigr)=\mu(\cC(1,\infty))  , \\
		\hpi_*(\mu)(\{x\})&=\mu\bigl(\hpi^{-1}(x)\bigr)=\mu(\cC(a_1,\dots,a_n,\infty))+\mu(\cC(a_1,\dots,a_n-1,1,\infty))  . 
	\end{align*}
	As a consequence, 
    $\hpi_*(\mu)(R_1)=\sum_{n=2}^{+\infty}\mu(\cC(n,\infty)) + \sum_{n=1}^{+\infty}\mu (\cC(n,1,\infty))$. Hence, from the fact that $\mu \in \MMM\bigl(\hSigma,\sigma_{\hSigma}\bigr)$ and $\mu(\Sigma) = 0$, we get
	\begin{align*}
		\hpi_*(\mu)(I\smallsetminus\Q)&=\mu\bigl(\hSigma\smallsetminus \Sigma\bigr)=1  , \\
		\hpi_*(\mu)(\{0\})&=\mu(\cC(\infty))\geq \mu(\cC(1,\infty))=\hpi_*(\mu)(\{1\})  , \\
		\hpi_*(\mu)(\{0\})&=\mu(\cC(\infty))
        \geq \sum\limits_{n=1}^{+\infty}\mu(\cC(n,\infty)) \\
        &\geq \sum\limits_{n=1}^{+\infty}\mu (\cC(n,1,\infty))+\sum\limits_{n=2}^{+\infty}\mu(\cC(n,\infty))=\hpi_*(\mu)\bigl(\{1/n\}_{n=2}^{+\infty} \bigr) 	,\\
		\hpi_*(\mu)(\{x\})&=\mu(\cC(a_1,\dots,a_n,\infty))+\mu(\cC(a_1,\dots,a_n-1,1,\infty))\\
		&\geq \sum\limits_{m=1}^{+\infty}\mu(\cC(m,a_1,\dots,a_n,\infty))+\sum\limits_{m=1}^{+\infty}\mu(\cC(m,a_1,\dots,a_n-1,1,\infty))
		= \hpi_*(\mu)\bigl(\Gau^{-1}(x)\bigr)  . 	
	\end{align*}
	The last identity above follows from (\ref{explicitpreimageset}). Therefore, applying Corollary~\ref{c_RM_equivalent_def} to $\hpi_*(\mu)$, we conclude $\hpi_*(\mu) \in \RM$, as required.
\end{proof}

	\section{The Ma\~n\'e lemma}\label{Sec_mane_lemma}

In this section we will prove a version of the Ma\~n\'e
lemma for the Gauss map $\Gau$,
and then use this to derive a revelation theorem.
The approach, by analogy with \cite{Bou00}, will involve a certain nonlinear operator which can be shown (cf.~Proposition~\ref{p_calibrated_sub-action_exists}) to have a fixed point function
(a so-called \emph{calibrated sub-action}, in the terminology of \cite{GLT09})
with certain regularity properties.

For a Borel measurable map $T \: I \to I$, and bounded Borel measurable function $\psi \: I\to\R$, to study the $(T,\psi)$-maximizing measures it is convenient, whenever possible, to consider a cohomologous function $\tpsi$ satisfying $\tpsi\leq \mea(T,\psi)$.
We recall the following (cf.~\cite[p.~2601]{Je19}):

\begin{definition}\label{d_normalisation}
	Suppose $T\: I\to I$ is Borel measurable, and $\psi\in C(I)$.
	If $\psi\le Q(T,\psi)$ and $\psi^{-1}(Q(T,\psi))$ contains $\supp \mu$ for some $\mu\in\MMM(I,T)$, then $\psi$ is said to be \emph{revealed}.
	If $Q(T,\psi)=0$ then $\psi$ is said to be \emph{normalised}; in particular, 
	a normalised function $\psi$ is revealed if and only if $\psi\le 0$ and $\psi^{-1}(0)$ contains $\supp \mu$ for some $\mu\in\MMM(I,T)$.
\end{definition}
\begin{lemma}\label{l_normalised_cohomologous}
	Suppose $T\: I\to I$ is Borel measurable, $\phi \: I\to\R$ is bounded and Borel measurable, and 
	$\Mmax(T,  \phi)\neq\emptyset$.
	Denote $\overline{\phi}\=\phi - Q(T,\phi)$, and suppose $\tphi\=\overline{\phi}+u-u\circ T$
	for some bounded Borel measurable function $u \: I\to\R$.
	Then:
	\begin{enumerate}[label=\rm{(\roman*)}]
		\smallskip
		\item	$Q\bigl(T,\tphi\bigr)=Q\bigl(T,\overline{\phi}\bigr)=0$.
		\smallskip
		
		\item	$\Mmax(T,  \phi)=\Mmax\bigl(T,  \overline{\phi}\bigr)=\Mmax\bigl(T,  \tphi\bigr)$.
		\smallskip

	\end{enumerate}	
\end{lemma}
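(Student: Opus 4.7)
The plan is to establish (i) first by a direct computation using linearity and $T$-invariance, and then derive (ii) as an immediate consequence.

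For (i), the key observation is that for any $\mu\in\MMM(I,T)$, since $\mu$ is a probability measure,
\begin{equation*}
    \int \overline{\phi}\,\mathrm{d}\mu = \int \phi\,\mathrm{d}\mu - Q(T,\phi),
\end{equation*}
so taking the supremum over $\mu\in\MMM(I,T)$ gives $Q\bigl(T,\overline{\phi}\bigr)=Q(T,\phi)-Q(T,\phi)=0$. For the equation involving $\tphi$, boundedness of $u$ ensures $u$ and $u\circ T$ are $\mu$-integrable, and $T$-invariance of $\mu$ gives $\int u\,\mathrm{d}\mu=\int u\circ T\,\mathrm{d}\mu$, so that
\begin{equation*}
    \int \tphi\,\mathrm{d}\mu = \int \overline{\phi}\,\mathrm{d}\mu + \int u\,\mathrm{d}\mu - \int u\circ T\,\mathrm{d}\mu = \int \overline{\phi}\,\mathrm{d}\mu,
\end{equation*}
and taking the supremum yields $Q\bigl(T,\tphi\bigr)=Q\bigl(T,\overline{\phi}\bigr)=0$.

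For (ii), note that the above displayed equations show that, for every $\mu\in\MMM(I,T)$,
\begin{equation*}
    \int\tphi\,\mathrm{d}\mu \;=\; \int\overline{\phi}\,\mathrm{d}\mu \;=\; \int\phi\,\mathrm{d}\mu - Q(T,\phi).
\end{equation*}
Thus the three functionals $\mu\mapsto\int\phi\,\mathrm{d}\mu$, $\mu\mapsto\int\overline{\phi}\,\mathrm{d}\mu$, and $\mu\mapsto\int\tphi\,\mathrm{d}\mu$ differ pairwise by constants independent of $\mu$, so they attain their suprema on $\MMM(I,T)$ at exactly the same set of measures. Combining this with (i) gives $\Mmax(T,\phi)=\Mmax\bigl(T,\overline{\phi}\bigr)=\Mmax\bigl(T,\tphi\bigr)$.

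There is no serious obstacle: the argument is a three-line computation, with the only point requiring care being the need for $u$ to be $\mu$-integrable in order to invoke $T$-invariance, which is handled by the standing boundedness assumption on $u$ and the fact that $\mu$ is a probability measure.
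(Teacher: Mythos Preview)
Your proof is correct and follows essentially the same approach as the paper: both rely on the key identity $\int\tphi\,\mathrm{d}\mu=\int\overline{\phi}\,\mathrm{d}\mu$ for all $\mu\in\MMM(I,T)$, obtained from $T$-invariance and boundedness of $u$, from which (i) and (ii) follow directly from the definitions. Your version simply spells out the steps in more detail than the paper's one-line proof.
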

\begin{proof}
	(i) and (ii) follow from (\ref{e_ergodicmax}), (\ref{e_setofmaximizngmeasures}), and the fact that
	$\int \!\tphi\,\mathrm{d}\mu=\int \!(\overline{\phi}+u-u\circ T)\,\mathrm{d}\mu=\int \!\overline{\phi}\,\mathrm{d}\mu$ for all $\mu\in \MMM(I,T)$.
\end{proof}
\subsection{Bousch operator}
The following operator $\cL_\psi$ is an analogue of the one used by Bousch in \cite{Bou00}. Instead of preimages used by Bousch, we use inverse branches in the definition to address the irregular behaviour of the Gauss map at
the points $0$ and $1$.

\begin{definition} \label{d_Def_BouschOp}
	Let $\psi \: I\to \R$ be bounded and Borel measurable.
	Define 
	$\cL_\psi\:B(I)\to B(I)$
	by
	\begin{equation*}
		\cL_\psi(u)(x) 
        \=\sup \{(u+\psi)(\Gau_a(x)):  a \in \N\}
        =\sup \{u ( 1/(a+x))+\psi( 1/(a+x)) : a\in \N\}  . 
	\end{equation*}
\end{definition}

Since $\psi$ and $u$ are bounded, $\cL_\psi (u)$ is well-defined. If $\psi$ and $u$ are continuous, by Proposition~\ref{p_inverse_branches}~(v) we have
\begin{equation}  \label{e_Def_BouschOp_2}
	\cL_\psi(u)(x)\=\max \bigl\{(u+\psi)(\Gau_{\ha}(x)) : \ha\in \widehat{\N} \bigr\}  . 
\end{equation}

\begin{lemma}\label{l_property_Bousch_operator}
If
 $\psi\in C(I)$
	and	$\overline{\psi} \= \psi - \mea (\Gau, \psi)$,
then the following hold:
	\begin{enumerate}[label=\rm{(\roman*)}]
		\smallskip
		\item\label{l_property_Bousch_operator_plus} If $x\in I$ and $u\in C(I)$, then $\mathcal{L}_\psi(u+c)=c+\mathcal{L}_\psi(u)$.
		
		\smallskip
		\item\label{l_property_Bousch_operator_iteration} If $x\in I$, $n\in\N$, and $u\in C(I)$, then	
		\begin{equation*}
			\cL^n_{\overline{\psi}}(u)(x)+n\mea(\Gau,\psi)
			=\cL^n_\psi(u)(x)
			=\sup \{u(\Gau_\mathbf{a}(x))+S_{n,\mathbf{a}}\psi(x): \mathbf{a}\in \N^n\}  . 
		\end{equation*} 
		
		\item\label{l_property_Bousch_operator_sup_equal_max} If $x\in I$, $n\in\N$, and $u\in C(I)$, then	
		\begin{equation*}
			\cL^n_\psi(u)(x)
			=\max \bigl\{u(\Gau_{\hfa}(x))+S_{n,\hfa}\psi(x): \hfa\in \hbN^n \bigr\}  . 
		\end{equation*} 
		
		\item\label{l_property_Bousch_operator_commutes_with_sup} $\mathcal{L}_\psi(\sup_{v\in\cH} v)=\sup_{v\in \cH}\mathcal{L}_\psi(v)$ for any collection $\cH$ of bounded real-valued functions on $I$.
		\smallskip
		
		\item\label{l_property_Bousch_operator_commutes_with_limit} 
		If $\{u_n\}_{n\in \mathbb{N}}$ is a pointwise convergent sequence of equicontinuous functions on $I$,
		then the identity $\lim\limits_{n\rightarrow +\infty}\mathcal{L}_\psi(u_n)=\mathcal{L}_\psi\bigl(\lim\limits_{n\rightarrow +\infty}u_n\bigr)$ holds, 
		where the limits are pointwise.
	\end{enumerate}		
\end{lemma}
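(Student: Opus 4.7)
The plan is to dispatch the five parts essentially in order, relying on the definition of $\cL_\psi$, elementary manipulations of suprema, and, for part~\ref{l_property_Bousch_operator_sup_equal_max}, a mild compactness-plus-continuity argument drawn from Proposition~\ref{p_inverse_branches}. I do not expect a serious obstacle in any step.

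For~\ref{l_property_Bousch_operator_plus}, adding a constant $c$ to $u$ adds $c$ uniformly to every term inside the supremum in Definition~\ref{d_Def_BouschOp}, so $c$ pulls out directly. For~\ref{l_property_Bousch_operator_iteration}, I first derive $\cL_{\overline{\psi}}(v)=\cL_\psi(v)-\mea(\Gau,\psi)$ from $\overline{\psi}=\psi-\mea(\Gau,\psi)$, then combine this with part~\ref{l_property_Bousch_operator_plus} (applied to the accumulating additive constant at each stage) to conclude by induction that $\cL^n_{\overline{\psi}}(u) = \cL^n_\psi(u) - n\mea(\Gau,\psi)$. The second equality of~\ref{l_property_Bousch_operator_iteration} I prove by induction on $n$: the base case $n=1$ is the definition of $\cL_\psi$, while the inductive step writes $\cL^n_\psi(u)(x) = \sup_{a\in\N}\{\cL^{n-1}_\psi(u)(\Gau_a(x)) + \psi(\Gau_a(x))\}$, inserts the inductive hypothesis, swaps the two suprema, and identifies the combined expression with $u(\Gau_{\mathbf{a}}(x)) + S_{n,\mathbf{a}}\psi(x)$ via the decomposition $\Gau_{\mathbf{a}} = \Gau_{a_1}\circ\Gau_{\sigma(\mathbf{a})}$ and (\ref{e_S_n_bf_a}).

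Part~\ref{l_property_Bousch_operator_sup_equal_max} upgrades the $\N^n$-supremum in~\ref{l_property_Bousch_operator_iteration} to a $\hbN^n$-maximum. The key step is to show that the function $\hfa \mapsto u(\Gau_{\hfa}(x)) + S_{n,\hfa}\psi(x)$ is continuous on the compact space $\hbN^n$, and to use density to identify the two suprema. Continuity of $\hfa \mapsto \Gau_{\hfa}(x)$ is Proposition~\ref{p_inverse_branches}~\ref{p_inverse_branches_continuous}; composing with the continuous shifts $\sigma^i$ gives continuity of $\hfa\mapsto\Gau_{\sigma^i(\hfa)}(x)$, so $\hfa\mapsto S_{n,\hfa}\psi(x)$ is continuous since $\psi\in C(I)$, and adding $u(\Gau_{\hfa}(x))$ preserves this. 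Compactness of $\hbN^n$ then makes the supremum a maximum, while density of $\N^n$ in $\hbN^n$ (made precise by Proposition~\ref{p_inverse_branches}~\ref{p_inverse_branches_closure_of_preimage}) identifies it with the $\N^n$-supremum from~\ref{l_property_Bousch_operator_iteration}.

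Parts~\ref{l_property_Bousch_operator_commutes_with_sup} and~\ref{l_property_Bousch_operator_commutes_with_limit} are routine. For~\ref{l_property_Bousch_operator_commutes_with_sup} I simply interchange the supremum over $a\in\N$ from the definition of $\cL_\psi$ with the supremum over $v\in\cH$. For~\ref{l_property_Bousch_operator_commutes_with_limit}, equicontinuity of $\{u_n\}$ on the compact interval $I$ combined with pointwise convergence upgrades by Arzel\`a--Ascoli to uniform convergence on $I$; letting $u=\lim_n u_n$, the elementary bound $\abs{\sup_a f_a - \sup_a g_a}\le\sup_a\abs{f_a-g_a}$ yields $\norm{\cL_\psi(u_n)-\cL_\psi(u)}_\infty \le \norm{u_n-u}_\infty \to 0$, which in particular gives the required pointwise convergence. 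The mildest obstacle is thus~\ref{l_property_Bousch_operator_sup_equal_max}, where one must check continuity in $\hfa$ across the $\infty$-index stratification of $\hbN^n$, but this is precisely what Proposition~\ref{p_inverse_branches} was designed to provide.
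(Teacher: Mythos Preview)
Your proof is correct and for parts~\ref{l_property_Bousch_operator_plus}--\ref{l_property_Bousch_operator_commutes_with_sup} proceeds essentially as the paper does. The one notable difference is in part~\ref{l_property_Bousch_operator_commutes_with_limit}: the paper argues pointwise, fixing $x$ and $\epsilon$, exploiting the fact that $\{\Gau_a(x)\}_{a\in\N}$ accumulates only at $0$ to split the preimage set into a finite part (handled by pointwise convergence at finitely many points) and a tail near $0$ (handled by equicontinuity at $0$ alone), then chasing an $\epsilon/3$ argument. Your route via Arzel\`a--Ascoli is more efficient: equicontinuity plus pointwise convergence on the compact interval $I$ forces uniform convergence of $u_n$ to $u$, and the elementary inequality $\abs{\sup_a f_a - \sup_a g_a}\le\sup_a\abs{f_a-g_a}$ then gives $\norm{\cL_\psi(u_n)-\cL_\psi(u)}_\infty\le\norm{u_n-u}_\infty\to 0$ in one line. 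The paper's argument has the minor advantage of using equicontinuity only at the single point $0$, but since the hypothesis already grants full equicontinuity on $I$, your approach extracts more from it and is cleaner.
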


\begin{proof}
	\smallskip
	\ref{l_property_Bousch_operator_plus} By Definition~\ref{d_Def_BouschOp}, for any $x\in I$ and $u\in C(I)$,
	\begin{equation*}
			\cL_\psi(u+c)(x)
            =\sup \{\psi(\Gau_{a}(x))+u(\Gau_{a}(x))+c : a\in \N\}
			=\cL_\psi(u)(x)+c   . 
	\end{equation*}

\smallskip
    \ref{l_property_Bousch_operator_iteration} The first identity immediately follows from the second identity. We use induction to prove the second identity: the case $n=1$ follows from Definition~\ref{d_Def_BouschOp}, and assuming  
it is satisfied for some $n=m\in \N$, then
\begin{equation*}
	\begin{aligned}
	    \cL_{\psi}^{m+1}(u)(x)
        &=\sup \bigl\{\psi(\Gau_a(x))+	\cL_{\psi}^{m}(u)(\Gau_a(x)) : a\in \N\bigr\}\\
		&=\sup \{\psi(\Gau_a(x))+	\sup\{u(y)+S_{m,\mathbf{a}}\psi(\Gau_a(x)) : y= \Gau_{\mathbf{a}}(\Gau_a(x)),\,\mathbf{a}\in \N^m\} : a\in \N\}\\
		&=\sup \bigl\{u(\Gau_\mathbf{b}(x))+S_{m+1,\mathbf{b}}\psi(\Gau_\mathbf{b}(x)) : \mathbf{b}\in \N^{m+1} \bigr\}  . 
	\end{aligned}
\end{equation*}

\smallskip

\ref{l_property_Bousch_operator_sup_equal_max} By Proposition~\ref{p_inverse_branches}~(iv), and the fact that $u,\,\psi\in C(I)$, if $x\in I$ then $\hfa \mapsto u(\Gau_{\hfa}(x))+S_{n,\hfa}\psi(x)$ can be seen as a continuous function on $\hbN^n$; so \ref{l_property_Bousch_operator_sup_equal_max} follows from \ref{l_property_Bousch_operator_iteration} and the fact that $\N^n$ is dense in $\hbN^n$.

\smallskip
\ref{l_property_Bousch_operator_commutes_with_sup} follows readily from the definition.

\smallskip
\ref{l_property_Bousch_operator_commutes_with_limit} Let $v$ be the pointwise limit of $\{u_n\}_{n\in\N}$ as $n$ tends to infinity. Fix arbitrary $x\in I$ and $\epsilon>0$. Since $\{u_n\}_{n\in\N}$ is equicontinuous, there exists $\delta\in (0,1)$ such that for each $y\in[0,\delta)$ and each $n\in\N$, 
\begin{equation}\label{e_Bousch_op_commutes_limit_1}
	\abs{u_n(y)-u_n(0)}< \epsilon / 3  . 
\end{equation}
Letting $n$ tend to infinity, we have 
\begin{equation}\label{e_Bousch_op_commutes_limit_2}
	\abs{v(y)-v(0)}\leq \epsilon/3  . 
\end{equation}

We can find $N_1\in \N$ such that if $n>N_1$ then $\abs{u_n(0)-v(0)}<\epsilon/3$. When $n> N_1$, for each $y\in \{\Gau_a(x)\}_{a\in \N}\cap[0,\delta)$, by (\ref{e_Bousch_op_commutes_limit_1}) and (\ref{e_Bousch_op_commutes_limit_2}), we obtain
\begin{equation}\label{e_Bousch_op_commutes_limit_close_to_0}
		\abs{u_n(y)-v(y)}
        \leq\abs{u_n(y)-u_n(0)}+\abs{u_n(0)-v(0)}+\abs{v(0)-v(y)}
		< 3 \cdot (\epsilon / 3)
        <\epsilon  . 
\end{equation}
Since $\{\Gau_a(x) : a\in \N\} \cap[\delta,1]$ is finite, we can find $N_2\in \N$ such that for each $n\geq N_2$ and each $y\in \{\Gau_a(x) : a\in \N\} \cap[\delta,1]$, we obtain 
\begin{equation}\label{e_Bousch_op_commutes_limit_away_0}
	\abs{u_n(y)-v(y)}<\epsilon  . 
\end{equation}
Let $N\=\max\{N_1,\,N_2\}$. For each integer $n>N$, by (\ref{e_Bousch_op_commutes_limit_close_to_0}) and (\ref{e_Bousch_op_commutes_limit_away_0}), we have $\abs{u_n(y)-v(y)}<\epsilon$ for each $y\in \{\Gau_a(x) : a\in \N\}$. Fix an arbitrary integer $n>N$. We choose $z_1,z_2\in \{\Gau_a(x) : a\in \N\}$ satisfying $\mathcal{L}_{\psi}(u_n)(x)<\psi(z_1)+u_n(z_1)+\epsilon$ and $\mathcal{L}_{\psi}(v)(x)<\psi(z_2)+v(z_2)+\epsilon$. Then by Definition~\ref{d_Def_BouschOp},
\begin{align*}
	\mathcal{L}_{\psi}(u_n)(x)-\mathcal{L}_{\psi}(v)(x)&<\psi(z_1)+u_n(z_1)+\epsilon-\psi(z_1)-v(z_1)=u_{n}(z_1)-v(z_1)+\epsilon<2\epsilon,\\
	\mathcal{L}_{\psi}(u_n)(x)-\mathcal{L}_{\psi}(v)(x)&>\psi(z_2)+u_n(z_2)-\psi(z_2)-v(z_2)-\epsilon=u_{n}(z_2)-v(z_2)-\epsilon>-2\epsilon.
\end{align*}
Statement~\ref{l_property_Bousch_operator_commutes_with_limit} now follows.
\end{proof}

\begin{lemma} \label{l_Bousch_Op_preserve_space}
	Suppose $\alpha\in(0,1]$ and $\phi \in \Holder{\alpha}(I)$. Then for each $u\in \Holder{\alpha}(I)$ and each $n\in \N$, we have $\mathcal{L}_{\phi}^n(u)\in \Holder{\alpha}(I)$ and 
	\begin{equation}\label{e_Bousch_Op_Holder_seminorm_bound}
		\Hseminormbig{\alpha}{ \cL_\phi^n (u) } 
		\leq K_\alpha ( \Hseminorm{\alpha}{ \phi}  +  \Hseminorm{\alpha}{u} ).
	\end{equation}
\end{lemma}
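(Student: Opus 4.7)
The plan is to use the explicit formula
\begin{equation*}
    \cL_\phi^n(u)(x) = \sup\{u(\Gau_{\fa}(x)) + S_{n,\fa}\phi(x) : \fa \in \N^n\}
\end{equation*}
from Lemma~\ref{l_property_Bousch_operator}~\ref{l_property_Bousch_operator_iteration}, and to bound the $\alpha$-H\"older oscillation of each summand on the right uniformly in $\fa \in \N^n$. A standard ``sup-difference'' argument will then transfer the estimate to $\cL_\phi^n(u)$ itself.

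For the first piece, I would combine Proposition~\ref{p_inverse_branches}~\ref{p_inverse_branches_est_derivative} (which gives $\abs{\Gau_\fa'(x)} \le c_0^{-2}\theta^{-2n}$ for all $x \in I$ and $\fa \in \N^n$) with the mean value theorem to obtain $\abs{\Gau_\fa(x)-\Gau_\fa(y)} \le c_0^{-2}\theta^{-2n}\abs{x-y}$, and then apply the $\alpha$-H\"older regularity of $u$ to conclude
\begin{equation*}
    \abs{u(\Gau_\fa(x)) - u(\Gau_\fa(y))}
    \le \Hseminorm{\alpha}{u}\,c_0^{-2\alpha}\theta^{-2n\alpha}\abs{x-y}^\alpha
    \le \Hseminorm{\alpha}{u}\,K_\alpha \abs{x-y}^\alpha,
\end{equation*}
where the last inequality uses that $c_0^{-2\alpha}\theta^{-2n\alpha} \le c_0^{-2\alpha} \le K_\alpha$, by the definition of $K_\alpha$ in (\ref{e_def_K_alpha}). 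For the second piece, Lemma~\ref{l_Distortion} directly yields $\abs{S_{n,\fa}\phi(x) - S_{n,\fa}\phi(y)} \le K_\alpha \Hseminorm{\alpha}{\phi}\abs{x-y}^\alpha$. Adding these and using the triangle inequality gives, uniformly in $\fa \in \N^n$,
\begin{equation*}
    \AbsBig{\bigl(u(\Gau_\fa(x)) + S_{n,\fa}\phi(x)\bigr) - \bigl(u(\Gau_\fa(y)) + S_{n,\fa}\phi(y)\bigr)}
    \le K_\alpha\bigl(\Hseminorm{\alpha}{u} + \Hseminorm{\alpha}{\phi}\bigr)\abs{x-y}^\alpha.
\end{equation*}

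To conclude, I would apply the standard sup-difference argument: given $x, y \in I$ and $\epsilon > 0$, pick $\fa \in \N^n$ with $u(\Gau_\fa(x)) + S_{n,\fa}\phi(x) > \cL_\phi^n(u)(x) - \epsilon$; then
\begin{equation*}
    \cL_\phi^n(u)(x) - \cL_\phi^n(u)(y)
    \le u(\Gau_\fa(x)) + S_{n,\fa}\phi(x) + \epsilon - u(\Gau_\fa(y)) - S_{n,\fa}\phi(y)
    \le K_\alpha(\Hseminorm{\alpha}{u} + \Hseminorm{\alpha}{\phi})\abs{x-y}^\alpha + \epsilon.
\end{equation*}
Sending $\epsilon \to 0$ and then symmetrising in $x, y$ yields (\ref{e_Bousch_Op_Holder_seminorm_bound}). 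Boundedness of $\cL_\phi^n(u)$ is immediate since $\Absbig{\cL_\phi^n(u)} \le \norm{u}_\infty + n\norm{\phi}_\infty$, so $\cL_\phi^n(u) \in \Holder{\alpha}(I)$.

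I do not anticipate a substantial obstacle here: the argument is a routine combination of the derivative bound from Proposition~\ref{p_inverse_branches}, the distortion estimate from Lemma~\ref{l_Distortion}, and the universal sup-difference trick. The only mild subtlety is that the supremum defining $\cL_\phi^n(u)(x)$ is taken over the countable (non-compact) index set $\N^n$, so it may not be attained; this is precisely why the $\epsilon$-approximation argument is needed instead of a direct max-picking argument, but it causes no difficulty.
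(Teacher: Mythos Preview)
Your proposal is correct and follows essentially the same approach as the paper's proof: both use the formula from Lemma~\ref{l_property_Bousch_operator}~\ref{l_property_Bousch_operator_iteration}, bound the $u$-piece via Proposition~\ref{p_inverse_branches}~\ref{p_inverse_branches_est_derivative} and the mean value theorem, bound the $S_{n,\fa}\phi$-piece via Lemma~\ref{l_Distortion}, and finish with the $\epsilon$-approximation sup-difference argument plus symmetrisation. The only cosmetic difference is that you first establish the uniform bound on each summand and then pick the near-optimal $\fa$, whereas the paper picks $\fa$ first and then bounds; the content is identical.
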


\begin{proof}
	Suppose $u\in \Holder{\alpha}(I)$
	and $x,\,y\in I$. Fix $\epsilon>0$.
	By Lemma~\ref{l_property_Bousch_operator}~(ii), there exists $\fa\in \N^n$
	such that
	\begin{equation}\label{starequality}  
		\cL_{\phi}^n(u)(x)<u(\Gau_\mathbf{a}(x))+S_{n,\mathbf{a}}\phi(x)+\epsilon  . 
	\end{equation}
	By Lemma~\ref{l_property_Bousch_operator}~(ii), we have
	\begin{equation}\label{2starinequality}  
		\mathcal{L}_{\phi}^n(u)(y)\geq u(\Gau_\mathbf{a}(y))+S_{n,\mathbf{a}}\phi(y)  . 
	\end{equation}
	Combining (\ref{starequality}) and (\ref{2starinequality}) gives
	\begin{equation}\label{3starcombined}
		\cL_{\phi}^n(u)(x)-\cL_{\phi}^n(u)(y)\leq S_{n,\mathbf{a}}\phi(x)+u(\Gau_\mathbf{a}(x))-S_{n,\mathbf{a}}\phi(y)-u(\Gau_\mathbf{a}(y))+\epsilon  . 
	\end{equation}
 Lemma~\ref{l_Distortion} and (\ref{e_def_K_alpha}) gives
	\begin{equation}\label{4star}
		S_{n,\mathbf{a}}\phi(x)-S_{n,\mathbf{a}}\phi(y)\leq K_\alpha  \Hseminorm{\alpha}{\phi} \abs{x-y}^{\alpha}.
	\end{equation}
	From the fact that $u\in \Holder{\alpha}(I)$, the intermediate value theorem, and Proposition~\ref{p_inverse_branches}~(i), there exists $\xi$ in between $x$ and $y$ such that
	$$u(\Gau_\mathbf{a}(x))-u(\Gau_\mathbf{a}(y))\leq \Hseminorm{\alpha}{u}\abs{\Gau_\mathbf{a}(x)-\Gau_\mathbf{a}(y)}^\alpha=\Hseminorm{\alpha}{u}\abs{x-y}^\alpha\abs{\Gau'_\mathbf{a}(\xi)}^\alpha\le c_0^{-2\alpha} \theta^{-2n\alpha}\Hseminorm{\alpha}{u}\abs{x-y}^\alpha.$$ 
 	But $c_0^{-2\alpha} \theta^{-2n\alpha}<K_{\alpha}$ (see (\ref{e_def_K_alpha})), so
	\begin{equation}\label{5star}
		u(\Gau_\mathbf{a}(x))-u(\Gau_\mathbf{a}(y)) \leq  K_\alpha \Hseminorm{\alpha}{u} \abs{x-y}^\alpha .
	\end{equation}
	Combining (\ref{3starcombined}), (\ref{4star}), and (\ref{5star}) gives
	\begin{equation}\label{e_e_Bousch_Op_Holder_seminorm_upper_bound}
			\cL_{\phi}^n(u)(x)-\cL_{\phi}^n(u)(y)\leq K_\alpha(   \Hseminorm{\alpha}{ \phi}  +  \Hseminorm{\alpha}{u}  )\abs{x-y}^\alpha+\epsilon .
	\end{equation}

	Since (\ref{e_e_Bousch_Op_Holder_seminorm_upper_bound}) is satisfied for all $x,\, y \in I$, by swapping the positions of $x$ and $y$, we obtain
	\begin{equation}\label{e_e_Bousch_Op_Holder_seminorm_lower_bound}
			\cL_{\phi}^n(u)(x)-\cL_{\phi}^n(u)(y)\geq -K_\alpha(\Hseminorm{\alpha}{\phi}+\Hseminorm{\alpha}{u})\abs{x-y}^\alpha -\epsilon  . 
	\end{equation}
Finally, $\cL^n_\phi(u)\in \Holder{\alpha}(I)$ and (\ref{e_Bousch_Op_Holder_seminorm_bound}) follows from (\ref{e_e_Bousch_Op_Holder_seminorm_upper_bound}), (\ref{e_e_Bousch_Op_Holder_seminorm_lower_bound}), and the fact that $\epsilon>0$ was arbitrary.
\end{proof}

We are now able to find a fixed point  
$u_\phi$ of the normalised Bousch 
operator $\cL_{\overline\phi}$:

\begin{prop}\label{p_calibrated_sub-action_exists}
	Suppose $\alpha\in(0,1]$ and $\phi \in \Holder{\alpha}(I)$.
	Then the function $u_\phi \: I\rightarrow\R$ given by
	\begin{equation}  \label{e_calibrated_sub-action_exists}
		u_\phi(x)\=\limsup\limits_{n\rightarrow +\infty}\cL_{\overline{\phi}}^n(\mathbbold{0})(x),\quad\text{ for } x\in I,
	\end{equation}
	where $\overline{\phi} \= \phi - \mea \bigl(\Gau, \phi\bigr)$,
	satisfies the following properties:
	\begin{enumerate}[label=\rm{(\roman*)}]
		\smallskip
		\item $\abs{u_\phi(x)}\leq K_{\alpha} \Hseminorm{\alpha}{\phi}$ for each $x\in I$,
		\smallskip
		
		\item $u_\phi\in \Holder{\alpha}(I)$ with $\Hseminorm{\alpha}{u_\phi}\leq K_\alpha\Hseminorm{\alpha}{\phi}$,
		\smallskip
		
		\item $\cL_{\overline{\phi}}(u_\phi)=u_\phi$.
	\end{enumerate}	
\end{prop}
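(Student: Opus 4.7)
The plan is to construct $u_\phi$ from the pointwise $\limsup$ of the iterates $f_n\coloneqq \cL_{\overline{\phi}}^n(\mathbbold{0})$ of the normalised Bousch operator applied to the zero function, and then to verify the three properties by exploiting the regularity and lattice-theoretic features of $\cL_{\overline{\phi}}$ collected in Lemma~\ref{l_property_Bousch_operator} and Lemma~\ref{l_Bousch_Op_preserve_space}.

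First I would establish (ii) as essentially an immediate consequence of Lemma~\ref{l_Bousch_Op_preserve_space} applied with $u=\mathbbold{0}$ (whose $\alpha$-H\"older seminorm is $0$), which gives $\Hseminorm{\alpha}{f_n}\le K_\alpha \Hseminorm{\alpha}{\phi}$ uniformly in $n$. Since an $\alpha$-H\"older seminorm bound is preserved under pointwise $\limsup$ of a uniformly bounded sequence, the bound $\Hseminorm{\alpha}{u_\phi}\le K_\alpha\Hseminorm{\alpha}{\phi}$ will follow as soon as $u_\phi$ is known to be finite-valued.

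The main obstacle is therefore to prove the uniform sup-norm estimate $\sup_{n}\|f_n\|_\infty<+\infty$; by the equi-H\"older bound this reduces to controlling $|f_n(0)|$ uniformly in $n$. Combining Lemma~\ref{l_property_Bousch_operator}~\ref{l_property_Bousch_operator_sup_equal_max} with the identity in Lemma~\ref{l_properties_pi_and_pi_*}~\ref{l_properties_hpi_*_Birkhoff_sum}, I would write
\begin{equation*}
    f_n(0)=\max_{\hfa\in\hbN^n} S^\sigma_n(\overline{\phi}\circ\hpi)\bigl(\hfa\cdot\overline{\infty}\bigr),
\end{equation*}
where $\overline{\infty}\coloneqq(\infty,\infty,\dots)\in \hpi^{-1}(0)$ is a fixed point of $\sigma$. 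The key step is to decompose any such sequence $\hfa\cdot\overline{\infty}$ into the maximal runs of finite digits separated by occurrences of $\infty$; each such run contributes a finite sub-Birkhoff sum of the form $S^\Gau_l\overline{\phi}(y)$ with $y\in R_l$ an FCF representative, which is bounded above by $-\overline{\phi}(0)$ via approximation of the associated FCF measure $\mu_{\bf a}$ by the periodic $\Gau$-invariant measures $\mu_{\cO([\overline{a_1,\dots,a_l,j}])}$ furnished by Lemma~\ref{l_rational_orbit/measure_close_to_periodic} (invariance forces the integral of $\overline{\phi}$ to be $\le 0$, so passing to the limit $j\to+\infty$ yields $S^\Gau_l\overline{\phi}(y)+\overline{\phi}(0)\le 0$). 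Using the a priori estimate $|\overline{\phi}(0)|=|\phi(0)-\mea(\Gau,\phi)|\le\Hseminorm{\alpha}{\phi}$, obtained by comparing $\phi(0)$ with $\int\!\phi\,\dd\mu$ for any $\mu\in\Mmax(\Gau,\phi)$, and accounting for the nonpositive $\overline{\phi}(0)$-transition terms between runs, I would conclude that $f_n(0)\le\Hseminorm{\alpha}{\phi}$; a symmetric argument, selecting a specific $\hfa$ whose inverse branch $\Gau_{\hfa}$ tracks a suitable finite orbit, produces a matching lower bound.

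With finiteness of $u_\phi$ secured, for (iii) I would set $g_N\coloneqq\sup_{n\ge N}f_n$ and observe, using Lemma~\ref{l_property_Bousch_operator}~\ref{l_property_Bousch_operator_commutes_with_sup}, that $\cL_{\overline{\phi}}(g_N)=g_{N+1}$. Since $\{g_N\}$ inherits the equi-H\"older bound from $\{f_n\}$ and is nonincreasing in $N$, it converges pointwise to $u_\phi$, and Lemma~\ref{l_property_Bousch_operator}~\ref{l_property_Bousch_operator_commutes_with_limit} then yields
\begin{equation*}
    \cL_{\overline{\phi}}(u_\phi)=\lim_{N\to+\infty}\cL_{\overline{\phi}}(g_N)=\lim_{N\to+\infty}g_{N+1}=u_\phi,
\end{equation*}
which is (iii). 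Property (i) will follow by combining the seminorm estimate from (ii) with the uniform control on $u_\phi(0)=\limsup_n f_n(0)$ produced in the previous paragraph.
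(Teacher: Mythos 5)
Your arguments for (ii) and (iii) are sound and parallel the paper's proof; in fact your observation that a uniform $\alpha$-H\"older seminorm bound passes to a pointwise $\limsup$ is a clean simplification of the paper's $\epsilon$-estimates in step (ii). Your \emph{upper} bound on $f_n(0)$ via the decomposition of $\hfa\cdot\overline{\infty}$ into maximal finite runs, each controlled by $l_{\fa}\langle\mu_{\fa},\overline{\phi}\rangle\le 0$ through Lemma~\ref{l_rational_orbit/measure_close_to_periodic}, is also a correct and genuinely different route from the paper's; two caveats apply. First, a run ending in the digit $1$ does not literally give a sum $S^\Gau_l\overline{\phi}(y)$ with $y\in R_l$ (since $\hpi$ does not intertwine $\sigma$ with $\Gau$); the clean statement is simply that the run's contribution equals $l_\fa\langle\mu_\fa,\overline{\phi}\rangle-\overline{\phi}(0)\le-\overline{\phi}(0)$. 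Second, routing everything through $f_n(0)$ and then the seminorm bound only gives $\abs{u_\phi(x)}\le(1+K_\alpha)\Hseminorm{\alpha}{\phi}$, not the asserted $K_\alpha\Hseminorm{\alpha}{\phi}$. The paper obtains the sharper constant by bounding $r_n(x)$ for \emph{all} $x$ at once: for each $\fa\in\N^n$ it compares $S_{n,\fa}\overline{\phi}(x)$ to the value at the periodic point $p_\fa=[\overline{a_1,\dots,a_n}]$, where $S_{n,\fa}\overline{\phi}(p_\fa)=n\langle\mu_{\cO(p_\fa)},\overline{\phi}\rangle\le 0$, and invokes Lemma~\ref{l_Distortion}.

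The genuine gap is the \emph{lower} bound, which your sketch leaves unaddressed. The proposed ``symmetric argument, selecting a specific $\hfa$ whose inverse branch $\Gau_{\hfa}$ tracks a suitable finite orbit'' does not work as stated. If $\hfa$ consists of a fixed FCF block $\fa$ repeated with separating $\infty$'s, then $S_{n,\hfa}\overline{\phi}(0)$ grows like $(n/l_\fa)\cdot l_\fa\langle\mu_\fa,\overline{\phi}\rangle$, which tends to $-\infty$ unless $\langle\mu_\fa,\overline{\phi}\rangle=0$, i.e.\ unless $\phi$ happens to be rationally maximized with $\mu_\fa$ limit-maximizing. If instead $\hfa$ is taken to be the first $n$ continued-fraction digits of a $\mu$-generic point for a maximizing $\mu$, Birkhoff's theorem controls only the time averages $S_n\overline{\phi}/n\to 0$; the partial sums $S_n\overline{\phi}$ may still drift to $-\infty$ (a priori $\overline{\phi}$ need not be a coboundary --- that is precisely what we are trying to prove). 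The paper's argument is of a different character: choose $A_n\in\hSigma$ maximizing the continuous function $S_n^\sigma(\overline{\phi}\circ\hpi)$ on the compact space $\hSigma$; observe that $\max_{\hSigma}S_n^\sigma(\overline{\phi}\circ\hpi)\ge\int S_n^\sigma(\overline{\phi}\circ\hpi)\,\mathrm{d}\mu=0$ for any $\mu\in\Mmax\bigl(\sigma_{\hSigma},\overline{\phi}\circ\hpi\bigr)$; set $y_n=\hpi(\sigma^n(A_n))$ and deduce, via Lemma~\ref{l_properties_pi_and_pi_*}~\ref{l_properties_hpi_*_Birkhoff_sum} and Lemma~\ref{l_property_Bousch_operator}~\ref{l_property_Bousch_operator_sup_equal_max}, that $r_n(y_n)\ge 0$; and finally transport this to $r_n(x)\ge -K_\alpha\Hseminorm{\alpha}{\phi}$ for every $x\in I$ via the equi-H\"older bound of Lemma~\ref{l_Bousch_Op_preserve_space}. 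Without this ``$\max\ge\int$ on $\hSigma$'' step (or an equivalent), finiteness of $u_\phi$ is not established, and parts (i), (ii), and (iii) all remain incomplete.
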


\begin{proof}
		For each $n\in\N$ and each $x\in I$, we write 
	\begin{equation}\label{rnsn_def}
		r_n(x)\=\cL_{\overline{\phi}}^n(\mathbbold{0})(x)
		\qquad \text{and} \qquad
		s_n(x)\=\sup \{ r_m(x) : m\ge n \} . 
	\end{equation}
	Note that, 
	for each $x\in I$, the sequence $\{s_n(x)\}_{n\in\N}$ is nonincreasing and by (\ref{e_calibrated_sub-action_exists}) and (\ref{rnsn_def}),  
	\begin{equation*}
		u_\phi(x)=\lim\limits_{n\rightarrow +\infty}s_n(x)=\limsup\limits_{n\rightarrow +\infty}r_n(x).
	\end{equation*}
	
	\smallskip
	(\romannumeral1) Fix $x\in I$ and $n\in \N$. For each $\fa=(a_1,\dots,a_n)\in \N^n$, denote $p_\fa\=[\overline{a_1,\dots,a_n}]$ which satisfies $\Gau_\fa(p_\fa)=\Gau^n(p_\fa)=p_\fa$. By Proposition~\ref{p_inverse_branches}~(vi), (\ref{e_S_n_bf_a}), and the fact that $\mea\bigl(\Gau,\overline{\phi}\bigr)=0$, we get
	\begin{equation}\label{e_est_upper_S_n_a_periodic_points} 
		S_{n,\fa}\overline{\phi}(p_\fa)=S_n\overline{\phi}(p_\fa)
        =n\int_I \!\overline{\phi}\,\mathrm{d}\mu_{\cO(p_\fa)}\leq 0
	\end{equation}
	Combining this with Lemma~\ref{l_Distortion} gives
	\begin{equation}\label{e_est_upper_S_n_a}
			S_{n,\fa}\overline{\phi}(x)= 	S_{n,\fa}\overline{\phi}(x)-	S_{n,\fa}\overline{\phi}(p_\fa)+	S_{n,\fa}\overline{\phi}(p_\fa)\\
			\leq K_\alpha\Hseminorm{\alpha}{\phi}.
	\end{equation}
So by (\ref{rnsn_def}), Lemma~\ref{l_property_Bousch_operator}~(ii), and (\ref{e_est_upper_S_n_a}), we have 
	\begin{equation*}
		r_n(x)
        =\cL^n_{\overline{\phi}}(\mathbbold{0})(x)
        =\sup \bigl\{ S_{n,\fa}\overline{\phi}(x) : \fa\in \N^n \bigr\}
        \leq K_\alpha\Hseminorm{\alpha}{\phi}.
	\end{equation*}
Combining this with (\ref{rnsn_def}) and (\ref{e_calibrated_sub-action_exists}) gives 
	\begin{equation*}\label{e_u_phi_upper_bound}
		u_{\phi}(x)=\limsup\limits_{n\rightarrow +\infty}r_n(x)\leq K_\alpha \Hseminorm{\alpha}{\phi}  . 
	\end{equation*}
	Next, we will show that $u_\phi(x)\geq -K_\alpha \Hseminorm{\alpha}{\phi}$. 
	Fix $n\in \N$. We choose a point $A_n=(a_{i})_{i\in \N}\in \hSigma$ on which $S_n^{\sigma }\bigl(\overline{\phi}\circ \hpi\bigr)$
	attains its maximum value. Denote $y_n\=\hpi(\sigma^n(A_n))$ and  $\fa\=(a_1,a_2,\dots,a_n)$. By Lemma~\ref{l_properties_pi_and_pi_*}~\ref{l_properties_hpi_*_Birkhoff_sum}, we get 
	\begin{equation}\label{e_proof_Birkhoff_sum_max_preimage}
		S_{n,\hfa}\overline{\phi}(y_n)=S_n^{\sigma }\bigl(\overline{\phi}\circ \hpi\bigr)(A_n)  . 
	\end{equation}
	Choose $\mu\in \MMM_{\max}\bigl(\sigma_{\hSigma},\overline{\phi}\circ \hpi\bigr)$. By Proposition~\ref{p_ergodic_op_relations}~(i), we have $\mea\bigl(\sigma_{\hSigma},\overline{\phi}\circ \hpi\bigr)=\mea(T,\overline{\phi})=0$, and then we have $\int_{\hSigma}\! S_n^\sigma\bigl(\overline{\phi}\circ \hpi\bigr)\, \mathrm{d}\mu=0$.
	So for all $x\in I$, combining (\ref{rnsn_def}), Lemma~\ref{l_Bousch_Op_preserve_space}, and (\ref{e_proof_Birkhoff_sum_max_preimage}) gives
	\begin{align*}
		r_n(x)&\geq r_n(y_n)-K_\alpha\Hseminorm{\alpha}{\phi}
		\geq S_{n,\hfa}\overline{\phi}(y_n)-K_\alpha\Hseminorm{\alpha}{\phi}
		=S_n^{\sigma }\bigl(\overline{\phi}\circ \hpi\bigr)(A_n)-K_\alpha\Hseminorm{\alpha}{\phi}\\
		&\geq \int_{\hSigma}\! S_n^\sigma\bigl(\overline{\phi}\circ \hpi\bigr)\, \mathrm{d}\mu-K_\alpha\Hseminorm{\alpha}{\phi}
		=-K_\alpha\Hseminorm{\alpha}{\phi}  . 
	\end{align*}
	Combining this with (\ref{rnsn_def}) and (\ref{e_calibrated_sub-action_exists}) gives $u_\phi(x)\geq -K_\alpha\Hseminorm{\alpha}{\phi}$ for all $x\in I$, so
    (\romannumeral1) follows.
    
	\smallskip
	
   (\romannumeral2) Suppose $x,\,y\in I$ and fix $\epsilon>0$. By (\ref{rnsn_def}) and (\ref{e_calibrated_sub-action_exists}), there exists $N\in \mathbb{N}$ such that 
    $\abs{r_N(x)-u_\phi(x)}<\epsilon$ and
    $s_N(y)-u_{\phi}(y)<\epsilon$. So by (\ref{rnsn_def}) and Lemma~\ref{l_Bousch_Op_preserve_space},
\begin{equation}\label{uphixydifference1}	
	u_\phi(x)-u_\phi(y) < r_N(x)-s_N(y)+2\epsilon
	\leq r_N(x)-r_N(y)+2\epsilon
	\leq K_{\alpha}\Hseminorm{\alpha}{\phi}\abs{x-y}^\alpha+2\epsilon  , 
\end{equation}
where the final inequality uses (\ref{e_Bousch_Op_Holder_seminorm_bound}).
Similarly, there exists $M\in \N$ such that
$\abs{r_M(y)-u_\phi(y)}<\epsilon$ and
$s_M(x)-u_{\phi}(x)<\epsilon$, and an analogous calculation gives
\begin{equation}\label{uphixydifference2}
	u_\phi(x)-u_\phi(y) \ge  -K_{\alpha}\Hseminorm{\alpha}{\phi} \abs{x-y}^\alpha-2\epsilon.
\end{equation}

Since $\epsilon>0$ was arbitrary, (\romannumeral2) follows from (\ref{uphixydifference1}) and  (\ref{uphixydifference2}).

	\smallskip
	
	(\romannumeral3) First we prove that $\{s_n\}_{n\in\N}$ is equicontinuous. Fix arbitrary $\epsilon>0$ and $m\in\N$. By (\ref{e_Bousch_Op_Holder_seminorm_bound}) and (\ref{rnsn_def}), $\{r_n\}_{n\in\N}$ is equicontinuous. Hence there exists $\delta>0$ such that if $\abs{x-y}<\delta$, we have
	\begin{equation*}
		\abs{r_n(x)-r_n(y)}< \epsilon / 2
	\end{equation*}
for all $n\in \N$. Then fix arbitrary $x,\,y \in I$ satisfying $\abs{x-y}<\delta$.

	Since $s_m(x)=\sup_{k\geq m}\{r_k(x)\}$, we can find $N_1>m$ such that $s_m(x)<r_{N_1}(x)+\frac{\epsilon}{2}$. 
	Then we have
	\begin{equation*}
		\begin{aligned}
			s_m(x)-s_m(y)&< r_{N_1}(x)+\frac{\epsilon}{2}-s_m(y)
			\leq r_{N_1}(x)+\frac{\epsilon}{2}-r_{N_1}(y)
			\leq \frac{\epsilon}{2}+\frac{\epsilon}{2}=\epsilon  . 
			\end{aligned}
	\end{equation*}

	Similarly, we can find $N_2>m$ such that $s_m(y)<r_{N_2}(y)+\frac{\epsilon}{2}$.
    Then we have 
	\begin{equation*}
		\begin{aligned}
			s_m(x)-s_m(y)&> s_m(x)-r_{N_2}(y)-\frac{\epsilon}{2}
			\geq r_{N_2}(x)-r_{N_2}(y)-\frac{\epsilon}{2}
			\geq -\frac{\epsilon}{2}-\frac{\epsilon}{2}=-\epsilon  . 
		\end{aligned}
	\end{equation*}
	Therefore, $\{s_n\}_{n\in\N}$ is equicontinuous.
	
	If $x\in I$, then by Lemma~\ref{l_property_Bousch_operator}~(iv), (v), and (\ref{rnsn_def}),
\begin{align*}
	\mathcal{L}_{\overline{\phi}}(u_\phi)(x)
	&=\mathcal{L}_{\overline{\phi}}\bigl(\lim\limits_{n\to +\infty}s_n\bigr)(x)	=\lim\limits_{n\to+\infty}\mathcal{L}_{\overline{\phi}} \bigl( \sup \bigl\{ \mathcal{L}_{\overline{\phi}}^m(\mathbbold{0})(x) : m\ge n \bigr\}\bigr)\\
	&=\lim\limits_{n\to+\infty} \bigl( \sup \bigl\{ \mathcal{L}_{\overline{\phi}}^{m+1}(\mathbbold{0})(x) : m\ge n \bigr\} \bigr)
	=\lim\limits_{n\rightarrow+\infty}s_{n+1}(x)
	=u_{\phi}(x)  . \qedhere
\end{align*}
\end{proof}

\begin{definition}
    Suppose $\phi\in\Holder{\alpha}(I)$, and $u_\phi$ is the calibrated sub-action defined by (\ref{e_calibrated_sub-action_exists}). We define the \emph{revealed version} $\tphi$ by
	 \begin{equation*}\label{e_Def_tphi}
		\tphi\= \overline{\phi}+u_\phi-u_\phi\circ \Gau   . 
	\end{equation*}
\end{definition}

We are now able to prove Theorem~\ref{t_mane}, a Ma\~n\'e lemma for the Gauss map, which resembles the form of \cite[Lemma~A]{Bou00}.

\begin{proof}[\bf Proof of Theorem~\ref{t_mane}]
	This follows immediately from Definition~\ref{d_Def_BouschOp} and Proposition~\ref{p_calibrated_sub-action_exists}.
\end{proof}

\subsection{Maximizing set}

Now, by analogy with the set $Z'$ of \cite[p.~495]{Bou00}, and the admissible words 
defined in \cite[Definition~2.3]{BM02}, we
wish to relate each $\phi\in \Holder{\alpha}(I)$ to a 
\emph{maximizing set} $\cK(\phi)\subseteq \hSigma$. 

\begin{definition}\label{Maximizing set}
	Given $\alpha\in(0,1]$, $\phi \in \Holder{\alpha}(I)$, and $u_\phi$ 
    the calibrated sub-action defined by (\ref{e_calibrated_sub-action_exists}), 
	denote
	\begin{align}
		\Phi&\=\phi\circ \hpi,\label{e_def_Phi}\\
		\overline{\Phi}&\= \Phi -\mea(\Gau,\phi)=\overline{\phi}\circ \hpi,\label{e_def_bar_Phi} \\
		U_\Phi&\=u_\phi\circ \hpi,\text{ and }\label{e_def_U_Phi}\\
		\Psi&\=\Phi- \mea (\Gau,\phi)+U_\Phi-U_\Phi\circ \sigma  , \label{e_def_Psi}
	\end{align}
and define the \emph{maximizing set} for $\phi$ by $\cK(\phi)\=\bigcap_{n=1}^{+\infty}\sigma_{\hSigma}^{-n}\bigl( \Psi^{-1}(0) \bigr)$.
\end{definition}

\begin{lemma}\label{l_maximizing set}
	If $\phi \in \Holder{\alpha} (I)$ with $\alpha \in (0,1]$, then the following hold:
		\begin{enumerate}[label=\rm{(\roman*)}]
		\smallskip
		\item\label{l_maximizing set_Phi_U_Holder} $\Phi,\, U_\Phi,\, \Psi\in \Holder{\alpha}\bigl(\hSigma,d_{\hrho}\bigr)$.
		\smallskip
		
		\item\label{l_maximizing set_mane_equation} $U_\Phi$ satisfies the functional equation
		\begin{equation}\label{e_mane_equation_shift_space}
		    U_\Phi(A)=\max \bigl\{\overline{\Phi}(B)+U_\Phi(B) : B\in\sigma_{\hSigma}^{-1}(A) \bigr\},\quad \text{ for all } A\in \hSigma  ,
		\end{equation}
	and consequently $\Psi\leq 0$.
		\smallskip
		
		\item\label{l_maximizing set_basic_property} $\cK(\phi)$ is a nonempty compact closed subset of $\hSigma$, with $\sigma(\cK(\phi))\subseteq \cK(\phi)$.
		\smallskip
		
		\item\label{l_maximizing set_supported_maximizing_measures} $\MMM_{\max} \bigl(\sigma_{\hSigma},\Phi\bigr)=\MMM_{\max} \bigl(\sigma_{\hSigma},\Psi\bigr)=\bigl\{\mu \in \MMM\bigl(\hSigma,\sigma_{\hSigma}\bigr):\supp \mu \subseteq \cK(\phi)\bigr\}\neq \emptyset$.
	\end{enumerate}	
\end{lemma}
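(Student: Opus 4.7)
\textbf{Part (i)} is a composition argument. I would use that $\hpi\:\bigl(\hSigma,d_{\hrho}\bigr)\to(I,d)$ is Lipschitz by Lemma~\ref{l_properties_pi_and_pi_*}~\ref{l_properties_pi_Lipschitz}, so $\Phi=\phi\circ\hpi$ and $U_\Phi=u_\phi\circ\hpi$ inherit $\alpha$-H\"older regularity from $\phi$ and from $u_\phi$ (see Proposition~\ref{p_calibrated_sub-action_exists}~(ii)); combined with the Lipschitz property of $\sigma$ (Lemma~\ref{l_properties_sigma_and_Sigma}~\ref{l_properties_sigma_Lipschitz}), this yields $U_\Phi\circ\sigma\in\Holder{\alpha}\bigl(\hSigma,d_{\hrho}\bigr)$, and $\Psi$ is then $\alpha$-H\"older as a sum.

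\textbf{Part (ii)} is the lift of Theorem~\ref{t_mane} from $I$ to $\hSigma$. The plan is to parametrise $\sigma^{-1}(A)$ by the first digit: each $B\in\sigma^{-1}(A)$ has the form $B=\bigl(\hb,\ha_1,\ha_2,\dots\bigr)$ for a unique $\hb\in\hbN$, and Lemma~\ref{l_properties_pi_and_pi_*}~\ref{l_properties_sigma_and_pi_commutes} (applied with $n=1$) gives $\hpi(B)=\Gau_{\hb}(x)$, where $x\=\hpi(A)$. Therefore
\begin{equation*}
\max\bigl\{\overline{\Phi}(B)+U_\Phi(B):B\in\sigma^{-1}(A)\bigr\}
=\max_{\hb\in\hbN}\bigl(\overline{\phi}(\Gau_{\hb}(x))+u_\phi(\Gau_{\hb}(x))\bigr),
\end{equation*}
and the right-hand side equals $u_\phi(x)=U_\Phi(A)$ by Theorem~\ref{t_mane} together with the sup-is-max upgrade in Lemma~\ref{l_property_Bousch_operator}~\ref{l_property_Bousch_operator_sup_equal_max} (continuity of $\overline{\phi}+u_\phi$ and compactness of $\hbN$). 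To deduce $\Psi\leq 0$, I would specialise: for arbitrary $B'\in\hSigma$, setting $A=\sigma(B')$ and using the functional equation just proved gives $\overline{\Phi}(B')+U_\Phi(B')\leq U_\Phi(\sigma(B'))$, i.e., $\Psi(B')\leq 0$.

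\textbf{Part (iii).} Closedness of $\cK(\phi)$ is immediate from continuity of $\sigma$ and $\Psi$, and compactness then follows from compactness of $\hSigma$. Forward invariance $\sigma(\cK(\phi))\subseteq\cK(\phi)$ is immediate from the definition: if $\sigma^n(A)\in\Psi^{-1}(0)$ for all $n\geq 1$, then so does $\sigma^{n+1}(A)$. For non-emptiness, I would produce a maximizing measure $\mu\in\MMM_{\max}\bigl(\sigma_{\hSigma},\Phi\bigr)$ (which exists by Proposition~\ref{p_ergodic_op_relations}~(ii)) and compute
\begin{equation*}
\int\!\Psi\,\mathrm{d}\mu
=\int\!\Phi\,\mathrm{d}\mu-\mea(\Gau,\phi)
=\mea\bigl(\sigma_{\hSigma},\Phi\bigr)-\mea(\Gau,\phi)
=0,
\end{equation*}
using $\sigma$-invariance of $\mu$ to kill the coboundary and Proposition~\ref{p_ergodic_op_relations}~(i) for the second equality. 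Combined with $\Psi\leq 0$ from (ii), this forces $\Psi=0$ $\mu$-a.e., so $\sigma$-invariance propagates $\mu\bigl(\Psi^{-1}(0)\bigr)=1$ to $\mu\bigl(\sigma^{-n}\bigl(\Psi^{-1}(0)\bigr)\bigr)=1$ for every $n\geq 1$; hence $\mu(\cK(\phi))=1$, and closedness of $\cK(\phi)$ gives $\supp\mu\subseteq\cK(\phi)\neq\emptyset$.

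\textbf{Part (iv)} then follows. The same cohomology identity $\int\!\Psi\,\mathrm{d}\mu=\int\!\Phi\,\mathrm{d}\mu-\mea\bigl(\sigma_{\hSigma},\Phi\bigr)$ for every $\mu\in\MMM\bigl(\hSigma,\sigma_{\hSigma}\bigr)$ yields $\MMM_{\max}\bigl(\sigma_{\hSigma},\Phi\bigr)=\MMM_{\max}\bigl(\sigma_{\hSigma},\Psi\bigr)$, and the argument in (iii) shows every such maximizer has $\supp\mu\subseteq\cK(\phi)$. For the reverse inclusion, I would note that if $\supp\mu\subseteq\cK(\phi)$ then (by the $n=1$ term of the defining intersection) $\Psi\circ\sigma\equiv 0$ on $\supp\mu$, so $\sigma$-invariance gives $\int\!\Psi\,\mathrm{d}\mu=\int\!\Psi\circ\sigma\,\mathrm{d}\mu=0$, which is maximal. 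Non-emptiness was already established in (iii). The only genuinely technical point is (ii): aligning the supremum over $\hbN$ appearing in the calibrated sub-action equation on $I$ with the maximum over $\sigma^{-1}(A)\subseteq\hSigma$ requires careful bookkeeping via Lemma~\ref{l_properties_pi_and_pi_*}~\ref{l_properties_sigma_and_pi_commutes}, and the passage from sup to max depends on the compactification $\hbN$; once this is in place, the remaining parts are routine consequences of the coboundary identity and weak$^*$ compactness.
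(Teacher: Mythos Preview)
Your proposal is correct and follows essentially the same approach as the paper: composition with the Lipschitz maps $\hpi$ and $\sigma$ for (i), lifting the calibrated sub-action equation via Lemma~\ref{l_properties_pi_and_pi_*}~\ref{l_properties_sigma_and_pi_commutes} for (ii), and the coboundary identity $\int\Psi\,\mathrm{d}\mu=\int\Phi\,\mathrm{d}\mu-\mea(\Gau,\phi)$ for (iii) and (iv). The only organisational difference is that the paper defers non-emptiness of $\cK(\phi)$ to (iv) rather than proving it in (iii); your argument for the reverse inclusion in (iv), via $\Psi\circ\sigma\equiv 0$ on $\cK(\phi)$ and $\sigma$-invariance, is in fact slightly cleaner than the paper's assertion that $\cK(\phi)\subseteq\Psi^{-1}(0)$ (which does not follow directly from the definition $\cK(\phi)=\bigcap_{n\geq 1}\sigma^{-n}(\Psi^{-1}(0))$, though the conclusion is of course the same).
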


\begin{proof}
	\ref{l_maximizing set_Phi_U_Holder} follows immediately from the fact that $\phi,\, u_\phi\in \Holder{\alpha}(I)$, and since $\hpi$, $\sigma$ are Lipschitz (see Lemmas~\ref{l_properties_pi_and_pi_*}~\ref{l_properties_pi_Lipschitz} and~\ref{l_properties_sigma_and_Sigma}~(i)).
    
    \smallskip
	
    \ref{l_maximizing set_mane_equation} Fix $A\in \hSigma$. By definition of $\Phi$ and $U_\Phi$ (see (\ref{e_def_Phi}) and (\ref{e_def_U_Phi})), Proposition~\ref{p_calibrated_sub-action_exists}~(iii), (\ref{e_Def_BouschOp_2}), and Lemma~\ref{l_properties_pi_and_pi_*}~\ref{l_properties_sigma_and_pi_commutes}, we obtain 
\begin{align*}
    U_\Phi(A) = u_\phi(\hpi(A))
    &= \max \bigl\{\overline{\phi}(\Gau_{\ha}(\hpi(A))) + u_\phi(\Gau_{\ha}(\hpi(A))) : \ha\in \hbN \bigr\}\\
    &= \max \bigl\{\overline{\phi}(\Gau_{\ha}(\hpi(\sigma(\ha A)))) + u_\phi(\Gau_{\ha}(\hpi(\sigma(\ha A)))) : \ha\in \hbN \bigr\}\\
    &= \max \bigl\{\overline{\phi}(\hpi(\ha A)) + u_\phi(\hpi(\ha A)) : \ha\in \hbN \bigr\}\\
    &= \max \bigl\{\overline{\Phi}(B) + U_\Phi(B) : B\in\sigma_{\hSigma}^{-1}(A)\bigr\}.
\end{align*}

\smallskip

\ref{l_maximizing set_basic_property} By definition of $\cK(\phi)$, it is immediate from the continuity of $\sigma$ and $\Psi$ that $\cK(\phi)$ is compact. By  definition of $\cK (\phi)$, it is also clear that $\sigma(\cK(\phi))\subseteq \cK(\phi)$. The fact that $\cK(\phi)$ is nonempty will follow directly from \ref{l_maximizing set_supported_maximizing_measures} and the fact that $\Mmax\bigl(\sigma_{\hSigma},\Phi\bigr)$ is nonempty.

\smallskip

\ref{l_maximizing set_supported_maximizing_measures}  The first identity follows from (\ref{e_def_Psi}), (\ref{e_def_Phi}), and Proposition~\ref{p_ergodic_op_relations}~(i). 
To establish the second identity, we first note that by the first identity, (\ref{e_setofmaximizngmeasures}), and \ref{l_maximizing set_mane_equation}, every $\mu \in \MMM\bigl(\hSigma,\sigma_{\hSigma}\bigr)$ with $\supp \mu \subseteq \cK(\phi) \subseteq \Psi^{-1} (0)$ is in $\Mmax\bigl( \sigma_{\hSigma}, \, \Psi  \bigr)$. 
Conversely, by \ref{l_maximizing set_mane_equation} and Proposition~\ref{p_ergodic_op_relations}~(i), every $\mu \in \Mmax \bigl(\sigma_{\hSigma}, \Psi \bigr)$ satisfies $\int_{\hSigma}\! \Psi \, \mathrm{d} \mu = 0$. 
By \ref{l_maximizing set_mane_equation}, $\supp \mu$ is a subset of the compact set $\Psi^{-1} (0)$. It now follows from the $\sigma_{\hSigma}$-invariance of $\mu$ that $\supp \mu \subseteq \bigcap_{n=0}^{+\infty} \sigma^{-n}  \bigl( \Psi^{-1} ( 0 ) \bigr) = \cK(\phi)$. The inequality follows from the fact that $\MMM\bigl(\hSigma,\sigma_{\hSigma}\bigr)$ is compact with respect to the weak$^*$ topology.
\end{proof}

\section{Typical finite optimization}\label{sec_TFO}

It will be convenient to classify the potential
functions $\phi\in \Holder{\alpha}(I)$ 
as follows:

\begin{definition}[Classification of potentials]\label{d_classification_of_potential}
	For $\alpha\in(0,1]$, a function $\phi\in \Holder{\alpha}(I)$ is said to be
	\begin{enumerate}[label=\rm{(\roman*)}]
		\smallskip
		\item \emph{essentially compact}\footnote{The terminology follows \cite{JMU06, JMU07}.}
        if $\mea(\Gau,\phi)=\mea_m(\Gau,\phi)$ (cf.~Definition~\ref{def_restricted_maximum_ergodic_average}) for some $m\in\N$; let $\bad^\alpha(\Gau)$ denote the set of $\alpha$-H\"older essentially compact functions;
        
		\smallskip
		\item \emph{rationally maximized} if there exists $n\in \N$ and $\fa\in \N^n$ such that $\int_I\!\phi\,\mathrm{d}\mu_\mathbf{a}=\mea(\Gau,\phi)$ or $\phi(0)=\mea(\Gau,\phi)$; let $\ral^\alpha(\Gau)$ denote the set of $\alpha$-H\"older rationally maximized functions;
        
		\smallskip
		\item  let $\well^\alpha(I)$ denote the set of $\alpha$-H\"older functions satisfying neither (i) nor (ii) above. 
	\end{enumerate}
\end{definition}

Fix $\alpha\in (0,1]$. It is easy to check that $\psi\=-d(\cdot,\bcf_m)\in \bad^\alpha(\Gau)$ for all $m\in \N$ (cf.~\cite{JMU07} for a sufficient condition for a similar notion of essential compactness in a symbolic dynamical setting). See Example~\ref{ex_rational_maximized} for the construction of a function $\phi \in \ral^\alpha (\Gau)$.

In this section we will
firstly establish
the \emph{typical finite optimization} for rationally maximized potentials (see Theorem~\ref{t_rationl_locking_property} and its slightly stronger form Theorem~\ref{t_rationl_locking_property_locking}), 
secondly use Theorem~\ref{t_rationl_locking_property} together with an example of a rationally maximized potential to demonstrate the failure of the typical periodic optimization conjecture for $\alpha$-H\"older potentials (see Theorem~\ref{t:FailTPO}), and finally prove that the set 
 $\bad^\alpha(\Gau)$
of essentially compact functions is contained in the closure of $\Lock^\alpha(\Gau)$ (see Theorem~\ref{t_main_theorem_ess_compact} and its slightly stronger form Theorem~\ref{t_main_theorem_ess_compact_locking}).

Let $T\: X \rightarrow X$ be a Borel measurable map on a metric space $X$, and $\alpha\in(0,1]$.
We define 
    $\sP(T)$
to be the set of those continuous functions $\phi\:X\to\R$ with a $(T,\phi)$-maximizing measure supported on a periodic orbit, and define
$
	\sP^\alpha(T) 
$ to be those $\alpha$-H\"older functions in 
$\sP(T)$.

 If a function $\phi \in \sP^\alpha(T)$ satisfies $\card \Mmax(T,  \phi) = 1$ and $\Mmax(T,   \phi) = \Mmax(T,  \psi)$ for all $\psi \in  \Holder{\alpha}(X)$ sufficiently close to $\phi$ in $\Holder{\alpha}(X)$, we say that $\phi$ has the \emph{(periodic) locking\footnote{The terminology follows \cite{Boc19, BZ15} (see also e.g.~\cite{Bou00, Je00}).} property} in $\Holder{\alpha}(X)$ (with respect to $T$). The set $\Lock^\alpha(T)$ is defined to consist of all $\phi \in \sP^\alpha(T)$ satisfying the periodic locking property in $\Holder{\alpha}(X)$.

Similarly, if a function $\phi \in \Holder{\alpha}(X)$ satisfies $\card \Mmax^*(T,  \phi) = 1$, $\Mmax^*(T,  \phi) = \Mmax^*(T,  \psi)$ for all $\psi \in  \Holder{\alpha}(X)$ sufficiently close to $\phi$ in $\Holder{\alpha}(X)$, and the unique limit-maximizing measure is 
uniformly distributed\footnote{
A consequence of Theorem \ref{t_weak_*_closure_M(I,T)} is that for the Gauss map $\Gau$, all finitely supported extremal points of  $\overline{\MMM(I,\Gau)}$ are \emph{equidistributions} on their support; by contrast, for more general maps $T\:X\to X$, finitely supported extremal points of  $\overline{\MMM(X,T)}$ need not give equal mass to their atoms. So it is natural to replace ``uniformly distributed'' by ``supported'' in our definition of the \emph{finite locking property} in such general settings.} on a finite set, 
we say that $\phi$ has the \emph{finite locking property} in $\Holder{\alpha}(X)$, and define $\Flock^\alpha(T)$ to consist of those $\phi \in \Holder{\alpha}(T)$ with the finite locking property in $\Holder{\alpha}(X)$.

 In the proof of Theorem~\ref{t_main_theorem_ess_compact_locking}, we show that for an arbitrary $\phi\in\bad^\alpha(\Gau)$, any perturbation of
 the form $\phi'=\phi -\epsilon d(\cdot,\cO)^\alpha$, with $\epsilon>0$ sufficiently small, belongs to $\sP^\alpha(\Gau)$,
 where $\cO$ is a particular periodic orbit. The perturbation argument in our proof of Theorem~1.3 is mainly inspired by ideas appearing in \cite{Co16}, \cite{Boc19}, \cite{HLMXZ25}, and \cite{LZ25}.

In addition to the overview of our proof strategy given in Section \ref{sct_intro}, we note that
compared with the ideas and techniques in the aforementioned works, our approach is to apply the closing lemma from the uniformly expanding scenario in a neighbourhood of the support of a maximizing measure (supported on $\bcf_m$ for some $m \in \mathbb{N}$), and then to carry out a local analysis following the perspective discussed in \cite{Boc19} near $\bcf_m$. The main difficulties in our setting arise from the fact that $\Gau$ has \emph{countably} many inverse branches, and is discontinuous.

The technical ingredients of the proof consist of
(1) quantitatively avoiding the discontinuities of $\Gau$, using the fact that $\Gau$ is Lipschitz and distance-expanding in a small neighbourhood of $\bcf_m$ for each $m \in \mathbb{N}$ (cf.~Lemma~\ref{l_basic_property_bounded_continued_fractions}) and
(2) handling the perturbation argument with constants that are necessarily local (such as $\eta_m$, $\lambda_m$, $L_1$, $L_2$, and $L_3$ in the proof of Theorem~\ref{t_main_theorem_ess_compact_locking}).

 The proof of Theorem~\ref{t_rationl_locking_property_locking} is inspired by the proof of the periodic locking property (cf.~\cite{BZ15} and \cite[Remark~4.5]{YH99}). We first prove a technical lemma (Lemma~\ref{l_rational_locking}). The technical part in the proof of Lemma~\ref{l_rational_locking} is the construction of a \emph{transport sequence}\footnote{This terminology follows \cite{BZ15}.} in a given rational orbit. Since there exists more than one FCF measure in a given rational orbit, the perturbation in the proof of Theorem~\ref{t_rationl_locking_property_locking} is also more sophisticated than the one for the periodic locking property.

\subsection{TFO for rationally maximized potentials}

In this subsection 
we will establish the following slightly stronger version of Theorem~\ref{t_rationl_locking_property} (which in particular implies Theorem~\ref{t_rationl_locking_property}):

\setcounter{thml}{3}

\begin{thml}[TFO for rationally maximized potentials] \label{t_rationl_locking_property_locking}
	For $\alpha\in(0,1]$, the set $\Flock^\alpha(\Gau)$ contains an open dense subset of $\ral^\alpha(\Gau)$ (in the $\alpha$-H\"older topology).
\end{thml}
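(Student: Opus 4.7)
The proof has two ingredients: openness of $\Flock^\alpha(\Gau)$ in $\Holder{\alpha}(I)$, which is essentially built into the definition of finite locking, and density of $\Flock^\alpha(\Gau)\cap\ral^\alpha(\Gau)$ in $\ral^\alpha(\Gau)$, which is the bulk of the argument. Density rests on a key lemma (the Lemma~\ref{l_rational_locking} referenced in the excerpt) which produces, for each $\fa\in\N^*$, a H\"older perturbation of arbitrarily small norm that makes $\mu_\fa$ the unique limit-maximizing measure together with a quantitative H\"older--Wasserstein gap, the latter being precisely the ingredient needed for the locking neighbourhood.

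Given $\phi\in\ral^\alpha(\Gau)$, by Definition~\ref{d_classification_of_potential}~(ii) either $\phi(0)=\mea(\Gau,\phi)$ (the $\delta_0$ case, handled by adding $-\epsilon\,d(\cdot,0)^\alpha$) or some FCF measure $\mu_\fa$ attains $\mea(\Gau,\phi)$. I shall work out the FCF case with $\fa=(a_1,\dots,a_n)\in\cA_n$, the case $\fa\in\cB$ being similar via the identity $l_\fa\mu_\fa=l_{f^{-1}(\fa)}\mu_{f^{-1}(\fa)}+\delta_1$ from Lemma~\ref{l_basic_properties_rational_measures}~(iii). Setting $p_0\=0$ and $p_j\=[a_{n-j+1},\dots,a_n]$ for $j=1,\dots,n$, a short analysis using Theorem~\ref{t_weak_*_closure_M(I,T)} identifies the face $\{\nu\in\overline{\MMM(I,\Gau)}:\supp\nu\subseteq\cO_\fa\}$ as the simplex with extreme points $\delta_0$ and $\mu_{(a_{n-m+1},\dots,a_n)}$ for $m=1,\dots,n$: the only $\fb$'s giving $\cO_\fb\subseteq\cO_\fa$ are precisely the suffixes of $\fa$, and $\mu_\fa$ corresponds to the full-length suffix $m=n$.

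The key lemma's construction uses a \emph{transport sequence} $T_0<T_1<\dots<T_n$ along $\cO_\fa$ chosen so that the running averages $\bar T_m\=\tfrac{1}{m+1}\sum_{j=0}^m T_j$ are strictly increasing in $m$; the simple choice $T_j\=j$ suffices. Taking H\"older bump functions $\chi_j\in\Holder{\alpha}(I)$ supported in small disjoint balls around the $p_j$ with $\chi_j(p_j)=1$ and $\chi_j(p_k)=0$ for $k\neq j$, set
\[
\psi_\fa(x)\=\sum_{j=0}^n T_j\,\chi_j(x)-K\,d(x,\cO_\fa)^\alpha
\]
for a sufficiently large constant $K>0$. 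The monotonicity of $\bar T_m$ gives $\langle\mu_\fa,\psi_\fa\rangle=\bar T_n$ strictly greater than the value at every other extreme point of the $\cO_\fa$-simplex. For general $\nu\in\overline{\MMM(I,\Gau)}$ with mass off $\cO_\fa$, a H\"older estimate controlling $\langle\nu,\chi_j\rangle$ in terms of $\nu(\{p_j\})$ plus a term proportional to $\langle\nu,d(\cdot,\cO_\fa)^\alpha\rangle$ (together with a calculation for the periodic approximations $\nu_N\=\mu_{\cO([\overline{a_1,\dots,a_n,N}])}$ that weak-$\ast$ converge to $\mu_\fa$) yields the H\"older--Wasserstein gap $\langle\mu_\fa,\psi_\fa\rangle-\langle\nu,\psi_\fa\rangle\geq c\,d_{W,\alpha}(\nu,\mu_\fa)$ for some $c>0$ depending on $K$.

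The perturbation $\phi'\=\phi+\epsilon\psi_\fa$ then lies in $\Flock^\alpha(\Gau)$ for every $\epsilon>0$: combining the (possibly non-strict) maximality of $\mu_\fa$ for $\phi$ with the strict maximality for $\psi_\fa$ yields uniqueness, and the $d_{W,\alpha}$-gap propagates under further H\"older perturbations via the duality $\abs{\langle\nu-\mu_\fa,g\rangle}\leq\Hseminorm{\alpha}{g}\,d_{W,\alpha}(\nu,\mu_\fa)$, giving the locking neighbourhood. Scaling $\epsilon\to 0^+$ makes $\|\phi'-\phi\|_\alpha$ arbitrarily small, establishing density. The principal obstacle is the calibration of $K$ and the bump functions $\chi_j$ so that the $d(\cdot,\cO_\fa)^\alpha$-penalty dominates both the off-$\cO_\fa$ leakage of the bump terms and the weak-$\ast$ approach of nearby periodic measures; the existence of multiple FCF measures on a single rational orbit is precisely why a single-height perturbation will not suffice and the strict monotonicity of the transport averages is required, making the argument substantially more elaborate than its counterpart in \cite{BZ15}.
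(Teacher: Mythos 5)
Your proposal takes a genuinely different route from the paper, but it also contains some substantive gaps.

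\textbf{Difference of approach.} The paper does not use bump functions at all. In the case $\fa\in\cA_n$, the paper's perturbation is a pure distance penalty $\Phi_{t,s}=\phi - s\,d(\cdot,\cO(x))^\alpha - t\,d(\cdot,\tcO(x))^\alpha$, and the strict maximality of $\mu_\fa$ among the finitely many measures in $\cM_x$ is obtained by choosing $n$ \emph{minimal} among lengths of FCF measures in $\Mmax^*(\Gau,\phi)$: minimality guarantees that every proper suffix measure $\mu_{\sigma^j(\fa)}$ and $\delta_0$ is already strictly suboptimal for $\phi$, while the $s$-penalty kills the $\cB$-type suffixes $\mu_{\sigma^j(f(\fa))}$, which carry mass at $1\notin\cO(x)$. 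You instead use bump weights $T_j=j$ to break all ties simultaneously (and so do not need minimality of $n$). That is an interesting trade-off, but it is not what the paper does, and your account of ``the key lemma's transport sequence'' is a misreading: in the paper's Lemma~\ref{l_rational_locking} the transport sequence is a sequence $\{y_i\}\subseteq\tcO(x)$ of \emph{points} tracking the orbit of a generic irrational, and the conclusion is the inequality $\langle\nu,\phi\rangle\leq\max_{\mu\in\cM_x}\langle\mu,\phi\rangle + C_x\Hseminorm{\alpha}{\phi}\langle\nu,d(\cdot,\tcO(x))^\alpha\rangle$, not anything involving an increasing weight sequence.

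\textbf{First genuine gap: the bump estimate is false.} You claim a ``H\"older estimate controlling $\langle\nu,\chi_j\rangle$ in terms of $\nu(\{p_j\})$ plus a term proportional to $\langle\nu,d(\cdot,\cO_\fa)^\alpha\rangle$.'' No such estimate holds. A bump $\chi_j\in\Holder{\alpha}(I)$ with $\chi_j(p_j)=1$ necessarily takes values close to $1$ in an entire neighbourhood of $p_j$ (it has a plateau), so for a measure $\nu$ concentrated near $p_j$ but charging no mass to the point itself, $\langle\nu,\chi_j\rangle$ is close to $1$ while $\nu(\{p_j\})=0$ and $\langle\nu,d(\cdot,\cO_\fa)^\alpha\rangle$ is arbitrarily small. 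Such measures exist in $\overline{\MMM(I,\Gau)}$: the periodic measures $\mu_{\cO([\overline{a_1,\dots,a_n,N}])}$ do exactly this as $N\to\infty$. Thus the pointwise inequality you invoke cannot close the argument; one needs the qualitatively different structure of the paper's Lemma~\ref{l_rational_locking}, whose entire content is an \emph{orbit-wise} transport bound with a residual $\langle\nu,d(\cdot,\tcO(x))^\alpha\rangle$ rather than a pointwise bound on test functions.

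\textbf{Second gap: the relevant constellation of measures is $\cM_x$, supported on $\tcO(x)$, not the $\cO_\fa$-simplex.} The reason the paper works with $\tcO(x)=\cO_\fa\cup\{1\}$ and with the set $\cM_x$ containing both $\cA$-suffixes \emph{and} the $\cB$-suffixes $\mu_{\sigma^j(f(\fa))}$ (which have an atom at $1$) is that the transport sequence, when tracking a generic orbit, can exit a neighbourhood of $p_j$ on the ``$L_\delta$-side'' and then pass near $1$ and $0$ before re-entering; so the residual estimate involves $d(\cdot,\tcO(x))$ and the comparison measures must include the $\cB$-type ones. Your analysis identifies the face $\{\nu:\supp\nu\subseteq\cO_\fa\}$ only; this omits the $\cB$-type measures, which do arise in the key estimate even when $\fa\in\cA$, and your penalty $-Kd(\cdot,\cO_\fa)^\alpha$ is also not the one the lemma controls. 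Both omissions would have to be repaired for the bump route to have a chance of working, and at that point the proof would look quite different from what you sketched.
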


It follows immediately from the definition that $\Flock^\alpha(\Gau) \cap \ral^\alpha(\Gau)$ is open in $\Holder{\alpha}(I)$.

\begin{notation}
	Recall that $\bigcup_{n\in\N}R_n\= (0,1)\cap \Q$ (see Lemma~\ref{l_Rn_disjoint_union}).
	For each $x\in I\smallsetminus \Q$, define $\widetilde{\cO}(x)$ as follows:
	\begin{equation*}
		\tcO(x)\=\begin{cases}
			\{0\} & \text{if } x=0  , \\
			\{0,\,1\} & \text{if } x=1  , \\
			\bigl\{x,\,\Gau(x),\,\dots,\,\Gau^{n-1}(x),\,0,\,1\bigr\} & \text{if } x\in R_n\text{ for some }n\in \N  . 
		\end{cases}
	\end{equation*}
	Define 
	\begin{equation*}
		\cM_0\= \{\delta_0\} \quad \text{ and } \quad
		\cM_1\=\{\delta_0,\,(\delta_0+\delta_1)/2\}.
	\end{equation*}
	Suppose $n\in \N$ and $x=[a_1,a_2,\dots,a_n]\in R_n$ with $\fa=(a_1,\dots,a_n)\in \cA_n$ and $\fb\=g(\fa)=(a_1,\dots,a_{n-1},a_n-1,1)\in \cB_{n+1}$. Define 
	\begin{equation}\label{e_cM_x}
		\cM_x\=\{\mu_{\fa},\,\mu_{\sigma(\fa)}\dots,\,\mu_{\sigma^{n-1}(\fa)},\,\mu_{\fb},\,\mu_{\sigma(\fb)}\dots,\,\mu_{\sigma^{n}(\fb)},\,\delta_0\}  . 
	\end{equation}
	Denote $R_\epsilon(x)\=\Gau_{\fa}(\epsilon)=[a_1,\dots,a_n+\epsilon]$ and $L_\epsilon(x)\=\Gau_{\fb}(\epsilon)=[a_1,\dots,a_n-1,1+\epsilon]$.

	Let $\conv(\cM_x)$ denote the convex hull of $\cM_x$.
\end{notation}

The following two technical lemmas will be used in the proof of Theorem~\ref{t_rationl_locking_property_locking}.
\begin{lemma}\label{l_neighbourhood_rational_number}
		Suppose $\epsilon\in(0,1)$, $n\in \N$, and $x=[a_1,\dots,a_n]\in R_n$ with $\fa=(a_1,\dots,a_n)\in \cA_n$.
		\begin{enumerate}[label=\rm{(\roman*)}]	
			\smallskip
			\item When $n$ is odd, $R_\epsilon(x)<x<L_{\epsilon}(x)$. When $n$ is even, $L_\epsilon(x)<x<R_\epsilon(x)$.
			\smallskip
			
			\item Suppose $n$ is odd. If $y\in (R_\epsilon(x),x]$ then $\Absbig{ \Gau^i(y)-\Gau^i(x)}\leq \epsilon$ for all $0\le i\le n$. If $y\in (x, L_\epsilon(x))$ then $\Absbig{\Gau^i(y)-\Gau^i(x)}\leq \epsilon$ for all $0\le i\le n-1$,  and moreover $\abs{ \Gau^n(y)-1} \leq \epsilon$ and $\Absbig{ \Gau^{n+1}(y)} \leq \epsilon$.
			
			\smallskip
			\item Suppose $n$ is even. If $y\in [x,R_\epsilon(x))$ then $\Absbig{ \Gau^i(y)-\Gau^i(x)} \leq \epsilon$ for all $0\le i\le n$. If $y\in (L_\epsilon(x), x)$ then $\Absbig{ \Gau^i(y)-\Gau^i(x)} \leq \epsilon$ for all $0\le i\le n-1$, and moreover $\abs{\Gau^n(y)-1}\leq \epsilon$ and $\Absbig{ \Gau^{n+1}(y)} \leq \epsilon$.

            \smallskip
            \item If $\delta\in (0,1)$ then $\abs{R_\epsilon(x)-x}\leq \Absbig{R_\epsilon\bigl(\Gau^i(x)\bigr)-\Gau^i(x)}\leq \epsilon$ and
            $\abs{L_\epsilon(x)-x}\leq \Absbig{L_\epsilon\bigl(\Gau^i(x)\bigr)-\Gau^i(x)}\leq\epsilon/(1+\epsilon)< \epsilon$ for all $0\le i\le n-1$.
		\end{enumerate}
\end{lemma}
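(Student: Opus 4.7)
Statement~(i) will follow immediately from Proposition~\ref{p_inverse_branches}~(iii): since $x=\Gau_{\fa}(0)$ and $R_\epsilon(x)=\Gau_{\fa}(\epsilon)$, the sign of $R_\epsilon(x)-x$ is determined by the parity of $n$. Moreover, the alternative continued-fraction representation gives $x=[a_1,\dots,a_n-1,1]=\Gau_{f(\fa)}(0)$ with $f(\fa)\in\cB_{n+1}$ of length $n+1$, so $\Gau_{f(\fa)}$ has the opposite monotonicity to $\Gau_{\fa}$ and $L_\epsilon(x)=\Gau_{f(\fa)}(\epsilon)$ lies on the opposite side of $x$ from $R_\epsilon(x)$.

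For (ii)--(iii) the plan is to parameterise $y$ through an inverse branch and then invoke the semi-conjugacy identity $\Gau^i\circ \Gau_{\fb}=\Gau_{\sigma^i(\fb)}$ (a variant of Proposition~\ref{p_inverse_branches}~(vi); it holds on all of $I$ for each $\fb\in\N^*$ and $0\le i\le \abs{\fb}-1$ by direct inspection, the only subtlety being at rational points where the chosen CF representation dictates the correct branch). Concretely, assume $n$ odd and $y\in(R_\epsilon(x),x]$. Monotonicity of $\Gau_{\fa}$ gives a unique $z\in[0,\epsilon)$ with $y=\Gau_{\fa}(z)$; for $0\le i\le n-1$ one has $\Gau^i(y)=\Gau_{\sigma^i(\fa)}(z)$ and $\Gau^i(x)=\Gau_{\sigma^i(\fa)}(0)$, and since $\abs{\Gau_a'(t)}=(a+t)^{-2}\le 1$ on $I$ for every $a\in\N$ the mean value theorem yields $\Absbig{\Gau^i(y)-\Gau^i(x)}\le z\le\epsilon$; for $i=n$, direct computation gives $\Gau^n(y)=z$ and $\Gau^n(x)=0$. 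For $y\in(x,L_\epsilon(x))$, the same strategy with $y=\Gau_{f(\fa)}(z)$ (using now $\Gau^i(x)=\Gau_{\sigma^i(f(\fa))}(0)$ via the alternative representation) handles $0\le i\le n-1$; the final two indices are explicit, since $\sigma^n(f(\fa))=(1)$ so $\Gau^n(y)=\Gau_1(z)=1/(1+z)$, yielding $\abs{\Gau^n(y)-1}=z/(1+z)\le\epsilon$, and then $1/(1+z)\in(1/2,1)$ forces $\Gau^{n+1}(y)=z\le\epsilon$. Statement~(iii) is treated identically after swapping the roles of $\Gau_{\fa}$ and $\Gau_{f(\fa)}$ on the two sides of $x$.

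For (iv) the plan is a chain-rule comparison. Writing $\Gau_{\fa}=\Gau_{(a_1,\dots,a_i)}\circ \Gau_{\sigma^i(\fa)}$ and using $\abs{\Gau_a'(t)}\le 1$ on $I$ for every $a\in\N$, we obtain $\Absbig{\Gau_{\fa}'(z)}\le\Absbig{\Gau_{\sigma^i(\fa)}'(z)}\le 1$ for every $z\in I$; the mean value theorem then gives $\abs{R_\epsilon(x)-x}\le\Absbig{R_\epsilon(\Gau^i(x))-\Gau^i(x)}\le\epsilon$. The $L_\epsilon$-estimate is analogous, with the sharper upper bound $\epsilon/(1+\epsilon)$ arising from the innermost branch of $\Gau_{\sigma^i(f(\fa))}$ being $\Gau_1$: the outermost compositions are non-expanding, so the full increment is bounded by $\abs{\Gau_1(\epsilon)-\Gau_1(0)}=\epsilon/(1+\epsilon)$.

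The main obstacle is bookkeeping rather than conceptual content: one must track the two continued-fraction representations of the rational point $x$ carefully, and cope with the discontinuity of $\Gau$ at the rationals $1/m$ ($m\ge 2$) by choosing on each side of $x$ the CF representation appropriate to that side. This is exactly why the two quantities $R_\epsilon(x)$ and $L_\epsilon(x)$ are both needed, and why the last branch of $L_\epsilon$ is forced to be $\Gau_1$ (producing the factor $1/(1+\epsilon)$).
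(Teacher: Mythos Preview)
Your proposal is correct and follows essentially the same route as the paper: parameterise $y$ through the appropriate inverse branch ($\Gau_{\fa}$ or $\Gau_{f(\fa)}$), use $\Gau^i\circ\Gau_{\fb}=\Gau_{\sigma^i(\fb)}$, and exploit that each $\Gau_a$ is a contraction. One small point to tighten in (iv): from the pointwise bound $\abs{\Gau_{\fa}'(z)}\le\abs{\Gau_{\sigma^i(\fa)}'(z)}$ the mean value theorem does \emph{not} directly yield $\abs{\Gau_{\fa}(\epsilon)-\Gau_{\fa}(0)}\le\abs{\Gau_{\sigma^i(\fa)}(\epsilon)-\Gau_{\sigma^i(\fa)}(0)}$ (the intermediate points may differ), so instead apply the mean value theorem once to the outer factor $\Gau_{(a_1,\dots,a_i)}$ in the decomposition $\Gau_{\fa}=\Gau_{(a_1,\dots,a_i)}\circ\Gau_{\sigma^i(\fa)}$ --- this is exactly what the paper does and gives the inequality immediately.
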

\begin{proof}
    We will write $\fb\=g(\fa)=[a_1,\dots,a_{n-1},a_n-1,1]$ throughout this proof.

    \smallskip
    
	(i) Note that $R_\epsilon(x)=\Gau_\fa(\epsilon)$, $L_\epsilon(x)=\Gau_\fb(\epsilon)$, and $x=\Gau_\fa(0)=\Gau_\fb(0)$. 
    When $n$ is odd, by Proposition~\ref{p_inverse_branches}~(iii) $\Gau_\fa$ is strictly decreasing and $\Gau_\fb$ is strictly increasing. So $R_\epsilon(x)=\Gau_\fa(\epsilon)<\Gau_\fa(0)=x=\Gau_\fb(0)<\Gau_\fb(\epsilon)=L_{\epsilon}(x)$. 
    Similarly, when $n$ is even, $\Gau_\fa$ is strictly increasing and $\Gau_\fb$ is strictly decreasing. Then $L_{\epsilon}(x)=\Gau_\fb(\epsilon)<\Gau_\fb(0)=x=\Gau_\fa(0)<\Gau_\fa(\epsilon)=R_\epsilon(x)$.

    \smallskip
    
	(ii) If $y\in (R_\epsilon(x),x]$, and 
	denoting $z\=\Gau^n(y)\in [0,\epsilon)$, we have 
    that $y=[a_1,\dots,a_n+z]$, so for all 
    $0\le i\le n$,
	\begin{equation*}
		\Absbig{\Gau^i(y)-\Gau^i(x)}
        =\abs{{\Gau_{\sigma^i(\fa)}(z)-\Gau_{\sigma^i(\fa)}(0)}}
        \leq \epsilon  , 
	\end{equation*}
	since $\Absbig{ \Gau'_{\sigma^i(\fa)}} \leq 1$, so the first part of (ii) follows.
	
	Now assuming that $y\in (x, L_\epsilon(x))$, and denoting $w\=\Gau^{n+1}(y)\in (0,\epsilon)$, we see that $y=[a_1,\dots,a_{n-1},a_n-1,1+w]$, 
	so we have that, for all $0\le i\le n-1$, 
	\begin{equation*}
		\Absbig{ \Gau^i(y)-\Gau^i(x)}
        =\abs{ \Gau_{\sigma^i(\fb)}(w)-\Gau_{\sigma^i(\fb)}(0)}
        \leq \epsilon  , 
	\end{equation*} 
    since $\Absbig{ \Gau'_{\sigma^i(\fb)}} \leq 1$. 
	Moreover, $\abs{\Gau^n(y)-1}=\frac{w}{1+w}<w<\epsilon$, so the second part of (ii) follows.
    
	\smallskip
    
	(iii) The proof is very similar to the one for (ii).
    
	\smallskip
    
	(iv) For each $a\in \N$ and $x, \, y\in I$, 
    clearly $\abs{\Gau_a(x)-\Gau_a(y)}=\Absbig{ \frac{1}{a+x}-\frac{1}{a+y}} =\frac{\abs{x-y}}{(a+x)(a+y)}\leq \abs{x-y}$.
	So $\Absbig{ R_\delta\bigl(\Gau^{n-1}(x)\bigr)-\Gau^{n-1}(x)} =\abs{\Gau_{a_n}(\delta)-\Gau_{a_n}(0)}\leq \delta$,
	 and for each $0\le i\le n-1$ we obtain
	\begin{equation*}
		\Absbig{ R_\delta\bigl(\Gau^i(x)\bigr)-\Gau^i(x)}
        =\abs{ \Gau_{\sigma^i(\fa)}(\delta)-\Gau_{\sigma^i(\fa)}(0)}
        \geq  \abs{ \Gau_{\sigma^{i-1}(\fa)}(\delta)-\Gau_{\sigma^{i-1}(\fa)}(0)}
        =\Absbig{ R_\delta\bigl(\Gau^{i-1}(x)\bigr)-\Gau^{i-1}(x)}  , 
	\end{equation*}
	and so the first part of (iv) follows.
	
	Similarly, we obtain $\Absbig{L_\delta\bigl(\Gau^{n-1}(x)\bigr)-\Gau^{n-1}(x)} =\abs{\Gau_{a_n-1}(1/(1+\delta))-\Gau_{a_n-1}(1)}\leq \delta/(1+\delta)<\delta$
	and so for each $1\le i\le n-1$ we have
	\begin{equation*}
		\Absbig{ L_\delta\bigl(\Gau^i(x)\bigr)-\Gau^i(x)}
        =\abs{ \Gau_{\sigma^i(\fb)}(\delta)-\Gau_{\sigma^i(\fb)}(0)}
        \geq  \abs{ \Gau_{\sigma^{i-1}(\fb)}(\delta)-\Gau_{\sigma^{i-1}(\fb)}(0)}
        =\Absbig{ L_\delta\bigl(\Gau^{i-1}(x)\bigr)-\Gau^{i-1}(x)}  , 
	\end{equation*}
	and the second part of (iv) follows.
\end{proof}

\begin{lemma}\label{l_rational_locking}
	Suppose $\alpha \in (0,1]$ and $x\in I\cap \Q$.
    There exists $C_x >0$ such that for all $\nu\in\overline{\MMM (I, \Gau)}$ and  
	$\phi\in \Holder{\alpha} (I)$, 
	\begin{equation}\label{eq_lemma_rational_locking}
		\langle\nu,\phi\rangle\leq
	\max \{	\langle\mu,\phi\rangle : \mu \in \cM_x\}+C_x \Hseminorm{\alpha,\,I}{\phi}\bigl\langle\nu ,d \bigl(\cdot,\tcO(x) \bigr)^\alpha\bigr\rangle   . 
	\end{equation}
\end{lemma}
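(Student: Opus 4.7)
My plan is to reduce the statement to extreme points of $\overline{\MMM(I,\Gau)}$ and for each such extreme point produce a measure $\mu_* \in \conv(\cM_x)$ witnessing the inequality modulo a H\"older-type error; the lemma then follows because $\langle \mu_*, \phi \rangle \le \max\{\langle \mu, \phi \rangle : \mu \in \cM_x\}$ by convexity. The maps $\nu \mapsto \langle \nu, \phi \rangle$ and $\nu \mapsto \langle \nu, d(\cdot, \tcO(x))^\alpha \rangle$ are both affine in $\nu$ while the maximum is independent of $\nu$, so Theorem~\ref{t_weak_*_closure_M(I,T)} together with the ergodic decomposition reduces (\ref{eq_lemma_rational_locking}) to the cases where $\nu$ is $\delta_0$, an FCF measure $\mu_\fc$, or an ergodic member of $\MMM_{\irr}(I,\Gau)$. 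The cases $x = 0$ and $x = 1$ follow directly: for $x = 0$, integrate the pointwise H\"older bound $\phi(y) \le \phi(0) + \Hseminorm{\alpha,\,I}{\phi} |y|^\alpha$ against $\nu$; for $x = 1$, use the fact that any $\nu \in \overline{\MMM(I,\Gau)}$ satisfies $\nu(\{0\}) \ge \nu(\{1\})$ (from Corollary~\ref{c_RM_equivalent_def} applied to the FCF component, together with the fact that measures in $\MMM_{\irr}(I,\Gau)$ give zero mass to rationals) to realise the atomic part of $\nu$ on $\{0, 1\}$ as a convex combination of $\delta_0$ and $(\delta_0 + \delta_1)/2$.

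For $x \in R_n$ with $n \ge 1$, my first step is the structural observation that the FCF measures supported on $\tcO(x)$ are precisely those of $\cM_x$: if $\cO_\fc \subseteq \tcO(x) = \{0, \Gau^{n-1}(x), \dots, \Gau(x), x, 1\}$, then $p_1(\fc) = 1 / c_{\abs{\fc}}$ must lie in $\tcO(x) \cap \{1/k : k \in \N\} = \{1, 1/a_n\}$, forcing $c_{\abs{\fc}} \in \{1, a_n\}$; iterating the constraint $p_i(\fc) \in \tcO(x)$ forces $\fc$ to be a suffix of $\fa = (a_1, \dots, a_n)$ or of $\fb = f(\fa)$. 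Consequently, every probability measure in $\overline{\MMM(I,\Gau)}$ supported on $\tcO(x)$ lies in $\conv(\cM_x)$.

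For the FCF extreme point $\nu = \mu_\fc$, I will construct a \emph{transport sequence} in the sense of \cite{BZ15}, redistributing the atomic mass of $\mu_\fc$ among the measures of $\cM_x$ by successive iteration of $\Gau$ along the rational orbit $\cO_\fc$ (which terminates at $0 \in \tcO(x)$) and use of the convex-combination identity $(l_\fa + 1) \mu_{f(\fa)} = l_\fa \mu_\fa + \delta_1$ from Lemma~\ref{l_basic_properties_rational_measures}(iii); the per-step H\"older cost is controlled by Proposition~\ref{p_inverse_branches}(i) and Lemma~\ref{l_neighbourhood_rational_number}, and aggregates to a fixed multiple of $\Hseminorm{\alpha,\,I}{\phi} \langle \mu_\fc, d(\cdot, \tcO(x))^\alpha \rangle$ uniform in $\fc$. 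For an ergodic $\nu \in \MMM_{\irr}(I,\Gau)$, I will partition $I$ into the Voronoi basins $\{B_z\}_{z \in \tcO(x)}$; the pointwise H\"older bound gives
\[
\langle \nu, \phi \rangle \le \sum_{z \in \tcO(x)} \nu(B_z) \phi(z) + \Hseminorm{\alpha,\,I}{\phi} \langle \nu, d(\cdot, \tcO(x))^\alpha \rangle,
\]
and the $\Gau$-invariance of $\nu$, combined with the inclusion $\Gau(B_{\Gau^i(x)}) \subseteq B_{\Gau^{i+1}(x)}$ (which holds modulo a boundary set of $\nu$-measure absorbed into the H\"older error via the shadowing of Lemma~\ref{l_neighbourhood_rational_number}), yields $\nu(B_{\Gau^i(x)}) \le \nu(B_{\Gau^{i+1}(x)})$ for $0 \le i \le n-1$ and $\nu(B_1) \le \nu(B_0)$; by the structural observation, the projected discrete measure $\sum_z \nu(B_z) \delta_z$ then lies in $\conv(\cM_x)$.

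The main obstacle will be the construction of the FCF transport sequence for $\mu_\fc$ whose symbolic orbit $\cO_\fc$ is far from $\tcO(x)$: the redistribution must proceed through many intermediate steps while keeping the aggregate H\"older cost bounded by a fixed constant multiple of $\langle \mu_\fc, d(\cdot, \tcO(x))^\alpha \rangle$, which requires careful uniform bookkeeping to absorb each transfer into the nearest $\cM_x$-admissible combination. A subtler issue is the basin boundary slippage in the invariant case, which is controlled by the quantitative shadowing of Lemma~\ref{l_neighbourhood_rational_number} together with the $\alpha$-H\"older absorption; the constant $C_x$ then emerges from the number of orbit points of $\tcO(x)$ and the local Lipschitz data of $\Gau$ near $\cO_\fa$.
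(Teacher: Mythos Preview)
Your Voronoi-basin approach for ergodic $\nu \in \MMM_{\irr}(I,\Gau)$ does not work. The claimed inclusion $\Gau(B_{\Gau^i(x)}) \subseteq B_{\Gau^{i+1}(x)}$ fails in bulk, not merely on a boundary set: points on opposite sides of a rational $z = \Gau^i(x)$ are mapped by $\Gau$ to neighborhoods of \emph{different} points of $\tcO(x)$ (cf.\ Lemma~\ref{l_neighbourhood_rational_number}, where the $R_\epsilon$-side lands near $0$ and the $L_\epsilon$-side near $1$), and points in $B_z$ that are not $\epsilon$-close to $z$ need have no relation at all to $\Gau(z)$ after one iterate. Concretely, take $x = 1/3$ so $\tcO(x) = \{0, 1/3, 1\}$, and let $\nu$ be the Gauss measure. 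Then $\nu(B_{1/3}) = \nu([1/6, 2/3]) \approx 0.515 > \nu(B_0) = \nu([0, 1/6]) \approx 0.222$, whereas every $\mu \in \conv(\cM_{1/3})$ satisfies $\mu(\{0\}) \ge \mu(\{1/3\})$. So the projected measure $\sum_z \nu(B_z) \delta_z$ is not in $\conv(\cM_x)$, and the discrepancy is of order one, not absorbable into a H\"older error of the form $C_x \Hseminorm{\alpha}{\phi} \langle \nu, d(\cdot, \tcO(x))^\alpha \rangle$.

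The paper avoids the separate FCF case entirely: since $\MMM_{\irr}(I,\Gau)$ is weak$^*$ dense in $\overline{\MMM(I,\Gau)}$ (Lemma~\ref{l_Q_Q^c_invatiant}) and both sides of (\ref{eq_lemma_rational_locking}) are weak$^*$ continuous in $\nu$, it suffices to treat ergodic $\nu \in \MMM_{\irr}(I,\Gau)$. For such $\nu$, the paper runs \emph{your} transport-sequence idea---but along the $\Gau$-orbit of a generic point $w$ for $\nu$, not along an FCF orbit. At each time $t$ one inspects the location of $\Gau^t(w)$: if it is $\epsilon$-close to some $z \in \tcO(x)\smallsetminus\{0,1\}$, Lemma~\ref{l_neighbourhood_rational_number} dictates whether the next block of $y_i$'s follows $\sigma^j(\fa)$ or $\sigma^j(\fb)$ (depending on which side of $z$ the orbit lies), and the block average is then $\langle \mu, \phi \rangle$ for some $\mu \in \cM_x$; if $\Gau^t(w)$ is far from $\tcO(x)$, set $y_t = 0$ and the distance cost is at least $\epsilon$, which is what fixes $C_x = \epsilon^{-\alpha}$. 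The two-sided splitting near each rational is essential, and is exactly what your static Voronoi partition misses.
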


\begin{proof}
	Denote $p \= \card\tcO(x)$. Suppose $\nu\in\overline{\MMM (I, \Gau)}$ and  
	$\phi\in \Holder{\alpha} (I)$. Define
	\begin{equation}\label{e_proof_rational_locking_eta}
		\eta\=	\max \{	\langle\mu,\phi\rangle : \mu \in \cM_x \}  . 
	\end{equation}	
    If $p=1$ then $x=0$, $\tcO(x)=\{0\}$, and 
	(\ref{eq_lemma_rational_locking}) holds with $C_x=1$ because
	\begin{equation*}
			\langle\nu,\phi\rangle
		\le\langle \nu, \phi (0 ) + \Hseminorm{\alpha}{\phi}  d (\cdot,0 ) ^\alpha \rangle 
		=\langle\delta_0,\phi\rangle+ \Hseminorm{\alpha}{\phi}\bigl\langle \nu,d \bigl(\cdot,\widetilde{\cO}(x) \bigr)^\alpha\bigr\rangle   . 
	\end{equation*}
	
	\smallskip
	If $p\geq 2$, then by the ergodic decomposition theorem and the fact that $\MMM_{\irr}(I,\Gau)$ is dense in $\overline{\MMM(I,\Gau)}$ (see Lemma~\ref{l_rational_orbit/measure_close_to_periodic}), 
	it suffices to prove (\ref{eq_lemma_rational_locking}) for every ergodic measure $\nu \in \MMM_{\irr} (I, \Gau )$. 
	Fixing an arbitrary ergodic $\nu\in \MMM_{\irr}(I, \Gau )$, the Birkhoff ergodic theorem
	implies that there exists $a\in I$ with
	\begin{align}
		\langle\nu,\phi\rangle &= \lim_{k\to+\infty} \frac{1}{k} S_k\phi(w) \quad \text{ and} \label{eq ergodic theorem phi} \\
		\bigl\langle \nu,d \bigl(\cdot,\widetilde{\cO}(x) \bigr)^\alpha\bigr\rangle&=\lim_{k\to+\infty}\frac{1}{k}\sum_{i=0}^{k-1} d \bigl(\Gau^i (w), \widetilde{\cO}(x) \bigr)^\alpha .\label{eq ergodic theorem distance}
	\end{align}
    
    \smallskip
    
	\emph{Claim.} There exists $C_x >0$ and a \emph{transport sequence} $\underline{y}= \{y_i\}_{i=-1}^{+\infty}$ with entries from $\widetilde{\cO}(x)$ satisfying
	\begin{equation}\label{e_proof_rational_locking_limsup}
		 \limsup\limits_{k\to+\infty}\frac{1}{n}\sum\limits_{i=0}^{k-1}\phi(y_i)\leq \eta  \quad \text{ and }
	\end{equation}
	\begin{equation}\label{e_distance_control_transport_sequence}
		\Absbig{ \Gau^i (w)-y_i }\le C_x^{1/\alpha} d \bigl(\Gau^i (w),\tcO(x)\bigr) \quad \text{ for all } i\in\N_0.
	\end{equation}

    \smallskip
	
	Note that a consequence of this claim is, by (\ref{eq ergodic theorem phi}), (\ref{e_distance_control_transport_sequence}), (\ref{e_proof_rational_locking_limsup}), and (\ref{eq ergodic theorem distance}), that
	\begin{equation*}
		\begin{aligned}
			\langle\nu,\phi\rangle
			=\lim\limits_{k\to+\infty}\frac{1}{k}\sum_{i=0}^{k-1}\phi \bigl(\Gau^i (w) \bigr)
			&\le\limsup\limits_{k\to+\infty}\frac{1}{k}\sum_{i=0}^{k-1} \bigl(\phi (y_i )+ \Hseminorm{\alpha}{\phi}  \Absbig{ \Gau^i (w)-y_i }^\alpha \bigr)\\
			&\le\limsup\limits_{k\to+\infty}\frac{1}{k}\sum_{i=0}^{k-1} \phi (y_i )+\lim\limits_{k\to+\infty}\frac{1}{k}\sum_{i=0}^{k-1}C_x \Hseminorm{\alpha}{\phi} d \bigl(\Gau^i (w), \tcO(x) \bigr)^\alpha   \\
			&\leq\eta+C_x \Hseminorm{\alpha}{\phi}\bigl\langle\nu ,d \bigl(\cdot,\widetilde{\cO}(x) \bigr)^\alpha\bigr\rangle   , 
		\end{aligned}
	\end{equation*}
	so the required inequality (\ref{eq_lemma_rational_locking}) will hold, and the lemma will follow.
	
	\smallskip
    
	\emph{Proof of Claim.} Define
	\begin{equation}\label{e_Pf_ration_locking_delta}
		\delta\= \Delta \bigl( \tcO(x)\bigr) \big/ 3 , 
	\end{equation}
	where $\Delta\bigl(\tcO(x)\bigr)\=\min \bigl\{\abs{y-z}: y,z\in \tcO(x),\,y\neq z \bigr\}$.
    
    Define
	\begin{equation}\label{e_Pf_ration_locking_epsilon}
		\epsilon\=\begin{cases}
		      \delta / (1+\delta) & \text{if } x=1  , \\
			\min\{ \abs{R_\delta(x)-x},\, \abs{L_\delta(x)-x}\} & \text{if }x\in (0,1)\cap\Q  . 
		\end{cases}
	\end{equation}
	Define 
    \begin{equation*}
        C_x\=\epsilon^{-\alpha}>1.
    \end{equation*}
	
	 The sequence $\{y_i\}_{i=-1}^{+\infty}$ is constructed recursively as follows.  
	\smallskip
    
	\emph{Base step.} Define $y_{-1}\=0$.

    \smallskip
    
	\emph{Recursive step.} For some $t\in \N_0$, assume that $y_{-1},\, y_0,\, \dots,\, y_{t-1}$ are defined.
    
    \smallskip

	\emph{Case A.} Assume that $\Gau^t(w)\in [0,\epsilon)$. 
	Define $y_{t}\=0$. Then by (\ref{e_proof_rational_locking_eta}) and (\ref{e_cM_x}),
	\begin{equation}\label{e_Pf_rational_locking_case_A_less_than_eta}
		\phi(y_t)=\phi(0)\leq \eta  .  
	\end{equation}
	By (\ref{e_Pf_ration_locking_epsilon}) and (\ref{e_Pf_ration_locking_delta}), we have $\Absbig{\Gau^t(w)-y_t}<\epsilon\leq \delta\leq (1/3) \Delta\bigl(\tcO(x)\bigr)$ and so
	\begin{equation}\label{e_Pf_rational_locking_case_A_less_distance_control}
		\Absbig{\Gau^t(w)-y_t}=d\bigl(\Gau^t(w),\tcO(x)\bigr)  . 
	\end{equation}

    \smallskip
    
	\emph{Case B.} Assume that $\Gau^t(w)\in (1-\epsilon,1]$. 
	Define $y_{t}\=1$ and $y_{t+1}=0$. Then by (\ref{e_proof_rational_locking_eta}) and (\ref{e_cM_x}),
	 \begin{equation}\label{e_Pf_rational_locking_case_B_less_than_eta}
	 	(1/2)(\phi(y_t)+\phi(y_{t+1}))=(1/2)(\phi(1)+\phi(0))\leq \eta  .  
	 \end{equation}
	 By (\ref{e_Pf_ration_locking_epsilon}) and (\ref{e_Pf_ration_locking_delta}), we have $\Absbig{ \Gau^{t+i}(w)-y_{t+i}} \leq\delta\leq (1/3) \Delta\bigl(\tcO(x)\bigr)$ for $i\in\{0,\,1\}$ and so for $i\in \{0,\,1\}$,
	 \begin{equation}\label{e_Pf_rational_locking_case_B_less_distance_control}
	 	\Absbig{ \Gau^{t+i}(w)-y_{t+i}} 
        =d\bigl(\Gau^{t+i}(w),\tcO(x)\bigr)  . 
	 \end{equation}

    \smallskip
    
	\emph{Case C.} Assume that $\Gau^t(w)\in (z-\epsilon,z+\epsilon)$ for some $z\in \tcO(x)\smallsetminus\{0,\,1\}$. 
	Suppose $x=[a_1,\dots,a_{p-1}]\in R_{p-1}$ and $z=[a_{p-m},\dots,a_{p-1}]\in R_{m}$ for some 
    $0\le m \le p-2$. 
	Let us write $\fa\=(a_1,\dots,a_{p-2},a_{p-1})$ and  $\fb\=(a_1,\dots,a_{p-2},a_{p-1}-1,1)$.

    \smallskip
    
	\emph{Subcase (i).} Assume that $m$ is odd. By (\ref{e_Pf_ration_locking_epsilon}) and Lemma~\ref{l_neighbourhood_rational_number}~(i)(iv), we have $(z-\epsilon,z+\epsilon)\subseteq (R_\delta(z),L_\delta(z))$.
	
	If $\Gau^t(w)\in(R_\delta(z),z]$, we define $y_{t+i}\=\Gau^{i}(z)$ for all $0\le i\le m$. 
    Then by (\ref{e_proof_rational_locking_eta}) and (\ref{e_cM_x}),
	\begin{equation}\label{e_Pf_rational_locking_case_C(i)_R_less_than_eta}
		  (m+1)^{-1} S_{m+1}\phi (y_{t})=\langle \mu_{\sigma^{p-m-1}(\fa)}, \phi\rangle\leq \eta  . 
	\end{equation}
	By (\ref{e_Pf_ration_locking_epsilon}), (\ref{e_Pf_ration_locking_delta}), and Lemma~\ref{l_neighbourhood_rational_number}~(ii), we 
    see that for $0\le i \le m$,
	\begin{equation}\label{e_Pf_rational_locking_case_C(i)_R_less_distance_control}
		\Absbig{ \Gau^{t+i}(w)-y_{t+i}}
        =d\bigl(\Gau^{t+i}(w),\tcO(x)\bigr)  . 
	\end{equation}
	
	When $\Gau^t(w)\in (z,L_\delta(z))$, we define $y_{t+i}\=\Gau^{i}(z)$ for all $0\le i\le m-1$, and $y_{t+m}\=1$, and $ y_{t+m+1}\=0$. 
	Then by (\ref{e_proof_rational_locking_eta}) and (\ref{e_cM_x}),
	\begin{equation}\label{e_Pf_rational_locking_case_C(i)_L_less_than_eta}
		(m+2)^{-1} S_{m+2}\phi (y_{t})=\langle \mu_{\sigma^{p-m-1}(\fb)}, \phi\rangle\leq \eta  . 
	\end{equation}
	By (\ref{e_Pf_ration_locking_epsilon}), (\ref{e_Pf_ration_locking_delta}), and Lemma~\ref{l_neighbourhood_rational_number}~(ii), we get for $0\le i\le m+1$ that
	\begin{equation}\label{e_Pf_rational_locking_case_C(i)_L_less_distance_control}
		\Absbig{ \Gau^{t+i}(w)-y_{t+i}}
        =d\bigl(\Gau^{t+i}(w),\tcO(x)\bigr)  . 
	\end{equation}

    \smallskip
    
	\emph{Subcase (ii).} Assume that $m$ is even.
	 Using (\ref{e_Pf_ration_locking_epsilon}) and Lemma~\ref{l_neighbourhood_rational_number}~(i), (iv), we see that $(z-\epsilon,z+\epsilon)\subseteq (L_\delta(z),R_\delta(z))$.
	 
	When $\Gau^t(w)\in [z,R_\delta(z))$, we define $y_{t+i}\=\Gau^{i}(z)$ for all $0\le i \le m$. Then by (\ref{e_proof_rational_locking_eta}) and (\ref{e_cM_x}), 
	\begin{equation}\label{e_Pf_rational_locking_case_C(ii)_R_less_than_eta}
		(m+1)^{-1} S_{m+1}\phi (y_{t})=\langle \mu_{\sigma^{p-m-1}(\fa)}, \phi\rangle\leq \eta  . 
	\end{equation}
	By (\ref{e_Pf_ration_locking_epsilon}), (\ref{e_Pf_ration_locking_delta}), and Lemma~\ref{l_neighbourhood_rational_number}~(iii), we get that for $0\le i\le m$,
	\begin{equation}\label{e_Pf_rational_locking_case_C(ii)_R_less_distance_control}
		\Absbig{ \Gau^{t+i}(w)-y_{t+i}}
        =d\bigl(\Gau^{t+i}(w),\tcO(x)\bigr)  . 
	\end{equation}
	
	When $\Gau^t(w)\in (L_\delta(z),z)$, we define $y_{t+i}\=\Gau^{i}(z)$ for all $0\le i\le m-1$ and $y_{t+m}\=1$, and $y_{t+m+1}\=0$.	Then by (\ref{e_proof_rational_locking_eta}) and (\ref{e_cM_x}), 
    \begin{equation}\label{e_Pf_rational_locking_case_C(ii)_L_less_than_eta}
		(m+2)^{-1} S_{m+2}\phi (y_{t})=\langle \mu_{\sigma^{p-m-1}(\fb)}, \phi\rangle\leq \eta  . 
	\end{equation}
	By (\ref{e_Pf_ration_locking_epsilon}), (\ref{e_Pf_ration_locking_delta}), and Lemma~\ref{l_neighbourhood_rational_number}~(iii), we get for $0\le i\le m+1$,
	\begin{equation}\label{e_Pf_rational_locking_case_C(ii)_L_less_distance_control}
		\Absbig{ \Gau^{t+i}(w)-y_{t+i}}
        =d\bigl(\Gau^{t+i}(w),\tcO(x)\bigr)  . 
	\end{equation}
	
	\emph{Case D.} Assume that $d(\Gau^t(w),\tcO)\geq \epsilon $. Define $y_{t}\=0$.  
	Then by (\ref{e_proof_rational_locking_eta}) and (\ref{e_cM_x}),
	\begin{equation}\label{e_Pf_rational_locking_case_D_less_than_eta}
		\phi(y_t)=\phi(0)\leq \eta  .  
	\end{equation}
	By the definition of $C_x$ and (\ref{e_Pf_ration_locking_epsilon}), we get
	\begin{equation}\label{e_Pf_rational_locking_case_D_less_distance_control}
		\Absbig{ \Gau^t(w)-y_t}
        \leq  C_x^{1/\alpha}\epsilon
        \leq C_x^{1/\alpha}d\bigl(\Gau^t(w),\tcO(x)\bigr)  . 
	\end{equation}
	The recursive step is now complete, and
	combining (\ref{e_Pf_rational_locking_case_A_less_than_eta}), (\ref{e_Pf_rational_locking_case_B_less_than_eta}), (\ref{e_Pf_rational_locking_case_C(i)_R_less_than_eta}), (\ref{e_Pf_rational_locking_case_C(i)_L_less_than_eta}), (\ref{e_Pf_rational_locking_case_C(ii)_R_less_than_eta}), (\ref{e_Pf_rational_locking_case_C(ii)_L_less_than_eta}) and (\ref{e_Pf_rational_locking_case_D_less_than_eta}) gives (\ref{e_proof_rational_locking_limsup}).
	Combining (\ref{e_Pf_rational_locking_case_A_less_distance_control}), (\ref{e_Pf_rational_locking_case_B_less_distance_control}), (\ref{e_Pf_rational_locking_case_C(i)_R_less_distance_control}), (\ref{e_Pf_rational_locking_case_C(i)_L_less_distance_control}), (\ref{e_Pf_rational_locking_case_C(ii)_R_less_distance_control}), (\ref{e_Pf_rational_locking_case_C(ii)_L_less_distance_control}), and (\ref{e_Pf_rational_locking_case_D_less_distance_control}) gives (\ref{e_distance_control_transport_sequence}), thereby completing the proof of 
	the claim.
\end{proof}

\begin{lemma}\label{l_support_limiting_measure}
	Suppose $x\in I \cap \Q$ and $\alpha\in(0,1]$. Then $\bigl\langle \mu,  d\bigl(\cdot, \tcO(x)\bigr)^\alpha\bigr\rangle>0$ for all $\mu \in \overline{\MMM(I,\Gau)}\smallsetminus \conv(\cM_x)$.
\end{lemma}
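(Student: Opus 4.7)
The plan is to proceed by contraposition: I assume $\mu \in \overline{\MMM(I,\Gau)}$ satisfies $\langle \mu, d(\cdot, \tcO(x))^\alpha \rangle = 0$ and show $\mu \in \conv(\cM_x)$. Since $d(\cdot, \tcO(x))^\alpha$ is continuous, nonnegative, and vanishes precisely on the finite set $\tcO(x) \subseteq I \cap \Q$, this hypothesis forces $\supp \mu \subseteq \tcO(x)$. I will then apply Theorem~\ref{t_weak_*_closure_M(I,T)} to write $\mu$ as a convex combination of a measure in $\MMM_{\irr}(I,\Gau)$ and a measure in $\RM$; evaluating at $I \smallsetminus \Q$ (where $\mu$ gives mass $0$ but any measure in $\MMM_{\irr}(I,\Gau)$ gives mass $1$) forces the irrational component to vanish, so $\mu \in \RM$.

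The heart of the argument is then to show: if $\mu \in \RM$ and $\supp \mu \subseteq \tcO(x)$, then $\mu \in \conv(\cM_x)$. Expanding $\mu = c_0\delta_0 + \sum_i c_i \mu_{\fa_i}$ as a finite convex combination of the generating measures of $\RM$, nonnegativity of the coefficients forces $\supp \mu_{\fa_i} = \cO_{\fa_i} \subseteq \tcO(x)$ whenever $c_i > 0$. Thus the task reduces to classifying all $\fa \in \N^*$ with $\cO_\fa \subseteq \tcO(x)$, and verifying these are precisely the FCF measures comprising $\cM_x$.

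For the main case $x \in (0,1) \cap \Q$, write $x = [a_1,\dots,a_s] \in R_s$, set $\fa_0 \= (a_1,\dots,a_s) \in \cA_s$, and $\fb_0 \= g(\fa_0) \in \cB_{s+1}$. If $\fa = (b_1,\dots,b_n) \in \cA_n$, then $p_n(\fa) = [b_1,\dots,b_n]$ must lie in $R_n \cap \tcO(x) = R_n \cap \cO(x)$; since $G^i(x) \in R_{s-i}$, this forces $p_n(\fa) = G^{s-n}(x)$ (requiring $n \leq s$), and the uniqueness half of Lemma~\ref{l_continued_fraction_of_rational_number} then yields $\fa = \sigma^{s-n}(\fa_0)$. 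If instead $\fa \in \cB_n$ with $n \geq 2$, the alternative-form identity $[b_1,\dots,b_{n-1},1] = [b_1,\dots,b_{n-1}+1] \in R_{n-1}$ reduces to the previous analysis and gives $\fa = \sigma^{s-n+1}(\fb_0)$, while the degenerate case $\fa = (1)$ equals $\sigma^s(\fb_0)$. This exhausts precisely the atoms of $\cM_x$, so $\mu \in \conv(\cM_x)$.

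The edge cases $x = 0$ and $x = 1$ require separate attention. For $x = 0$ the inclusion $\supp\mu \subseteq \{0\}$ forces $\mu = \delta_0 \in \cM_0$ immediately. For $x = 1$, $\supp \mu \subseteq \{0,1\}$ gives $\mu = r\delta_0 + (1-r)\delta_1$ for some $r \in [0,1]$, and Corollary~\ref{c_RM_equivalent_def} (applied to $\mu \in \RM$) yields $r = \mu(\{0\}) \geq \mu(\{1\}) = 1 - r$, so $r \geq 1/2$, allowing the representation $\mu = (2r-1)\delta_0 + 2(1-r)\cdot\tfrac{1}{2}(\delta_0+\delta_1) \in \conv(\cM_1)$. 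The main obstacle is the combinatorial classification in the third paragraph, which hinges on the fact that each element of $\cO(x)$ lies in a uniquely determined $R_k$, combined with the carefully restricted uniqueness of finite continued fraction expansions afforded by Lemma~\ref{l_continued_fraction_of_rational_number}.
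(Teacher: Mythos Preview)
Your proof is correct and follows essentially the same approach as the paper: both reduce via Theorem~\ref{t_weak_*_closure_M(I,T)} to checking that any FCF measure whose support lies in $\tcO(x)$ must already belong to $\cM_x$, and that no measure in $\MMM_{\irr}(I,\Gau)$ can be supported there. The paper dispatches the FCF classification in a single line (asserting $g(\fa)\notin\tcO(x)$), whereas you carry it out explicitly by splitting into the $\cA_n$/$\cB_n$ cases and treating the boundary points $x\in\{0,1\}$ separately; your version is more detailed but not a different method.
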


\begin{proof}
	By Theorem~\ref{t_weak_*_closure_M(I,T)} and the ergodic decomposition theorem, it suffices to show that
    \begin{equation}\label{mu_strictly_pos}
    \bigl\langle \mu,  d\bigl(\cdot, \tcO(x)\bigr)^\alpha\bigr\rangle >0
    \end{equation}
    for all 
    measures $\mu$ that either are ergodic and in $\MMM_{\irr}(I,\Gau)$, or are FCF measures not contained in $\cM_x$.
	
	Assuming that $\mu \in \MMM_{\irr}(I,\Gau)$ is ergodic,
    if (\ref{mu_strictly_pos}) were false, then
	 $\bigl\langle \mu,  d\bigl(\cdot, \tcO(x)\bigr)^\alpha\bigr\rangle =0$ would give $\supp \mu  \subseteq \tcO(x)$, 
which contradicts the fact that $\mu (I\smallsetminus \Q)=1$.
	
	If, on the other hand, $\fa\in \N^*$ and
    $\mu=\mu_{\fa}$ is an FCF measure not contained in $\cM_x$,
	then $g(\fa)\notin \tcO(x)$, since otherwise $\fa$ is equal to the continued fraction expansion of some point in $\tcO(x)$, contradicting the definition of $\cM_x$.
	So $\supp \mu_\fa $ is not contained in $\tcO(x)$, so we get $\bigl\langle \mu , d\bigl(\cdot, \tcO(x)\bigr)^\alpha\bigr\rangle >0$, 
	and the result follows. 
\end{proof}

With the preceding lemmas in hand, we can now prove Theorem~\ref{t_rationl_locking_property_locking}.
\begin{proof}[\bf Proof of Theorem~\ref{t_rationl_locking_property_locking}]

	Fix arbitrary $\phi \in \ral^\alpha(\Gau)$ and real numbers $s>0$ and $t>0$. 
	
    When $\delta_0\in \Mmax(T,\phi)$, denote $x\=0$. When $\delta_0\notin \Mmax(T,\phi)$, let $n$ be the smallest integer such that there exists $\fa \in \N^n$ with $\mu_{\fa}\in \Mmax^*(\Gau,\phi)$, and
   choose $\fa=(a_1,a_2,\dots,a_n)\in \N^n$ satisfying $\mu_{\fa}\in \Mmax^*(\Gau,\phi)$, denote $x\= [a_1,\dots,a_n]$ and $\fb \= f(\fa)$.

   \smallskip
   
   \emph{Case A.} Assume that $\fa \in \cA_n$ or $\delta_0\in \Mmax(T,\phi)$. Define
   \begin{equation}\label{e_Pf_Phi_s}
   	\Phi_s\= \phi-sd(\cdot,\cO(x))^\alpha   . 
   \end{equation}
   By (\ref{e_Pf_Phi_s}), we have $\mea(\Gau,\phi)\geq \mea(\Gau,\Phi_s)$. Combining this with the fact that $\langle\mu_\fa,\Phi_s\rangle=\langle\mu_\fa,\phi\rangle=\mea (\Gau,\phi)$, we obtain $\mea(\Gau,\phi)\leq \mea(\Gau,\Phi_s)$ and $\mu_\fa \in \Mmax^*(\Gau,\Phi_s)$.
   By the choice of $n$, we get 
   \begin{equation*}
   	\langle \mu_{\fa} , \phi\rangle>	\langle \nu ,\phi \rangle  , 
   \end{equation*}
   for each $\nu \in \cM_x\smallsetminus \{\mu_{\fa},\,\mu_{\fb},\, \mu_{\sigma(\fb)}\}$.
   Since $1\notin \cO(x)$, $1\in \cO_{\fb}$, and $1\in \cO_{\sigma(\fb)}$ (see Remark~\ref{r_rational_orbit}), we get $\langle \mu_\fa,\Phi_s\rangle=\langle \mu_\fa,\phi\rangle\geq\langle \mu_\fb,\phi\rangle>\langle \mu_\fb,\Phi_s\rangle$ and $\langle \mu_\fa,\Phi_s\rangle=\langle \mu_\fa,\phi\rangle\geq\langle \mu_{\sigma(\fb)},\phi\rangle>\langle \mu_{\sigma(\fb)},\Phi_s\rangle$, and so  
   \begin{equation*}
   	\langle\mu_{\fa}, \Phi_s  \rangle>	\langle \nu ,\Phi_s \rangle  , 
   \end{equation*}
   for each $\nu \in \cM_x\smallsetminus \{\mu_{\fa}\}$.
    So we can define 
   \begin{equation}\label{e_Pf_rational_locking_delta_s}
   	\delta_s\= \langle\mu_{\fa} ,\Phi_s \rangle-\max \{ \langle\nu_{\fa} , \Phi_s\rangle : \nu \in \cM_x\smallsetminus \{\mu_{\fa}\} \}>0  . 
   \end{equation}
   Define
   \begin{equation*}\label{e_Pf_Phi_t,s}
   	\Phi_{t,s}\=\Phi_s-td\bigl(\cdot,\tcO(x)\bigr)^\alpha= \phi-sd(\cdot,\cO(x))^\alpha-td\bigl(\cdot,\tcO(x)\bigr)^\alpha   . 
   \end{equation*}
      Then for each $\nu \in \overline{\MMM(I,\Gau)}\smallsetminus \conv(M_x)$, when $\Hseminorm{\alpha}{\psi}< t/C_x$ (where $C_x>0$ is the constant from Lemma~\ref{l_rational_locking}), by Lemmas~\ref{l_rational_locking} and~\ref{l_support_limiting_measure},
   \begin{align*}
   	\langle \nu,\Phi_{t,s}+\psi  \rangle
    &=\langle \nu,\Phi_s \rangle+\langle \nu , \psi\rangle-\bigl\langle\nu, td\bigl(\cdot,\tcO(x)\bigr)^\alpha \bigr\rangle\\
   	&\leq \max\limits_{\mu\in \cM_x}\langle \mu, \Phi_s\rangle+\max\limits_{\mu\in \cM_x}\langle \mu, \psi\rangle+ (C_x\Hseminorm{\alpha}{\psi}-t)\bigl\langle\nu, d\bigl(\cdot,\tcO(x)\bigr)^\alpha \bigr\rangle
   	<\max\limits_{\mu\in \cM_x}\langle  \mu,\Phi_{t,s}+\psi\rangle . 
   \end{align*}
      For each $\nu \in \cM_x\smallsetminus\{\mu_\fa\}$, when $\Hnorm{\alpha}{\psi}< \delta_s /4$, by (\ref{e_Pf_rational_locking_delta_s}),
   \begin{align*}
   	\langle \nu,\Phi_{t,s}+\psi \rangle&= \langle \nu,\Phi_{s} \rangle+\langle \mu_\fa,\psi \rangle-\langle \mu_\fa,\psi \rangle+\langle \nu,\psi \rangle\\
   	&< \langle \mu_\fa,\Phi_s \rangle- (1/ 2 )\delta_s  + \langle \mu_\fa ,\psi\rangle+2\norm{\psi}_{\infty}
   	<\langle \mu_\fa,\Phi_{t,s}+\psi \rangle  . 
   \end{align*}
     So by Theorem~\ref{t_weak_*_closure_M(I,T)}, $\Mmax^*(\Gau,\Phi_{t,s}+\psi)=\{\mu_\fa\}$ when $\Hnorm{\alpha}{\psi}<\min  \{ t/C_x , \,  \delta_s/4  \}$, and consequently, $\Phi_{t,s}\in \ral^\alpha(\Gau) \cap \Flock^\alpha(\Gau)$.

   \smallskip
   
   \emph{Case B.} Assume that $\fa \in \cB_n$. Then $a_n=1$. 
   By the choice of $n$, we get 
   \begin{equation*}
   	\langle\mu_{\fa},\phi \rangle >	\langle\nu ,\phi \rangle   
   \end{equation*}
   for each $\nu \in \cM_x\smallsetminus \{\mu_{\fa}\}$. So we can define 
   \begin{equation}\label{e_Pf_rational_locking_delta}
   	\delta\= \langle \mu_{\fa},\phi \rangle-\max \{	\langle\nu, \phi \rangle : \nu \in \cM_x\smallsetminus \{\mu_{\fa}\} \}>0  . 
   \end{equation}
   For each $t>0$, define    
   \begin{equation*}\label{e_Pf_phi_t}
   	\phi_t\= \phi-td\bigl(\cdot,\tcO(x)\bigr)^\alpha   . 
   \end{equation*}
   Then for each $\nu \in \overline{\MMM(I,\Gau)}\smallsetminus \conv(M_x)$, when $\Hseminorm{\alpha}{\psi}< t/C_x$, by Lemmas~\ref{l_rational_locking} and~\ref{l_support_limiting_measure},
   \begin{align*}
      \langle \nu,\phi_t+\psi \rangle&=\langle \nu,\phi \rangle+\langle \nu,\psi \rangle-\bigl\langle \nu,td\bigl(\cdot,\tcO(x)\bigr)^\alpha \bigr\rangle\\
      &\leq\max\limits_{\mu\in \cM_x}\langle \mu,\phi \rangle+\max\limits_{\mu\in \cM_x}\langle\mu, \psi \rangle+(C_x\Hseminorm{\alpha}{\psi}-t)\bigl\langle\nu, d\bigl(\cdot,\tcO(x)\bigr)^\alpha \bigr\rangle
      <\max\limits_{\mu\in \cM_x}\langle \mu ,\phi_t+\psi\rangle   . 
   \end{align*}
   For each $\nu \in \conv(M_x)\smallsetminus\{\mu_\fa\}$, when $\norm{\psi}_{\infty}< \delta /4$, by (\ref{e_Pf_rational_locking_delta}),
   \begin{align*}
   	\langle \nu,\phi_t+\psi \rangle
    &=\langle \nu,\phi_t \rangle+\langle \mu_\fa,\psi \rangle-\langle \mu_\fa,\psi \rangle+\langle \nu,\psi \rangle\\
   	&< \langle \mu_\fa,\phi \rangle-  (1/2) \delta   +\langle \mu_\fa,\psi \rangle+2\norm{\psi}_{\infty}
   	<\langle \mu_\fa,\phi_t+\psi \rangle  . 
   \end{align*}
   So by Theorem~\ref{t_weak_*_closure_M(I,T)}, $\Mmax^*(\Gau,\phi_t+\psi)=\{\mu_\fa\}$ when $\Hnorm{\alpha}{\psi}<\min\{ t / C_x ,\, \delta/4 \}$, and consequently, $\phi_t\in \ral^\alpha(\Gau) \cap \Flock^\alpha(\Gau)$. 
   
   From the preceding two cases we deduce that $\ral^\alpha(\Gau) \cap \Flock^\alpha(\Gau)$ is dense in $\ral^\alpha(\Gau)$.

    It follows immediately from the definition that $\Flock^\alpha(\Gau) \cap \ral^\alpha(\Gau)$ is open in $\Holder{\alpha}(I)$. Therefore the theorem is proved.
\end{proof}

\subsection{Failure of TPO for H\"older continuous potentials}  \label{subsct:FailTPO}
In the example below, we construct a function $\phi\in \ral^\alpha(\Gau)$ with a limit-maximizing measure $(1/2)(\delta_0+\delta_1)$.

\begin{example}\label{ex_rational_maximized}
	For $\alpha\in (0,1]$, let $\phi\in \Holder{\alpha}(I)$ satisfy $\phi(0)=-1$, $\phi(1)=1$, $\phi(1/3)=\phi(3/4)=-2$, and be affine on each of the intervals $[0,1/3]$, $[1/3,3/4]$, and $[3/4,1]$. More precisely, $\phi$ is given by
	\begin{equation*}
		\phi(x)\=\begin{cases}
		-3x-1  & \text{if } 0\leq x\leq 1/3,\\
		-2     & \text{if } 1/3\leq x\leq 3/4,\\
		12x-11  & \text{if } 3/4\leq x\leq 1.
		\end{cases}
	\end{equation*} 
	We claim that $\mea(\Gau,\phi)=0$, $\phi\in \ral^\alpha(\Gau)$, and $\phi \notin \bad^\alpha(\Gau)$.
	
	Using that $(1/2)(\phi(0)+\phi(1))=0$, and that $(1/2)(\delta_0+\delta_1)\in \overline{\MMM(I,\Gau)}$ (see Lemma~\ref{l_rational_orbit/measure_close_to_periodic}), we see that $\mea(\Gau,\phi)\geq 0$.
	
	Fix $m\in \N$ with $m\geq 4$ and $x=[a_1,\dots,a_n,\dots]\in \bcf_m$. Denote $x_n\=\Gau^n(x)=[a_{n+1},a_{n+2},\dots]$ for $n\in\N_0$. We will recursively construct a sequence $\{n_k\}_{k\in \N}$ in $\{1,2\}$.
	
	\emph{Base step.} Case 1. Assume that either $a_1\geq 2$ or both $a_1=1$ and $a_2\leq 3$ hold. Then $x_0\leq 4/5$, and define $n_1\=1$. By the definition of $\phi$, we have 
	$\phi(x_0)\leq -1$. 
	
	Case 2. Assume that $a_1=1$ and $3< a_2\leq m$. Then $x_0\leq [1,m+1]= (m+1)/(m+2)$ and $1/(m+2)\leq x_1\leq 1/3$. Define $n_1\=2$. By the definition of $\phi$,
	\begin{equation}   \label{e_Pf_example}
    \begin{aligned}
		S_2\phi(x_0)
        &=(1/2)(\phi(x_0)+\phi(x_1))
        \leq (1/2)(\phi([1,m+1])+\phi(1/(m+2)))\\
		&=(1/2)\biggl(\frac{12(m+1)}{m+)}-11-\frac{3}{m+2}-1\biggr)
        =-\frac{15}{2(m+2)}.
	\end{aligned}	    
	\end{equation}

	\emph{Recursive step.} Assume that for some $t\in \N$, $\{n_i\}_{i=1}^t$ are defined. Denote $N_t\=\sum_{i=1}^t n_i$. 
	
	Case 1. Assume that either $a_{N_t+1}\geq 2$ or both $a_{N_t+1}=1$ and $a_{N_t+2}\leq 3$ hold. Then $x_{N_t}\leq 4/5 $, and define $n_{t+1}\=1$. By the definition of $\phi$, we have 
	$\phi(x_{N_t})\leq -1$.

	Case 2. Assume that $a_{N_t+1}=1$ and $3< a_{N_t+2}\leq m$. Then $x_{N_t}\leq [1,m+1]=(m+1)/(m+2)$ and $1/(m+2)\leq x_{N_t+1}\leq 1/3$. Define $n_{t+1}\=2$. By the definition of $\phi$, as in (\ref{e_Pf_example}),
	\begin{equation*}
		S_2\phi(x_{N_t})
        =(1/2)(\phi(x_{N_t})+\phi(x_{N_t+1}))
        \leq- 15/ (2(m+2)).
	\end{equation*}
	This finishes the recursive step. Hence, by our construction above, we get 
	\begin{equation*}
		\liminf\limits_{n\to +\infty}\frac{1}{n}S_n\phi(x)\leq \limsup\limits_{k\to +\infty}\frac{1}{N_k}S_{N_k}\phi (x)\leq \max\Bigl\{-1,-\frac{15}{2(m+2)}\Bigr\}.
	\end{equation*}
	Combining this with \cite[Proposition~2.2]{Je19} gives $\mea_m(\Gau,\phi)\leq \max\bigl\{-1,\,-\frac{15}{2(m+2)}\bigr\}$. Letting $m$ tend to infinity, from Proposition~\ref{p_mea_equal_sup_time_average}~(i) we see that  $\mea(\Gau,\phi)\leq 0$. 
	Consequently we conclude that $\mea(\Gau,\phi)=0$ and $\mea(\Gau,\phi)>\mea_m(\Gau,\phi)$ for all $m\in \N$. So $\phi\in \ral^\alpha(\Gau)$ but $\phi \notin \bad^\alpha(\Gau)$, and $(1/2)(\delta_0+\delta_1)$ is a $(\Gau,\phi)$-limit-maximizing measure.

\end{example}

\begin{proof}[\bf Proof of Theorem~\ref{t:FailTPO}]
    By Theorem~\ref{t_rationl_locking_property}, there is an open subset $U$ of $\psi\in  \Holder{\alpha}(I)$ with the finite optimization property such that $U$ is a dense subset of $\ral^\alpha(\Gau)$. Then the function $\phi$ counstructed in Example~\ref{ex_rational_maximized} is in the closure of $U$. Let $B^\epsilon$ denote the open ball in $\Holder{\alpha}(I)$ center at $\phi$ with radius $\epsilon>0$. Since each $\psi$ in $U$ admits a unique $(\Gau,\psi)$-limit-maximizing measure, it follows immediately that there exists $\delta>0$ such that each $\psi$ in the open set $B^\delta \cap U$ admits a unique $(\Gau,\psi)$-limit-maximizing measure and such that this measure is not $\delta_0$.
\end{proof}

In fact, by repeating arguments for Case~B in the proof of Theorem~\ref{t_rationl_locking_property_locking}, we can show that $\phi_t \=\phi- t d(\cdot,\{0,1\})^\alpha \in \Flock(G)$ for each real number $t>0$. We leave the proof for the interested reader.

\subsection{TPO for essentially compact potentials}  \label{subsec_The set Bad(I)}

The goal of this subsection is to prove that the set $\bad^\alpha(\Gau)$ is contained in the closure of $\sP^\alpha (\Gau)$. We will establish the following
slightly stronger version of Theorem~\ref{t_main_theorem_ess_compact} (which in particular implies Theorem~\ref{t_main_theorem_ess_compact}).

\begin{thml}[TPO for essentially compact potentials] \label{t_main_theorem_ess_compact_locking}
            For $\alpha\in (0,1]$, the set $\Lock^{\alpha}(\Gau)$ contains an open dense subset of $\bad^\alpha(\Gau)$ (in the $\alpha$-H\"older topology).
\end{thml}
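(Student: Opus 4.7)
The openness of $\Lock^\alpha(\Gau) \cap \bad^\alpha(\Gau)$ in $\Holder{\alpha}(I)$ is immediate from the definition of the locking property, together with the observation that a potential whose unique maximizing measure is supported on a periodic orbit $\cO \subseteq \bcf_m$ automatically belongs to $\bad^\alpha(\Gau)$. The substantive task is therefore density: given $\phi \in \bad^\alpha(\Gau)$ and $\epsilon > 0$, the plan is to produce $\psi \in \Lock^\alpha(\Gau)$ with $\Hnorm{\alpha}{\psi-\phi} < \epsilon$.

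First, choose $m \in \N$ with $\mea_m(\Gau,\phi) = \mea(\Gau,\phi)$, and let $\mu_0 \in \MMM(I,\Gau)$ be a maximizing measure with $\supp \mu_0 \subseteq \bcf_m$. By Lemma \ref{l_basic_property_bounded_continued_fractions}, $\Gau|_{\bcf_m}$ is an open distance-expanding Lipschitz map conjugate to the full shift on $m$ symbols, with uniform expansion factor $\lambda_m > 1$ on the neighbourhood $F_m$ of $\bcf_m$. The classical closing lemma for uniformly expanding maps, applied to $\mu_0$ after replacing $\phi$ by the revealed version $\widetilde\phi = \overline\phi + u_\phi - u_\phi \circ \Gau$ supplied by the Ma\~n\'e Lemma (Theorem \ref{t_mane}), yields a periodic orbit $\cO \subseteq \bcf_m$ whose periodic measure $\mu_\cO$ satisfies $\int \!\widetilde\phi \, \dd \mu_\cO$ arbitrarily close to $0$, and which approximates $\mu_0$ sufficiently well that $\int \!\phi \, \dd \mu_\cO$ is within $\epsilon$ of $\mea(\Gau,\phi)$.

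The candidate perturbation is
\[
\psi \= \phi - \epsilon_0 d(\cdot, \cO)^\alpha
\]
for a sufficiently small $\epsilon_0 > 0$. This preserves $\langle \mu_\cO, \psi\rangle = \langle \mu_\cO, \phi\rangle$ and strictly decreases the integral against any measure not supported on $\cO$. To upgrade this into the locking property, the plan is to follow the local perturbation strategy of \cite{Co16,Boc19}: for any $\psi' \in \Holder{\alpha}(I)$ with $\Hnorm{\alpha}{\psi'-\psi}$ small, re-apply Theorem \ref{t_mane} to $\psi'$ and exploit the uniform expansion on $F_m$, together with the periodic locking property for the Gauss map established in the appendix (which is applicable precisely because $\cO \subseteq \bcf_m$), to conclude that $\mu_\cO$ is the unique $(\Gau,\psi')$-maximizing measure. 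The specific choice of Hölder bump $d(\cdot,\cO)^\alpha$ is essential here: combined with the sub-action $u_{\psi'}$, it furnishes a strict negative gap for measures whose support has any portion outside a small neighbourhood of $\cO$.

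The hard part is controlling the behaviour near the discontinuities of $\Gau$ (the unit fractions $\{1/n\}_{n\in\N}$) and preventing small $\Holder{\alpha}(I)$-perturbations from creating near-maximizing measures that escape $F_m$ -- for instance measures supported on $\bcf_{m'}$ for some $m'>m$, or FCF measures as classified by Theorem \ref{t_weak_*_closure_M(I,T)}. Since $F_m$ stays a uniform distance $\eta_m$ from the singular set (Lemma \ref{l_basic_property_bounded_continued_fractions}(i)), one can calibrate $\epsilon_0$ and the admissible perturbation radius in terms of the local constants $\eta_m$ and $\lambda_m$, plus auxiliary constants encoding the Ma\~n\'e sub-action regularity, the closing-lemma accuracy, and the distortion bound on $F_m$. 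The essential compactness of $\phi$ supplies a definite positive gap between $\mea(\Gau,\phi)$ and $\sup_{\nu}\{\langle \nu,\phi\rangle : \supp\nu \not\subseteq F_m\}$ that persists under small perturbations, so that the Bochi-style local perturbation argument then goes through and yields $\psi \in \Lock^\alpha(\Gau)$, as required.
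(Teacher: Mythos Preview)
Your proposal has the right architecture---use the Ma\~n\'e lemma, pick a periodic orbit $\cO\subseteq\bcf_m$ via a closing lemma, perturb by $-\epsilon_0 d(\cdot,\cO)^\alpha$, and invoke Proposition~\ref{p_periodic_locking}---but it contains a genuine gap at the crucial step.

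The assertion that ``essential compactness of $\phi$ supplies a definite positive gap between $\mea(\Gau,\phi)$ and $\sup_\nu\{\langle\nu,\phi\rangle : \supp\nu\not\subseteq F_m\}$'' is \emph{false} in general. Essential compactness only says that \emph{some} maximizing measure lives in $\bcf_m$; it does not preclude other maximizing (or near-maximizing) measures supported far from $F_m$, for instance on $\bcf_{m'}$ with $m'\gg m$, or FCF measures near $0$. So you cannot separate the problem into ``inside $F_m$'' versus ``outside $F_m$'' and handle the latter by a gap argument. Consequently, the sentence ``the Bochi-style local perturbation argument then goes through'' is not justified: you have not explained why $\mu_\cO$ is maximizing for $\psi$, let alone for nearby $\psi'$.

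The paper resolves this differently and without any such gap. After passing to the revealed function $\tphi\leq 0$ (which holds \emph{globally} on $I$), it invokes the quantitative closing lemma of \cite{HLMXZ25} to obtain $\cO$ satisfying $\sum_{x\in\cO}d(x,\cK)^\alpha\leq\tau(\Delta_{r,\theta}(\cO))^\alpha$ with $\tau$ chosen in terms of $\epsilon$ and the local constants $\eta_m,\lambda_m,L_1,L_2,L_3$. It then proves directly, via Proposition~\ref{p_mea_equal_sup_time_average}, that $\liminf_{n\to\infty}\frac{1}{n}S_n\psi(x)\leq\gamma\coloneqq\langle\mu_\cO,\psi\rangle$ for \emph{every} $x\in I\smallsetminus\Q$, by recursively cutting the orbit of $x$ into segments and estimating each one. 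The segments where $x_t$ lies outside a $\rho$-neighbourhood of $\cO$ are controlled pointwise by $\tphi\leq 0$ and the penalty term; the segments where $x_t$ shadows $\cO$ are controlled by the quantitative closing inequality together with the local expansion on $F_m$. This orbit-segmentation argument is the heart of the proof, and it is what replaces your (unavailable) global gap. Your proposal does not mention it, and without it the argument does not close.
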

It follows immediately from the definition that $\Lock^\alpha(\Gau) \cap \bad^\alpha(\Gau)$ is open in $\Holder{\alpha}(I)$.

Recall the maximizing set defined in Definition~\ref{Maximizing set}. To prove Theorem~\ref{t_main_theorem_ess_compact_locking}, we first require:

\begin{lemma}\label{l_equivalent_bad_alpha}
	Suppose $\alpha\in(0,1]$ and $\phi\in\Holder{\alpha}(I)$. The following are equivalent:
	\begin{enumerate}[label=\rm{(\roman*)}]	
		\smallskip
		\item $\phi\in \bad^\alpha(\Gau)$ as defined in Definition~\ref{d_classification_of_potential}.
		
		\smallskip
		\item There exists $\mu \in \Mmax(\Gau,\phi)$ with $\supp \mu \subseteq \bcf_{m}$ for some $m\in \N$.
       
		\smallskip
		\item There exists $\mu \in \Mmax(\Gau,\phi)$ with $0\notin \supp \mu$.	
        
        \smallskip
		\item $\cK(\phi)\cap \Sigma_m\neq \emptyset$ for some $m\in \N$.
	\end{enumerate}
\end{lemma}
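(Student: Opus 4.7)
The plan is to establish the cycle (i) $\Rightarrow$ (ii) $\Rightarrow$ (iii) $\Rightarrow$ (i) and separately the equivalence (ii) $\Leftrightarrow$ (iv). For (i) $\Rightarrow$ (ii): the set $\bcf_m = \pi(\Sigma_m)$ is compact in $I$ as the continuous image of the compact symbolic space $\Sigma_m$, and $\Gau|_{\bcf_m}$ is continuous by Lemma~\ref{l_basic_property_bounded_continued_fractions}, so $\MMM(\bcf_m, \Gau|_{\bcf_m})$ is weak-$*$ compact and the supremum defining $\mea_m(\Gau,\phi)$ is attained by some $\mu$ with $\supp\mu \subseteq \bcf_m$; hypothesis (i) then makes $\mu$ a $(\Gau,\phi)$-maximizing measure. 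The implication (ii) $\Rightarrow$ (iii) is immediate from the inclusion $\bcf_m \subseteq \bigl(1/(m+1), (m+1)/(m+2)\bigr)$ established in the proof of Lemma~\ref{l_basic_property_bounded_continued_fractions}(i).

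The core step is (iii) $\Rightarrow$ (i). Given $\mu \in \Mmax(\Gau,\phi)$ with $0 \notin \supp\mu$, the decomposition from Lemma~\ref{l_Q_Q^c_invatiant} together with $\mu(\{0\}) = 0$ forces $\mu \in \MMM_{\irr}(I,\Gau)$. Choose $c > 0$ with $\mu([0,c)) = 0$ and set $m \= \lfloor 1/c \rfloor$. The $\Gau$-invariance of $\mu$ gives $\mu\bigl(\Gau^{-i}([0,c))\bigr) = 0$ for every $i \in \N_0$, so the countable union $E \= \bigcup_{i=0}^{+\infty} \Gau^{-i}([0,c))$ is $\mu$-null, and $\mu$-a.e.\ $x$ satisfies $\Gau^i(x) \geq c$ for all $i \geq 0$. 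Because $\mu$ charges no rationals, every such irrational $x$ has all continued-fraction digits $a_{i+1}(x) = \lfloor 1/\Gau^i(x) \rfloor$ bounded by $m$, so $\mu(\bcf_m) = 1$. Since $\bcf_m$ is closed, $\supp\mu \subseteq \bcf_m$, whence $\mu \in \MMM(\bcf_m, \Gau|_{\bcf_m})$, and the chain $\mea(\Gau,\phi) = \int\phi\,\dd\mu \leq \mea_m(\Gau,\phi) \leq \mea(\Gau,\phi)$ yields (i).

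The equivalence (ii) $\Leftrightarrow$ (iv) is obtained by transferring measures between $\bcf_m$ and $\Sigma_m$ through the homeomorphism $\pi|_{\Sigma_m}$, which intertwines $\sigma$ and $\Gau$. From (ii), the lift $\tilde\mu \= (\pi|_{\Sigma_m})_*^{-1}\mu$ belongs to $\MMM(\Sigma_m,\sigma) \subseteq \MMM\bigl(\hSigma,\sigma_{\hSigma}\bigr)$ and, by Proposition~\ref{p_ergodic_op_relations}(i), satisfies $\int \phi\circ\hpi\,\dd\tilde\mu = \mea\bigl(\sigma_{\hSigma}, \phi\circ\hpi\bigr)$; Lemma~\ref{l_maximizing set}(iv) then places $\supp \tilde\mu \subseteq \cK(\phi) \cap \Sigma_m$, which is therefore nonempty. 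Conversely, given (iv), the intersection $\cK(\phi) \cap \Sigma_m$ is nonempty, compact, and $\sigma$-forward-invariant (by Lemma~\ref{l_maximizing set}(iii) combined with $\sigma(\Sigma_m) \subseteq \Sigma_m$), so Krylov--Bogolyubov produces an invariant measure $\nu$ supported there; Lemma~\ref{l_maximizing set}(iv) places $\nu \in \Mmax\bigl(\sigma_{\hSigma},\phi\circ\hpi\bigr)$, and since $\supp\nu \subseteq \Sigma_m \subseteq \Sigma$ the pushforward $\hpi_*\nu$ is $\Gau$-invariant with support in $\bcf_m$ and integral $\int\phi\,\dd(\hpi_*\nu) = \mea(\Gau,\phi)$, giving (ii).

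The main technical subtlety lies in (iii) $\Rightarrow$ (i): one must use $\Gau$-invariance to promote the single-neighborhood avoidance $\supp\mu \cap [0,c) = \emptyset$ into a uniform bound on \emph{all} continued-fraction digits of $\mu$-typical points (legitimate only after observing $\mu \in \MMM_{\irr}(I,\Gau)$ so that rationals may be discarded), and then invoke the closedness of $\bcf_m = \pi(\Sigma_m)$ to upgrade $\mu(\bcf_m) = 1$ to the containment $\supp\mu \subseteq \bcf_m$ required for (ii).
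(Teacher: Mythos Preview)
Your proof is correct and establishes all four equivalences, but the logical organization and the argument for (iii) differ from the paper's. The paper proves the single cycle (i) $\Rightarrow$ (ii) $\Rightarrow$ (iii) $\Rightarrow$ (iv) $\Rightarrow$ (i); in particular, for (iii) $\Rightarrow$ (iv) it lifts the maximizing measure $\mu$ to $\nu \in \MMM\bigl(\hSigma,\sigma_{\hSigma}\bigr)$ via Proposition~\ref{p_ergodic_op_relations}(ii), uses Lemma~\ref{l_maximizing set}(iv) to get $\supp\nu \subseteq \cK(\phi)$, and then argues by compactness-and-contradiction in $\hSigma$ (if $\supp\nu \not\subseteq \Sigma_m$ for any $m$, one would find an accumulation point in $\cC(\infty)$, forcing $0 \in \supp\mu$). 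Your route from (iii) is more elementary: you stay in $I$, use $\Gau$-invariance of $\mu$ together with $\mu([0,c)) = 0$ to bound every continued-fraction digit by $m = \lfloor 1/c \rfloor$, and deduce $\supp\mu \subseteq \bcf_m$ directly (so you in fact prove (iii) $\Rightarrow$ (ii)). This avoids the symbolic compactification entirely for that step. Your (ii) $\Leftrightarrow$ (iv) then matches the paper's (iv) $\Rightarrow$ (i) argument (Krylov--Bogolyubov on $\cK(\phi)\cap\Sigma_m$, push forward by $\hpi$), with the added observation that (ii) $\Rightarrow$ (iv) is a clean lift through $\pi|_{\Sigma_m}$. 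Both approaches are sound; yours is slightly more self-contained for the implication out of (iii), while the paper's cycle keeps the symbolic viewpoint uniform.
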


\begin{proof}
    (i)$\implies$(ii): Assume that $\phi\in \bad^\alpha(\Gau)$. 
	Then there exists $m\in \N$ with $\mea(\Gau,\phi)=\mea_m(\Gau,\phi)$ by Definition~\ref{d_classification_of_potential}. Since $\Gau|_{\bcf_m}$ is continuous, and $\bcf_m$ is compact (see Lemma~\ref{l_basic_property_bounded_continued_fractions}), there exists $\mu\in \Mmax(\Gau|_{\bcf_m},\phi|_{\bcf_m})\neq \emptyset$. Since $\mu$ can be seen as a measure in $\MMM(I,\Gau)$, we obtain that $\mu \in \Mmax(\Gau,\phi)$ satisfies $\supp \mu \subseteq \bcf_{m}$.
	
	\smallskip

    (ii)$\implies$(iii) since $0\notin \bcf_m$ for all $m\in \N$.

	\smallskip

(iii)$\implies$(iv). Assume that there exists $\mu \in \Mmax(\Gau,\phi)$ with $0\notin \supp \mu$. 
By Proposition~\ref{p_ergodic_op_relations}~(ii), there exists $\nu\in \Mmax\bigl(\sigma_{\hSigma},\phi\circ\hpi\bigr)$ with $\hpi_*(\nu)=\mu$. 
By Lemma~\ref{l_maximizing set}~(iv), we obtain $\supp \nu \subseteq \cK(\phi)$.

Since $\hpi^{-1}(0)=\cC(\infty)$ (see Remark~\ref{r_hpi_equi_def}) and $0\notin \supp \mu=\hpi(\supp \nu) $ (see e.g.~\cite[p.~156]{Ak93}), it follows that 
$\cC(\infty)\cap \supp \nu =\emptyset$.

     We claim that there exists $m\in \N$ with $\supp \nu \subseteq \Sigma_m$. Suppose, to the contrary, that $\supp \nu$ is not contained in $\Sigma_n$ for all $n\in \N$. 
     Then since $\sigma(\supp \nu)=\supp \nu $ (see e.g.~\cite[p.~156]{Ak93}), for each $n\in \N$, there exist $A_n \in \supp \nu $ and $a_n\in\N$ satisfying $A_n\in \cC(a_n)$ and $a_n>n$.  As $(\hSigma,d_{\hrho})$ is compact, $\{A_n\}_{n\in \N}$ admits an accumulation point $A$. 
     By the fact that $\supp \nu $ is closed and the fact that $A_n\in \cC(a_n)$ and $a_n>n$, we get that $A\in \cC(\infty)\cap\supp \nu$, which contradicts to the fact that $\cC(\infty)\cap \supp \nu =\emptyset$.
     
     Therefore, we conclude that $\supp \nu \subseteq \cK(\phi) \cap \Sigma_m$ for some $m\in \N$ and $\cK(\phi)\cap \sigma_m\neq \emptyset$.

    \smallskip
    
	(iv)$\implies$(i): Assume that $\cK(\phi)\cap \Sigma_m\neq \emptyset$ for some $m\in \N$. 
    Since $\sigma(\cK(\phi))\subseteq \cK(\phi)$ and $\sigma(\Sigma_m)\subseteq \Sigma_m$, and both $\cK(\phi)$ and $\Sigma_m$ are compact, there exists $\nu \in \MMM\bigl(\hSigma,\sigma_{\hSigma}\bigr)$ with $\supp \nu \subseteq \Sigma_m$. By Lemma~\ref{l_maximizing set}~(iv), we see that $\mu \in\Mmax\bigl(\sigma_{\hSigma}, \phi\circ\hpi\bigr)$. By Proposition~\ref{p_ergodic_op_relations}~(ii), $\hpi_*(\nu)\in \Mmax(\Gau,\phi)$. Combining this with the fact that $\supp \hpi_*(\nu)\subseteq \bcf_m$ (see e.g.~\cite[p.~156]{Ak93}), we obtain that $\phi \in \bad^\alpha(\Gau)$, as required.             
\end{proof}

\begin{notation} 
For a compact metric space $(X,d)$,
a map $T \: X\rightarrow X$,
and a periodic orbit $\cO$ of $T$, 
the corresponding \emph{gap} is defined as
\begin{equation}\label{e_Def_gap}
	\Delta (\cO) = \Delta^d (\cO) \= \min\{ d(x, y) : x,\, y \in \cO, \, x \neq y \},
\end{equation}
with the convention that $\min \emptyset = +\infty$. 
For  $r, \, \theta >0$, 
define the \emph{$(r,\theta)$-gap} of $\cO$ by
\begin{equation}  \label{e_Def_gap_r_theta}
	\Delta_{r,\,\theta} (\cO) = \Delta^d_{r,\,\theta} (\cO) \= \min\{ r, \, \theta \cdot \Delta (\cO) \}.
\end{equation}
\end{notation}

We will need the following closing lemma, which was first introduced in \cite{HLMXZ25}.
\begin{lemma}  \label{l_sum_bound_by_gap}
	Let $\Gau$ be the Gauss map, $d$ the Euclidean metric on $I$,  $\alpha\in(0,1]$, and 
     $m\in \N$. Let $\mathcal{K}$ be a nonempty compact subset of $\bcf_m$ with $\Gau(\cK)\subseteq \cK$. For $r>0$, $\theta>0$, and $\tau > 0$, there exists a periodic orbit $\cO\subseteq \bcf_m$ of $\Gau$ with
	\begin{equation*} \label{e_l_sum bound_by_gap}
		\sum\limits_{x\in\cO} d( x, \cK )^\alpha \leq \tau \cdot ( \Delta_{r,\,\theta} ( \cO ) )^{\alpha}  . 
	\end{equation*}
\end{lemma}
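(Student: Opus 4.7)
My plan is to deduce this lemma directly from the closing lemma in \cite{HLMXZ25}, applied to the restricted system $\Gau|_{\bcf_m}$. The point is that although $\Gau$ itself is neither continuous nor uniformly expanding on $I$, its restriction to $\bcf_m$ lies entirely in the uniformly expanding regime, which is precisely the setting where the closing lemma of \cite{HLMXZ25} was formulated.

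First I would verify that the pair $(\bcf_m, \Gau|_{\bcf_m})$ is an open distance-expanding dynamical system in the sense required. By Lemma~\ref{l_basic_property_bounded_continued_fractions}, the set $\bcf_m$ is compact (it is a Cantor-type subset of $I$ coded by $\Sigma_m$), the map $\Gau|_{\bcf_m}$ is Lipschitz (statement~(ii)), distance-expanding with constants $\lambda_m$ and $\eta_m/2$ (statement~(iii)), and an open self-map (statement~(iv)). Thus the hypotheses of the abstract closing lemma in \cite{HLMXZ25} are in force for $\Gau|_{\bcf_m}$.

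Second I would observe that $\cK$ satisfies exactly the hypotheses required of an ``invariant set'' in the closing lemma: it is a nonempty compact subset of $\bcf_m$ with $\Gau(\cK)\subseteq \cK$. Given $r,\theta,\tau>0$, invoking the closing lemma from \cite{HLMXZ25} applied to $(\bcf_m, \Gau|_{\bcf_m}, \cK)$ with parameters $r$, $\theta$, $\tau$ and H\"older exponent $\alpha$, we obtain a periodic orbit $\cO$ of $\Gau|_{\bcf_m}$ (hence a periodic orbit of $\Gau$ contained in $\bcf_m$) satisfying
\[
\sum_{x\in\cO} d(x,\cK)^\alpha \leq \tau \cdot (\Delta_{r,\theta}(\cO))^\alpha,
\]
which is exactly the inequality asserted.

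The main (though modest) obstacle is a bookkeeping one: ensuring that the Lipschitz constant and expansion factor for $\Gau|_{\bcf_m}$ (both depending on $m$) do not force $r$, $\theta$, $\tau$ to be rescaled in a way that would weaken the statement. Since the closing lemma in \cite{HLMXZ25} allows $r$, $\theta$, $\tau$ to be prescribed independently (the dynamical constants only appear implicitly through what ``period large enough'' means in the construction of $\cO$), this is not a genuine difficulty, and the statement follows. If one preferred a self-contained proof, one could instead work directly on the symbolic coding $\Sigma_m$ of $\bcf_m$ (using the conjugacy provided by $\pi|_{\Sigma_m}$), where the pigeonhole/shadowing construction producing $\cO$ becomes transparent: one selects an orbit segment in $\cK$ which, by compactness and a pigeonhole argument on cylinder sets, returns super-polynomially close to its starting point, and then extracts the corresponding periodic orbit using the fact that $\sigma|_{\Sigma_m}$ has local inverses with exponential contraction of cylinder diameters.
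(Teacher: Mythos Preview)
Your proposal is correct and matches the paper's approach exactly: the paper's proof is a two-line argument that cites Lemma~\ref{l_basic_property_bounded_continued_fractions} to confirm $\Gau|_{\bcf_m}$ is open, Lipschitz, and distance-expanding, and then invokes \cite[Proposition~2.1]{HLMXZ25} directly. Your additional remarks about constants and the alternative symbolic approach are fine but unnecessary for the argument.
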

\begin{proof}
	By Lemma~\ref{l_basic_property_bounded_continued_fractions}, $\Gau|_{\bcf_m}$ is open, Lipschitz, and distance-expanding, so
	the result is immediate from \cite[Proposition~2.1]{HLMXZ25}.
\end{proof}

\begin{proof}[\bf Proof of Theorem~\ref{t_main_theorem_ess_compact_locking}]
    By Proposition~\ref{p_periodic_locking}, it suffices to prove that $\bad^\alpha(\Gau)$ is contained in the closure of $\sP^\alpha(\Gau)$.
    
	For each periodic orbit $\cO$ of $\Gau$, define the measure $\mu_{\cO}$ by
	\begin{equation}\label{e_Pf_t_periodic_measure}
		\mu_{\cO} \= \frac{1}{ \card \cO} \sum\limits_{x\in \mathcal{O}} \delta_x \in \MMM(I, \Gau).
	\end{equation}
	Fix $\phi \in \bad^\alpha(\Gau)$ with no $\phi$-maximizing measure in $\Mmax(\Gau,\phi)$ supported on a periodic orbit of $\Gau$.
	Let $u_\phi \in \Holder{\alpha} (I)$ be the calibrated sub-action for $\phi$ and $\Gau$ (i.e., a fixed point of $\cL_{\overline\phi}$) from Proposition~\ref{p_calibrated_sub-action_exists}. Define
	\begin{equation} \label{e_Pf_main_theorem_1_tphi}
		\tphi \= \phi - \mea(\Gau, \phi) + u_\phi - u_\phi  \circ \Gau ,
	\end{equation}
	where $\mea(\Gau,\phi)$ is defined in (\ref{e_ergodicmax}). Then by Theorem~\ref{t_mane},
	\begin{equation}    \label{e_pf_main_theorem_1_tphi_nonpositive}
		\tphi (x) \leq 0 \quad \text{ for all }x\in I.
	\end{equation}

	By Lemma~\ref{l_equivalent_bad_alpha}, there exists $\mu \in \Mmax(\Gau,\phi)$ and $m\in \N$ satisfying $\supp \mu\subseteq \bcf_m$. 
	Let $\eta_m>0$ and $\lambda_m>1$ be the constants defined in Lemma~\ref{l_basic_property_bounded_continued_fractions} and denote $\cK\=\supp \mu$. Since $\tphi$ is continuous on $\cK$ (note that $\cK\subseteq E_m\subseteq I\smallsetminus\Q$), by (\ref{e_pf_main_theorem_1_tphi_nonpositive}) we obtain that 
    \begin{equation}\label{e_Pf_cK_contained_in_zeros}
        \tphi|_{\cK}\equiv 0  . 
    \end{equation} 
	Without loss of generality, assume that $\cK$ contains no periodic orbits of $\Gau$. 
	Recall (cf.~(\ref{e_def_set_F_m})) that the
    closed $\eta_m$-neighbourhood of $E_m$ is denoted by
	\begin{equation}\label{e_set_F_m}
		F_m\=\overline{B}_{d}^{\eta_m}(\bcf_m)=\{x\in I: d(x,\bcf_m)\leq \eta_m\}  . 
	\end{equation}
	Now $\phi, \, u_{\phi}\in \Holder{\alpha}(I)$ (see Proposition~\ref{p_calibrated_sub-action_exists}~(ii)), and $\Gau|_{F_m}$ is Lipschitz (see Lemma~\ref{l_basic_property_bounded_continued_fractions}~(ii)),
    with Lipschitz constant $\Hseminorm{\mathrm{LIP},\,F_m}{\Gau} \=\Hseminorm{1,\,F_m}{\Gau}$,
    so using (\ref{e_Pf_main_theorem_1_tphi}) we see that $\tphi|_{F_m}\in \Holder{\alpha}(F_m)$.  Let us write
	\begin{equation}\label{e_Pf_t_Holderseminorm_psi}
	L_1\=\Hseminormbig{\alpha,\,F_m}{\tphi}.
	\end{equation}
	Fix $\epsilon\in(0,1)$, and define constants
	\begin{align}
		L_2  &\= \Hseminorm{\mathrm{LIP},\,F_m}{\Gau} ,  \label{e_Pf_t_main1_C_1}  \\
		r   &\=\eta_m,\label{e_r}\\
     	\theta &\= \min\{1/3,\,1/(3L_2)\}   ,  \label{e_Pf_t_main1_theta} \\	
		L_3&\=\frac{L_1+1}{1-\lambda_m^{-\alpha}}>0, \label{e_Pf_t_main1_C_2} \\	
		\tau &\= \min\biggl\{ 1, \, \frac{ \epsilon }{ 2L_1},  \,  \frac{ \epsilon }{  ( 1 + L_3 \epsilon^{-1} )   L_1 } \biggr\} \leq 1 \label{e_Pf_t_Density_Lattes_tau}  . 
	\end{align}
	
	By Lemma~\ref{l_sum_bound_by_gap}, there exists a periodic orbit $\cO\subseteq \bcf_m$ of $\Gau$, of  period $p \= \card \mathcal{O}$, satisfying 
	\begin{equation}   \label{e_Pf_t_Density_Lattes_bound_by_gap}
		\sum\limits_{x\in \cO} d( x, \cK )^\alpha \leq \tau \cdot ( \Delta_{r,\,\theta} ( \cO ) )^{\alpha}.
	\end{equation}
	Define functions
	\begin{align}
		\phi' &\= \phi -\mea(\Gau,\phi)- \epsilon d (\cdot, \cO)^\alpha \in \Holder{\alpha}(I)  \text{ and}  \label{e_Pf_t_Density_phi'} \\
		\psi  &\= \tphi -  \epsilon  d (\cdot, \mathcal{O})^\alpha  =  \phi' - \mea(\Gau,\phi) + u_\phi - u_\phi \circ \Gau   .  \label{e_Pf_t_Density_psi}
	\end{align}

By (\ref{e_Pf_t_Density_phi'}), (\ref{e_Pf_t_Density_psi}), and Lemma~\ref{l_normalised_cohomologous},
	\begin{equation}  \label{e_Pf_t_Density_Lattes_Mmax_identical}
		\mea(\Gau,\phi')=\mea(\Gau,\psi)\text{ and }\Mmax (\Gau, \phi')  = \Mmax(\Gau,  \psi)  . 
	\end{equation} 
	
	\smallskip
	\emph{Claim.} The measure $\mu_{\cO}$ 
    (cf.~(\ref{e_Pf_t_periodic_measure})) belongs to $\Mmax (\Gau,  \psi)$, i.e., $\mea(\Gau, \psi) = \gamma$, where
	\begin{equation}  \label{e_Pf_t_gamma}
		\gamma \= \int\! \psi \, \mathrm{d}\mu_\cO 
		= \frac{1}{ p } \sum_{x\in\cO} \psi (x) 
		= \frac{1}{ p } \sum_{x\in\cO} \tphi (x)
		< 0.
	\end{equation}
	Note that the equality 
    $\frac{1}{ p } \sum_{x\in\cO} \psi (x) 
		= \frac{1}{ p } \sum_{x\in\cO} \tphi (x)$
        in (\ref{e_Pf_t_gamma})
    follows from (\ref{e_Pf_t_Density_psi}), whereas the inequality in (\ref{e_Pf_t_gamma}) follows from (\ref{e_pf_main_theorem_1_tphi_nonpositive}) and the assumption that $\phi \notin \sP^\alpha(\Gau)$.
	
	\smallskip
	
	Note  that if the claim holds 
	then (\ref{e_Pf_t_Density_Lattes_Mmax_identical}) gives that $\phi' \in \sP^\alpha(\Gau)$, and since $\epsilon$ can be chosen arbitrarily small we see that
	$\phi$ belongs to the closure
    of $\sP^\alpha(\Gau)$ in $\Holder{\alpha}(I)$, which is the required conclusion of Theorem~\ref{t_main_theorem_ess_compact}.
	
	\smallskip
	So to prove
    Theorem~\ref{t_main_theorem_ess_compact}
    it suffices to establish the claim. From the definitions of $\gamma$ (cf.~(\ref{e_Pf_t_gamma})) and $\mea(\Gau,\psi)$ (cf.~(\ref{e_ergodicmax})), we see that $\mea(\Gau,\psi)\geq \gamma$, so it remains to show that $\mea(\Gau,\psi)=\mea(\Gau,\phi')\leq \gamma$. 
	By Proposition~\ref{p_mea_equal_sup_time_average}~(ii), we only need to prove that for all $x\in I\smallsetminus \Q$,
	\begin{equation}\label{e_Pf_liminf_psi}
		\liminf\limits_{n\to+\infty}\frac{1}{n}S_n\phi'(x)=\liminf\limits_{n\to+\infty}\frac{1}{n}S_n\psi(x)\leq \gamma  , 
	\end{equation}
	where the first identity follows from (\ref{e_Pf_t_Density_phi'}), (\ref{e_Pf_t_Density_psi}), and the fact that $u_\phi$ is bounded (see Proposition~\ref{p_calibrated_sub-action_exists}~(i)).
	
	Fix $x\in I\smallsetminus\Q$. 
	In the remainder of this proof, we will divide the orbit of $x$ into segments such that the average on each segment is less than $\gamma$, i.e., we will recursively construct a sequence $\{x_k\}_{k\in \N}$ in $\cO(x)$ and a sequence $\{n_k\}_{k\in \N}$ in $\N$ satisfying 
	$x_{k+1}=\Gau^{n_k}(x_{k})$ and $S_{n_k}\psi (x_k)\leq n_{k}\gamma$.
    
	We first observe that by (\ref{e_Pf_t_gamma}), (\ref{e_pf_main_theorem_1_tphi_nonpositive}), (\ref{e_Pf_cK_contained_in_zeros}), (\ref{e_Pf_t_Holderseminorm_psi}), and (\ref{e_Pf_t_Density_Lattes_bound_by_gap}), 
\begin{equation}  \label{e_Pf_t_est_p_gamma}
	p \abs{\gamma} 
	= \sum_{x\in\cO} \Absbig{  \tphi (x) - 0 }
    \leq \sum_{x\in\cO} L_1  d(x,\cK)^\alpha
	\leq L_1 \tau \cdot ( \Delta_{r,\theta} ( \cO) )^{\alpha}  . 
\end{equation}

By (\ref{e_Pf_t_est_p_gamma}), (\ref{e_Pf_t_Density_Lattes_tau}), and (\ref{e_Def_gap_r_theta}),  
\begin{equation}  \label{e_Pf_t_est_rho}
	\rho  \=  \epsilon^{-1/\alpha} \abs{\gamma}^{1/\alpha}  
	\leq   ( L_1  \tau / \epsilon )^{1/\alpha}  \Delta_{r,  \theta} ( \cO )  \\
	<    \Delta_{r, \theta} ( \cO )\leq r   . 
\end{equation} 
	Let us denote $U \= \overline{B}_d^{\rho} (\cO)=\{x\in I: d(x,\cO)\leq \rho\}$ (cf.~Section~\ref{sct_Notaion}).
	
	\smallskip
    \emph{Base step.} Define $x_1 \= x$.

    \smallskip
     \emph{Recursive step.} Assume that for some $t\in \N$, $\{x_k\}_{k=1}^t$ and $\{n_k\}_{k=1}^{t-1}$ are defined. We now divide our discussion into three cases, the third of which requires some delicate analysis.	
     
	\smallskip
	\emph{Case~A.}   Assume $x_t \in \cO$. Then define $n_t \= p $ and $x_{t+1} \= \Gau^{n_t}(x_t) = x_t$. Thus, by (\ref{e_Pf_t_gamma}) and (\ref{e_Pf_t_Density_psi}), we have 
	\begin{equation}\label{e_Pf_t_main_case_A}
		S_{n_t} \psi (x_t) = n_t \gamma  . 
	\end{equation}
	
	\smallskip
    \emph{Case~B.} Assume $x_t \notin U$. Then define $n_t \= 1$ and $x_{t+1} \= \Gau(x_t)$, so that combining (\ref{e_Pf_t_Density_psi}), (\ref{e_pf_main_theorem_1_tphi_nonpositive}), and (\ref{e_Pf_t_est_rho}) gives 
\begin{equation}\label{e_Pf_t_main_case_B}
	S_{n_t}\psi (x_t) = \psi(x_t)=\tphi(x_t)-\epsilon d(x_t,\cO)^\alpha\leq -\epsilon d(x_t,\cO)^\alpha< -\epsilon\rho^\alpha  = \gamma  . 
\end{equation}
	
	\smallskip
    \emph{Case~C.} Assume $x_t \in U \smallsetminus \cO$. Then $0<d(x_t,\cO)\leq \rho$. By (\ref{e_Pf_t_est_rho}), (\ref{e_Def_gap_r_theta}), and (\ref{e_Pf_t_main1_theta}), $\rho < \Delta_{r,\, \theta} (\cO) \leq \frac13 \Delta(\cO)$. So by (\ref{e_Def_gap}), there is a unique point $y \in \cO$ which is closest to $x_t$ among points in the periodic orbit $\cO$. By (\ref{e_Def_gap_r_theta}) and (\ref{e_Pf_t_est_rho}), $\abs{x_t-y}\leq \rho <\Delta_{r, \theta} (\cO)\leq r$.
	
	Let $N \in \N $ be the smallest positive integer satisfying 
	\begin{equation}  \label{e_Pf_t_Density_Lattes_N}
		\Absbig{ \Gau^{N}(x_t)- \Gau^{N}(y) }> \Delta_{ r, \theta} (\cO)   . 
	\end{equation}
	Such a positive integer always exists; otherwise, we have $\abs{\Gau^{n}(x_t)- \Gau^{n}(y) }\leq \Delta_{ r, \theta} (\cO)\leq r=\eta_m$ for all $n\in \N$ by (\ref{e_Def_gap_r_theta}) and (\ref{e_r}).  Then for each $n\in \N$, by Lemma~\ref{l_basic_property_bounded_continued_fractions}~(i), $\Gau^{n}(x_t)$ and $\Gau^{n}(y)$ are contained in the same interval $(1/(l+1),1/l)$ for some $l\in\N$. So $x_t$ and $y$ have the same continued fraction digits, and consequently $x_t=y\in \cO$,  contradicting the assumption that $x_t\notin \cO$.   

    From the definition of $N$, 
    together with (\ref{e_Def_gap_r_theta}) and (\ref{e_Pf_t_main1_theta}), we see that for each 
    $0\le j\le N-1$,
	\begin{equation}   \label{e_Pf_t_mian1_N_less} 
		d \bigl( \Gau^j (x_t) , \cO \bigr) 
		\leq  \Absbig { \Gau^j (x_t)- \Gau^j(y) }
        \leq    \Delta_{r, \, \theta} (\cO) 
        \leq   \Delta(\cO) /3  .   
	\end{equation}  
	By (\ref{e_Pf_t_mian1_N_less}), (\ref{e_Def_gap_r_theta}), and (\ref{e_r}), for each $0\le j\le N-1$,  
	\begin{equation}\label{e_Pf_t_mian1_in_F_m}
		\Absbig{ \Gau^j (x_t)- \Gau^j(y) }
        \leq    \Delta_{r, \, \theta} (\cO)
        \leq r
        =\eta_m   . 
	\end{equation}
So by (\ref{e_Pf_t_mian1_in_F_m}), the fact that $\cO\subseteq \bcf_m$, and (\ref{e_set_F_m}), we have that
$\Gau^j(x_t)\in F_m$ for each 
$0\le j\le N-1$.
Therefore, by (\ref{e_Pf_t_mian1_N_less}), (\ref{e_Pf_t_mian1_in_F_m}), and Lemma~\ref{l_basic_property_bounded_continued_fractions}~(iii), for each $0\le j\le N-1$,
\begin{equation}  \label{e_Pf_t_main1_y_orbit_distance_expansion}
	d \bigl( \Gau^j(x_t), \cO \bigr) 
    = \Absbig{ \Gau^j(x_t)- \Gau^j(y) }
    \geq \lambda_m^j \abs{x_t-y}  . 
\end{equation}

By (\ref{e_Pf_t_main1_C_1}), (\ref{e_Pf_t_mian1_N_less}), (\ref{e_Def_gap_r_theta}), and (\ref{e_Pf_t_main1_theta}),
	\begin{equation}\label{e_Pf_TN_gap}
		\Absbig{  \Gau^{N} (x_t) - \Gau^{N} (y) } 
		\leq L_2 \Absbig{ \Gau^{N-1} (x_t) - \Gau^{N-1}(y) } 
        \leq L_2\Delta_{r, \, \theta} (\cO) 
        \leq L_2\theta \Delta(\cO)
        \leq \Delta(\cO) /3 . 
	\end{equation}

	Set $n_t\=N+1$ and $x_{t+1}\=\Gau^{n_t}(x_t)$. We now aim to show that $S_{n_t} \psi (x_t) \leq n_t \gamma$.
	
	Let $n \in \N$ be the smallest positive integer satisfying 
	\begin{equation}  \label{e_Pf_t_Density_Lattes_n_property}
		\abs{\Gau^n(x_t)-\Gau^n(y)} > \rho   .  
	\end{equation}
	Such an integer $n$ exists and satisfies $1\leq n\leq N$ since $\abs{x_t-y} \leq \rho < \Delta_{r, \theta} (\cO)$ (see (\ref{e_Pf_t_est_rho})), and by the definition of $N$. Moreover, we have
	\begin{equation}  \label{e_Pf_t_main1_n}
	\Absbig{ \Gau^{n-1}(x_t)-\Gau^{n-1}(y)} \leq  \rho   . 
	\end{equation}
	
	We will separately estimate two parts of the sum 
	\begin{equation}  \label{e_Pf_t_Density_Lattes_sum12}
		S_{n_t} ( \gamma - \psi ) (x_t) = S_n ( \gamma - \psi ) (x_t) + S_{n_t-n} ( \gamma - \psi ) ( \Gau^n(x_t) )  \eqqcolon \operatorname{I}  +  \operatorname{II}.
	\end{equation}
	For each $j\in \N$ with $n\le j\le N$, by (\ref{e_Pf_t_main1_y_orbit_distance_expansion}), (\ref{e_Pf_t_Density_Lattes_n_property}), and the fact that $\lambda_m > 1$ (see Lemma~\ref{l_basic_property_bounded_continued_fractions}), we have
	\begin{equation*}
		d \bigl( \Gau^j(x_t), \cO \bigr) 
		= \Absbig{ \Gau^j(x_t) - \Gau^j(y) } 
		\geq  \lambda_m^{j-n} \abs{\Gau^n(x_t)- \Gau^n(y)}
		>\rho  . 
	\end{equation*}
	Thus $\Gau^j (x_t) \notin U$, and by (\ref{e_Pf_t_main_case_B}), $\gamma - \psi \bigl( \Gau^j(x_t) \bigr) >0$ for each $n\le j\le N$. Hence by (\ref{e_Pf_t_Density_psi}), (\ref{e_pf_main_theorem_1_tphi_nonpositive}),  (\ref{e_Pf_TN_gap}), (\ref{e_Pf_t_Density_Lattes_N}), we have
	\begin{align*} \label{e_Pf_t_Density_Lattes_bound_II}
		\operatorname{II}
		& \geq  \gamma - \psi \bigl( \Gau^{N} (x_t) \bigr)
		   =  \gamma - \tphi \bigl( \Gau^{N} (x_t) \bigr)  + \epsilon  d \bigl( \Gau^{N}(x_t), \cO \bigr)^\alpha   \\
		& \geq  \gamma + \epsilon \Absbig{ \Gau^{N}(x_t)- \Gau^{N} (y) }^\alpha  
		 \geq  \gamma + \epsilon   ( \Delta_{r, \theta} (\cO) )^\alpha. 
	\end{align*}
	To estimate $\operatorname{I}$, we write
	\begin{equation}   \label{e_Pf_t_Density_Lattes_sum34}
		\operatorname{I} = ( n \gamma - S_n \psi(y) ) + ( S_n \psi(y) - S_n \psi(x_t) ) \eqqcolon \operatorname{III} + \operatorname{IV}
	\end{equation}
	and will bound each of the parts $\operatorname{III}$
    and $\operatorname{IV}$ below.
	
	We write $n = pq + r$ for $q,\,r\in\N_0$ with $0 \leq r \leq p - 1$. Then by (\ref{e_Pf_t_Density_psi}), (\ref{e_pf_main_theorem_1_tphi_nonpositive}), and (\ref{e_Pf_t_est_p_gamma}), we have $S_n \psi (y) \leq S_n\tphi (y) = S_{pq} \tphi (y) + S_r \tphi (y) \leq pq\gamma$. Thus, considering $\gamma<0$ (see (\ref{e_Pf_t_est_p_gamma})), we obtain
	\begin{equation*}    \label{e_Pf_t_Density_Lattes_bound_III}
		\operatorname{III} \geq r \gamma \geq (p-1) \gamma  . 
	\end{equation*}
	
	Next, by (\ref{e_Pf_t_Density_psi}), (\ref{e_Pf_t_main1_y_orbit_distance_expansion}), (\ref{e_Pf_t_Holderseminorm_psi}), Lemma~\ref{l_basic_property_bounded_continued_fractions}~(iii), (\ref{e_Pf_t_main1_n}), and (\ref{e_Pf_t_est_rho}), we have
	\begin{align*}
		\abs{\operatorname{IV}}
		&  \leq \sum_{j=0}^{n-1}  \Absbig{  \psi \bigl( \Gau^j (x_t) \bigr) - \psi \bigl( \Gau^j (y) \bigr) }  \\
		&  \leq \sum_{j=0}^{n-1}  \bigl( \Absbig{  \tphi \bigl( \Gau^j (x_t) \bigr) - \tphi \bigl( \Gau^j (y) \bigr) } + \epsilon d \bigl( \Gau^j(x_t), \cO\bigr)^\alpha \bigr)  \\
		&  \leq \sum_{j=0}^{n-1}  ( L_1 + \epsilon  ) \Absbig{\Gau^j(x_t)- \Gau^j(y)}^\alpha \\
		&  \leq \sum_{j=0}^{n-1}  ( L_1 + \epsilon) \lambda_m^{-(n-1-j) \alpha} \Absbig{\Gau^{n-1}(x_t)-\Gau^{n-1}(y) }^\alpha  \\
		&  \leq \rho^\alpha ( L_1 +  \epsilon) / (1-\lambda_m^{-\alpha} )    \\
		&  \leq   \epsilon^{-1} \abs{\gamma}  L_3  . 
	\end{align*}
	
	Combining the above estimates for $\operatorname{II}$, $\operatorname{III}$, and $\operatorname{IV}$, we obtain from (\ref{e_Pf_t_Density_Lattes_sum12}), (\ref{e_Pf_t_Density_Lattes_sum34}), (\ref{e_Pf_t_est_p_gamma}), (\ref{e_Pf_t_Density_Lattes_tau}) the final estimate
	\begin{equation}\label{e_Pf_t_main_case_C}
			\begin{aligned}
			n_t \gamma -  S_{n_t}\psi (y) 
			&  =      \operatorname{II} + \operatorname{III} + \operatorname{IV}  \\
			& \geq   \gamma + \epsilon  ( \Delta_{r, \, \theta} (\cO) )^\alpha - (p-1) \abs{\gamma} -  \epsilon^{-1} \abs{\gamma} L_3 \\
			& \geq    \epsilon ( \Delta_{r, \, \theta} (\cO) )^\alpha -  \bigl( 1 + L_3 \epsilon^{-1} \bigr)  p\abs{\gamma}\\
			& \geq   \bigl(  \epsilon     -  \bigl( 1 + L_3 \epsilon^{-1} \bigr) L_1 \tau  \bigr)      ( \Delta_{r, \, \theta} (\cO) )^\alpha\\
			& \geq 0   . 
		\end{aligned}
	\end{equation}
	\smallskip
	
	Denote $N_k\=\sum_{i=1}^{k}n_i$. Combining (\ref{e_Pf_t_main_case_A}), (\ref{e_Pf_t_main_case_B}), and (\ref{e_Pf_t_main_case_C}) now gives
	\begin{equation*}
			\liminf_{n\to +\infty} \frac{1}{n}S_n \psi(x) \le \liminf_{k \to +\infty} \frac{1}{N_k} \sum_{i=1}^{k} S_{n_i} \psi(x_i) \le \liminf_{k\to +\infty}\frac{1}{ N_k} \sum_{i=1}^k n_i \gamma = \gamma.
	\end{equation*}
	Therefore, since $x\in I \smallsetminus\Q$ was arbitrary, Proposition~\ref{p_mea_equal_sup_time_average}~(ii) and (\ref{e_Pf_liminf_psi}) give $\mea(\Gau,\psi)\leq \gamma$, thereby completing the proof of the 
	claim.
\end{proof}

\appendix

\section{Periodic locking property} \label{sct_locking_property}

In this appendix we prove the periodic locking property for the Gauss map. The proof of Lemma~\ref{l_periodic the locking property} and Proposition~\ref{p_periodic_locking} is similar to \cite{BZ15}.

\begin{lemma}\label{l_periodic the locking property}
	Let $\Gau\: I\to I$ be the Gauss map. Let $\mu \in \MMM(I,\Gau)$ be a measure supported on a periodic orbit $\cO$. Then there exists a constant $C_\mu \geq 1$ such that for all $\nu\in\MMM(I,\Gau)$ and $\phi \in \Holder{\alpha}(I)$,  
	\begin{equation}\label{e_inequality of Wasserstein distance}
		\langle \nu, \phi\rangle
        \leq	\langle \mu, \phi\rangle+ C_\mu\Hseminorm{\alpha}{\phi}	\langle  \nu, d(\cdot,\mathcal{O})^\alpha \rangle . 
	\end{equation}
	
\end{lemma}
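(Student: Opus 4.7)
The strategy is to adapt the transport-sequence construction used for Lemma~\ref{l_rational_locking} to the purely periodic orbit $\cO = \{z_1, \ldots, z_p\}$ of $\Gau$, with $\Gau(z_j) = z_{j+1 \bmod p}$ and $\mu = \frac{1}{p}\sum_j \delta_{z_j}$. The first reduction is to ergodic measures: by Lemma~\ref{l_Q_Q^c_invatiant} the set $\MMM(I, \Gau)$ is the convex hull of $\MMM_{\erg}(I,\Gau) \cap \MMM_{\irr}(I,\Gau)$ together with $\{\delta_0\}$, and since (\ref{e_inequality of Wasserstein distance}) is preserved under convex combinations in $\nu$, it suffices to verify it for $\nu = \delta_0$ and for ergodic $\nu \in \MMM_{\irr}(I,\Gau)$. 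The case $\nu = \delta_0$ is immediate: either $\cO = \{0\}$ and the inequality is trivial, or $d(0,\cO) > 0$ is fixed and the H\"older bound
\begin{equation*}
\phi(0) - \langle\mu,\phi\rangle \leq \Hseminorm{\alpha}{\phi}\bigl(d(0,\cO)^\alpha + \diam(I)^\alpha\bigr) \leq \Hseminorm{\alpha}{\phi}\bigl(1 + d(0,\cO)^{-\alpha}\bigr)\,\langle\delta_0, d(\cdot,\cO)^\alpha\rangle
\end{equation*}
yields the claim with a constant absorbing $d(0,\cO)^{-\alpha}$.

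For ergodic $\nu \in \MMM_{\irr}(I, \Gau)$, Birkhoff's ergodic theorem supplies a $\nu$-generic point $w \in I\smallsetminus\Q$ at which the time averages of $\phi$ and $d(\cdot,\cO)^\alpha$ converge to $\langle\nu,\phi\rangle$ and $\langle\nu, d(\cdot,\cO)^\alpha\rangle$, respectively. The plan is to construct a transport sequence $\{y_t\}_{t\geq 0}\subseteq \cO$ satisfying
\begin{enumerate}[label=\rm{(\roman*)}]
\item $\limsup_{k\to+\infty}\frac{1}{k}\sum_{i=0}^{k-1}\phi(y_i) \leq \langle\mu,\phi\rangle$, and
\item $|\Gau^t(w) - y_t| \leq C_\mu^{1/\alpha}\, d(\Gau^t(w),\cO)$ for all $t \in \N_0$.
\end{enumerate}
Combining (i) and (ii) with the H\"older estimate $\phi(\Gau^t(w)) \leq \phi(y_t) + \Hseminorm{\alpha}{\phi}|\Gau^t(w) - y_t|^\alpha$, averaging, and passing to the limit along the two Birkhoff averages then reproduces (\ref{e_inequality of Wasserstein distance}) exactly as at the end of the proof of Lemma~\ref{l_rational_locking}.

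To build the sequence, note that since each $z_j$ is a periodic point of $\Gau$, $\cO \subseteq \bcf_m$ for some $m \in \N$; let $L_m$ and $\eta_m$ be the Lipschitz constant and neighbourhood size from Lemma~\ref{l_basic_property_bounded_continued_fractions}. Pick $\epsilon \in (0,1)$ small enough that $\epsilon < \eta_m$, $L_m^{p-1}\epsilon < \min\{\eta_m, \Delta(\cO)/2\}$, and the balls $\{B^\epsilon_d(z_j)\}_{j=1}^p$ are pairwise disjoint, and set $C_\mu \= \max\{1,\epsilon^{-\alpha}\}$. Recursively: at step $t$, if $\Gau^t(w) \in B^\epsilon_d(z_j)$ for the (unique) such $j$, declare a \emph{matching block} of length $p$ by setting $y_{t+s} \= z_{j+s \bmod p}$ for $0 \leq s \leq p-1$ and advance to step $t+p$; otherwise set $y_t \= z^\ast$, where $z^\ast \in \cO$ attains $\min_{z\in\cO}\phi(z) \leq \langle\mu,\phi\rangle$, and advance to step $t+1$. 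Inside a matching block, iteratively applying the Lipschitz bound from Lemma~\ref{l_basic_property_bounded_continued_fractions}~(ii) gives $|\Gau^{t+s}(w) - z_{j+s}| \leq L_m^s \epsilon < \Delta(\cO)/2$, so $z_{j+s}$ remains the closest point of $\cO$ to $\Gau^{t+s}(w)$ and (ii) holds with ratio exactly $1$; on a free step, $d(\Gau^t(w),\cO) \geq \epsilon$ while $|\Gau^t(w)-z^\ast| \leq 1 \leq \epsilon^{-1}d(\Gau^t(w),\cO)$. Each matching block contributes exactly $p\langle\mu,\phi\rangle$ to $\sum\phi(y_i)$, while each free step contributes $\phi(z^\ast) \leq \langle\mu,\phi\rangle$, which yields (i).

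The main obstacle is the choice of $\epsilon$: one must guarantee that once the orbit of $w$ enters a small neighbourhood of some $z_j$, the subsequent $p-1$ iterates remain both inside the distance-expanding region $F_m$ and within $\Delta(\cO)/2$ of the corresponding cyclic translates $z_{j+1},\ldots,z_{j+p-1}$. This is precisely what the quantitative bounds in Lemma~\ref{l_basic_property_bounded_continued_fractions}~(ii), (iii) provide, together with the constraint $L_m^{p-1}\epsilon < \min\{\eta_m,\Delta(\cO)/2\}$; beyond this, the argument runs parallel to the proof of Lemma~\ref{l_rational_locking}, with a single periodic matching block of length $p$ replacing the orbit-to-$\{0,1\}$ segments appearing there.
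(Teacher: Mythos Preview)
Your proof is correct and follows the same transport-sequence strategy as the paper. Two small differences are worth noting. First, to obtain the radius $\epsilon$ the paper simply invokes continuity of $\Gau$ at the (irrational) periodic points, choosing $\epsilon_y$ so that $|\Gau^i(x)-\Gau^i(y)|<\Delta(\cO)/2$ for $0\le i\le p-1$ whenever $|x-y|<\epsilon_y$; your route through the explicit Lipschitz constant of Lemma~\ref{l_basic_property_bounded_continued_fractions} achieves the same end with more bookkeeping. Second, at a free step the paper sets $y_{i+1}\=\Gau(y_k)$ (where $k$ is the previous bad time), so the bad-time entries also cycle through $\cO$; this makes the whole transport sequence independent of $\phi$ and gives $\lim_n n^{-1}\sum\phi(y_i)=\langle\mu,\phi\rangle$ exactly, rather than the $\limsup\le$ that your $\phi$-dependent choice $y_t=z^\ast=\arg\min_\cO\phi$ yields. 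Either variant suffices.

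One minor gap: your transport construction assumes $\cO\subseteq\bcf_m$ for some $m$, but this fails for the fixed point $\cO=\{0\}$. You dispose of $\cO=\{0\}$ only within the $\nu=\delta_0$ subcase; for general $\nu$ it needs a separate line (as in the paper's $p=1$ case, where $\phi(x)\le\phi(0)+\Hseminorm{\alpha}{\phi}|x|^\alpha$ integrated against $\nu$ gives the result with $C_\mu=1$). Also, be sure your final constant $C_\mu$ is the maximum of the $\delta_0$-case constant and $\epsilon^{-\alpha}$, since you define it twice.
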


\begin{proof}
	We will write $p\=\card \cO$ throughout this proof. If $p=1$, i.e., if $\cO$ consists of a single point $x_0$, then for every $\nu\in \mathcal{M}(I,\Gau)$ and $\phi\in \Holder{\alpha}(I)$,
	\begin{equation*}
		\langle \nu, \phi\rangle 
        \leq	\phi(x_0)+  \Hseminorm{\alpha}{\phi} \langle \nu,  d(\cdot,x_0)^\alpha \rangle . 
	\end{equation*}
	So (\ref{e_inequality of Wasserstein distance}) holds with $C_\mu=1$.
	
	From now on assume that $p\geq 2$. Clearly, $\cO\cap\Q=\emptyset$. Denote $\delta \= \Delta(\cO)$ as defined in (\ref{e_Def_gap}). 
    Fix an arbitrary $y\in \cO$. Since $\Gau$ is continuous at each irrational number, there exists $\epsilon_y>0$ such that $\Absbig{ \Gau^i(x)-\Gau^i(y)} < \delta/2$ for all $x\in(y-\epsilon_y,y+\epsilon_y)$ and $0\leq i\leq p-1$. Moreover, since $\cO$ is a finite set, there exists $\epsilon\=\min \{ \epsilon_y : y\in \cO\}$ such that if $x\in I$, $y\in \cO$, and $d(x,y)<\epsilon$, then $\Absbig{ \Gau^i(x)-\Gau^i(y)}< \delta/2$ for all $0\leq i\leq p-1$.  
	
	Define $C_\mu \=1/\epsilon$. We aim to check that the inequality (\ref{e_inequality of Wasserstein distance}) is satisfied for every $\nu\in \MMM(I,\Gau)$. It is sufficient to consider ergodic $\nu$; the general case will follow using ergodic decomposition.
	
	Fix $x\in I$ and $\phi\in \Holder{\alpha}(I)$ such that the Birkhoff averages of the continuous function $\phi$ along the orbit of $x$ converge to $\langle \mu, \phi \rangle$. We will inductively define a transport sequence $\{y_i\}_{i=-1}^{+\infty}$ in $\cO$. As an auxiliary device for the definition of the sequence, each integer $i\geq-1$ will be labelled as good or bad. The definition is as follows: the point $y_{-1}\in \cO$ is chosen arbitrarily, and the time $-1$ is labelled bad. As inductive hypothesis let us suppose that $y_{-1},\,y_0,\,\dots,\,y_i$ are already defined and that the times $-1,\,\dots,\,i$ are already labelled. Then
	\begin{enumerate}
        \smallskip
	    \item[(a)] If $d\bigl(\Gau^{i+1}(x),\cO\bigr)<\epsilon$ then each time $j\in\{i+1,\,i+1,\,\dots,\,i+p\}$ is labelled good, and $y_j$ is defined as the unique point in $\cO$ that is closest to $\Gau^j(x)$. Note that $y_j=\Gau^{j-i}(y_i)$, and in particular each point of $\cO$ appears exactly once in the sequence $y_{i+1},\,y_{i+2},\,\dots,\,y_{i+p}$;

        \smallskip
	    \item[(b)] If $d\bigl(\Gau^{i+1}(x),\cO\bigr)\geq\epsilon$ then the time $i+1$ is labelled bad, and  we define $y_{i+1}$ as $\Gau(y_k)$, where $k$ is the largest bad time less than or equal to $i$.      
	\end{enumerate}
	
	This completes the definition of the transport sequence. Notice that $\lim\limits_{n\to +\infty}\frac{1}{n}\sum_{i=0}^{n-1}\phi(y_i)=\langle\mu , \phi\rangle$. On the other hand, for all $i \in \N_0$, the distance $d\bigl(\Gau^i(x),\cO\bigr)$ is equal to $\Absbig{ \Gau^i(x)-y_i}$ if $i$ is a good time, and otherwise is at least $\epsilon$. In either case we have
	\begin{equation*}
		\Absbig{ \Gau^i(x)-y_i}
        \leq C_\mu d\bigl(\Gau^i(x),\cO\bigr)  . 
	\end{equation*} 
	Using these properties we obtain, for every $\phi\in \Holder{\alpha}(I)$,
	\begin{equation*}
		\begin{aligned}
			\langle \nu,  \phi \rangle
            &=\lim\limits_{n\to +\infty}\frac{1}{n}\sum\limits_{i=0}^{n-1}\phi\bigl(\Gau^i(x)\bigr)
			\leq\lim\limits_{n\to +\infty}\frac{1}{n}\sum\limits_{i=0}^{n-1} \bigl(\phi(y_i)+\Hseminorm{\alpha}{\phi}\Absbig{ \Gau^i(x)-y_i}^\alpha \bigr)\\
			&\leq\lim\limits_{n\to +\infty}\frac{1}{n}\sum\limits_{i=0}^{n-1} \bigl( \phi(y_i)+C_\mu\Hseminorm{\alpha}{\phi}d\bigl(\Gau^i(x),\cO\bigr)^\alpha \bigr)
			=	\langle \mu,  \phi\rangle +C_\mu\Hseminorm{\alpha}{\phi} \langle \nu, d(\cdot,\cO)^\alpha \rangle. 
		\end{aligned}
	\end{equation*}
	
	The inequality (\ref{e_inequality of Wasserstein distance}) follows.
\end{proof}

\begin{prop}[Locking property for periodic orbits]\label{p_periodic_locking}
	The set $\Lock^\alpha(\Gau)$ is an open dense subset of $\sP^\alpha(\Gau)$.
\end{prop}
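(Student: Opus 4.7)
The plan has two parts, with openness being almost immediate from the definition. If $\phi\in\Lock^\alpha(\Gau)$ with associated periodic maximizing measure $\mu$, choose $\epsilon>0$ such that $\Mmax(\Gau,\psi)=\{\mu\}$ whenever $\Hnorm{\alpha}{\psi-\phi}<\epsilon$; then for any $\psi_0$ in the ball of radius $\epsilon/2$ around $\phi$, the $\epsilon/2$-ball around $\psi_0$ is contained in the $\epsilon$-ball around $\phi$, so $\psi_0\in\Lock^\alpha(\Gau)$. Hence $\Lock^\alpha(\Gau)$ is open in $\Holder{\alpha}(I)$, and a fortiori in $\sP^\alpha(\Gau)$.

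For density, fix $\phi\in\sP^\alpha(\Gau)$ and choose a periodic orbit $\cO$ with $\mu_\cO\in\Mmax(\Gau,\phi)$. For each $s>0$ set
\begin{equation*}
\phi_s\=\phi-s\cdot d(\cdot,\cO)^\alpha.
\end{equation*}
Since $d(\cdot,\cO)^\alpha$ has $\Holder{\alpha}$-norm at most $2$ (using $|a^\alpha-b^\alpha|\le|a-b|^\alpha$ and the reverse triangle inequality for $d(\cdot,\cO)$), one has $\Hnorm{\alpha}{\phi-\phi_s}\le 2s$, so $\phi_s\to\phi$ in $\Holder{\alpha}(I)$ as $s\to 0^+$. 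It therefore suffices to show that $\phi_s\in\Lock^\alpha(\Gau)$ for every $s>0$.

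Let $C_{\mu_\cO}\ge 1$ be the constant provided by Lemma~\ref{l_periodic the locking property} applied to $\mu_\cO$. I will show that the locking property holds for $\phi_s$ with radius $s/C_{\mu_\cO}$. Fix $\xi\in\Holder{\alpha}(I)$ with $\Hseminorm{\alpha}{\xi}<s/C_{\mu_\cO}$ and any $\nu\in\MMM(I,\Gau)$. Since $\mu_\cO$ is a $\phi$-maximizing measure, $\langle\nu,\phi\rangle\le\langle\mu_\cO,\phi\rangle$; applying Lemma~\ref{l_periodic the locking property} to the perturbation $\xi$ alone yields
\begin{equation*}
\langle\nu,\xi\rangle\le\langle\mu_\cO,\xi\rangle+C_{\mu_\cO}\Hseminorm{\alpha}{\xi}\langle\nu,d(\cdot,\cO)^\alpha\rangle.
\end{equation*}
Adding these estimates and subtracting $s\langle\nu,d(\cdot,\cO)^\alpha\rangle$, together with the identity $\langle\mu_\cO,d(\cdot,\cO)^\alpha\rangle=0$, gives
\begin{equation*}
\langle\nu,\phi_s+\xi\rangle\le\langle\mu_\cO,\phi_s+\xi\rangle+\bigl(C_{\mu_\cO}\Hseminorm{\alpha}{\xi}-s\bigr)\langle\nu,d(\cdot,\cO)^\alpha\rangle.
\end{equation*}
The parenthesised coefficient is strictly negative, so equality forces $\langle\nu,d(\cdot,\cO)^\alpha\rangle=0$, i.e.\ $\supp\nu\subseteq\cO$; but the unique $\Gau$-invariant probability measure carried by a periodic orbit is the uniform one, so $\nu=\mu_\cO$. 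Hence $\Mmax(\Gau,\phi_s+\xi)=\{\mu_\cO\}$, and $\phi_s\in\Lock^\alpha(\Gau)$ as required.

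The key subtlety is to apply Lemma~\ref{l_periodic the locking property} to $\xi$ rather than to $\phi+\xi$: this replaces $C_{\mu_\cO}\Hseminorm{\alpha}{\phi+\xi}$ (which would force $s$ to dominate $C_{\mu_\cO}\Hseminorm{\alpha}{\phi}$ and preclude a small perturbation) by $C_{\mu_\cO}\Hseminorm{\alpha}{\xi}$, which can be made arbitrarily small independent of $\phi$. The splitting is legitimate precisely because $\mu_\cO$ already attains $\mea(\Gau,\phi)$, so the unperturbed half of the estimate needs no Ma\~n\'e-type inequality at all; this is the only point where the hypothesis $\phi\in\sP^\alpha(\Gau)$ is used.
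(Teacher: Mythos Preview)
Your proof is correct and follows essentially the same approach as the paper: both establish openness directly from the definition, then for density perturb $\phi$ to $\phi_s=\phi-s\,d(\cdot,\cO)^\alpha$ and apply Lemma~\ref{l_periodic the locking property} to the perturbation $\xi$ (the paper's $\psi$) while using the fact that $\mu_\cO$ is already $\phi$-maximizing to handle the unperturbed part. Your added commentary explaining why one applies the lemma to $\xi$ rather than $\phi+\xi$ is a helpful clarification of the same argument.
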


\begin{proof}
	By definition, the set $\Lock^\alpha(I)$ is open and contained in $\sP^\alpha(\Gau)$, so we only need to prove that it is dense.
	
	Let $\phi\in \sP(I)$, and suppose $\mu\in \mathcal{M}_{\max}(\Gau,\phi)$ is supported on a periodic orbit $\cO$.
    For each $t>0$, consider $\phi_t\=\phi-td(\cdot,\cO)^\alpha$. The functions $\phi_t$ belong to the Banach space $\Holder{\alpha}(I)$, and converge to $\phi$ as $t\to 0$.
    Moreover, for any $\psi\in \Holder{\alpha}(I)$ and $\nu\in\mathcal{M}(I,\Gau)$, by Lemma~\ref{l_periodic the locking property} we obtain
	\begin{equation*}
		\begin{aligned}
		\langle \nu, \phi_t+\psi \rangle
        \leq\langle \nu, \phi\rangle + \langle \nu,  \psi\rangle -t\langle \nu, d(\cdot,\cO)^\alpha\rangle
		&\leq \langle \mu,  \phi\rangle+ \langle \mu, \psi \rangle + (C_\mu\Hseminorm{\alpha}{\psi}-t) \langle \nu, d(\cdot,\cO)^\alpha\rangle\\
		&=\langle \mu, \phi_t+\psi \rangle + (C_\mu\Hseminorm{\alpha}{\psi}-t)\langle \nu,  d(\cdot,\cO)^\alpha\rangle. 
	\end{aligned}
\end{equation*}
	Therefore, if $\Hseminorm{\alpha}{\psi}<t/C_\mu$ then $\mu$ is the unique maximizing measure for $\phi_t+\psi$. This shows that $\phi_t\in\Lock^\alpha(\Gau)$ for each $t>0$. So $\phi$ belongs to the closure of $\Lock^\alpha(\Gau)$. Hence we conclude that $\Lock^\alpha(\Gau)$ is dense in $\sP^\alpha(\Gau)\cap\Holder{\alpha}(I)$.
\end{proof}

\end{document}